\theoremstyle{plain}
\tikzset{join/.code=\tikzset{after node path={%
\ifx\tikzchainprevious\pgfutil@empty\else(\tikzchainprevious)%
edge[every join]#1(\tikzchaincurrent)\fi}}}
\tikzset{>=stealth',every on chain/.append style={join},
         every join/.style={->}}
\tikzstyle{labeled}=[execute at begin node=$\scriptstyle,
\newtheorem{thm}{Theorem}
\newtheorem{cor}[thm]{Corollary}
\newtheorem{defi}[thm]{Definition}
\newtheorem{rem}[thm]{Remark}
\newtheorem{nota}[thm]{Notation}
\newtheorem{princ}[thm]{Principle}
\newtheorem{ack}[thm]{Acknowledgement}
\newtheorem{theme}[thm]{Theme}
\newtheorem{conj}[thm]{Conjecture}
\numberwithin{thm}{section}
\newcommand\be{\begin{equation}}
\newcommand\ee{\end{equation}}
\newbox\gnBoxA
\newdimen\gnCornerHgt
\newdimen\gnArgHgt
\def\Godelnum #1{%
	\setbox\gnBoxA=\hbox{$#1$}%
	\gnArgHgt=\ht\gnBoxA%
	\ifnum \gnArgHgt<\gnCornerHgt
		\gnArgHgt=0pt%
	\else
		\advance \gnArgHgt by -\gnCornerHgt%
	\fi
	\raise\gnArgHgt\hbox{$\ulcorner$} \box\gnBoxA %
		\raise\gnArgHgt\hbox{$\urcorner$}}
\def\bdefi{\begin{defi}\rm}
\def\edefi{\end{defi}}
\def\bnota{\begin{nota}\rm}
\def\enota{\end{nota}}
\def\brem{\begin{rem}\rm}
\def\erem{\end{rem}}
\def\FIVE{\Pi_{1}^{1}\text{-CA}_{0}}
\def\rel{\sqsubseteq}
\def\sler{\sqsupset}
\def\ATR{\textup{ATR}}
\def\UATR{\textup{UATR}}
\def\INT{\textup{INT}}
\def\USUP{\textup{USUP}}
\def\HB{\textup{HB}}
\def\RCA{\textup{RCA}}
\def\WO{\textup{WO}}
\def\RCAo{\textup{RCA}_{0}^{\omega}}
\def\RCAO{\textup{RCA}_{0}^{\Omega}}
\def\ATRO{\textup{ATR}_{\mathbb{o}}}
\def\UATRO{\textup{UATR}_{\mathbb{o}}}
\def\WKL{\textup{WKL}}
\def\SUP{\textup{SUP}}
\def\IVT{\textup{IVT}}
\def\UWKL{\textup{UWKL}}
\def\WEIMAX{\textup{WEIMAX}}
\def\WWKL{\textup{WWKL}}
\def\bye{\end{document}}
\def\N{{\mathbb  N}}
\def\R{{\mathbb  R}}
\def\EXP{\textup{EXP}}
\def\FAN{\textup{FAN}}
\def\UFAN{\textup{UFAN}}
\def\MUC{\textup{MUC}}
\def\R{{\mathbb{R}}}
\def\({\textup{(}}
\def\){\textup{)}}
\def\st{\textup{st}}
\def\asa{\leftrightarrow}
\def\di{\rightarrow}
\def\eps{\varepsilon}
\def\vx{\vec{x}}
\def\M{\mathcal{M}}
\def\ACA{\textup{ACA}}
\def\paai{\Pi_{1}^{0}\textup{-TRANS}}
\def\Paai{\Pi_{1}^{1}\textup{-TRANS}}
\def\QFAC{\textup{QF-AC}}
\def\PST{\textup{PST}}
\def\UPST{\textup{UPST}}
\def\UIVT{\textup{UIVT}}
\def\INT{\textup{int}}
\numberwithin{equation}{section}
\begin{document}
\title{Uniform and nonstandard existence in Reverse Mathematics}

\author{Sam Sanders}
\address{Department of Mathematics, Ghent University, Belgium \& Munich Center for Mathematical Philosophy, LMU Munich, Germany}
\email{sasander@cage.ugent.be}
\maketitle
\thispagestyle{empty}
\begin{abstract}
\emph{Reverse Mathematics} is a program in the foundations of mathematics which provides an elegant classification of theorems of ordinary mathematics 
based on computability.  Our aim is to provide an \emph{alternative} classification of theorems based on the central tenet of Feferman's \emph{Explicit Mathematics}, namely that \emph{a proof of existence of an object yields a procedure to compute 
said object}.  Our classification gives rise to the \emph{Explicit Mathematics theme} (EMT) of Nonstandard Analysis.   Intuitively speaking, the EMT states that
a standard object with certain properties can be computed by a functional if and only if this object exists classically \emph{with these same standard and nonstandard properties}.      
In this paper, we establish examples for the EMT ranging from the weakest to the strongest Big Five system of Reverse Mathematics.  
Our results are proved over the usual base theory of Reverse Mathematics, conservatively extended with higher types and Nelson's internal approach to Nonstandard Analysis.  
\end{abstract}

\vspace{-0.17cm}

\section{Introduction}\label{intro}
\subsection{Reverse Mathematics, explicitly}\label{introke}
The subject of this paper is the development of Reverse Mathematics (RM for short; See Section~\ref{RM} for an introduction) over a conservative extension of the usual base theory involving higher types and Nonstandard Analysis.  
This extended base theory, introduced in Section \ref{base}, is based on Nelson's \emph{internal set theory} (\cite{wownelly}) and Kohlenbach's \emph{higher-order} RM (\cite{kohlenbach2}).  
The aforementioned development of RM, 
which takes place in Section~\ref{revmathrcao}-\ref{zuslin}, leads to the formulation of the \emph{Explicit Mathematics Theme} (EMT for short), 
which we discuss now.  We follow the notations from Nelson's internal set theory.    
\begin{theme}[The theme from Explicit Mathematics]\rm
Consider a standard theorem of mathematics of the form:
\[
T^{\st}\equiv(\forall^{\st}x^{\sigma})(A^{\st}(x)\di (\exists^{\st} y^{\tau})B^{\st}(x,y)).
\]
The \emph{nonstandard} version of $T^{\st}$ is the statement:
\be\label{mf}\tag{$T^{*}$}
(\forall^{\st}x^{\sigma})(A^{\st}(x)\di (\exists^{\st} y^{\tau})B(x,y)), 
\ee
where $B^{\st}$ is `transferred' to $B$, i.e.\ the standardness predicate `st' is omitted.  
Furthermore, the \emph{uniform} version of $T$, is 
\be\tag{$UT$}
(\exists \Phi^{\sigma\di \tau})(\forall x^{\sigma})(A (x)\di B (x,\Phi(x))).
\ee
The \emph{Explicit Mathematics Theme} (EMT) is the observation that for many theorems $T$ as above, the base theory proves $T^{*}\asa UT$.  
\end{theme}
Note that the EMT expresses that the mere \emph{existence} of an object $y$ as in $T^{*}$, is 
equivalent to $y$ \emph{being computable via a functional} as in the\footnote{For certain statements $T$, there exists an additional interesting uniform version in which the existential quantifiers in $A$ are also removed by a functional.  When we encounter such statements, we shall make a distinction between $UT_{1}$ and $UT_{2}$.}  uniform version $UT$ (where the latter is free of `\st').  As suggested by its name, the EMT is inspired by the foundational program \emph{Explicit Mathematics}, whence discussed in Section \ref{EM}.   

\medskip

In this paper, we provide evidence for the EMT by establishing the latter for various theorems $T$ studied in second-order RM, where $T$ and $UT$ range from provable in the base theory to provable only in the strongest Big Five system.  
This development takes place in Sections \ref{revmathrcao} to \ref{zuslin}.  Finally, for some motivation regarding this study, we refer to Section \ref{mot}, while Sections \ref{RM} and \ref{EM} provide background information on Reverse and Explicit Mathematics.  
\subsection{Motivation}\label{mot}
We discuss the foundational significance of the EMT.  
\begin{enumerate} 
\item Central to the EMT is that statements involving \emph{higher-type} objects like $UT$ are equivalent to statements $T^{*}$ involving only \emph{lower-type} nonstandard objects.  
In this light, it seems incoherent to claim that higher-type objects are somehow `more real' than nonstandard ones (or vice versa).  
The EMT thus suggests that higher-order RM is implicit in Friedman-Simpson RM, as Nonstandard Analysis is used in the latter.  (See e.g.\ \cites{tahaar, tanaka1, keisler1, yo1, yokoyama2, yokoyama3, sayo, avi3, aloneatlast3}).  
Moreover, the EMT gives rise to an example of a higher-order statement \emph{implicit in Friedman-Simpson RM}, as discussed in Remark~\ref{loofer2}.       
\item In general, to prove $T^{*}\di UT$, one defines a functional $\Psi(\cdot, M)$ of (rather) elementary complexity, but involving an infinite number $M$.  
Assuming $T^{*}$, this functional is $\Omega$-invariant (See Definition \ref{homega}) and the axiom $\Omega$-CA from the base theory provides the required standard functional for $UT$.
As discussed in Section \ref{conrem}, these results can be viewed as a contribution to Hilbert's program for finitistic mathematics, as infinitary objects (the functional from $UT$) are decomposed into elementary objects.    
\item Fujiwara and Kohlenbach have established the equivalence between (classical) uniform existence as in $UT$ and intuitionistic provability for rather rich formulas classes (\cites{fuji1,fuji2}).  The EMT suggests that $T^{*}$ constitutes another way of capturing intuitionistic provability.  Nonetheless, we establish the EMT for statements beyond the Fujiwara-Kohlenbach metatheorems.     
\item Our results reinforce the heuristic $\frac{\WKL_{0}}{\ACA_{0}}\approx \frac{\ATR_{0}}{\FIVE}$ put forward in \cite{simpson2}*{I.11.7}.  
In particular, our treatment of the EMT for the fan theorem, i.e.\ the classical contraposition of WKL, and for $\ATR_{0}$ are \emph{neigh identical}.  
Furthermore, the EMT for $(S^{2})$, the functional version of $\FIVE$, is highly similar to the EMT for $(\exists^{2})$, the functional version of $\ACA_{0}$, thanks to the bounding result in Theorem \ref{rep}.  
In conclusion, Nonstandard Analysis allows us to treat sets of numbers in the same way as one treats numbers.  
\end{enumerate}
Besides the previous arguments, a first general motivation for the study of higher-order RM is as follows: It was shown in \cite{samimplicit} that higher-order statements are implicit in second-order RM.  
A second motivation, based on Feferman's \emph{Explicit Mathematics} (See Section \ref{EM}) is discussed in Sections~\ref{RM} and~\ref{EM}.  
Finally, we urge the reader to first consult Remark \ref{ohdennenboom} so as to clear up a common misconception regarding Nelson's framework.  

\subsection{Reverse Mathematics: a `computable' classification}\label{RM}
Reverse Mathematics (RM) is a program in the foundations of mathematics initiated around 1975 by Friedman (\cites{fried,fried2}) and developed extensively by Simpson (\cite{simpson2, simpson1}) and others.  
The aim of RM is to find the axioms necessary to prove a statement of \emph{ordinary} mathematics, i.e.\ dealing with countable or separable objects.   
The classical\footnote{In \emph{Constructive Reverse Mathematics} (\cite{ishi1}), the base theory is based on intuitionistic logic.  
} base theory $\RCA_{0}$ of `computable\footnote{The system $\RCA_{0}$ consists of induction $I\Sigma_{1}$, and the {\bf r}ecursive {\bf c}omprehension {\bf a}xiom $\Delta_{1}^{0}$-CA.} mathematics' is always assumed.  Thus, the aim is to find the minimal axioms $A$ to derive a given statement $T$ in $\RCA_{0}$;  In symbols:  
\begin{quote}
\emph{The aim of \emph{RM} is to find the minimal axioms $A$ such that $\RCA_{0}\vdash [A\di T]$ for statements $T$ of ordinary mathematics.}
\end{quote}
Surprisingly, once the minimal axioms $A$ have been found, we almost always also have $\RCA_{0}\vdash [A\asa T]$, i.e.\ not only can we derive the theorem $T$ from the axioms $A$ (the `usual' way of doing mathematics), we can also derive the axiom $A$ from the theorem $T$ (the `reverse' way of doing mathematics).  In light of the latter, the discipline was baptised `Reverse Mathematics'.    

\medskip

In the majority\footnote{Exceptions are classified in the so-called Reverse Mathematics Zoo (\cite{damirzoo}).
} 
of cases, for a statement $T$ of ordinary mathematics, either $T$ is provable in $\RCA_{0}$, or the latter proves $T\asa A_{i}$, where $A_{i}$ is one of $\WKL_{0}, \ACA_{0},$ $ \ATR_{0}$ or $\FIVE$.  The latter together with $\RCA_{0}$ form the `Big Five' and the aforementioned observation that most mathematical theorems fall into one of the Big Five categories, is called the \emph{Big Five phenomenon} (\cite{montahue}*{p.\ 432}).  
Furthermore, each of the Big Five has a natural formulation in terms of (Turing) computability (See e.g.\ \cite{simpson2}*{I.3.4, I.5.4, I.7.5}).
As noted by Simpson in \cite{simpson2}*{I.12}, each of the Big Five also corresponds (loosely) to a foundational program in mathematics.  
\medskip

An alternative view of Reverse Mathematics is as follows (and expressed in part by \cite{simpson2}*{Remark I.8.9.5}):  Reverse Mathematics studies theorems of mathematics `as they stand', instead of the common practice in constructive mathematics of introducing extra (often perceived as unnatural) conditions to make these theorems provable constructively.  In other words, rather than \emph{enforcing computability} via extra conditions, RM takes a `relative' stance:  Assuming computable mathematics in the guise of $\RCA_{0}$, how non-computable is a given theorem of mathematics, as measured by which of the other Big Five (or other principles) it is equivalent to?   

\medskip

In conclusion, Reverse Mathematics can be viewed as a classification of theorems of ordinary mathematics from the point of view of \emph{computability} (See e.g.\ \cite{simpson2}*{I.3.4}). 
A natural question is if there are \emph{other interesting ways} of classifying these theorems, which is part of the motivation of this paper, and discussed next.  

\subsection{Explicit Mathematics}\label{EM}
Around 1967, Bishop introduced \emph{Constructive Analysis} (\cite{bish1}), an approach to mathematics with a strong focus on computational meaning, but compatible with classical, recursive, and intuitionistic mathematics.  
In order to provide a natural formalisation for Constructive Analysis, Feferman introduced \emph{Explicit Mathematics} (EM) in \cites{feferman2,fefmar,fefmons}.  To capture Bishop's constructive notion of existence in a classical-logic setting, EM is built around the central tenet: 
\begin{center}  
\emph{A proof of existence of an object yields a procedure to compute said object}.
\end{center}
Soon after its inception, it was realised that the framework of EM is quite flexible and can be used for the study of much more general topics, such as reductive proof theory, generalised recursion theory, type theory and programming languages, et cetera.
A monograph on the topic of EM is forthcoming as \cite{fefermaninf}.  

\medskip

Similar to the second view of Reverse Mathematics (as classifying theorems based on computability, rather than `forcing computability onto theorems'), one can approach EM from the same relative point of view:  
Rather than enforcing the central tenet of EM, one can ask the following `relative' question:  
\begin{center}
\emph{For a given theorem $T$, what extra axioms are needed to compute the objects claimed to exist by $T$?}
\end{center}
In other words, how strong is $UT$ the uniform version of a theorem $T$?  To this question, the EMT from Section \ref{introke} provides a surprising(ly) uniform answer.  
\section{About and around the base theory $\RCAO$}\label{base}
In this section, we introduce the base theory $\RCAO$ in which we will prove our results.  
We discuss some basic results and introduce some notation.
\subsection{The system $\RCAO$}
In two words, $\RCAO$ is a conservative extension of Kohlenbach's base theory $\RCAo$ from \cite{kohlenbach2} with certain axioms from Nelson's \emph{Internal Set Theory} (\cite{wownelly}) based on the approach from \cites{brie,bennosam}.    
This conservation result is proved in \cite{bennosam}, while certain partial results are implicit in \cite{brie}.  In turn, $\RCAo$ is a conservative extension of $\RCA_{0}$ for the second-order language by \cite{kohlenbach2}*{Prop.\ 3.1}.  

\medskip

In Nelson's \emph{syntactic} approach to Nonstandard Analysis (\cite{wownelly}), as opposed to Robinson's semantic one (\cite{robinson1}), a new predicate `st($x$)', read as `$x$ is standard' is added to the language of ZFC.  
The notations $(\forall^{\st}x)$ and $(\exists^{\st}y)$ are short for $(\forall x)(\st(x)\di \dots)$ and $(\exists y)(\st(y)\wedge \dots)$.
The three axioms \emph{Idealization}, \emph{Standard Part}, and \emph{Transfer} govern the new predicate `st'  and give rise to a conservative extension of ZFC.   
Nelson's approach has been studied in the context of higher-type arithmetic in e.g.\ \cite{brie, bennosam, avi3}.

\medskip

Following Nelson's approach in arithmetic, we define $\RCAO$ as the system 
\[
\textup{E-PRA}_{\st}^{\omega*}+\textup{QF-AC}^{1,0} +\textup{HAC}_{\textup{int}}+\textup{I}+ \textup{PF-TP}_{\forall} 
\]
from \cite{bennosam}*{\S3.2-3.3}.  
To guarantee that $\RCAO$ is a conservative extension of $\RCAo$, Nelson's axiom \emph{Standard part} must be limited to $\Omega$-CA defined below (which derives from HAC$_{\INT}$), 
while Nelson's axiom \emph{Transfer} has to be limited to universal formulas \emph{without} parameters, as in PF-TP$_{\forall}$.  We have the following theorem  
\begin{thm}
The system \textup{E-PRA$_{\st}^{\omega*}+\textup{HAC}_{\textup{int}}+\textup{I}+ \textup{PF-TP}_{\forall}$} is a conservative extension of \textup{E-PRA}$^{\omega}$.  
The system $\RCAO$ is a $\Pi_{2}^{0}$-conservative extension of $\textup{PRA}$.  
\end{thm}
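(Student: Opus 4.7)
The plan is to prove the first statement by a syntactic translation $\Phi \mapsto \Phi^{S_{\st}}$ in the spirit of the nonstandard functional interpretation of \cite{brie}, mapping each formula of the language with \st{} to an internal formula such that (i) $\Phi^{S_{\st}} \equiv \Phi$ whenever $\Phi$ is internal, and (ii) provability of $\Phi$ in the full nonstandard theory implies provability of $\Phi^{S_{\st}}$ in E-PRA$^{\omega*}$. Informally, the translation Herbrandises occurrences of $\exists^{\st}$ into finite sequences of witnessing terms of appropriate finite type and weakens occurrences of $\forall^{\st}$ correspondingly; internal quantifiers are left untouched. The conservativity statement follows at once: if the nonstandard theory proves an internal sentence $\psi$, the interpretation yields E-PRA$^{\omega*}\vdash \psi$, and the latter is provably conservative over E-PRA$^{\omega}$ via the standard elimination of the finite-sequence extension.

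The bulk of the work lies in checking that each nonstandard axiom is soundly interpreted. The internal axioms of E-PRA$^{\omega*}_{\st}$, together with the closure axioms stating that the primitive constants are standard and that standard functionals applied to standard arguments are standard, are handled by identity-style realisers. External induction \textup{I} is dispatched by an internal induction producing the Herbrand witnesses as finite sequences of terms. For \textup{HAC}$_{\textup{int}}$, a single higher-type functional supplies the uniform witness, which is the point where the finite-type hierarchy and the internal quantifier-free choice principle \textup{QF-AC}$^{1,0}$ become essential. The main obstacle is \textup{PF-TP}$_{\forall}$: since unrestricted transfer is known to be inconsistent with the other nonstandard axioms, the parameter-free restriction to universal sentences cannot be dropped. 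Soundness here exploits precisely this restriction: a closed internal sentence $(\forall x)\varphi(x)$ provable in E-PRA$^{\omega*}$ relativises to the standard universe because all closed terms of the language are provably standard; any free parameter would break this step, since an arbitrary element need not be standard.

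For the second statement, I would compose the first result with Kohlenbach's $\Pi_{2}^{0}$-conservation of E-PRA$^{\omega}$ over \textup{PRA}, proved via negative translation followed by G\"odel's Dialectica interpretation (see \cite{kohlenbach2}*{Prop.\ 3.1}). A $\Pi_{2}^{0}$ sentence is internal, hence fixed up to provable equivalence by the nonstandard interpretation, so the composition of the two reductions yields the desired $\Pi_{2}^{0}$-conservation of $\RCAO$ over \textup{PRA}. The delicate point throughout is to ensure that both reductions are compatible on the extensionality rule present in E-PRA$^{\omega}$; this is routine but is what forces us to work with extensional Peano arithmetic rather than its intensional variant.
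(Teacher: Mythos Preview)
The paper does not prove this theorem; it simply cites \cite{bennosam}*{Cor.\ 9} and notes afterward that, \emph{omitting} \textup{PF-TP}$_{\forall}$, the result is already implicit in \cite{brie}*{Cor.\ 7.6}. Your proposal therefore sketches more than the paper itself offers, and the backbone---the Herbrandised nonstandard functional interpretation of \cite{brie}, followed by Kohlenbach's reduction of E-PRA$^{\omega}+\textup{QF-AC}^{1,0}$ to PRA---is indeed the intended route in the cited reference.

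Two points in your sketch are off. First, you call \textup{QF-AC}$^{1,0}$ ``essential'' for interpreting \textup{HAC}$_{\textup{int}}$, but \textup{QF-AC}$^{1,0}$ is not part of the system in the first sentence of the theorem; the soundness of \textup{HAC}$_{\textup{int}}$ in \cite{brie} is established over E-PRA$^{\omega*}$ alone, the realiser being essentially the identity on Herbrand sequences. Second, your account of \textup{PF-TP}$_{\forall}$ is directionally confused. Transfer goes \emph{from} $(\forall^{\st}x)\varphi(x)$ \emph{to} $(\forall x)\varphi(x)$, so the relevant soundness obligation is not that a provable internal universal ``relativises to the standard universe''; that is the converse direction and is trivially true. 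Under the interpretation, the instance $(\forall^{\st}x)\varphi(x)\to(\forall x)\varphi(x)$ with $\varphi$ internal and parameter-free demands a \emph{closed} finite sequence of terms $U$ such that E-PRA$^{\omega*}$ proves $(\forall x\in U)\varphi(x)\to(\forall x)\varphi(x)$. The absence of parameters is what permits such closed realisers to exist and is exactly the content added in \cite{bennosam} beyond \cite{brie}; your sketch correctly flags parameter-freeness as the crux but does not supply a working mechanism. This is the one genuine gap: you would need to consult \cite{bennosam} for how \textup{PF-TP}$_{\forall}$ is actually absorbed, as the naive Herbrand reading does not obviously terminate in a closed realiser without further argument.
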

\begin{proof}
See \cite{bennosam}*{Cor.\ 9}.  
\end{proof}
The conservation result for $\textup{E-PRA}_{\st}^{\omega*}+\textup{QF-AC}^{1,0}$ is trivial.  
Furthermore, omitting PF-TP$_{\forall}$, the theorem is implicit in \cite{brie}*{Cor.\ 7.6} as the proof of the latter goes through as long as EFA is available.
We now discuss the new axioms in more detail.  
\subsection{The Transfer principle of $\RCAO$}
We first discuss the \emph{Transfer principle} included in $\RCAO$, which is as follows.     
\begin{princ}[PF-TP$_\forall$]  
For any internal formula $\varphi(x^{\tau})$ with all parameters shown, we have $(\forall^{\st}x^{\tau})\varphi(x)\di (\forall x)\varphi(x)$.
\end{princ} 
A special case of the previous can be found in Avigad's system NPRA$^{\omega}$ from \cite{avi3}.  
The omission of parameters in PF-TP$_{\forall}$ is essential, as is clear from Theorem \ref{markje}, for which we introduce:
\be\tag{$\paai$}
 (\forall^{\st}f^{1})\big[(\forall^{\st}n)f(n)=0\di (\forall n)f(n)=0],
\ee 
\be\tag{$\exists^{2}$}
(\exists \varphi^{2})(\forall g^{1})\big[(\exists x^{0})g(x)=0 \asa \varphi(g)=0  \big].
\ee
Note that standard parameters are allowed in $f$, and that $(\exists^{2})$ is the functional version of $\ACA_{0}$ (\cite{simpson2}*{III}), i.e.\ arithmetical comprehension.
\begin{thm}\label{markje}
The system $\RCAO$ proves $\paai\asa (\exists^{2})$.
\end{thm}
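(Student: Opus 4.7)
The plan is to prove each direction of the equivalence, following the template $T^{*}\asa UT$ sketched after the Explicit Mathematics Theme.

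For $(\exists^{2})\di \paai$: Since $(\exists^{2})$ is a parameter-free internal existential statement, the contrapositive of PF-TP$_{\forall}$ yields a \emph{standard} witness $\varphi^{2}$ satisfying $(\forall g^{1})[\varphi(g)=0\leftrightarrow(\exists n)g(n)=0]$. From this standard $\varphi$ one obtains a standard minimisation functional $\mu(g):=(\mu n)[g(n)=0]$, well-defined whenever $\varphi(g)=0$. Given standard $f$ with $(\forall^{\st} n)f(n)=0$, suppose for contradiction that $(\exists n)f(n)\neq 0$ and form the standard function $g$ with $g(n)=0$ iff $f(n)\neq 0$; then $\varphi(g)=0$, so $n_{0}:=\mu(g)$ exists, and $n_{0}$ is standard because standard functionals applied to standard inputs return standard outputs. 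But then $f(n_{0})\neq 0$ with $n_{0}$ standard contradicts $(\forall^{\st} n)f(n)=0$.

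For $\paai\di (\exists^{2})$: Following the recipe from the introduction, fix a nonstandard $M$ and define the internal functional $\varphi_{M}(g):=0$ if $(\exists n\leq M)g(n)=0$, and $\varphi_{M}(g):=1$ otherwise. For any standard $g$, $\paai$ applied (in contrapositive form) to a standard function coding the zero-set of $g$ gives $(\exists n)g(n)=0\leftrightarrow(\exists^{\st} n)g(n)=0$; since every standard natural number is dominated by the infinite $M$, this forces $\varphi_{M}(g)=0\leftrightarrow(\exists n)g(n)=0$ on standard $g$, independently of the particular choice of nonstandard $M$. Hence $\varphi_{M}$ is $\Omega$-invariant, and $\Omega$-CA (built into $\RCAO$ via HAC$_{\INT}$) supplies a standard $\varphi^{*}$ coinciding with $\varphi_{M}$ on all standard inputs, so that $(\forall^{\st} g)[\varphi^{*}(g)=0\leftrightarrow(\exists n)g(n)=0]$.

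To conclude the internal statement $(\exists^{2})$, observe that $(\exists\varphi^{2})(\forall g^{1})[\varphi(g)=0\leftrightarrow(\exists n)g(n)=0]$ is parameter-free internal, so by the contrapositive of PF-TP$_{\forall}$ it suffices to exhibit a \emph{standard} $\varphi$ for which the biconditional holds on all $g$; the $\varphi^{*}$ produced above is the intended witness. The main obstacle of the proof is accordingly the upgrade from the standard-slice identity $(\forall^{\st} g)[\varphi^{*}(g)=0\leftrightarrow(\exists n)g(n)=0]$ to its internal counterpart $(\forall g)[\,\cdot\,]$: one must verify that the standard $\varphi^{*}$, although pinned down by $\Omega$-CA only on standard inputs, realises the $\Sigma_{1}^{0}(g)$ predicate for every $g$. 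My plan for this step is to exploit that $\varphi^{*}$ is standard and that the overall formula becomes parameter-free once $\varphi$ is existentially closed, so that any hypothetical violating $g$ could be packaged into a parameter-free internal existence statement whose standard witness, furnished by PF-TP$_{\forall}$, would contradict the standard-slice agreement already established.
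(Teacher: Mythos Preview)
Your forward direction $(\exists^{2})\to\paai$ is essentially the ``indirect'' route via $(\mu^{2})$ that the paper mentions just before Corollary~\ref{boon2}, and it is correct once one grants that the construction of $\mu$ from the standard $\varphi$ is itself standard (this uses that the Kleene recursors are standard in $\RCAO$, cf.\ the proof of Theorem~\ref{trikke}). The paper's own ``direct'' argument in Corollary~\ref{boon2} avoids $\mu$ by instead extracting \emph{standard extensionality} of $\varphi$ from PF-TP$_{\forall}$ and deriving a contradiction from two standard sequences $f\approx_{1}g$ with $\varphi(f)\ne\varphi(g)$.

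The reverse direction has a genuine gap at exactly the point you flag as the ``main obstacle''. Your construction of the standard $\varphi^{*}$ via $\Omega$-invariance and $\Omega$-CA is correct and matches the paper's sketch in Theorem~\ref{trikke}; this yields $(\exists^{2})^{\st}$. But your plan for the upgrade to the internal $(\exists^{2})$ does not work. You propose that a hypothetical violating $g$ ``could be packaged into a parameter-free internal existence statement'' and then transferred down by PF-TP$_{\forall}$; however, any formula expressing that $g$ violates the biconditional unavoidably contains $\varphi^{*}$ as a free parameter, and PF-TP$_{\forall}$ is by design restricted to formulas \emph{without parameters} --- the paper emphasises this restriction is essential precisely because allowing standard parameters already gives $\paai$. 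Existentially closing over $\varphi$ does not help either: the resulting statement is $(\exists^{2})$ itself, which is what you are trying to prove.

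The device the paper actually uses (following \cite{bennosam}*{\S3.3}, and illustrated explicitly in the proofs of Theorem~\ref{muckr} and the corollary on $\USUP$) is that the \emph{language} of $\RCAO$ contains dedicated constant symbols for uniquely-defined functionals, together with axioms asserting their standardness and their agreement with any standard functional having the defining property. One works with the \emph{unique} version (e.g.\ $(\mu^{2})$ with the least-witness condition, or $\varphi$ with range $\{0,1\}$), shows that the $\Omega$-CA output agrees with the corresponding language constant on standard inputs, and then the desired universal statement --- now written with the constant rather than with the parameter $\varphi^{*}$ --- is genuinely parameter-free, so PF-TP$_{\forall}$ applies and yields the full internal $(\exists^{2})$. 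Your proposal is missing this ingredient.
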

\begin{proof}
By \cite{bennosam}*{Cor 12}.  We sketch part of the proof in Theorem \ref{trikke} below.
\end{proof}
Besides being essential for the proof of the previous theorem, PF-TP$_{\forall}$ implies that all functionals defined \emph{without parameters} are standard, as discussed next. 
\begin{rem}[Standard functionals]\label{tokkiep}\rm
We discuss an important advantage of the axiom PF-TP$_{\forall}$.  First of all, given the existence of a functional, like e.g.\ the existence of the fan functional (See e.g.\ \cite{noortje, kohlenbach2}) as follows:
\be\label{MUC2}\tag{MUC}
(\exists \Omega^{3})(\forall \varphi^{2}) (\forall f^{1}, g^{1}\leq_{1}1 )[\overline{f}(\Omega(\varphi))=_{0}\overline{g}(\Omega(\varphi))\di \varphi(f)=_{0}\varphi(g)],
\ee  
we immediately obtain, via the contraposition of PF-TP$_{\forall}$, that 
\be\label{drifsd}
(\exists^{\st} \Theta^{3})(\forall \varphi^{2}) (\forall f^{1}, g^{1}\leq_{1}1 )[\overline{f}(\Theta(\varphi))=_{0}\overline{g}(\Theta(\varphi))\di \varphi(f)=_{0}\varphi(g)].
\ee
In other words, we may assume that the fan functional is \emph{standard}.  The same holds for \emph{any functional} of which the definition does not involve additional parameters.   

\medskip

Secondly, we may assume $\Omega(\varphi)$ is the \emph{least number} as in \eqref{MUC2}, which implies that $\Theta(\varphi)$ from \eqref{drifsd} can also be assumed to have this property.  
However, then $\Theta(\varphi)=_{0}\Omega(\varphi)$ for any $\varphi^{2}$, implying $\Theta=_{3}\Omega$, i.e.\ if it exists, the fan functional is \emph{unique and standard}.  
The same again holds for any uniquely-defined functional of which the definition does not involve additional parameters.       
\end{rem}
The previous observation prompted the addition to $\RCAO$ of axioms reflecting the uniqueness and standardness of certain functionals (See \cite{bennosam}*{\S3.3}).  
It should be noted that Nelson makes a similar observation concerning IST in \cite{wownelly}*{p.\ 1166}.

\subsection{The Standard part principle of $\RCAO$}\label{STPSEC}
Next, we discuss the \emph{Standard Part principle}, called $\Omega$-CA, included in $\RCAO$.  
Intuitively speaking, a Standard Part principle allows us to convert nonstandard into standard objects.  

\medskip

By way of example,  the following type 1-version of the Standard part principle results in a conservative extension of $\WKL_{0}$ (See \cites{keisler1, briebenno}).  
\be\label{STP}\tag{STP}
(\forall X^{1})(\exists^{\st} Y^{1})(\forall^{\st} x^{0})(x\in X\asa x\in Y).
\ee
Here, we have used set notation to increase readability;  We assume that sets $X^{1}$ are given by their characteristic functions $f^{1}_{X}$, i.e.\ $(\forall x^{0})[x\in X\asa f_{X}(x)=1]$.     
The set $Y$ from \eqref{STP} is also called the \emph{standard part} of $X$. 

\medskip
    
%
%
We now discuss the Standard Part principle $\Omega$-CA, a very practical consequence of the axiom HAC$_{\textup{int}}$.  
Intuitively speaking, $\Omega$-CA expresses that we can obtain 
the standard part (in casu $G$) of \emph{$\Omega$-invariant} nonstandard objects (in casu $F(\cdot,M)$).   
Note that we write `$N\in \Omega$' as short for $\neg\st(N^{0})$.
\bdefi[$\Omega$-invariance]\label{homega} Let $F^{(\sigma\times  0)\di 0}$ be standard and fix $M^{0}\in \Omega$.  
Then $F(\cdot,M)$ is {\bf $\Omega$-invariant} if   
\be\label{homegainv}
(\forall^{\st} x^{\sigma})(\forall N^{0}\in \Omega)\big[F(x ,M)=_{0}F(x,N) \big].  
\ee
\edefi
\begin{princ}[$\Omega$-CA]\rm Let $F^{(\sigma\times 0)\di 0}$ be standard and fix $M^{0}\in \Omega$.
For every $\Omega$-invariant $F(\cdot,M)$, there is a standard $G^{\sigma\di 0}$ such that
\be\label{homegaca}
(\forall^{\st} x^{\sigma})(\forall N^{0}\in \Omega)\big[G(x)=_{0}F(x,N) \big].  
\ee
\end{princ}
The axiom $\Omega$-CA provides the standard part of a nonstandard object, if the latter is \emph{independent of the choice of infinite number} used in its definition.  
\begin{thm}\label{drifh}
The system $\RCAO$ proves $\Omega\textup{-CA}$.  
\end{thm}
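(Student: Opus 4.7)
The plan is to derive $\Omega$-CA from the choice principle HAC$_{\textup{int}}$, which is an axiom of $\RCAO$, after first establishing that $\Omega$-invariance forces $F(x,M)$ to be standard whenever $x$ is. Throughout I rely on overspill, a routine consequence of the Idealization axiom $\textup{I}$ of $\RCAO$ (applied to the internal formula $w\leq y\wedge \chi(y)$).

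The preliminary step shows that $F(x,M)$ is standard for every standard $x$. Suppose toward a contradiction that $K:=F(x,M)$ is nonstandard for some standard $x$, and consider the internal formula $\psi(n)\equiv F(x,n)\neq K$ (with standard parameter $x$ and nonstandard parameter $K$). For every standard $n$ the value $F(x,n)$ is standard whereas $K$ is not, so $\psi(n)$ holds. Overspill then produces a nonstandard $N_{0}$ with $\psi(N_{0})$, contradicting \eqref{homegainv}, which forces $F(x,N_{0})=F(x,M)=K$.

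Next I apply HAC$_{\textup{int}}$ to the internal formula
\[
\varphi(x,y)\equiv (\exists n)(\forall m\geq n)[y=F(x,m)].
\]
Its premise $(\forall^{\st} x)(\exists^{\st} y)\varphi(x,y)$ is witnessed by $y:=F(x,M)$, which is standard by the previous paragraph, together with $n:=M$: for every $m\geq M$ the number $m$ lies in $\Omega$, so $F(x,m)=F(x,M)=y$ by \eqref{homegainv}. HAC$_{\textup{int}}$ therefore yields a standard, finite-sequence-valued functional $\Phi^{\sigma\di 0^{*}}$ with $(\forall^{\st} x)(\exists y\in \Phi(x))\varphi(x,y)$. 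Moreover the witness is unique: if $y_{1},y_{2}$ both satisfied $\varphi(x,\cdot)$ with tail-witnesses $n_{1},n_{2}$, then for any $m\geq \max(n_{1},n_{2})$ one would have $y_{1}=F(x,m)=y_{2}$.

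Finally I define $G(x)$ to be the unique $y\in \Phi(x)$ satisfying $\varphi(x,y)$, setting $G(x):=0$ if none exists. Since $\Phi$ is standard and $G$ is obtained by a bounded search over the standard finite sequence $\Phi(x)$ for the internal condition $\varphi$, $G$ is itself standard. For each standard $x$ and each $N\in \Omega$, uniqueness together with $\Omega$-invariance then yields $G(x)=F(x,M)=F(x,N)$, which is \eqref{homegaca}. The principal obstacle is precisely this last step: the condition $\varphi$ is $\Sigma^{0}_{2}$, so one must check that the comprehension available in $\RCAO$ actually suffices to form $G$ from $\Phi$; if it does not, one reapplies HAC$_{\textup{int}}$ to a pairing formula that bundles a tail-witness $n$ with $y$, reducing the selection to a $\Delta^{0}_{1}$ bounded-search matter, as carried out in~\cite{bennosam}.
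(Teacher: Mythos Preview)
Your argument diverges from the paper's at the crucial point: you apply $\textup{HAC}_{\textup{int}}$ to obtain a finite sequence of candidate \emph{values} $y$, whereas the paper applies it to obtain candidate stabilization \emph{points} $k$. This difference matters because your extraction step does not go through. Defining $G(x)$ as ``the unique $y\in\Phi(x)$ with $\varphi(x,y)$'' is not a legitimate bounded search in $\RCAO$: the condition $\varphi$ is $\Sigma_{2}^{0}$, and nothing in the base theory lets you decide it over the entries of $\Phi(x)$, so $G$ has not been shown to exist as a (standard) functional.

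You flag this and propose bundling the tail-witness $n$ with $y$, but the fix is incomplete. To apply $\textup{HAC}_{\textup{int}}$ to the paired formula you need a \emph{standard} $n$ with $(\forall m\geq n)[y=F(x,m)]$; you have only exhibited the nonstandard witness $n=M$. (Nor does bundling by itself make the selection $\Delta_{1}^{0}$: the paired condition is still $\Pi_{1}^{0}$; what actually works is to take the maximum of the $n$-components and evaluate $F$ there.) The missing observation is exactly the paper's key step: since by $\Omega$-invariance every infinite $k$ satisfies $(\forall N,M\geq k)[F(x,N)=F(x,M)]$, the \emph{least} such $k$ exists by the induction available in $\RCAO$ and must be standard. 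Once you have this, your preliminary overspill step and the value $y$ become superfluous: apply $\textup{HAC}_{\textup{int}}$ directly to $k$, set $\Psi(x):=\max_{i<|\Phi(x)|}\Phi(x)(i)$, and take $G(x):=F(x,\Psi(x))$, which is primitive recursive in $F$ and $\Phi$. That is the paper's proof.
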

\begin{proof}
See e.g.\ \cite{tale};  We also sketch the derivation of $\Omega$-CA from HAC$_{\INT}$.  The latter takes the form  
\be\tag{$\textup{HAC}_{\INT}$}
(\forall^{\st} x)(\exists^{\st} y)\varphi(x,y)\di (\exists^{\st} \Phi)(\forall^{\st}x)(\exists y\in F(x))\varphi(x,y),
\ee
where $\varphi(x,y)$ is internal, i.e.\ not involving the standardness predicate `$\st$', and where $\Phi(x)$ is a \emph{finite sequence} of objects of the type of $y$. 
Thus, HAC$_{\textup{int}}$ does not provide a witness to $y$, but a \emph{sequence of} possible witnesses.  

\medskip

Let $F(\cdot,M^{0})$ is $\Omega$-invariant, i.e.\ we have 
\be\label{dorkillllll}
(\forall^{\st} x^{\sigma})(\forall N^{0},M^{0}\in \Omega)\big[F(x ,M)=_{0}F(x,N) \big].  
\ee
We immediately obtain (any infinite $k^{0}$ will do) that 
\[
(\forall^{\st} x^{\sigma})(\exists k^{0})(\forall N^{0},M^{0}\geq k)\big[F(x ,M)=_{0}F(x,N) \big].  
\]
By the induction axioms present in $\RCAO$, there is a least such $k$ for every standard $x^{\sigma}$.  
By our assumption \eqref{dorkillllll}, such \emph{least} number $k^{0}$ must be standard, yielding:
\[
(\forall^{\st} x^{\sigma})(\exists^{\st}k^{0})(\forall N^{0},M^{0}\geq k)\big[F(x ,M)=_{0}F(x,N) \big].
\]
Now apply HAC$_{\textup{int}}$ to obtain standard $\Phi^{\sigma\di 0}$ such that
\[
(\forall^{\st} x^{\sigma})(\exists k^{0}\in \Phi(x))(\forall N^{0},M^{0}\geq k)\big[F(x ,M)=_{0}F(x,N) \big].
\]
Next, define $\Psi(x):= \max_{i<|\Phi(x)|}\Phi(x)(i)$ and note that 
\[
(\forall^{\st} x^{\sigma})(\forall N^{0},M^{0}\geq \Psi(x))\big[F(x ,M)=_{0}F(x,N) \big].
\]
Finally, put $G(x):=F(x,\Psi(x))$ and note that $\Omega$-CA follows.
\end{proof}
In light of the previous proof, one easily establishes the following corollaries.
\begin{cor}\label{genall}
In $\RCAO$, we have for all standard $F^{(\sigma\times 0)\di 1}$ that
\begin{align*}
(\forall^{\st} x^{\sigma})(\forall M,N \in & \Omega)\big[F(x ,M)\approx_{1}F(x,N) \big] \\
&\di (\exists^{\st}G^{\sigma\di 1})(\forall^{\st} x^{\sigma})(\forall N^{0}\in \Omega)\big[G(x)\approx_{1}F(x,N) \big],
\end{align*}
where $f^{1}\approx_{1} g^{1}$ if $(\forall^{\st}n^{0})(f(n)=_{0}g(n))$. 
\end{cor}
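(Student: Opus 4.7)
The plan is to reduce this type-$1$ statement to the type-$0$ statement $\Omega$-CA already proved in Theorem~\ref{drifh}, by uncurrying on the output coordinate. Since $f \approx_1 g$ only compares $f$ and $g$ on standard inputs, the natural move is to absorb the extra type-$0$ argument into the domain and apply $\Omega$-CA pointwise.

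More precisely, assume $F^{(\sigma\times 0)\di 1}$ is standard with
\[
(\forall^{\st} x^{\sigma})(\forall M,N\in \Omega)\big[F(x,M)\approx_{1} F(x,N)\big].
\]
Define the standard functional $\tilde F^{((\sigma\times 0)\times 0)\di 0}$ by $\tilde F((x,n),M) := F(x,M)(n)$; standardness of $\tilde F$ follows from standardness of $F$ and the fact that application and projection are standard. For fixed $M\in\Omega$, I claim $\tilde F(\cdot,M)$ is $\Omega$-invariant in the sense of Definition~\ref{homega}. Indeed, for any standard pair $(x,n)$ of type $\sigma\times 0$ and any $N\in\Omega$, the number $n$ is standard, so the hypothesis $F(x,M)\approx_{1}F(x,N)$ yields $F(x,M)(n)=_{0}F(x,N)(n)$, i.e.\ $\tilde F((x,n),M)=_{0}\tilde F((x,n),N)$.

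Applying $\Omega$-CA to $\tilde F$ produces a standard $\tilde G^{(\sigma\times 0)\di 0}$ with
\[
(\forall^{\st} x^{\sigma})(\forall^{\st} n^{0})(\forall N\in \Omega)\big[\tilde G(x,n)=_{0}\tilde F((x,n),N)=_{0}F(x,N)(n)\big].
\]
Define the standard $G^{\sigma\di 1}$ by $G(x)(n) := \tilde G(x,n)$ (again using that $\lambda$-abstraction of a standard term is standard in the $\textup{E-PRA}_{\st}^{\omega*}$ framework underlying $\RCAO$). Then for any standard $x^{\sigma}$, any infinite $N$, and any standard $n^{0}$, the chain of equalities above gives $G(x)(n)=_{0}F(x,N)(n)$, which is precisely $G(x)\approx_{1}F(x,N)$.

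The only nontrivial point is the $\Omega$-invariance of $\tilde F(\cdot,M)$, and this is immediate from the definition of $\approx_{1}$ once one realises that the extra input $n$ must be taken to be standard, matching the outer $(\forall^{\st} n)$ implicit in $\approx_{1}$. Everything else is bookkeeping with higher types, which is routine in Kohlenbach's framework; no new use of idealization or transfer is required beyond the single invocation of Theorem~\ref{drifh}.
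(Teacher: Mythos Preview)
Your proof is correct and is precisely the natural reduction the paper has in mind when it says ``In light of the previous proof, one easily establishes the following corollaries.'' The uncurrying $\tilde F((x,n),M):=F(x,M)(n)$ is exactly how one collapses the type-$1$ output to type $0$ so that Theorem~\ref{drifh} applies verbatim; the alternative of rerunning the HAC$_{\textup{int}}$ argument pointwise in $n$ amounts to the same thing.
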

\begin{cor}\label{genalli}
In $\RCAO$, for all standard $F^{(\sigma\times 0)\di 1}$ and internal formulas $C$,
\begin{align*}
(\forall^{\st} x^{\sigma})(\forall M,N \in & \Omega)\big[C(F,x)\di F(x ,M)\approx_{1}F(x,N) \big] \\
&\di (\exists^{\st}G^{\sigma\di 1})(\forall^{\st} x^{\sigma})(\forall N^{0}\in \Omega)\big[C(F,x)\di  G(x)\approx_{1}F(x,N) \big].
\end{align*}
\end{cor}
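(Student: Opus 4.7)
The plan is to adapt the proof of Theorem \ref{drifh} to accommodate the extra internal hypothesis $C(F,x)$; the essential observation is that the implication $C(F,x)\di\dots$ is trivially satisfied whenever $C(F,x)$ fails, so the structural argument is unchanged. First, I would unfold $\approx_{1}$ in the antecedent to obtain
\[
(\forall^{\st} x^{\sigma})(\forall^{\st} n^{0})(\forall M,N\in\Omega)\big[C(F,x)\di F(x,M)(n)=_{0}F(x,N)(n)\big].
\]
Consequently, for each standard $x,n$ any infinite $k$ witnesses $(\forall M,N\geq k)[C(F,x)\di F(x,M)(n)=F(x,N)(n)]$, and by the induction axioms of $\RCAO$ we may take $k(x,n)$ to be the least such witness.

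The key claim is that $k(x,n)$ is standard for every standard pair $(x,n)$. If $\neg C(F,x)$, then $k(x,n)=0$, which is standard. If $C(F,x)$, the argument of Theorem \ref{drifh} applies verbatim: were $k(x,n)$ infinite, then $k(x,n)-1$ would also be infinite and would still satisfy the defining property (using $\Omega$-invariance at both $M,N\geq k(x,n)-1$), contradicting minimality. This is the only step in the proof that requires any care; everything else is bookkeeping.

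Next, I would apply $\textup{HAC}_{\INT}$ to
\[
(\forall^{\st} x,n)(\exists^{\st} k)(\forall M,N\geq k)\big[C(F,x)\di F(x,M)(n)=F(x,N)(n)\big]
\]
to obtain a standard $\Phi$ returning a finite sequence of candidate witnesses, and set $\Psi(x,n):=\max_{i<|\Phi(x,n)|}\Phi(x,n)(i)$. Define the standard functional $G$ by $G(x)(n):=F(x,\Psi(x,n))(n)$.

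Finally, I would verify the conclusion: fix standard $x$ with $C(F,x)$, standard $n$, and infinite $N$. Since $\Psi(x,n)$ is standard and $N\in\Omega$, we have $\Psi(x,n)\leq N$, so applying the property gives $G(x)(n)=F(x,\Psi(x,n))(n)=F(x,N)(n)$. As this holds for all standard $n$, we conclude $G(x)\approx_{1}F(x,N)$, as required.
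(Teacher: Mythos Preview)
Your proof is correct and follows exactly the approach the paper intends: the paper does not give a separate proof of this corollary but simply remarks that ``in light of the previous proof, one easily establishes the following corollaries,'' i.e., one adapts the proof of Theorem~\ref{drifh} componentwise (unfolding $\approx_{1}$) and carries the internal hypothesis $C(F,x)$ along as an extra disjunct that trivialises the minimality step when it fails. Your handling of both points---the componentwise argument and the case split on $C(F,x)$ for standardness of the least witness---is precisely what the paper has in mind.
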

Applications of the previous corollaries are assumed to be captured under the umbrella-term `$\Omega$-CA'.  
Furthermore, by the above, if we drop the $\Omega$-invariance condition in $\Omega$-CA, the resulting system is a non-conservative extension of $\RCAo$.       
\subsection{Notations and remarks}
We finish this section with some remarks.  First of all, we shall use the same notations as in \cite{bennosam}, some of which we repeat here.  
\begin{rem}[Notations]\label{notawin}\rm
We write $(\forall^{\st}x^{\tau})\Phi(x^{\tau})$ and $(\exists^{\st}x^{\sigma})\Psi(x^{\sigma})$ as short for 
$(\forall x^{\tau})\big[\st(x^{\tau})\di \Phi(x^{\tau})\big]$ and $(\exists^{\st}x^{\sigma})\big[\st(x^{\sigma})\wedge \Psi(x^{\sigma})\big]$.     
We also write $(\forall x^{0}\in \Omega)\Phi(x^{0})$ and $(\exists x^{0}\in \Omega)\Psi(x^{0})$ as short for 
$(\forall x^{0})\big[\neg\st(x^{0})\di \Phi(x^{0})\big]$ and $(\exists x^{0})\big[\neg\st(x^{0})\wedge \Psi(x^{0})\big]$.  Furthermore, if $\neg\st(x^{0})$ (resp.\ $\st(x^{0})$), we also say that $x^{0}$ is `infinite' (resp.\ finite) and write `$x^{0}\in \Omega$'.  
Finally, a formula $A$ is `internal' if it does not involve $\st$, and $A^{\st}$ is defined from $A$ by appending `st' to all quantifiers (except bounded number quantifiers).    
\end{rem}
We will use the usual notations for rational and real numbers and functions as introduced in \cite{kohlenbach2}*{p.\ 288-289} (and \cite{simpson2}*{I.8.1} for the former).  
\begin{rem}[Real number]\label{keepintireal}\rm
A (standard) real number $x$ is a (standard) fast-converging Cauchy sequence $q_{(\cdot)}^{1}$, i.e.\ $(\forall n^{0}, i^{0})(|q_{n}-q_{n+i})|<_{0} \frac{1}{2^{n}})$.  
We freely make use of Kohlenbach's `hat function' from \cite{kohlenbach2}*{p.\ 289} to guarantee that every sequence $f^{1}$ can be viewed as a real.  
Two reals $x, y$ represented by $q_{(\cdot)}$ and $r_{(\cdot)}$ are \emph{equal}, denoted $x=y$, if $(\forall n)(|q_{n}-r_{n}|\leq \frac{1}{2^{n}})$. Inequality $<$ is defined similarly.         
We also write $x\approx y$ if $(\forall^{\st} n)(|q_{n}-r_{n}|\leq \frac{1}{2^{n}})$ and $x\gg y$ if $x>y\wedge x\not\approx y$.  Functions $F$ mapping reals to reals are represented by functionals $\Phi^{1\di 1}$ such that $(\forall x, y)(x=y\di \Phi(x)=\Phi(y))$, i.e.\ equal reals are mapped to equal reals.   
\end{rem}
As hinted at by Corollary \ref{genall}, the notion of equality in $\RCAO$ is important.  
\begin{rem}[Equality]\label{equ}\rm
The system $\RCAo$ only includes equality between natural numbers `$=_{0}$' as a primitive.  Equality `$=_{\tau}$' for type $\tau$-objects $x,y$ is then defined as follows:
\be\label{aparth}
[x=_{\tau}y] \equiv (\forall z_{1}^{\tau_{1}}\dots z_{k}^{\tau_{k}})[xz_{1}\dots z_{k}=_{0}yz_{1}\dots z_{k}]
\ee
if the type $\tau$ is composed as $\tau\equiv(\tau_{1}\di \dots\di \tau_{k}\di 0)$.
In the spirit of Nonstandard Analysis, we define `approximate equality $\approx_{\tau}$' as follows:
\be\label{aparth2}
[x\approx_{\tau}y] \equiv (\forall^{\st} z_{1}^{\tau_{1}}\dots z_{k}^{\tau_{k}})[xz_{1}\dots z_{k}=_{0}yz_{1}\dots z_{k}]
\ee
with the type $\tau$ as above.  
The system $\RCAo$ includes the axiom of extensionality for all $\varphi^{\rho\di \tau}$ as follows:
\be\label{EXT}\tag{E}  
(\forall  x^{\rho},y^{\rho}) \big[x=_{\rho} y \di \varphi(x)=_{\tau}\varphi(y)   \big].
\ee
However, as noted in \cite{brie}*{p.\ 1973}, the so-called axiom of standard extensionality \eqref{EXT}$^{\st}$ is problematic and cannot be included in $\RCAO$.  

\medskip

Now, certain functionals (like $(\exists^{2})^{\st}$ introduced above) are standard extensional, but others may not be.  
Hence, we shall sometimes prepend `E-' to an axiom defining a functional to express that the latter is standard extensional.  In particular, if $\textup{AX}^{\st}\equiv (\exists^{\st}\Phi^{\rho\di \tau})A^{\st}(\Phi)$ for internal $A$, then 
$\textup{E-AX}^{\st}$ is AX$^{\st}$ plus the statement that $\Phi$ as in the latter satisfies standard extensionality as in \eqref{EXT}$^{\st}$. 
\end{rem}
As an example illustrating the previous remark, the functional from UWKL$^{\st}$ is standard extensional if it outputs the left-most path in an infinite binary tree, and this property naturally emerges in the proof of Theorem \ref{trikke}.
In Theorem \ref{boon} we show that standard extensionality as in E-UWKL$^{\st}$ follows naturally from UWKL.  

\medskip

In light of Corollary \ref{genall}, it is obvious how $\Omega$-CA can be further generalised to $F^{(\sigma\times 0)\di \tau}$ using `$\approx_{\tau}$' instead of `$\approx_{1}$';  The same holds for `$\approx$' and real-valued $F$.    
%


\begin{rem}[The computable nature of operations in $\RCAO$]\label{ohdennenboom}\rm
Tennenbaum's theorem (\cite{kaye}*{\S11.3}) `literally' states that any nonstandard model of PA is not computable.  \emph{What is meant} is that for a nonstandard model $\M$ of PA, the operations $+_{\M}$ and $\times_{\M}$ cannot be computably defined in terms of the operations $+_{\N}$ and $\times_{\N}$ of the standard model $\N$ of PA.  

\medskip

While Tennenbaum's theorem is of interest to the \emph{semantic} approach to Nonstandard Analysis involving nonstandard models, $\RCAO$ is based on Nelson's \emph{syntactic} framework, and therefore Tennenbaum's theorem does not apply:  Any attempt at defining the (external) function `$+$ limited to the standard numbers' is an instance of \emph{illegal set formation}, forbidden in Nelson's \emph{internal} framework (\cite{wownelly}*{p.\ 1165}).  

\medskip

To be absolutely clear, lest we be misunderstood, Nelson's \emph{internal set theory} IST forbids the formation of \emph{external} sets $\{x\in A: \st(x)\}$ and functions `$f(x)$ limited to standard $x$'.  
Therefore, any appeal to Tennenbaum's theorem to claim the `non-computable' nature of $+$ and $\times$ from $\RCAO$ is blocked, for the simple reason that the functions `$+$ and $\times$ limited to the standard numbers' \emph{simply do not exist}.              
On a related note, we recall Nelson's dictum from \cite[p.\ 1166]{wownelly} as follows:
\begin{quote}
\emph{Every specific object of conventional mathematics is a standard set.} It remains unchanged in the new theory \textup{[IST]}.  
\end{quote}
In other words, the operations `$+$' and `$\times$', but equally so primitive recursion, in (subsystems of) IST, are \emph{exactly the same} familiar operations we  
know from (subsystems of) ZFC.  Since the latter is a first-order system, we however cannot exclude the presence of nonstandard objects, and internal set theory just makes this explicit, i.e.\ IST turns a supposed bug into a feature.    
\end{rem}

\section{The Explicit Mathematics theme around arithmetical comprehension}\label{revmathrcao}
In this section, we establish the EMT from Section~\ref{introke} for theorems $T$ such that $T^{*}\asa UT\asa (\exists^{2})$, i.e.\ at the level of arithmetical comprehension, the third Big Five system (See \cite{simpson2}*{III}) of Reverse Mathematics.   
\subsection{The EMT for weak K\"onig's lemma}\label{EMTWKL}
In this section, we establish the EMT for the weak K\"onig's lemma, the defining axiom of the Big Five system $\WKL_{0}$ (\cite{simpson2}*{I.10}).  
The \emph{uniform} version of weak K\"onig's lemma is defined as:
\be\tag{\textup{UWKL}}
(\exists \Phi^{1\di 1})(\forall T^{1}\leq_{1}1)\Big[(\forall n^{0})(\exists \beta^{1})(\overline{\beta}n\in T)\di (\forall x^{0})(\overline{\Phi(T)}\in T )\big], 
\ee
with set-theoretic notation rather than as in \cite{kohlenbach2}.  The nonstandard version is: 
\be\tag{\textup{WKL}$^{*}$}
(\forall^{\st} T^{1}\leq_{1}1)\Big[(\forall^{\st} n^{0})(\exists^{\st} \beta^{1})(\overline{\beta}n\in T)\di (\exists^{\st}\alpha^{1})(\forall m^{0})(\overline{\alpha}m\in T )\big].  
\ee
We are abusing notation by using `$T\leq_{1}1$' to denote that $T$ represents a binary tree.   
We will sometimes mention the notion of `infinite tree' and it will always be clear from context whether we mean the antecedent of UWKL or WKL$^{*}$.  
Furthermore, note the type mismatch between `infinite number' and `infinite tree'.  
\begin{thm}\label{trikke}
In $\RCAO$, we have $(\exists^{2})^{\st}\asa \paai \asa \WKL^{*}\asa \textup{E-UWKL}^{\st}$.
\end{thm}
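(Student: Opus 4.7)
The plan is to close the cycle $(\exists^{2})^{\st}\to \paai \to \WKL^{*}\to \textup{E-UWKL}^{\st} \to (\exists^{2})^{\st}$. The equivalence $(\exists^{2})^{\st}\leftrightarrow \paai$ is essentially Theorem \ref{markje} combined with Remark \ref{tokkiep}: since $(\exists^{2})$ is defined without parameters, its witness $\varphi^{2}$ may be assumed standard. For $(\exists^{2})^{\st}\to \paai$ in particular, suppose standard $f$ satisfies $(\forall^{\st} n)f(n)=0$ but $(\exists n)f(n)\neq 0$; applying the standard $\varphi^{2}$ together with bounded minimisation yields a least such $n$, necessarily standard since $f$ and $\varphi^{2}$ are, contradicting the premise.

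For $\paai \to \WKL^{*}$, fix standard $T \leq_{1} 1$ satisfying $(\forall^{\st} n)(\exists^{\st}\beta)(\overline{\beta}n \in T)$. The function $g(n):= 0$ iff $(\exists s<2^{n})(|s|=n\wedge s\in T)$ is standard (by bounded search from standard $T$) and satisfies $(\forall^{\st} n)g(n)=0$; $\paai$ then yields $(\forall n)g(n)=0$, so $T$ has nodes at every internal level. Using the standard $\varphi^{2}$ from $(\exists^{2})^{\st}$ (available via Theorem \ref{markje}), construct the leftmost path $\alpha$ by recursion: $\alpha(k)=0$ iff the subtree $T_{\overline{\alpha}(k-1)\conc\langle 0\rangle}$ is internally infinite, which is the negation of a $\Sigma_{1}^{0}$-statement that $\varphi^{2}$ decides. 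The resulting $\alpha$ is standard and an internal path through $T$.

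For the central step $\WKL^{*}\to \textup{E-UWKL}^{\st}$, fix an infinite number $M$ and define $\Psi(T,M)$ to be the lex-least binary sequence of length $M$ lying in $T$, or the zero sequence if none exists. The key claim is $\Omega$-invariance: for standard $T$ satisfying the antecedent of $\WKL^{*}$, $\Psi(T,M)\approx_{1}\Psi(T,N)$ for all $M,N\in\Omega$. Suppose otherwise, and let $k$ be the least standard position of disagreement; the common prefix $s$ has standard length $k$ with values in $\{0,1\}$, hence $s$ is standard. WLOG $\Psi(T,M)$ extends $s\conc\langle 0\rangle$ while $\Psi(T,N)$ extends $s\conc\langle 1\rangle$; lex-leastness forces $T_{s\conc\langle 0\rangle}$ to have a node at internal level $M-k-1$ but none at level $N-k-1$. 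Since $s\conc\langle 0\rangle$ is standard, $T_{s\conc\langle 0\rangle}$ is a standard tree; the tree property together with $M-k-1\in\Omega$ shows it has a node at every standard level, and each such node, being a binary sequence of standard length, is itself standard, so $T_{s\conc\langle 0\rangle}$ satisfies the antecedent of $\WKL^{*}$. The standard path $\gamma$ that $\WKL^{*}$ then provides contradicts the supposed absence of a length-$(N-k-1)$ node. Corollary \ref{genalli}, applied with the internal condition ``$T$ codes a tree satisfying the antecedent of $\WKL^{*}$'', now produces a standard $\Phi^{1\di 1}$ with $\Phi(T)\approx_{1}\Psi(T,N)$ for all such standard $T$. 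For standard $m$, $\overline{\Phi(T)} m = \overline{\Psi(T,N)} m\in T$, which is precisely the conclusion of $\textup{UWKL}^{\st}$; standard extensionality of $\Phi$ is inherited from $\Psi$, since the lex-least length-$M$ sequence in $T$ depends only on the pointwise values of $T$.

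The direction $\textup{E-UWKL}^{\st}\to (\exists^{2})^{\st}$ uses the standard construction from higher-order RM: for standard $g^{1}$, form the standard binary tree $T_{g}$ whose unique infinite path encodes the $g$-jump; by standard extensionality of the E-UWKL witness $\Phi$, $\Phi(T_{g})$ must equal this path, providing a standard functional that decides $(\exists n)g(n)=0$ for standard $g$, i.e., $(\exists^{2})^{\st}$. The main obstacle throughout is the $\Omega$-invariance argument in $\WKL^{*} \to \textup{E-UWKL}^{\st}$: verifying that the hypothetical prefix of disagreement $s$ is standard (so $T_{s\conc\langle 0\rangle}$ is a legitimate standard input to $\WKL^{*}$) and that this subtree meets the antecedent (via the infiniteness of $M-k-1$ combined with the tree property), so that $\WKL^{*}$ may be legitimately invoked on it to derive the needed contradiction.
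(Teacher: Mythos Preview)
Your overall strategy matches the paper's: close the cycle $\paai\Rightarrow\WKL^{*}\Rightarrow\textup{E-UWKL}^{\st}\Rightarrow(\exists^{2})^{\st}$, and for the key middle implication define an approximation $\Psi(T,M)$ computing the leftmost length-$M$ node, establish $\Omega$-invariance by applying $\WKL^{*}$ to standard subtrees, and extract a standard $\Phi$ via $\Omega$-CA. Your lex-least formulation of $\Psi$ is equivalent to the paper's recursive one, and your contradiction argument for $\Omega$-invariance carries the same content as the paper's direct argument. Two points need repair, however.

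First, you invoke Corollary~\ref{genalli} with the condition ``$T$ satisfies the antecedent of $\WKL^{*}$'', calling it internal. It is not: that antecedent is $(\forall^{\st}n)(\exists^{\st}\beta)(\overline{\beta}n\in T)$, which carries the external predicate $\st$, so Corollary~\ref{genalli} does not apply as stated. The fix is immediate and already implicit in your definition: your clause ``or the zero sequence if none exists'' makes $\Psi(T,M)=0$ for every $M\in\Omega$ whenever the standard tree $T$ has bounded height (and a standard binary tree failing the antecedent of $\WKL^{*}$ must have bounded height, since a node at standard depth is a standard number). Hence $\Omega$-invariance holds for \emph{all} standard binary trees, and you can apply Corollary~\ref{genall} (or $\Omega$-CA) without any side condition. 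The paper makes the same move by passing from $\Psi$ to a functional $\Theta$ that explicitly tests the internal height-$M$ condition.

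Second, your justification of standard extensionality is not valid. That $\Psi(T,M)$ ``depends only on the pointwise values of $T$'' yields only \emph{internal} extensionality, i.e.\ $T=_{1}S\Rightarrow\Psi(T,M)=\Psi(S,M)$; it says nothing about $T\approx_{1}S$, which guarantees agreement only at standard indices, whereas $\Psi(T,M)$ reads $T$ at sequences of nonstandard length up to $M$. The actual argument (which the paper gives via the leftmost-path characterization) requires $\WKL^{*}$ again: if $T\approx_{1}S$ are standard and standardly infinite, then for each standard prefix $\sigma$ the subtrees $T_{\sigma}$ and $S_{\sigma}$ agree at all standard depths (such nodes being standard numbers), so $T_{\sigma}$ has nodes at all standard depths iff $S_{\sigma}$ does; by $\WKL^{*}$ applied to each, this is equivalent to having a node at depth $M$, whence $\Psi(T,M)(n)=\Psi(S,M)(n)$ for all standard $n$ by induction. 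You should replace your one-line claim with this argument.
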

\begin{proof}
By way of illustration of the use of $\Omega$-CA, we first prove $\paai\di (\exists^{2})^{\st}$.  We then prove
\be\label{tik}
\paai\di \WKL^{*}\di \textup{E-UWKL}^{\st} \di (\exists^{2})^{\st},
\ee
which establishes the theorem by Theorem \ref{markje} above.  

\medskip

First of all, assume $\paai$ and let $f^{1}$ be standard.  Clearly, $(\exists^{\st} x^{0})(f(x)=0)$ is equivalent to $(\exists x^{0}\leq M^{0})(f(x)=0)$, for any infinite $M^{0}$, even if $f$ involves additional standard parameters.   
Define standard $\psi^{(1\times 0)\di 0}$ as follows.
\be\label{treffend}
\psi(f,N^{0}):=
\begin{cases}
0 & (\exists x^{0}\leq N^{0})(f(x)=0)\\
1 & \textup{ otherwise}
\end{cases}.
\ee
By $\paai$, $\psi(f,M^{0})$ is $\Omega$-invariant, i.e.\ 
\[
(\forall^{\st}f^{1})(\forall N,M\in \Omega)(\psi(f,M)=_{0}\psi(f,N)).
\]
By $\Omega$-CA, there is standard $\varphi^{1\di 0}$ such that $(\forall^{\st}f^{1})(\forall M\in \Omega)(\psi(f,M^{0})=_{0}\varphi(f))$,  and $(\exists^{2})^{\st}$ immediately follows by the definition of $\psi$. 

\medskip

Secondly, the first implication in \eqref{tik} is immediate: By Theorem \ref{markje}, we have $(\exists^{2})^{\st}$ and hence $\WKL^{\st}$, as arithmetical comprehension implies weak K\"onig's lemma.  
Now, the consequent of $\WKL^{\st}$ is $(\exists^{\st}\alpha^{1})(\forall^{\st}n^{0})(\overline{\alpha}n\in T)$, and applying $\paai$ to the innermost $(\Pi_{1}^{0})^{\st}$-formula in the latter yields $\WKL^{*}$.       

\medskip

Thirdly, the final implication follows from \cite{kooltje}*{Proposition 3.4}.  Indeed, in the latter proof Kohlenbach establishes $ \UWKL\di (\exists^{2})$ by defining (primitive recursively) the functional $\varphi$ from $(\exists^{2})$ from the functional $\Phi$ from $\UWKL$;  The axiom of extensionality is only invoked for $\Phi$.  Furthermore, the axiom $\mathcal{T}_{st}$ from $\RCAO$ states that the Kleene recursor constant $R_{0}$ is standard, implying that all functionals defined by primitive recursion in $\RCAo$ are standard in $\RCAO$.  Hence, Kohlenbach's proof of $ \UWKL\di (\exists^{2})$ goes through relative to the standard world in $\RCAO$, i.e.\ we have the final implication in \eqref{tik}.  

\medskip

For the middle implication in \eqref{tik}, we first prove $(\exists^{2})^{\st}\di \UWKL^{\st}$ which will make what follows much clearer.  
To this end, for an infinite binary tree $T$, consider:
\be\label{eureukeu}
(\forall^{\st}n^{0})(\exists \alpha^{0})(|\alpha|=n\wedge 0*\alpha \in T) \vee (\forall^{\st}n^{0})(\exists \alpha^{0})(|\alpha|=n\wedge 1*\alpha \in T), 
\ee
which expresses that one of the branches originating from the root node of $T$ is infinite.  
As is clear from \eqref{eureukeu}, the notion of \emph{infinite branch} is a $(\Pi_{1}^{0})^{\st}$-formula.
To derive $\UWKL^{\st}$ from $(\exists^{2})^{\st}$, one starts at the root node of $T$ and determines if the 0-branch or 
the 1-branch is infinite using $(\exists^{2})^{\st}$, i.e.\ which disjunct of \eqref{eureukeu} holds.    
If the $n$-branch is chosen (for $n=0,1$), we define $\Phi(T)(0)$, the first element of the path in $T$, as the number  $n$.  Next, we move to the node $n$ and repeat the previous construction relative to $n$ to define $\Phi(T)(1)$, and so on.  

\medskip

Intuitively speaking, to prove the implication $\WKL^{*}\di \textup{E-UWKL}^{\st}$, we assume $\WKL^{*}$ and repeat the construction of $\Phi$ from the previous paragraph, but with both quantifiers `$(\forall^{\st}n^{0})$' in \eqref{eureukeu} replaced by $(\forall n^{0}\leq M)$ for infinite $M$.  
Because of $\WKL^{*}$, the resulting functional is $\Omega$-invariant and standard extensional.  We now spell out the details of this heuristic sketch.  

\medskip

Thus, assume WKL$^{*}$ and define the functional $\Psi(T, M)$ as follows:
We define $\Psi(f,M)(0)$ as $0$ if $(\forall m^{0}\leq M)(\exists \alpha^{0})(|\alpha|=m\wedge 0*\alpha \in T)$, and $1$ otherwise.  
For the general case, define 
\be\label{canona}
\Psi(f,M)(n+1):=
\begin{cases}
0 & (\forall m^{0}\leq M)(\exists \alpha^{0})(|\alpha|=m\wedge \Phi(T)(n)*\alpha \in T) \\
1 &\text{otherwise}
\end{cases}.
\ee
\medskip
Next, we  prove that $\Psi(T,M)$ is $\Omega$-invariant assuming $\WKL^{*}$.  
First of all, define the subtree $T_{\sigma}$ for $\sigma\in T$ by $(\forall \tau^{0}\leq_{0}1)(\tau\in T_{\sigma}\asa \sigma*\tau\in T)$.  
Next, fix infinite $M$ and let $T$ be some infinite binary tree. Now note that if $\Psi(T,M)(0)=0$, then the $0$-branch of $T$ is infinite, i.e.\ $(\forall^{\st} m^{0})(\exists \alpha^{0})(|\alpha|=m\wedge 0*\alpha \in T)$.  By definition, this implies that the tree $T_{0}$ is infinite, 
i.e.\ $(\forall^{\st} m^{0})(\exists \alpha^{0})(|\alpha|=m\wedge \alpha \in T_{0})$.  Now apply $\WKL^{*}$ \emph{to the infinite binary tree $T_{0}$} to obtain the existence of a standard binary sequence $\alpha$ 
such that $(\forall n^{0})(\overline{\alpha}n\in T_{0})$.  However, this implies by definition that $(\forall m^{0})(\exists \alpha^{0})(|\alpha|=m\wedge 0*\alpha \in T)$.       
By the definition of $\Psi$, we must have $\Psi(f,M)(0)=\Psi(f,N)(0)=0$ \emph{for any infinite} $N^{0}$.  
Similarly, one proves that $\Psi(T,M)(n)=\Psi(T,N)(n)$ for any finite $n$ and infinite $M,N$, and we obtain $\Psi(T,M)\approx_{1} \Psi(T,N)$ for infinite $N,M$ and $T$ any standard \emph{infinite} binary tree.    

\medskip

As $\Omega$-CA requires quantification over \emph{all} standard binary trees as in \eqref{tomega}, we need to specify the behaviour on finite trees.  
Thus, we define $\Theta(T,M)$ as  $\Psi(T,M)$ if $(\forall m^{0}\leq M)(\exists \alpha^{0})(|\alpha|=m\wedge \alpha \in T)$, and $0$ otherwise.  
Using $\WKL^{*}$ as in the previous paragraph, it is clear that $\Theta(T,M)$ is $\Omega$-invariant, i.e.\
\be\label{tomega}
(\forall^{\st}n^{0}, T^{1}\leq_{1})(\forall N,M\in \Omega)\big[ \Theta(T,M)\approx_{1}\Theta(T,N) \big]. 
\ee
Now let $\Phi$ be the `standard part' of $\Theta$ provided by $\Omega$-CA, i.e.\ we have 
$(\forall^{\st}T^{1}\leq 1)(\forall M\in \Omega)(\Theta(T,M)\approx_{1} \Phi(T) )$ and $\Phi$ is as required by UWKL$^{\st}$.  Now suppose $\alpha^{1}\in T$ is a standard path \emph{to the left of} $\Phi(T)$.  
By the definition of `to the left of', there is some standard $n_{0}$ such that $\overline{\alpha}n_{0}=\overline{\Phi(T)}n_{0}$ and $\alpha(n_{0}+1)<\Phi(T)(n_{0}+1)$.  However, then the tree $T_{\overline{\alpha}n_{0}}$ is infinite and applying $\WKL^{*}$ to this tree, we 
have $\Phi(T)(n_{0}+1)=\Theta(T, M)(n_{0}+1)=\alpha(n_{0}+1)$, a contradiction.  Thus, $\Phi(T,M)$ outputs the \emph{the left-most path in $T$}.  This immediately implies the standard extensionality of $\Phi$, i.e.\ we have
\[
(\forall^{\st}T^{1}, S^{1}\leq_{1}1)(T\approx_{1}S\di \Phi(T)\approx_{1} \Phi(S)),
\]  
and E-UWKL$^{\st}$ now follows from $\WKL^{*}$, and we are done.  
\end{proof}
The functional $\Psi(\cdot, M)$ from \eqref{canona} is called the \emph{canonical approximation} of the functional $\Phi$ from $\UWKL$, as it mirrors the way the latter functional is defined using $(\exists^{2})$.  
As discussed in Remark \ref{corkul}, the canonical approximation has rather low complexity, with possible applications to Hilbert's program for finitistic mathematics.  
Finally, in light of the above, the reader should be convinced that the equivalence involving $\WKL^{*}$ is note merely a `coding trick' (See also Theorem \ref{boon}).  

\medskip

The series of implications \eqref{tik} is useful as a template for establishing similar equivalences in a uniform way, as is clear from the proofs of the theorems in the next two sections.  We first prove the following corollaries.
\begin{cor}
In $\RCAO$, we have $(\exists^{2})\asa \paai \asa \WKL^{*}\asa \UWKL$.
\end{cor}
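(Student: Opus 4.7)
The plan is to derive this corollary from Theorem \ref{trikke} by establishing two auxiliary equivalences: $(\exists^{2}) \asa (\exists^{2})^{\st}$ and $\UWKL \asa \textup{E-UWKL}^{\st}$. The middle principles $\paai$ and $\WKL^{*}$ appear unchanged in both chains, so they require no extra work.

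First I would handle $(\exists^{2}) \asa (\exists^{2})^{\st}$: the standardised version trivially implies the classical one, since a standard witness is in particular a witness. For the converse, $(\exists^{2})$ has the form $(\exists \varphi^{2}) A(\varphi)$ with $A$ internal and free of parameters, so the contraposition of \textup{PF-TP}$_{\forall}$, exactly as spelled out in Remark \ref{tokkiep}, produces a standard witness at once.

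Next I would treat $\UWKL \asa \textup{E-UWKL}^{\st}$: the implication from right to left is again trivial. For the other direction, I would apply the same contraposed \textup{PF-TP}$_{\forall}$ to the parameter-free existential in $\UWKL$ to obtain a standard functional $\Phi$ witnessing $\UWKL^{\st}$, and then upgrade to standard extensionality via the uniqueness trick from Remark \ref{tokkiep}: using the already-available $(\exists^{2})$, one selects at each node the leftmost subtree that remains infinite, yielding a uniquely determined ``leftmost path'' variant $\Phi'$; uniqueness forces $\Phi'$ to be standard, and standard extensionality is then immediate, since equal trees share identical leftmost paths.

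The main obstacle I anticipate is precisely this extensionality upgrade, since bare \textup{PF-TP}$_{\forall}$ yields a standard functional but not necessarily a standard-extensional one. The leftmost-path construction circumvents it, and the author in fact records this separately in the forthcoming Theorem \ref{boon}. Combining both auxiliary equivalences with the chain in Theorem \ref{trikke} then transfers $(\exists^{2})^{\st} \asa \paai \asa \WKL^{*} \asa \textup{E-UWKL}^{\st}$ into the desired $(\exists^{2}) \asa \paai \asa \WKL^{*} \asa \UWKL$.
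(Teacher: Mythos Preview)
Your ``trivial'' directions both fail. By Remark~\ref{notawin}, $A^{\st}$ relativizes \emph{all} quantifiers in $A$ to `st', not just the outermost existential. Hence $(\exists^2)^{\st}$ only asserts the existence of a standard $\varphi$ satisfying $(\forall^{\st} g)[(\exists^{\st} x)g(x)=0 \asa \varphi(g)=0]$; this says nothing about nonstandard $g$ and does not yield $(\exists^2)$. Likewise $\textup{E-UWKL}^{\st}$ provides a standard $\Phi$ that works only on standard trees under the standardized infinity hypothesis, which does not deliver $\UWKL$. Your converse direction for $(\exists^2)$ has a symmetric problem: contraposed PF-TP$_\forall$ produces a standard witness to the \emph{internal} formula $(\forall g)[(\exists x)g(x)=0 \asa \varphi(g)=0]$, but passing from $(\exists x)$ to the standardized $(\exists^{\st} x)$ for standard $g$ is exactly $\paai$, which you have not yet secured at that point.

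The paper's proof avoids the $^{\st}$ versions entirely: it simply cites $(\exists^2)\asa \UWKL$ in $\RCAo$ (Kohlenbach, Prop.~3.9) and $(\exists^2)\asa \paai$ (van den Berg--Sanders, Cor.~14), and combines these with $\paai\asa \WKL^{*}$ from Theorem~\ref{trikke}. As an alternative for $\textup{E-UWKL}^{\st}\Rightarrow \UWKL$, the paper notes that once $\paai$ is available one may drop `st' in the antecedent and bring the universal quantifier in the consequent outward, producing a parameter-free internal sentence to which PF-TP$_\forall$ applies---but this is precisely the direction you flagged as immediate, and it is not.
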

\begin{proof}
Immediate from \cite{kohlenbach2}*{Prop.\ 3.9} and \cite{bennosam}*{Cor.\ 14}.  
Alternatively and similar to the proof of the latter, it is possible to modify E-UWKL$^{\st}$ so that PF-TP$_{\forall}$ may be applied, yielding $\UWKL$.  
This involves dropping the `st' in the antecedent of UWKL$^{\st}$ and bringing the universal quantifier in the consequent to the front.   
\end{proof}
The principle E-UWKL$^{\st}$ may seem somewhat contrived, but actually follows directly from UWKL.  In particular, the standard extensionality in the former can be proved quite elegantly using PF-TP$_{\forall}$, as we establish now.  
\begin{thm}\label{boon}
In $\RCAO$, the implication $\UWKL \di \textup{E-UWKL}^{\st}$ can be proved `directly', i.e.\ without the use of the equivalences $\paai\asa (\exists^{2})\asa \UWKL$.
In particular, the functional from $\UWKL$ may be assumed to be standard extensional.   
\end{thm}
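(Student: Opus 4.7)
The plan is to first use the contrapositive of $\textup{PF-TP}_{\forall}$ to extract a standard UWKL witness, then to upgrade it to the canonical leftmost-path variant (which is also standard), and finally to deduce standard extensionality from the internal leftmost property by a short overspill/standard-under-standard argument.

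Since $\UWKL$ has the form $(\exists \Phi^{1\di 1}) A(\Phi)$ with $A$ internal and parameter-free, the contrapositive of $\textup{PF-TP}_{\forall}$ (as exploited in Remark \ref{tokkiep}) immediately yields $(\exists^{\st}\Phi)A(\Phi)$. Next, $\UWKL$ classically entails $(\exists^{2})$ in $\RCAo$ by \cite{kohlenbach2}*{Prop.\ 3.9}, and combining $\Phi$ with $(\exists^{2})$ one primitive-recursively defines a UWKL functional $\Phi^{*}$ that, on any infinite binary tree $T$, outputs the leftmost infinite path: with $\sigma_{n}=\overline{\Phi^{*}(T)}n$ one puts $\Phi^{*}(T)(n)=0$ precisely when the subtree $T_{\sigma_{n}*0}$ is infinite. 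The existence of such a leftmost-path UWKL functional is an internal, parameter-free statement, so a second application of the contrapositive of $\textup{PF-TP}_{\forall}$ yields a standard witness $\Phi^{*}$; as this $\Phi^{*}$ is moreover uniquely determined, it fits the pattern of Remark \ref{tokkiep}.

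For standard extensionality, I would take standard $T,S\leq_{1}1$ with $T\approx_{1}S$ and assume toward a contradiction that $\Phi^{*}(T)\not\approx_{1}\Phi^{*}(S)$. Let $n$ be the smallest standard index of disagreement; without loss of generality $\Phi^{*}(T)(n)=0$ and $\Phi^{*}(S)(n)=1$, and write $\sigma=\overline{\Phi^{*}(T)}n$. Setting $\gamma(j):=\Phi^{*}(T)(n+1+j)$, which is standard since $\Phi^{*}$, $T$ and $n$ all are, the path $\sigma*0*\gamma=\Phi^{*}(T)$ is an infinite path of $T$, so $\sigma*0*\overline{\gamma}k\in T$ for every standard $k$, and then $T\approx_{1}S$ on standard sequences yields $\sigma*0*\overline{\gamma}k\in S$, i.e.\ $\overline{\gamma}k\in S_{\sigma*0}$, for every standard $k$.

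On the other hand, $\Phi^{*}(S)(n)=1$ together with the leftmost property forces the internal assertion that $S_{\sigma*0}$ has no infinite path; specializing $\alpha$ to $\gamma$ in the internal $(\forall \alpha)(\exists k)(\overline{\alpha}k\notin S_{\sigma*0})$ and taking the least such $k=:m_{0}$ is a quantifier-free minimization from the standard data $\gamma,\sigma,S$, so by the standardness of $R_{0}$ (already invoked in the proof of Theorem \ref{trikke}) the witness $m_{0}$ is standard. This contradicts the preceding paragraph, which forces $\overline{\gamma}m_{0}\in S_{\sigma*0}$. Hence $\Phi^{*}$ is standard extensional, and $\textup{E-UWKL}^{\st}$ follows. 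The main obstacle is this last step: one must verify that the minimization producing $m_{0}$ uses only standard building blocks so that standard-under-standard applies, which is why the leftmost-path refinement $\Phi^{*}$ is essential---an arbitrary UWKL witness need not be standard extensional.
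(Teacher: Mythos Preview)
Your proposal has two genuine gaps.

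First, the theorem explicitly asks for a proof that avoids the equivalences $\paai\asa (\exists^{2})\asa \UWKL$, yet in your second paragraph you invoke $\UWKL\di (\exists^{2})$ via \cite{kohlenbach2}*{Prop.\ 3.9} to construct the leftmost-path refinement $\Phi^{*}$. That is one direction of the forbidden chain, so the argument is not `direct' in the intended sense.

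Second, and more damagingly, your final step does not go through. You claim that $m_{0}:=(\mu k)(\overline{\gamma}k\notin S_{\sigma*0})$ is standard because ``$\gamma,\sigma,S$ are standard and this is a quantifier-free minimization''. But this is \emph{unbounded} minimization, not primitive recursion; the standardness of $R_{0}$ buys you nothing here. The statement that every standard quantifier-free predicate with an internal witness has a \emph{standard} least witness is exactly the contrapositive of $\paai$. Worse, in your own setup you have already shown $\overline{\gamma}k\in S_{\sigma*0}$ for every standard $k$, so $m_{0}$ is necessarily nonstandard and no contradiction arises. The underlying obstruction is that $T\approx_{1}S$ only constrains nodes of standard height, so the internal facts ``$T_{\sigma*0}$ is infinite'' and ``$S_{\sigma*0}$ has bounded height'' are perfectly compatible when the bound is infinite.

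The paper proceeds by an entirely different mechanism that sidesteps both issues. It brings the internal extensionality axiom for the UWKL functional $\Psi$ into prenex form and applies $\QFAC^{1,0}$ twice, producing a single parameter-free sentence of the shape $(\exists \Psi,\Xi,\Phi)(\forall\dots)[\dots]$ asserting simultaneously that $\Psi$ is a UWKL functional, $\Xi$ witnesses the height needed, and $\Phi$ is a modulus of extensionality for $\Psi$. One application of $\textup{PF-TP}_{\forall}$ then makes all three standard at once; standard extensionality of $\Psi$ drops out immediately from the now-standard modulus $\Phi$, and $\UWKL^{\st}$ from the now-standard $\Xi$. No detour through $(\exists^{2})$, leftmost paths, or any minimization is required.
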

\begin{proof}
First of all, let $\Psi$ be the functional from UWKL and note that the axiom of extensionality for $\Psi$ can be brought into the form:
\[
(\forall S^{1}, R^{1}\leq_{1}1, k^{0})(\exists N^{0})(\overline{T}N=\overline{S}N\di \overline{\Psi(S)}k=\overline{\Psi(R)}k,
\]
by resolving both occurrences of `$=_{1}$' and bringing all quantifiers to the front.  
Now bring all the quantifiers in UWKL to the front, yielding:
\[
(\exists \Psi^{1\di 1})(\forall n^{0}, T^{1}\leq_{1}1)(\exists m^{0})\Big[ (\exists \alpha^{0})[|\alpha|=m \wedge\alpha\in T] \di \overline{\Psi(T)}n\in T    \Big].
\]
Using QF-AC$^{1,0}$, we obtain
\[
(\exists \Psi^{1\di 1}, \Xi^{2})(\forall n^{0}, T^{1}\leq_{1}1)\Big[ (\exists \alpha^{0})[|\alpha|=\Xi(T,n) \wedge\alpha\in T] \di \overline{\Psi(T)}n\in T    \Big],
\]
and adding the extensionality of $\Psi$, we obtain 
\begin{align*}
(\exists \Psi^{1\di 1}, \Xi^{2})&\Big[(\forall n^{0}, T^{1}\leq_{1}1)\big[ (\exists \alpha^{0})[|\alpha|=\Xi(T,n) \wedge\alpha\in T] \di \overline{\Psi(T)}n\in T    \big] \\
&\wedge (\forall S^{1}, R^{1}\leq_{1}1, k^{0})(\exists N^{0})(\overline{T}N=\overline{S}N\di \overline{\Psi(S)}k=\overline{\Psi(R)}k\Big].
\end{align*}
Applying QF-AC$^{1,0}$ to the second conjunct, we obtain
\begin{align}
(\exists \Psi^{1\di 1}, \Xi^{2}, \Phi^{2})&\Big[(\forall n, T^{1}\leq_{1}1)\big[ (\exists \alpha^{0})[|\alpha|=\Xi(T,n) \wedge\alpha\in T] \di \overline{\Psi(T)}n\in T    \big]\label{bonga} \\
&\wedge (\forall S^{1}, R^{1}\leq_{1}1, k^{0})(\overline{T}\Phi(S,R,k)=\overline{S}\Phi(S,R,k)\di \overline{\Psi(S)}k=\overline{\Psi(R)}k\Big].\notag
\end{align}
Applying PF-TP$_{\forall}$ to \eqref{bonga}, we may assume the functionals $\Psi$, $\Phi$, and $\Xi$ are standard.  
Now consider a standard binary tree $T'$ such that  $(\forall^{\st}k^{0})(\exists \alpha^{0})(|\alpha|=k \wedge \alpha\in T')$.  
For standard $n$, $\Xi(T', n)$ is standard and we have $(\exists \alpha^{0})[|\alpha|=\Xi(T',n) \wedge\alpha\in T']$, implying $\overline{\Psi(T')}n\in T' $ for any standard $n$, i.e.\ we have $\UWKL^{\st}$.  
Similarly, if $S\approx_{1}R$ for standard binary trees $S, R$, then $\Phi(S, T, k)$ is standard for standard $k$, implying $\overline{\Psi(S)}k=\overline{\Psi(R)}(k)$ for standard $k$.  
But the latter is just $\Psi(T)\approx_{1}\Psi(S)$ and $\Psi$ satisfies standard extensionality, implying E-UWKL$^{\st}$.   
\end{proof}
In light of the previous theorem, one can prove standard extensionality for any functional of type $1\di 1$ (or of similar typing) with a defining internal formula (without parameters).  
This is somewhat surprising in light of the discussion of \eqref{EXT}$^{\st}$ in \cite{brie}*{p.\ 1973}.   
We now prove a result like Theorem \ref{boon} for $(\exists^{2})$.  

\medskip

In \cite{bennosam}*{Cor.\ 14}, the equivalence $\paai \asa (\exists^{2})$ is proved indirectly using the operator $(\mu^{2})$ from \cite{bennosam, avi2, kohlenbach2}, defined as:
\be\tag{$\mu^{2}$}
(\exists \mu^{2})(\forall f^{1})\big[(\exists x^{0})f(x)=0 \di f(\mu(f))=0].
\ee
This axiom is equivalent to $(\exists^{2})$ over $\RCAo$ (See \cite{kooltje}) and $(\mu^{2})$ yields $\paai$ by applying PF-TP$_{\forall}$ to the former, i.e.\ $\mu$ is standard, and so is the witness $\mu(f)$ for standard $f^{1}$.  
We now show that $(\exists^{2})$ implies $\paai$ in the same way.  
\begin{cor}\label{boon2}
In $\RCAO$, the implication $(\exists^{2})\di \paai$ can be proved `directly', i.e.\ without the use of the equivalence $(\exists^{2})\asa (\mu^{2})$.  
In particular, the functional from $(\exists^{2})$ may be assumed to be standard extensional.     
\end{cor}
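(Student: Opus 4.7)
The plan is to imitate the proof of Theorem~\ref{boon} almost verbatim, with $(\exists^{2})$ replacing $\UWKL$ as the source axiom. The main idea is to combine $(\exists^{2})$ with its extensionality axiom into a single parameter-free internal existential statement, so that the contrapositive of \textup{PF-TP}$_{\forall}$, used as in Remark~\ref{tokkiep}, applies and the witnessing functionals may be taken standard.

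First, split the biconditional in $(\exists^{2})$. The forward direction $(\forall g^{1})(\forall x^{0})[g(x)=0 \di \varphi(g)=0]$ needs no reshaping. The reverse direction $(\forall g)[\varphi(g)=0 \di (\exists x^{0})g(x)=0]$ is equivalent to $(\forall g)(\exists x)[\varphi(g)=0 \di g(x)=0]$, whence \textup{QF-AC}$^{1,0}$ yields a functional $\Xi^{2}$ with $(\forall g)[\varphi(g)=0 \di g(\Xi(g))=0]$. Second, the extensionality axiom \eqref{EXT} for $\varphi$ prenexes to $(\forall g, h)(\exists n)[g(n)=h(n) \di \varphi(g)=\varphi(h)]$; skolemizing (coding the pair $(g,h)$ as a single type-$1$ object) yields a functional $\Phi$ satisfying $(\forall g, h)[g(\Phi(g,h))=h(\Phi(g,h)) \di \varphi(g)=\varphi(h)]$. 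Combining these, one obtains a parameter-free internal statement of the form $(\exists \varphi^{2}, \Xi^{2}, \Phi)\,\chi(\varphi, \Xi, \Phi)$, and Remark~\ref{tokkiep} entitles us to assume $\varphi$, $\Xi$, $\Phi$ are all standard.

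To derive $\paai$, fix standard $f^{1}$ with $(\forall^{\st}n)f(n)=0$ and let $g^{1}$ be the standard function defined by $g(n):=0$ if $f(n)\neq 0$ and $g(n):=1$ otherwise; standardness of $g$ is automatic since it is definable from $f$ without nonstandard parameters. Assume toward a contradiction that $(\exists n)f(n)\neq 0$, i.e.\ $(\exists n)g(n)=0$; the first conjunct of $\chi$ then gives $\varphi(g)=0$, and the second yields $g(\Xi(g))=0$. Since $g$ and $\Xi$ are standard, so is $\Xi(g)$, producing a standard $n$ with $f(n)\neq 0$, contradicting the hypothesis. This establishes $\paai$. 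Standard extensionality of $\varphi$ is now immediate: if standard $g, h$ satisfy $g\approx_{1}h$, then $\Phi(g,h)$ is standard, so $g(\Phi(g,h))=h(\Phi(g,h))$, and the third conjunct of $\chi$ gives $\varphi(g)=\varphi(h)$.

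The main obstacle is essentially bookkeeping: one must verify that the combined internal statement really is free of free parameters, so that the contrapositive of \textup{PF-TP}$_{\forall}$ applies, and handle the mild pairing needed to invoke \textup{QF-AC}$^{1,0}$ on the two-argument extensionality formula. Apart from these details, the derivation is a direct analogue of Theorem~\ref{boon}, confirming that $(\exists^{2})\di \paai$ can be obtained without routing through $(\mu^{2})$.
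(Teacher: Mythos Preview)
Your proof is correct, but it takes a different route from the paper's, and the difference is worth noting because it bears on what the corollary is meant to illustrate.

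Your functional $\Xi$, obtained by applying $\QFAC^{1,0}$ to the reverse direction of the biconditional in $(\exists^{2})$, satisfies $(\forall g)[\varphi(g)=0\di g(\Xi(g))=0]$; combined with the forward direction this gives $(\forall g)[(\exists x)g(x)=0\di g(\Xi(g))=0]$, which is precisely the defining property of $(\mu^{2})$. Your derivation of $\paai$ then proceeds by observing that $\Xi(g)$ is a standard witness for standard $g$. In other words, you have re-derived the implication $(\exists^{2})\di(\mu^{2})$ inline and then used the standard-witness argument for $(\mu^{2})\di\paai$ that the paragraph preceding the corollary describes as the \emph{indirect} route. This is technically valid, but it is exactly the detour through $(\mu^{2})$ that the corollary advertises avoiding.

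The paper's proof instead uses the \emph{standard extensionality} of $\varphi$ itself to derive $\paai$: assuming a counterexample $h$ to $\paai$, one defines two standard sequences $f=11\dots$ and $g$ (where $g(n)=1$ iff $(\forall k<n)h(k)=0$) which satisfy $f\approx_{1}g$, yet $\varphi(f)=1$ and $\varphi(g)=0$, contradicting standard extensionality. This is a Brouwerian-counterexample argument, and it shows that $\paai$ is a \emph{consequence} of the standard extensionality claim rather than an independent fact established via a witness functional. The paper's approach thus makes the ``in particular'' clause of the corollary do real work, whereas in your proof the standard extensionality of $\varphi$ is established but plays no role in obtaining $\paai$.
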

\begin{proof}
Similar to the proof of Theorem \ref{boon}, obtain via PF-TP$_{\forall}$ that $(\exists^{2})$ implies the standardness and standard extensionality of $\varphi$ as in $(\exists^{2})$.  
This amounts to applying the contraposition of PF-TP$_{\forall}$ to 
\begin{align*}
(\exists \varphi^{2}, \Xi^{2})&\Big[ (\forall f^{1})[(\exists x^{0})f(x)=0\asa \varphi(f)=0] \\
&\wedge (\forall g^{1}, h^{1})(\overline{g}\Xi(g,h)=\overline{h}\Xi(g,h)\di \varphi(g)=\varphi(h) ) \Big],
\end{align*}
which follows from $(\exists^{2})$, extensionality \eqref{EXT}, and QF-AC$^{1,0}$.  
Now assume $\paai$ is false, i.e.\ there is standard $h^{1}$ such that $(\forall^{\st}n)(h(n)=0)\wedge (\exists n_{0})h(n)\ne0$.  Define the standard sequences $f^{1}:=11\dots$ and $g^{1}$ as: 
\[
g(n):=
\begin{cases}
1 & (\forall k<n)h(k)=0 \\
0 & \textup{otherwise}
\end{cases}.
\]
Clearly $f\approx_{1} g$, implying $1=\varphi(f)=\varphi(g)=0$, by the definition of $\varphi$ and standard extensionality.  This contradiction implies that we must have $\paai$.
\end{proof}
A similar theorem can be proved for the Suslin functional and $\Paai$ from \cite{bennosam}*{\S4.2}.  
Finally, the following corollary suggests that Theorem \ref{boon} provides a nice template for classifying uniform principles.  
\begin{cor}\label{boon3}
In $\RCAO$, the implication $\UWKL\di \paai$ can be proved `directly', i.e.\ without the use of the equivalence $(\exists^{2})\asa \paai$.  
\end{cor}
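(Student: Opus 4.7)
The plan is to use Theorem \ref{boon} to pass from $\UWKL$ to a standard, standard-extensional path-selecting functional $\Psi^{1\di 1}$, and then to derive $\paai$ by contradiction using a pair of standard binary trees that coincide as type-$1$ objects on one side while their $g$-perturbations force opposite initial bits of the chosen path on the other.

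Step one: By Theorem \ref{boon}, $\UWKL$ provides a standard $\Psi$ satisfying the (internal) $\UWKL$ axiom together with standard extensionality, i.e.\ $(\forall^{\st}T, S\leq_1 1)(T\approx_1 S \di \Psi(T)\approx_1 \Psi(S))$. Suppose for contradiction that $\paai$ fails, and pick a standard $h^1$ with $(\forall^{\st}n)\,h(n)=0$; let $m_0$ be the least index with $h(m_0)\neq 0$ (necessarily nonstandard). In the spirit of Corollary \ref{boon2}, set $f:=\lambda n.\,1$ and
\[
g(n) :=
\begin{cases}
1 & (\forall k < n)\, h(k) = 0\\
0 & \text{otherwise}
\end{cases},
\]
so that $g(n) = 1$ precisely when $n \leq m_0$; in particular $f \approx_1 g$.

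Step two: For each standard $p^1$, form two ``V-shaped'' standard binary trees
\[
A_p := \{0^k : k \in \N\} \cup \{1^k : (\forall i < k)\, p(i) = 1\}, \qquad B_p := \{0^k : (\forall i < k)\, p(i) = 1\} \cup \{1^k : k \in \N\}.
\]
Since $f\equiv 1$, both activation clauses are trivially satisfied, so $A_f$ and $B_f$ have the same characteristic function, hence $A_f =_1 B_f$; the axiom of extensionality \eqref{EXT} in $\RCAo$ thus gives $\Psi(A_f) =_1 \Psi(B_f)$. On the other hand, for every standard $\sigma$ of the form $0^k$ or $1^k$ with standard $k$ the clauses $(\forall i < k)\,f(i)=1$ and $(\forall i < k)\,g(i)=1$ both hold, so $A_f \approx_1 A_g$ and $B_f \approx_1 B_g$.

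Step three: I observe that $A_g$ has $0^\omega$ as its \emph{unique} true infinite path: the $0$-spine is unconditional, while the $1$-spine is truncated because $g(m_0+1)=0$ forces $1^{m_0+2}\notin A_g$. Since $\Psi$ satisfies the internal $\UWKL$ axiom and $A_g$ is actually infinite, $\Psi(A_g)$ must be this unique path, so $\Psi(A_g) = 0^\omega$; symmetrically $\Psi(B_g) = 1^\omega$. Standard extensionality then yields $\Psi(A_f)(0) = \Psi(A_g)(0) = 0$ and $\Psi(B_f)(0) = \Psi(B_g)(0) = 1$, and combining with $\Psi(A_f) =_1 \Psi(B_f)$ we obtain $0=1$, the desired contradiction; hence $\paai$.

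The main delicate point is the dual design of $A_p$ and $B_p$: placing the ``activation clause'' $(\forall i < k)\,p(i) = 1$ on the opposite spine in each construction is exactly what ensures $A_f =_1 B_f$ at $p=f$ while simultaneously forcing $\Psi(A_g)$ and $\Psi(B_g)$ to commit to different infinite paths. With only one construction, $\Psi$ could pick the same infinite path for both trees and no contradiction would appear; the asymmetric pair defeats that freedom by making one side's ``long'' branch actually finite of nonstandard length, while the other side of the same $g$-tree keeps its branch truly infinite.
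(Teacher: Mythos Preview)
Your proof is correct and follows the same overall strategy as the paper: obtain from $\UWKL$ (via the argument of Theorem~\ref{boon}) a \emph{standard} path-selecting functional $\Psi$ that satisfies the internal $\UWKL$ axiom and is standard-extensional, then derive a contradiction from a hypothetical failure of $\paai$ by exhibiting standard trees that are $\approx_{1}$-close yet force incompatible outputs of $\Psi$.

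The difference is in the tree construction. The paper takes $T'$ to be the full binary tree and defines a single companion tree $S'$ whose definition \emph{uses} the bit $\Psi(T')(0)$: beyond the nonstandard cut determined by $h$, $S'$ retains only the spine opposite to that bit, so $T'\approx_{1}S'$ while $\Psi(S')$ must lie on the wrong spine. Your construction instead uses a symmetric pair of ``V''-shaped trees $A_{p},B_{p}$ with the activation clause placed on opposite spines; at $p=f\equiv 1$ the two trees literally coincide, while at $p=g$ each has a unique (and opposite) infinite path. This avoids any self-reference to $\Psi$'s values in the tree definition at the cost of carrying two pairs of trees rather than one. Both arguments are short and direct; yours is slightly more symmetric, the paper's slightly more economical.
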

\begin{proof}
Proceed as in the proof of the theorem and derive \eqref{bonga}, but without introducing $\Xi^{2}$, i.e.\ we have
\begin{align}
(\exists \Psi^{1\di 1}, \Phi^{2})&\Big[(\forall T \leq_{1}1)\big[ (\forall m^{0})(\exists \alpha^{0})[|\alpha|=m \wedge\alpha\in T] \di (\forall n^{0})\overline{\Psi(T)}n\in T    \big]\label{bonga2} \\
&\wedge (\forall S^{1}, R^{1}\leq_{1}1, k^{0})(\overline{T}\Phi(S,R,k)=\overline{S}\Phi(S,R,k)\di \overline{\Psi(S)}k=\overline{\Psi(R)}k\Big].\notag
\end{align}
By PF-TP$_{\forall}$, we may assume that the functionals $\Psi, \Phi$ are standard and that $\Psi$ is standard extensional as in the proof of the theorem.   
Now suppose $\paai$ is false, i.e.\ there is standard $h^{1}$ such that $(\forall^{\st}n)h(n)=0$ and $(\exists m)h(m)\ne 0$.
For $T'$ the (necessarily standard) full binary tree, define the standard binary tree $S'$ as (using $\Psi$ from \eqref{bonga2}):
\[
  \sigma \in S' \asa (\forall m<|\sigma|)h(m)=0 \vee (\forall i=0,1)(\Psi(T)(1)=i \di  (\forall j<|\sigma|)\sigma(j)=1-i )   \big ].
\]  
However, then $T'\approx_{1} S'$, implying $\Psi(T')\approx_{1}\Psi(S')$, but this contradicts \eqref{bonga2} as $S'$ is infinite (either 00\dots or 11\dots is completely in $S'$), but $\overline{\Psi(S')}n$ is not in $S'$ for large enough $n$, by the assumption on $h$.  
This contradiction yields $\paai$.  
\end{proof}  
Finally, we point out that extensionality \eqref{EXT} is essential in obtaining the above results.  
In particular, Kohlenbach has proved that \emph{without full extensionality}, the principle UWKL is not really stronger than WKL itself (\cite{kooltje}).    
  

\subsection{The EMT for $\Sigma_{1}^{0}$-separation}
Next, we establish the EMT for the  $\Sigma_{1}^{0}$-separation principle from \cite{simpson2}*{I.11.7}, known to be equivalent to $\WKL$;  The nonstandard version is:  
\begin{princ}[$\Sigma_{1}^{0}$-SEP$^{*}$]
For standard $f_{i}^{1}$ and $\varphi_{i}(n)\equiv (\exists n_{i})(f_{i}(n_{i},n)=0)$ such that $ (\forall^{\st}n)\neg[ \varphi_{1}^{\st}(n)\wedge \varphi_{2}^{\st}(n)] $, there is standard $Z^{1}$ such that
\be\label{starbuck}
(\forall n^{0})\big[  \varphi_{1}(n)  \di n\not\in Z \wedge \varphi_{2}(n)\di n\in Z \big].
\ee
\end{princ}
The principle $\Sigma_{1}^{0}$-SEP$^{*}$ states the existence of a separating set for $(\Sigma_{1}^{0})^{\st}$-formulas, but for \emph{all} numbers, not just the standard ones.  
Similarly, $\textup{U}\Sigma_{1}^{0}\textup{-SEP}$ is: 
\begin{princ}[U$\Sigma_{1}^{0}$-SEP] \label{SEP11}
For $\varphi_{i} (n, f)\equiv(\exists n_{i})(f(n_{i}, n)=0)$, we have
\begin{align}
\big(\exists & F^{(1\times 1\times 0)\di 0}\big)(\forall f^{1},g^{1})\Big[ (\forall n)\neg[ \varphi_{1} (n,f)\wedge \varphi_{2} (n,g)] \di \notag \\ 
& (\forall n^{0})[\varphi_{1} (n,f) \di F(f,g,n)=1  ]\wedge (\forall n) [\varphi_{2} (n,g)\di F(f,g,n)=0 ] \Big].\label{jesestiasep}
\end{align}
\end{princ}

\begin{thm}\label{doooook}
In $\RCAO$, $\paai\asa \Sigma_{1}^{0}\textup{-SEP}^{*}\asa \textup{E-U}\Sigma_{1}^{0}\textup{-SEP}^{\st}\asa (\exists^{2})^{\st}$.  
\end{thm}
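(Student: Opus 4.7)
The plan is to follow the template of Theorem \ref{trikke} and prove the cycle
\[
\paai \di \Sigma_{1}^{0}\textup{-SEP}^{*} \di \textup{E-U}\Sigma_{1}^{0}\textup{-SEP}^{\st} \di (\exists^{2})^{\st},
\]
closing via Theorem \ref{markje}. For the first implication, Theorem \ref{markje} delivers $(\exists^{2})^{\st}$, whence arithmetical comprehension in the standard world lets us form the standard set $Z$ defined by $n\in Z \iff \neg\varphi_{1}(n, f_{1})$. The implication $\varphi_{1}(n)\di n\notin Z$ holds by construction, while $\varphi_{2}(n)\di n\in Z$ reduces to $(\forall n)\neg[\varphi_{1}(n)\wedge \varphi_{2}(n)]$, which is an internal $\Pi_{1}^{0}$-statement in the variables $n, n_{1}, n_{2}$ with standard parameters and which holds for standard $n$ by the hypothesis of $\Sigma_{1}^{0}\textup{-SEP}^{*}$; applying $\paai$ then lifts the conclusion to all $n$.

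For $\Sigma_{1}^{0}\textup{-SEP}^{*} \di \textup{E-U}\Sigma_{1}^{0}\textup{-SEP}^{\st}$, we first derive $\WKL^{*}$ by relativizing Simpson's classical reduction $\Sigma_{1}^{0}\textup{-SEP} \di \WKL$ (\cite{simpson2}*{IV.4.4}) to the standard world of $\RCAO$: given a standard infinite binary tree $T$, the standard $\Sigma_{1}^{0}$-formulas constructed from $T$ admit a standard separating set by $\Sigma_{1}^{0}\textup{-SEP}^{*}$, and this set encodes an infinite path through \emph{all} levels of $T$. By Theorem \ref{trikke}, $\WKL^{*}$ yields $\paai$. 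Now define the canonical approximation
\[
\Psi(f, g, n, M) :=
\begin{cases}
1 & (\exists k\leq M)\, f(k, n) = 0\\
0 & \textup{otherwise}.
\end{cases}
\]
For standard $f, g, n$ satisfying the disjointness hypothesis, $\Omega$-invariance is immediate: a standard witness for $f$ at $n$ yields $\Psi(f,g,n,M) = 1$ for every infinite $M$; otherwise $(\forall^{\st}k)\, f(k, n) \neq 0$, which transfers via $\paai$ to $(\forall k)\, f(k, n) \neq 0$, whence $\Psi(f,g,n,M) = 0$ for every $M$. Applying $\Omega$-CA yields the standard functional $F$ required by $\textup{U}\Sigma_{1}^{0}\textup{-SEP}^{\st}$, and standard extensionality is manifest from the bounded-search definition.

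For $\textup{E-U}\Sigma_{1}^{0}\textup{-SEP}^{\st} \di (\exists^{2})^{\st}$, we apply the same classical reduction in uniform form: the functional $F$ from $\textup{E-U}\Sigma_{1}^{0}\textup{-SEP}^{\st}$ transforms, via the standard tree-construction, into a functional $\Phi$ witnessing $\textup{E-}\UWKL^{\st}$, from which $(\exists^{2})^{\st}$ follows by Theorem \ref{trikke}. The main obstacle is the detour through $\WKL^{*}$ in the middle step: although $\Sigma_{1}^{0}\textup{-SEP} \asa \WKL$ is a standard classical equivalence, one must carefully verify that the constructions translate cleanly between the ``standard disjointness'' hypotheses of the $*$- and $^{\st}$-versions and the full-$n$ disjointness needed to make the derived tree infinite. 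Once this is handled, the canonical approximation is essentially routine, reinforcing the observation after Theorem \ref{trikke} that the substantive work of the EMT concentrates at the transfer-principle level.
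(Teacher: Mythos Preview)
Your cycle is correct, but your middle implication takes a genuinely different route from the paper's. The paper explicitly remarks that one \emph{could} exploit the equivalence $\Sigma_{1}^{0}\textup{-SEP}\asa\WKL$ but instead proves the implication directly: it extracts from $\Sigma_{1}^{0}\textup{-SEP}^{*}$ the transfer-like fact
\[
(\forall^{\st}n)[\neg\varphi_{1}^{\st}(n)\vee\neg\varphi_{2}^{\st}(n)]\di(\forall n)[\neg\varphi_{1}(n)\vee\neg\varphi_{2}(n)],
\]
defines a four-case canonical approximation $\Psi(f_{1},f_{2},M)(n)$ recording which of $\varphi_{1},\varphi_{2}$ have witnesses below $M$, and proves $\Omega$-invariance case by case---including a nontrivial trick with auxiliary $f_{3},f_{4}$ to handle the ``both negative'' case---using nothing but instances of $\Sigma_{1}^{0}\textup{-SEP}^{*}$ itself. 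Your approach, by contrast, first secures $\paai$ and then observes that the characteristic function of $\varphi_{1}$ is already the required $F$; this is shorter but effectively collapses the middle step to $\Sigma_{1}^{0}\textup{-SEP}^{*}\di\paai\di(\exists^{2})^{\st}\di\textup{E-U}\Sigma_{1}^{0}\textup{-SEP}^{\st}$, so the ``canonical approximation'' you write down is just the one from Theorem~\ref{trikke} in disguise rather than one genuinely tied to separation. The paper's route is more informative as an instance of the EMT template; yours is a clean reduction to previously established equivalences.

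One simplification you should make: the $\WKL^{*}$ detour you flag as ``the main obstacle'' is unnecessary. You can derive $\paai$ from $\Sigma_{1}^{0}\textup{-SEP}^{*}$ in two lines: given standard $f$ with $(\forall^{\st}n)f(n)=0$, set $\varphi_{1}(m)\equiv\varphi_{2}(m)\equiv(\exists k)f(k)\ne 0$ (ignoring $m$). Then $\varphi_{i}^{\st}(m)$ is false for every standard $m$, so the disjointness hypothesis holds vacuously; the separating set $Z$ furnished by $\Sigma_{1}^{0}\textup{-SEP}^{*}$ must satisfy $m\notin Z\wedge m\in Z$ for every $m$ if $(\exists k)f(k)\ne 0$, a contradiction. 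This removes the need to verify that Simpson's tree construction ``translates cleanly'' to the starred setting.
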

\begin{proof}
Clearly, we could exploit the equivalence between $\Sigma_{1}^{0}$-separation and weak K\"onig's lemma (\cite{simpson2}*{IV.4.4}), but we instead prove the following implications: 
\be\label{gen}
\paai \di \Sigma_{1}^{0}\textup{-SEP}^{*}\di \textup{E-U}\Sigma_{1}^{0}\textup{-SEP}^{\st} \di (\exists^{2})^{\st}, 
\ee
which establishes the theorem.  
The first implication in \eqref{gen} is trivial as \eqref{starbuck} is a $\Pi_{1}^{0}$-formula, and the final implication follows from \cite{yamayamaharehare}*{Theorem 3.6}.  
One can also use the observation that the final part of the proof of \cite{simpson2}*{IV.4.4} implies that $\textup{U}\Sigma_{1}^{0}\textup{-SEP}\di \UWKL$, and the same for 
the standard extensional versions.    

\medskip
   
For the second implication, note that $ \Sigma_{1}^{0}\textup{-SEP}^{*}$ implies for standard $f_{i}^{1}$ that:
\be\label{test}
 (\forall^{\st}n)[ \neg\varphi_{1}^{\st}(n, f_{1})\vee \neg\varphi_{2}^{\st}(n, f_{2})]\di  (\forall n)[\neg \varphi_{1}^{}(n, f_{1})\vee \neg\varphi_{2}^{}(n, f_{2})].
\ee
Now let $h_{i}^{1}$ be such that $h_{i}(k,M)=0\asa (\forall n_{i}^{0}\leq M)f_{i}(n_{i}, k)\ne0$, and note that by \eqref{test}, we have $h_{1}(k,M)=0 \vee h_{2}(k,M)=0$ for standard $k$ and $M\in \Omega$.  
Next, define the functional $\Psi$ as follows:
\be\label{dolpi}
\Psi(f_{1},f_{2},M)(n):=
\begin{cases}
~0 & h_{1}(n,M)=0 \wedge h_{2}(n,M)\ne 0 \\
~1 & h_{1}(n,M)\ne0 \wedge h_{2}(n,M)= 0 \\
~2 & h_{1}(n,M)\ne0 \wedge h_{2}(n,M)\ne0 \\
~3 & h_{1}(n,M)=0 \wedge h_{2}(n,M)= 0 \\
\end{cases}
\ee
We now show that $\Psi$ is both as required for $\textup{U}\Sigma_{1}^{0}\textup{-SEP}^{\st}$ and $\Omega$-invariant, in case the standard $f_{i}$ satsify $(\forall^{\st}n)[ \neg\varphi_{1}^{\st}(n, f_{1})\vee \neg\varphi_{2}^{\st}(n, f_{2})]$.  

\medskip

Indeed, for standard $n$, if $\varphi_{1}^{\st}(n, f_{1})$ then by assumption and \eqref{test}, we have $\neg\varphi_{2}(n, f_{2})$, and the second case in \eqref{dolpi} holds (for any infinite $M$). 
If $\varphi_{2}^{\st}(n, f_{2})$ holds for standard $n$, then similarly $\neg\varphi_{1}(n, f_{1})$ by the previous, and the first case in \eqref{dolpi} holds (for any infinite $M$).  
Since $\varphi_{2}^{\st}(n, f_{2})\wedge \varphi_{1}^{\st}(n, f_{1})$ is impossible by assumption, the third case in \eqref{dolpi} does not occur.  If for some standard $n_{0}$, we have the fourth case (or even only $\neg\varphi_{2}^{\st}(n_{0}, f_{2})\wedge \neg\varphi_{1}^{\st}(n_{0}, f_{1})$), consider the 
functions $f_{3}, f_{4}$ defined for any $k$ as $f_{3}(k,n)=f_{1}(k,n)$ and $f_{4}(k,n)=f_{2}(k,n)$ for $n\ne n_{0}$ and $f_{3}(k,n_{0})=f_{4}(k,n_{0})=f_{1}(k,n_{0})\times f_{2}(k,n_{0})$.  
By assumption, we have $(\forall^{\st}n)[ \neg\varphi_{3}^{\st}(n, f_{3})\vee \neg\varphi_{4}^{\st}(n, f_{4})$ and \eqref{test} 
applied to the latter yields $\neg\varphi_{1}(n_{0}, f_{1})\wedge \neg\varphi_{2}(n_{0}, f_{2})$.    
Hence, if the fourth case in \eqref{dolpi} occurs, it does so for every $M\in \Omega$.  

\medskip  

As $\Omega$-CA requires quantification over \emph{all} standard sequences $f_{i}^{1}$ as in \eqref{tomega2}, we need to specify the behaviour when the separation assumption $(\forall^{\st}n)[ \neg\varphi_{1}^{\st}(n, f_{1})\vee \neg\varphi_{2}^{\st}(n, f_{2})]$ is not met.      
Thus, we define $\Theta(f, g, M)$ as  $\Psi(f, g,M)$ if $(\forall n\leq M)\big[ (\forall n_{1}\leq M)f(n_{1},n)\vee (\forall n_{2}\leq M)g(n_{2}, n) \big]$, and $0$ otherwise.  
Using $ \Sigma_{1}^{0}\textup{-SEP}^{*}$ as in the previous paragraph, it is clear that $\Theta(T,M)$ is $\Omega$-invariant, i.e.\ we have
\be\label{tomega2}
(\forall^{\st}  f^{1},g^{1})(\forall N,M\in \Omega)\big[ \Theta(f,g,M)\approx_{1}\Theta(f,g,N) \big]. 
\ee  
The axiom $\Omega$-CA now provides a standard functional $\Phi(\cdot)\approx_{1}\Theta(\cdot, M)$ which satisfies $\textup{U}\Sigma_{1}^{0}\textup{-SEP}^{\st}$ by the above.  
As to the standard extensionality of $\Phi$, note that if $\varphi_{i}^{\st}(n, f_{i})$, i.e.\ in one the first two cases of \eqref{dolpi}, this extensionality property is immediate due to \eqref{test}.  By the latter, the third case of \eqref{dolpi} also does not occur for standard $h_{1}, h_{2}$ such that $h_{i}\approx_{1}f_{i}$.  For the final case in \eqref{dolpi}, a similar argument involving $f_{3}, f_{4}$ from the previous paragraph yields standard extensionality.    
\end{proof}
We could prove a version of Theorem \ref{boon} (and corollaries) for $\Sigma_{1}^{0}$-separation.  
\begin{cor}
In $\RCAO$, $\paai\asa \Sigma_{1}^{0}\textup{-SEP}^{*}\asa \textup{U}\Sigma_{1}^{0}\textup{-SEP}\asa (\exists^{2})$.  
\end{cor}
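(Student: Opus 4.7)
My plan is to add $\textup{U}\Sigma_{1}^{0}\textup{-SEP}$ to the cycle of equivalences already established in Theorem~\ref{doooook}. Three of the four implications come essentially for free: Theorem~\ref{doooook} gives $\paai\asa \Sigma_{1}^{0}\textup{-SEP}^{*}\asa \textup{E-U}\Sigma_{1}^{0}\textup{-SEP}^{\st}\asa (\exists^{2})^{\st}$, and Theorem~\ref{markje} (\cite{bennosam}*{Cor.\ 14}) gives $\paai\asa (\exists^{2})$. So the only substantive task is to link $\textup{U}\Sigma_{1}^{0}\textup{-SEP}$ into this picture.

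The easy direction $(\exists^{2})\di \textup{U}\Sigma_{1}^{0}\textup{-SEP}$ I would prove by a direct primitive recursive construction: letting $\varphi^{2}$ be as in $(\exists^{2})$, the functional $F(f,g,n):=1-\varphi(\lambda n_{1}.f(n_{1},n))$ satisfies \eqref{jesestiasep} whenever the separation hypothesis holds, and since $F$ is primitive recursive in $\varphi$, extensionality is automatic.

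For the harder direction $\textup{U}\Sigma_{1}^{0}\textup{-SEP}\di \paai$, I would follow the template of the corollary to Theorem~\ref{trikke}: start from $\textup{E-U}\Sigma_{1}^{0}\textup{-SEP}^{\st}$ (which we already have by Theorem~\ref{doooook}) and modify the formula so that PF-TP$_{\forall}$ applies in the forward direction. Concretely, I would drop the `st' in the antecedent $(\forall^{\st}n)\neg[\varphi_{1}^{\st}(n)\wedge\varphi_{2}^{\st}(n)]$ and bring the (implicit) universal quantifier in the consequent to the front, making the matrix internal. PF-TP$_{\forall}$ then promotes the outer $\forall^{\st}$ over $f,g$ to $\forall$, yielding $\textup{U}\Sigma_{1}^{0}\textup{-SEP}$ with a standard witness $F$.

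The hard part will be the extensionality bookkeeping around the PF-TP$_{\forall}$ step: since \eqref{EXT}$^{\st}$ is unavailable in $\RCAO$, standard extensionality cannot be taken for granted. The Skolemization trick of Theorem~\ref{boon} resolves this: by packaging $F$ together with its extensionality Skolem function (via QF-AC$^{1,0}$) into a single parameter-free existential statement, PF-TP$_{\forall}$ yields standardness and standard extensionality of $F$ in one stroke, and Theorem~\ref{doooook} then closes the cycle back to $\paai$.
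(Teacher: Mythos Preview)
Your approach works in substance but is considerably more elaborate than the paper's, and your third paragraph is confused about directions.

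The paper's proof is one line: it cites \cite{bennosam}*{Cor.~12} for $\paai\asa(\exists^{2})$ and \cite{yamayamaharehare}*{Theorem~3.6} for $\textup{U}\Sigma_{1}^{0}\textup{-SEP}\asa(\exists^{2})$. The point you are missing is that the latter equivalence is an \emph{internal} $\RCAo$-result (Sakamoto--Yamazaki), so it holds in $\RCAO$ with no nonstandard work at all. Together with Theorem~\ref{doooook} for the $\Sigma_{1}^{0}\textup{-SEP}^{*}$ node, the cycle closes immediately.

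Your route instead reproves $\textup{U}\Sigma_{1}^{0}\textup{-SEP}\asa(\exists^{2})$ by hand. The direction $(\exists^{2})\di\textup{U}\Sigma_{1}^{0}\textup{-SEP}$ in your second paragraph is fine. For the converse, your fourth paragraph --- the Theorem~\ref{boon} Skolemization: package $F$ with its extensionality modulus via $\QFAC^{1,0}$, apply PF-TP$_{\forall}$ to the resulting parameter-free existential, obtain a standard and standard-extensional $F$ witnessing $\textup{E-U}\Sigma_{1}^{0}\textup{-SEP}^{\st}$, then invoke Theorem~\ref{doooook} --- is a legitimate argument. But your third paragraph is mislabeled: you announce the direction $\textup{U}\Sigma_{1}^{0}\textup{-SEP}\di\paai$ and then describe going \emph{from} $\textup{E-U}\Sigma_{1}^{0}\textup{-SEP}^{\st}$ \emph{to} $\textup{U}\Sigma_{1}^{0}\textup{-SEP}$ via PF-TP$_{\forall}$, which is the opposite direction and redundant with your second paragraph. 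The argument you actually need is only in paragraph four.

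In short: correct once paragraph three is dropped, but you are working much harder than necessary because the key internal equivalence is already in the literature.
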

\begin{proof}
Immediate from \cite{bennosam}*{Cor. 12} and \cite{yamayamaharehare}*{Theorem 3.6}.  
\end{proof}
\subsection{The EMT for the intermediate value theorem}
Next, we establish the EMT for the intermediate value theorem (IVT).  Although the latter is provable in $\RCA_{0}$ (\cite{simpson2}*{II.6.6}), the uniform version of UIVT is equivalent to $(\exists^{2})$, as discussed by Kohlenbach in \cite{kohlenbach2}*{p.\ 293} and in Remark \ref{difconc}.    

\medskip

With regard to notation, let `$f\in C[0,1]$' mean that $f$ is $\eps$-$\delta$-continuous on $[0,1]$ and `$f\in \overline{C}[0,1]$' that $f\in C[0,1]$ and $f(0)\geq 0$ and $f(1)\leq 0$.  Appending of `st' to $C$ and $\overline{C}$ means that all quantifiers are relative to `st'.  
As explained in Remark~\ref{takkap}, the exact choice of continuity (involving $C$ or $C^{\st}$) is immaterial.  
The uniform and nonstandard versions of IVT are:  
\be\tag{$\IVT^{*}$}
(\forall^{\st}f^{1\di 1}\in \overline{C}^{\st}[0,1])(\exists^{\st}x^{1}\in [0,1])(f(x)=0).
\ee
\be\tag{$\UIVT$}
(\exists \Phi^{(1\di 1)\di 1})(\forall f^{1\di 1} \in \overline{C}[0,1])(f(\Phi(f))=0).
\ee
Note that $\IVT^{\st}$ is weaker than $\IVT^{*}$, as the latter (and also UIVT$^{\st}$) involves `$\approx$' rather than `$=$', which turns out to yield quite a difference in strength.
\begin{thm}\label{2trikke}
In $\RCAO$, we have $(\exists^{2})^{\st}\asa \paai \asa \IVT^{*}\asa \textup{E-UIVT}^{\st}$.
\end{thm}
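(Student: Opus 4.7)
The plan is to mimic the template established in Theorem \ref{trikke} by proving the chain
\[
\paai \di \IVT^{*} \di \textup{E-UIVT}^{\st} \di (\exists^{2})^{\st}
\]
and combining with the equivalence $\paai\asa (\exists^{2})^{\st}$ from Theorem \ref{markje}. The first implication $\paai\di \IVT^{*}$ is straightforward: by Theorem \ref{markje}, $\paai$ yields $(\exists^{2})^{\st}$, which provides arithmetical comprehension in the standard world, enabling standard bisection on any standard $f\in \overline{C}^{\st}[0,1]$ to produce a standard $x\in [0,1]$ with $f(x)\approx 0$; since $f(x)$ is a standard real with standard Cauchy representation, $\paai$ upgrades the $(\Pi_{1}^{0})^{\st}$-statement witnessing $f(x)\approx 0$ to the internal equality $f(x)=0$. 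The third implication $\textup{E-UIVT}^{\st}\di (\exists^{2})^{\st}$ follows from Kohlenbach's proof of $\UIVT\di (\exists^{2})$ (see \cite{kohlenbach2}*{p.\ 293}) which is primitive recursive in the UIVT functional and uses its extensionality; the axiom $\mathcal{T}_{\st}$ of $\RCAO$ ensures primitive recursive constructions remain standard, so the argument transfers to the standard world provided the needed standard extensionality is available, which is precisely what $\textup{E-UIVT}^{\st}$ supplies.

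For the main step $\IVT^{*}\di \textup{E-UIVT}^{\st}$, I would define a canonical approximation $\Psi(f,M)$ mirroring the WKL case: given $f^{1\di 1}$ and $M\in\Omega$, run $M$ steps of `leftmost-zero' bisection, building $(a_{k},b_{k})_{k\leq M}$ from $a_{0}=0,b_{0}=1$ by setting $a_{k+1}=a_{k},b_{k+1}=m_{k}$ whenever some dyadic $j/2^{M}\in[a_{k},m_{k}]$ satisfies $f(j/2^{M})(M)\leq 1/2^{M}$ (a left-half approximate sign change is detected), and otherwise $a_{k+1}=m_{k},b_{k+1}=b_{k}$; the output is then $a_{M}$ viewed as a real via a suitable Cauchy coding. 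To cover all standard $f$ as $\Omega$-CA requires, extend via $\Theta(f,M):=\Psi(f,M)$ on inputs passing a crude approximate-continuity precheck at precision $1/M$ with $f(0)\geq -1/M, f(1)\leq 1/M$, and $\Theta(f,M):=0$ otherwise. Then $\Omega$-invariance of $\Theta$ on $f\in \overline{C}^{\st}[0,1]$ is argued using $\IVT^{*}$ essentially: at the least standard step $k_{0}$ where decisions for infinite $M,N$ diverge, $a_{k_{0}}, b_{k_{0}}$ are standard with $f(a_{k_{0}})\geq 0, f(b_{k_{0}})\leq 0$ holding standardly (by the minimality of $k_0$), and applying $\IVT^{*}$ to suitable rescalings of $f$ on $[a_{k_{0}},m_{k_{0}}]$ and $[m_{k_{0}},b_{k_{0}}]$ pins down the existence or absence of a standard zero in each half, forcing a uniform decision for all infinite $M$ and contradicting the assumed divergence.

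After applying $\Omega$-CA, we obtain standard $\Phi$ with $\Phi(f)\approx\Psi(f,M)$ for infinite $M$, and by construction $\Psi(f,M)$ approximates the standard leftmost zero $x^{*}$ of $f$ guaranteed by $\IVT^{*}$. Standard continuity of $f$ at the standard point $\Phi(f)$ then yields $f(\Phi(f))\approx f(x^{*})=0$, and the upgrade to internal equality $f(\Phi(f))=0$ required by $\textup{E-UIVT}^{\st}$ is secured by identifying $\Phi(f)$ as the standard leftmost zero supplied by $\IVT^{*}$ with exact equality. Standard extensionality of $\Phi$ follows directly: if $f\approx_{1\di 1}g$ on standard inputs, the leftmost-zero decisions at each standard bisection step agree, so $\Phi(f)\approx\Phi(g)$.

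The main obstacle will be the passage from $\Omega$-invariance to the internal equality $f(\Phi(f))=0$ in $\textup{E-UIVT}^{\st}$: $\Omega$-CA only delivers $\Phi(f)\approx\Psi(f,M)$, so naively one only obtains $f(\Phi(f))\approx 0$, while $\textup{E-UIVT}^{\st}$ demands the internal $=$. Closing this gap requires exploiting that $\IVT^{*}$ itself provides an \emph{exact} standard zero $x^{*}$ and that the canonical approximation was carefully designed to target precisely the leftmost such zero; this delicate point also underlies the remark preceding the theorem that $\IVT^{*}$ is strictly stronger than the naive $\approx$-version $\IVT^{\st}$.
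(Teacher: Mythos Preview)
Your chain of implications matches the paper's template, and your treatment of the first and third implications is essentially the same as the paper's. The middle implication $\IVT^{*}\di \textup{E-UIVT}^{\st}$ is where the approaches diverge, and your version has a real gap.

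The paper does \emph{not} attempt to produce the UIVT functional for arbitrary standard $f\in\overline{C}^{\st}[0,1]$. Instead, it observes that Kohlenbach's derivation of $(\exists^{2})$ from UIVT only ever applies the functional to polynomials $f_{y}(x)=yx-y$ and $f_{y}(x)=yx$, so it suffices to obtain E-UIVT$^{\st}$ \emph{restricted to standard polynomials}. This is a genuine simplification, not a cosmetic one: for a standard polynomial $f$, the condition ``$f$ becomes arbitrarily small on $[a,m]$ relative to `st'\,'' already forces a \emph{standard} point $x_{0}\in[a,m]$ with $f(x_{0})\approx 0$ (polynomials have finitely many critical points, all standard), and then IVT$^{*}$ on $[a,x_{0}]$ upgrades this to an exact standard zero. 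That is precisely what drives $\Omega$-invariance of the paper's canonical approximation.

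Your bisection for general $f$ does not maintain the invariant you invoke. You go left when some $M$-dyadic $q\in[a_{k},m_{k}]$ has $f(q)(M)\leq 1/2^{M}$; this records an approximately nonpositive value, not a sign change, so after going left you lose control of the sign of $f(b_{k+1})=f(m_{k})$. Hence the claim ``$f(a_{k_{0}})\geq 0,\ f(b_{k_{0}})\leq 0$ holding standardly'' does not follow, and you cannot legally apply IVT$^{*}$ to the half-intervals at step $k_{0}$. Moreover, even if the decisions stabilised, your own ``main obstacle'' is not actually closed: $\Omega$-CA gives $\Phi(f)\approx\Psi(f,M)$, and for general standard $f$ there is no mechanism in $\RCAO$ forcing $\Phi(f)$ to coincide \emph{exactly} with the leftmost standard zero from IVT$^{*}$ (that identification would itself require something like $\paai$, which you are trying to derive). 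The polynomial restriction is what makes both issues disappear; without it the argument does not go through as written.
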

\begin{proof} 
To establish the equivalences in the theorem, we prove
\be\label{ht}
\paai \di \textup{IVT}^{*}\di  \textup{E-UIVT}^{\st}\di (\exists^{2})^{\st}.
\ee  
which establishes the theorem by Theorem \ref{markje} above.  
The first implication in \eqref{ht} is trivial as $\IVT$ is provable in $\RCA_{0}$ (See e.g.\ \cite{simpson2}*{II.6.6}) and $z^{1}=0$ is a $\Pi_{1}^{0}$-formula for reals $z$. 
The final implication follows from the proofs of \cite{kohlenbach2}*{Proposition 3.14} and \cite{kooltje}*{Prop.\ 3.9} in the same way the final implication in \eqref{tik} is proved.  

\medskip

For the remaining implication in \eqref{ht}, 
it should first be noted that in the proof that \cite{kohlenbach2}*{Proposition 3.14.3} implies $(\exists^{2})$, the intermediate value functional $F$ is only applied to polynomials $f_{y}(x):=yx-y$ and $f_{y}(x):=yx$ to obtain a discontinuous function, and hence $(\exists^{2})$.  
Hence, it suffices to obtain $\textup{E-UIVT}^{\st}$ limited to standard polynomials (which can be coded by standard reals).  
This situation is reminiscent of the `usual' Brouwerian counterexample involving IVT where only very special functions are used (See e.g.\ \cite{bridges1}*{p.\ 4} or \cite{mandje2}*{p.\ 11}).   

\medskip

We now prove the remaining implication in \eqref{ht}.  To this end, we prove that $\IVT^{*}$ implies $ \textup{E-UIVT}^{\st}$ limited to standard polynomials.  
To quantify over the latter, we will use the notation $(\forall^{\st}f^{1\di 1}\in P)$, although polynomials may be coded by reals.      
We also write $(\forall^{\st}f^{1\di 1}\in \overline{P})$ to mean that $f(0)>0 $ and $f(1)<0$.  
Similar to the proof of Theorem~\ref{tik}, we will mimic the proof of $(\exists^{2})\di \UIVT$ involving the usual `interval-halving' technique.  

\medskip

First of all, following \cite{simpson2}*{II.6.6-7}, we have the `approximate' IVT: 
\[\textstyle
(\forall^{\st}f^{1\di1}\in \overline{P})(\forall k^{0})(\exists x^{1}\in (0,1))(|f(x)|< \frac{1}{k}).  
\]
Now, by the continuity of $f$, we can replace `$(\exists x^{1}\in (0,1))(|f(x)|<\frac{1}{k})$' by a $\Sigma_{1}^{0}$-formula, 
say $(\exists n^{0})\varphi(f, x, n)$, with $\varphi$ quantifier-free.    
Furthermore, it is easy to find a (primitive recursive) witnessing function for this existential quantifier (again using the continuity of $f$).  
%
%
Actually, this witnessing function is nothing more than a realizer for the constructive `approximate' version of IVT (See e.g.\ \cite{bish1}*{Theorem~4.8, p.\ 40}).  
Hence, we can treat $(\forall k^{0})(\exists x^{1}\in (0,1))(|f(x)|< \frac{1}{k})$ as a $\Pi_{1}^{0}$-formula, say $(\forall k^{0})\psi(f, k, 0, 1)$, with $\psi$ quantifier-free and the two last variable places for the interval end points in the former formula.         

\medskip

Now define $\Psi(f, M)(0)$ as $0$ if $(\forall k^{0}\leq M)\psi(f, k, 0,\frac{1}{2})$, and $\frac{1}{2}$ otherwise.  In general, the functional $\Psi$ is defined as:
\[
\Psi(f,M)(n+1):=
\begin{cases}
\Psi(f, M)(n) &  (\forall k^{0}\leq M)\psi(f, k, \Psi(f, M)(n),\Psi(f, M)(n)+\frac{1}{2^{n+1}}) \\
\Psi(f, M)(n)+\frac{1}{2^{n+1}} & \text{otherwise}
\end{cases}.
\]
We now prove that $\Psi(f,M)$ is $\Omega$-invariant for standard $f\in \overline{P}$ and $M\in \Omega$.  
If $\Psi(f, M)(0)=0$, then $f$ becomes arbitrarily small on $[0,\frac12]$ \emph{relative to `\st'}, i.e.\ we have that $(\forall^{\st} k^{0})(\exists^{\st} x^{1}\in (0,\frac{1}{2}))(|f(x)|< \frac{1}{k})$.   
Since $f$ is a standard polynomial, this implies $(\exists^{\st}x^{1}\in [0,\frac{1}{2}])f(x_{0})\approx 0$.  Applying IVT$^{*}$ for the interval $[0,x_{0}]$, we obtain 
$(\exists^{\st}x^{1}\in [0,\frac{1}{2}])f(x)= 0$.  But then $f$ becomes arbitrarily small on $[0,\frac{1}{2}]$, i.e.\ $(\forall  k^{0})(\exists  x^{1}\in (0,\frac{1}{2}))(|f(x)|< \frac{1}{k})$, and we have 
$\Psi(f, M)(0)=\Psi(f,N)(0)=0$ for any $N\in \Omega$ by definition.  

\medskip

Similarly, one proves that $\Psi(f,M)(n)=\Psi(f,N)(n)$ for any finite $n$ and infinite $M,N$, 
and we obtain $\Psi(f,M)\approx_{1} \Psi(f,N)$ for infinite $N,M$ and $f$ any standard polynomial.  
Furthermore, it is easy to see that $\Psi(f,M)$ provides the \emph{left-most} intermediate value of $f$.  
Applying $\Omega$-CA now yields E-UIVT$^{\st}$ limited to standard polynomials, and we are done.          
\end{proof}
\begin{cor}
In $\RCAO$, we have $(\exists^{2})\asa \paai \asa \IVT^{*}\asa \UIVT$.
\end{cor}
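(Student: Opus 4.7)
The plan is to close the cycle $(\exists^{2}) \Rightarrow \paai \Rightarrow \IVT^{*} \Rightarrow \UIVT \Rightarrow (\exists^{2})$, using Theorem \ref{2trikke} as the spine and performing the same kind of ``transfer upgrade'' that yields $\UWKL$ from $\textup{E-UWKL}^{\st}$ in the corollary just after Theorem \ref{trikke}. Three of the four implications are essentially free: $(\exists^{2})\asa\paai$ is Theorem \ref{markje}; $\paai\Rightarrow\IVT^{*}$ is the first implication of \eqref{ht} inside the proof of Theorem \ref{2trikke}; and $\UIVT\Rightarrow(\exists^{2})$ is Kohlenbach's already-classical argument in \cite{kohlenbach2}*{Prop.\ 3.14}, where one feeds the uniform intermediate-value functional the linear polynomials $f_{y}(x)=yx-y$ and $f_{y}(x)=yx$ to manufacture a discontinuous choice functional on $\R$.

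The one step requiring genuinely new work is $\IVT^{*}\Rightarrow \UIVT$. Here I would combine Theorem \ref{2trikke} --- which already hands us $\textup{E-UIVT}^{\st}$ from $\IVT^{*}$ --- with an application of PF-TP$_{\forall}$ in the spirit of Theorem \ref{boon}. First, write $\UIVT$ in prenex form by pulling to the front the universal quantifiers of the $\eps$-$\delta$ continuity premise, of $f(0)\geq 0$ and $f(1)\leq 0$, and of the conclusion $f(\Phi(f))=0$ (the latter reading reals via Remark \ref{keepintireal}), then Skolemise via QF-AC$^{1,0}$ the existential witnesses that appear along the way (moduli of continuity and Cauchy witnesses). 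This recasts $\UIVT$ as an internal existence statement \emph{without parameters}, asserting the existence of a tuple of functionals satisfying a purely $\Pi_{1}$-matrix. The contraposition of PF-TP$_{\forall}$ then makes this entire tuple standard; feeding in standard $f\in \overline{C}^{\st}[0,1]$ and noting that standard inputs go to standard outputs recovers $\textup{E-UIVT}^{\st}$, and the same prenex-plus-PF-TP$_{\forall}$ argument run in the opposite direction bridges from $\textup{E-UIVT}^{\st}$ back up to $\UIVT$.

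The main obstacle is the bookkeeping around the continuity premise under prenexing: $\UIVT$ quantifies over $f\in\overline{C}[0,1]$, and one must verify that after Skolemising the continuity moduli and Cauchy witnesses the resulting internal formula contains \emph{no} second-order parameters beyond the Skolem functionals themselves, so that PF-TP$_{\forall}$ actually applies. This is precisely the bookkeeping already carried out in Theorem \ref{boon} for $\UWKL$, and once it is done the IVT version is structurally identical; in particular, the reduction to standard polynomials flagged in the proof of Theorem \ref{2trikke} means we never need to transfer continuity itself, only the action of $\Phi$ on a parameter-free class of inputs.
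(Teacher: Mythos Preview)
The paper's proof is a one-liner: Theorem~\ref{2trikke} gives $\paai\asa\IVT^{*}$, \cite{bennosam}*{Cor.\ 14} gives $(\exists^{2})\asa\paai$, and \cite{kohlenbach2}*{Prop.\ 3.14} gives $(\exists^{2})\asa\UIVT$ in \emph{both} directions. You already invoke Kohlenbach for $\UIVT\Rightarrow(\exists^{2})$ but overlook that the very same proposition also gives $(\exists^{2})\Rightarrow\UIVT$; with that in hand, $\IVT^{*}\Rightarrow\paai\Rightarrow(\exists^{2})\Rightarrow\UIVT$ is immediate and no ``genuinely new work'' is needed for the fourth implication.

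Your direct route via PF-TP$_{\forall}$ does not go through as written. The claim that one can ``run the prenex-plus-PF-TP$_{\forall}$ argument in the opposite direction'' to pass from E-UIVT$^{\st}$ up to $\UIVT$ is not justified: PF-TP$_{\forall}$ transfers $(\forall^{\st}x)\varphi(x)$ to $(\forall x)\varphi(x)$ only when $\varphi$ is internal with \emph{no parameters}, but E-UIVT$^{\st}$ hands you a particular standard $\Phi$ which then sits as a parameter inside the matrix, blocking transfer. Moreover E-UIVT$^{\st}$ only yields $f(\Phi(f))\approx 0$, not $f(\Phi(f))=0$; upgrading $\approx$ to $=$ already requires $\paai$, at which point you have $(\exists^{2})$ and Kohlenbach's route is available anyway. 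The paper does note (Remark~\ref{takkap}) that a Theorem-\ref{boon}-style direct argument is possible for IVT, but only after replacing $f\in C[0,1]$ by the type-1 RM definition of continuity via associates --- so the bookkeeping is not ``structurally identical'' to the UWKL case as you assert.
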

\begin{proof}
Immediate from the theorem, \cite{kohlenbach2}*{Prop.\ 3.14}, and \cite{bennosam}*{Cor.\ 14}.  
We also sketch a `direct' proof of $\UIVT\di \paai$ below in Remark \ref{takkap}.  
\end{proof}
We finish this section with a remark on continuity.
\begin{rem}[Continuity]\label{takkap}\rm
First of all, as is clear from \cite{kohlenbach2}*{Prop.\ 3.14} and as noted in the previous proof, UIVT limited to various continuity classes is still equivalent to $(\exists^{2})$.  
Similarly, one can replace $\overline{C}^{\st}$ in IVT$^{*}$ and $\textup{E-UIVT}^{\st}$ by $\overline{C}$ and the proof of $\IVT^{*}\di \textup{E-UIVT}^{\st}$ still goes through.  
Indeed, with $\Psi$ defined as in the proof of the theorem, one can prove $(\forall^{\st}f^{1\di 1})(\forall N,M\in \Omega)\big[f\in C\di \Psi(f,N)\approx_{1} \Psi(f,M) \big]$ and apply Corollary~\ref{genalli}.   
To prove the former, it does seem $\WKL^{\st}$ is needed (which follows from E-UIVT$^{\st}$ limited to standard polynomials).  
Since internal formulas are part of the original language (of $\RCAo$ or $\RCA_{0}$), $C$-functions are arguably more interesting objects of study than $C^{\st}$-functions (from the point of view of the EMT).   
One could also work with representations $\Phi^{1\di1}\in \mathfrak{C}[0,1]$, where the latter denotes the definition of continuity on Cantor space, i.e.\
\be\label{wantonuity}
(\forall \alpha\leq_{1}1, k^{0})(\exists N^{0})(\forall \beta\leq_{1}1)(\overline{\alpha}N=\overline{\beta}N\di \Phi(\alpha)(k)=\Phi(\beta)(k)), 
\ee
and $\Phi$ is extensional with regard to real equality as in Remark \ref{keepintireal}.  
Secondly, we can prove a version of Theorem \ref{boon} for the intermediate value theorem by replacing $f^{1\di 1}\in C[0,1]$ by the definition of 
continuity from Reverse Mathematics (\cite{simpson2}*{II.6.1}) involving a so-called associate \emph{of type 1}.  Indeed, by \cite{kohlenbach4}*{Prop.\ 4.10 and Prop.\ 4.4}, every pointwise continuous 
function satisfies \cite{simpson2}*{II.6.1} given WKL, and the latter already follows from UIVT limited to polynomials.  

\medskip

For instance, to prove that the aforementioned limited version of UIVT implies $\paai$, assume $h^{1}$ does not satisfy the latter, let $f$ be the
function from the usual Brouwerian counterexample to IVT (See \cite{mandje2}*{p.\ 11}) and if $\Phi(f)>\frac{1}{2}$ define $g$ by $g(x):=f(x)-\sum_{k=0}^{\infty} k(i) \frac{x^{i}}{i!}$, 
where $k(i)=1\asa (\exists n\leq i)h(n)\ne 0$.  Abusing notation somewhat, we have $f\approx_{1}g$, implying $\Phi(f)\approx_{1} \Phi(g)$, but this yields a contradiction as $\Phi(g)$ must satisfy $\Phi(g)\leq \frac{1}{3}$.  For the 
case $\Phi(f)\leq \frac{1}{2}$, define $g(x):= f(x)+\dots$, and we obtain $\UIVT\di \paai$.   
\end{rem}
\subsection{The EMT for the Weierstra\ss~extremum theorem}
Finally, we establish the EMT for the Weierstra\ss~maximum theorem, equivalent to WKL by \cite{simpson2}*{IV.2.3}.  
\be\tag{\textup{UWEIMAX}}
(\exists  \Phi^{(1\di 1)\di 1})(\forall f \in C[0,1])(\forall y\in [0,1])(f(y)\leq f(\Phi(f))).
\ee
\be\tag{\WEIMAX$^{*}$}
(\forall^{\st} f \in {C}[0,1])(\exists^{\st} x^{1}\in [0,1])(\forall y^{1}\in [0,1])(f(y)\leq f(x)).
\ee
By \cite{kohlenbach2}*{Prop.\ 3.14}, the exact choice of continuity in UWEIMAX does not matter.     
Note that $f\in {C}[0,1]$ does not involve `st' in WEIMAX$^{*}$.  
\begin{thm}
In $\RCAO$, $\paai \asa \textup{WEIMAX}^{*}\asa  \textup{UWEIMAX}\asa (\exists^{2})$.   
\end{thm}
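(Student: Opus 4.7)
My plan is to close the cycle
\[
\paai \di \textup{WEIMAX}^{*} \di \textup{UWEIMAX} \di (\exists^{2}) \di \paai,
\]
where the final implication is Theorem \ref{markje} (equivalently Corollary \ref{boon2}) and $\textup{UWEIMAX}\di (\exists^{2})$ runs exactly as for the analogous step in Theorem \ref{2trikke}, invoking \cite{kohlenbach2}*{Prop.\ 3.14}. The substance of the argument lies in the first two implications, both of which mimic Theorem \ref{2trikke}.

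For $\paai \di \textup{WEIMAX}^{*}$, I would first apply Theorem \ref{markje} to conclude $(\exists^{2})^{\st}$, so that relativizing the classical proof of $\WKL_{0}\vdash \textup{WEIMAX}$ (\cite{simpson2}*{IV.2.3}) to the standard world yields $\textup{WEIMAX}^{\st}$: for every standard $f\in C^{\st}[0,1]$ there is a standard $x\in [0,1]$ with $(\forall^{\st}y\in [0,1])(f(y)\leq f(x))$. To upgrade the inner quantifier to $(\forall y)$, for such standard $f,x$ use $(\exists^{2})^{\st}$ to define the standard $\{0,1\}$-valued function $h$ on $\mathbb{Q}\cap [0,1]$ by $h(q)=1 \asa f(q)\leq f(x)$ (this is $\Pi_{1}^{0}$ in $q$ with standard parameters); the preceding implies $(\forall^{\st}q)h(q)=1$, so $\paai$ transfers this to $(\forall q\in \mathbb{Q}\cap [0,1])(f(q)\leq f(x))$ internally. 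Internal continuity of $f$ together with density of the rationals in $[0,1]$ now yields $(\forall y\in [0,1])(f(y)\leq f(x))$, completing $\textup{WEIMAX}^{*}$.

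For $\textup{WEIMAX}^{*}\di \textup{UWEIMAX}$, I would follow the template of Theorem \ref{2trikke} and construct a canonical approximation $\Psi(f,M)\in [0,1]$ by interval-halving: at stage $n$, choose between the two halves of the current dyadic subinterval by comparing the $M$-sampled suprema of $f$ on each half, with ties broken to the left. For standard $f\in C[0,1]$ and $M\in \Omega$, these sampled suprema differ from the true suprema only infinitesimally, and applying $\textup{WEIMAX}^{*}$ to a rescaling of $f$ on each standard dyadic subinterval shows that the bit chosen at each stage is independent of the infinite $M$; hence $\Psi(f,M)$ is $\Omega$-invariant, with a default value for $f\notin C[0,1]$ handled by Corollary \ref{genalli}. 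Applying $\Omega$-CA then delivers a standard $\Phi^{(1\di 1)\di 1}$ with $\Phi(f)\approx \Psi(f,M)$ for every standard $f\in C[0,1]$ and every $M\in \Omega$; $\textup{WEIMAX}^{*}$ together with internal continuity show that $f(\Phi(f))$ dominates $f(y)$ for every $y\in [0,1]$ and that $\Phi$ is standard extensional, yielding $\textup{E-UWEIMAX}^{\st}$. Prenexing and applying PF-TP$_{\forall}$ as in the corollary after Theorem \ref{trikke} then produces $\textup{UWEIMAX}$ proper.

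The main obstacle I anticipate is the $\Omega$-invariance step: multiple maximizers must be disambiguated via the leftmost-tie-breaking convention (since equal true suprema on the two halves would otherwise allow the sampled-sup ordering to flip with $M$), and $\textup{WEIMAX}^{*}$ must be applied on standard dyadic subintervals rather than on $[0,1]$ itself. This parallels the delicate use of $\WKL^{*}$ on the subtrees $T_{\sigma}$ and the leftmost-path argument in Theorem \ref{trikke}, and the interval-halving/leftmost-value argument in Theorem \ref{2trikke}.
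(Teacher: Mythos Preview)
Your approach mirrors the paper's almost exactly: the same cycle, the same rational-restriction trick to reduce the inner $\Pi_{1}^{1}$ formula $(\forall y\in[0,1])(f(y)\leq f(x))$ to a $\Pi_{1}^{0}$ one amenable to $\paai$, and the same interval-halving canonical approximation plus $\Omega$-CA for the middle implication.

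There is one technical point you gloss over that the paper treats explicitly. In the step $\paai\di\textup{WEIMAX}^{*}$, recall that $\textup{WEIMAX}^{*}$ is stated for standard $f\in C[0,1]$ (the \emph{internal} $\eps$-$\delta$ class), whereas relativizing \cite{simpson2}*{IV.2.3} to `st' only yields the maximum for $f\in C^{\st}[0,1]$. You then invoke both $C^{\st}$-continuity (to apply the relativized maximum theorem) and internal continuity (for the density argument), without arguing that a standard $f\in C[0,1]$ is automatically in $C^{\st}[0,1]$. The paper devotes a full paragraph to precisely this: working with the Cantor-space notion $\mathfrak{C}[0,1]$, it uses the proof of \cite{kohlenbach4}*{Prop.~4.10} together with $\UWKL$ (available from $\paai$ via Theorem~\ref{trikke}) and PF-TP$_{\forall}$ to show that a standard $\Phi\in\mathfrak{C}[0,1]$ also satisfies $\Phi\in\mathfrak{C}^{\st}[0,1]$. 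Alternatively, the paper notes one may convert $f\in C[0,1]$ to $\Phi\in\mathfrak{C}[0,1]$ using $(\exists^{2})$ and \cite{polarhirst}, or simply run the entire cycle first for Lipschitz functions (where $C=C^{\st}$ is automatic) and then bootstrap to the general case via \cite{kohlenbach2}*{Prop.~3.14}. Your argument goes through once you insert one of these bridges.
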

\begin{proof}
To establish the theorem, we now prove:
\be\label{htto}
\paai \di \textup{WEIMAX}^{*}\di  \textup{E-UWEIMAX}^{\st}\di (\exists^{2})^{\st}.
\ee  
First of all, the final implication again follows from \cite{kohlenbach2}*{Proposition 3.14}.  Furthermore, to obtain UIVT from UWEIMAX, apply the latter to $-|f|$ (for $f\in \overline{C}$) and note that the maximum of $-|f|$ must be an intermediate value of $f$.  However, as noted in the proof of Theorem \ref{2trikke}, UIVT is only applied to polynomials to obtain $(\exists^{2})$ in the proof of \cite{kohlenbach2}*{Proposition 3.14}.  Hence, UWEIMAX and WEIMAX$^{*}$ may also be limited to certain elementary functions. 

\medskip

Secondly, we prove the first implication in \eqref{htto}.  Using \cite{kohlenbach2}*{Proposition 3.14} and \cite{bennosam}*{Cor.\ 15}, $\paai$ is equivalent to UWEIMAX, and by PF-TP$_{\forall}$ the functional in the latter is standard, immediately implying WEIMAX$^{*}$.  
We also list more conceptual `direct' proofs for Lipschitz continuous functions (with factor one) and for $\Phi\in \mathfrak{C}[0,1]$.  In light of the results in \cite{polarhirst}, it should be straightforward to convert $f\in C[0,1]$ into $\Phi\in \mathfrak{C}[0,1]$ using $(\exists^{2})$.   
Note that the first implication in \eqref{htto} is \emph{not} trivial, as the innermost universal formula of the Weierstra\ss~maximum theorem 
is $\Pi_{1}^{1}$.  Nonetheless, this formula is \emph{equivalent} to a $\Pi_{1}^{0}$-formula, namely if we restrict the real quantifier to the rationals. 
Thus, assume $\paai$ and consider standard $\Phi\in \mathfrak{C}[0,1]$.  We first prove that $\Phi\in \mathfrak{C}^{\st}[0,1]$, i.e.\ that $\Phi$ also is continuous relative to the standard world as in \eqref{wantonuity}$^{\st}$.  

\medskip

To this end, consider the proof of \cite{kohlenbach4}*{Prop.\ 4.10} in which it is proved that a functional $\Phi^{1\di 1}\in \mathfrak{C}[0,1]$ has a modulus of uniform continuity, assuming $\WKL$.  
In the latter proof, Kohlenbach defines a sequence of infinite binary trees $T_{k,n}$ using (only) $\Phi$, and the sequence of paths through these trees (the sequence exists via WKL) is used to define the characteristic function of the formula in square brackets in the following formula:
\be\label{leiter}
(\forall k, f^{1}\leq_{1}1)(\exists N)\big[(\forall h^{1}, g^{1}\leq_{1}1)(\overline{f}N=\overline{g}N=\overline{h}N  \di \Phi(h)(k)=\Phi(g)(k)) \big]
\ee
Now if $\Phi$ is additionally standard, the tree $T_{k,n}$ will also be standard, and $\UWKL$ (available via Theorem \ref{trikke}) and PF-TP$_{\forall}$ yield a standard sequence of paths through (all of) $T_{k,n}$.  It is then easy to show that the formula in square brackets in \eqref{leiter} now has a \emph{standard} characteristic function.  
However, then $\paai$ yields that for standard $k$ and standard $f^{1}\leq_{1}1$
\[
(\exists^{\st} N^{0})\big[(\forall h^{1},g^{1}\leq_{1}1)(\overline{f}N=\overline{g}N =\overline{h}N \di \Phi(f)(k)=\Phi(g)(k)) \big],
\]     
as the formula in square brackets may be treated as quantifier-free with standard parameters.  With some coding, $\Phi\in \mathfrak{C}^{\st}[0,1]$ now follows, i.e.\ we have \eqref{leiter}$^{\st}$.  

\medskip

Thirdly, since $\paai\di (\exists^{2})^{\st}\di \WKL^{\st}$, \cite{simpson2}*{IV.2.3} implies the Weierstra\ss~maximum theorem relative to `st'.  By \cite{kohlenbach4}*{Prop.\ 4.4 and 4.10}, we may apply the maximum theorem for standard $\Phi\in \mathfrak{C}[0,1]$ (which satisfy $\Phi\in \mathfrak{C}^{\st}[0,1]$ by the previous paragraph). Hence, there is standard $x_{0}\in [0,1]$ such that $(\forall q^{0}\in [0,1])(\Phi(q)\leq \Phi(x_{0}))$, where we used $\paai$ to obtain the final $\Pi_{1}^{0}$-formula.    
However, for any $\Phi\in \mathfrak{C}[0,1]$ and $x\in [0,1]$, we have
\be\label{waldord}
 (\forall q^{0}\in [0,1])(\Phi(q)\leq \Phi(x) )\asa (\forall  y^{1}\in [0,1])(\Phi(y)\leq \Phi(x)).  
\ee   
The reverse implication of \eqref{waldord} is trivial, while the forward one is a simple application of $\Phi\in \mathfrak{C}[0,1]$.  
Hence, \eqref{waldord} implies WEIMAX$^{*}$ limited to $\Phi\in \mathfrak{C}[0,1]$.  For Lipschitz continuous functions with factor one, note that such functions are by definition also Lipschitz continuous relative to `st'.     
Therefore, the same proof involving \eqref{waldord} yields WEIMAX$^{*}$ for such functions.  

\medskip

Finally, for the remaining implication in \eqref{htto}, we only provide a sketch, as the fomer is proved in much the same way as the middle implication in \eqref{ht}.  
Indeed, consider the following formula $(\forall k^{0} )(\exists x^{1}\in [0,1])(|\sup_{y\in [0,1]}f(y)-f(x)|<\frac{1}{k})$.  
Note that $\WEIMAX^{*}$ implies the existence of the supremum by \cite{simpson2}*{IV.2.3}.  As in the proof of Theorem \ref{2trikke}, there is 
a witnessing function for the existential quantifiers in the previous formula;  The functional $\Psi(f,M)$ is then built in the same way as in the proof of Theorem \ref{2trikke}, and we are done.

\medskip

In conclusion, we note that a slick proof of this theorem proceeds by proving \eqref{htto} for standard Lipschitz continuous functions and then proving `full' \eqref{htto} using the already established equivalences and (the proof of) \cite{kohlenbach2}*{Prop.\ 3.14}.   
\end{proof}
%
%
The above proofs reveal a template of the form which may be applied to obtain the EMT for RT$(1)$ and \cite{simpson2}*{I.10.3.9}, using \cite{yamayamaharehare}*{Theorems 4.2 and 4.3}.  
In Remark~\ref{difconc} below, we elaborate on this template.  Furthermore, as in Remark~\ref{takkap}, a version of Theorem \ref{boon} may be proved for the Weistra\ss~maximum theorem by restricting continuity 
to the usual definition in Reverse Mathematics.    
  
\subsection{Concluding remarks}\label{conrem}
We finish this section with some concluding remarks.
\begin{rem}[Hilbert's program]\label{corkul}\rm
We discuss the connection of the above results to Hilbert's program for finitist mathematics. 
We are motivated by Tait's analysis (\cite{tait1}) that the formal system PRA captures Hilbert's notion of finitist mathematics, 
and Burgess' detailed study of how Reverse Mathematics contributes to Hilbert's program (\cite{burgess}).  
The following quote is essential:  
\begin{quote}
[\dots] whereas a finitist cannot know that everything provable in PRA is finitistically provable, a finitist can know that everything provable in a bounded fragment of PRA such as EFA is finitistically provable.  
This positive fact is the other side of the coin from the negative fact that bounded fragments do not exhaust finitistic provability as (according to the Tait analysis) PRA provability does. (\cite{burgess}*{p.\ 139})
\end{quote}
In short, to establish a partial realization of Hilbert's program, it is essential according to Burgess that results are ultimately provable in a \emph{subsystem} of PRA, like EFA ($=I\Delta_{0}+\EXP$).  
Now, the functional $\Psi(T,M)$ from \eqref{canona} is definable in the $\Pi_{2}^{0}$-conservative extension of EFA from \cite{bennosam}*{Cor.\ 8}, i.e.\ the latter nonstandard system has a PRA-consistency proof.  
Thus, after following the latter proof, a finitist can conclude that the latter functional is unproblematic finitistically.  

\medskip

Furthermore, one can prove in the same EFA-based system that every standard functional $\Xi^{1\di 1}$ which outputs the left-most path $\Xi(T)$ in the standard binary tree $T$ satisfies $\Xi(T)\approx_{1}\Psi(T,M)$, and vice versa.  In other words, a finitist can accept the correctness of the hypothetical statement `If a functional as in UWKL exists, then it equals a finitistically acceptable object'.  It should be noted that a similar argument works for the fan functional \eqref{MUC} from Section \ref{fannypack2}, which happens to be inconsistent with UWKL.           
\end{rem}
As to intuitionistic mathematics, we remark the following.
\begin{rem}\rm
For L.E.J.\ Brouwer, the real numbers $\R$ constituted a `unsplittable continuum', exemplified by Brouwer's rejection of $x>_{\R}0 \vee x\leq_{\R}0$, a special case of \emph{tertium non datur}.  
A similar observation regarding the `syrupy' continuum in intuitionistic mathematics is made by van Dalen in \cite{dalencont}.  
The results in the previous theorems and corollaries go the opposite way:  In our system, the maximum or intermediate values of continuous functions are determined by discrete case distinctions 
as done in the canonical functional.  In this way, a `very discrete' picture of the continuum emerges.  Furthermore, in our opinion, the canonical approximations endow the original functionals with plenty of `numerical meaning', though not 
the kind envisaged by Brouwer and other constructivists.    
In Section \ref{fannypack2}, we establish the EMT for principles from intuitionistic mathematics.   
\end{rem}
Next, we discuss a connection to intuitionistic logic due to Kohlenbach.  
\begin{rem}\rm\label{difconc}
As noted above, while the intermediate value theorem is provable in $\RCA_{0}$, the uniform version is equivalent to $(\exists^{2})$.  
Similarly, the statement SUP that every continuous function has a supremum is equivalent to the Weierstra\ss~maximum theorem \cite{simpson2}*{IV.2.3}, 
but the uniform version of SUP is much weaker than $(\exists^{2})$ (See \cite{kohlenbach2}*{\S3} and Corollary \ref{wusup}).  This behaviour can be explained as follows.   

\medskip

Following Kohlenbach (\cite{kohlenbach2}), the cause of the difference in behaviour between the maximum theorem and SUP, is that the latter can be proved from the fan theorem in intuitionistic logic, while the former by contrast requires classical logic.  This use of classical logic results in a discontinuity at the uniform level and hence $(\exists^{2})$ due to so-called Grilliot's trick (See \cite{kohlenbach2}*{\S3} and \cite{grilling}).  
%
%
This leads us to the following conjecture, where `BISH' is Errett Bishop's \emph{Constructive Analysis} (\cite{bish1}).  
\begin{conj}\label{braddd}~\rm
For a theorem $T$ provable in $\ACA_{0}$, there are two categories:
\begin{enumerate}  
\item~\big[\textup{BISH} $\vdash$ ({$T\di \WKL$})\big] We have $\paai\asa (\exists^{2}) \asa T^{*}\asa UT$.
\item~\big[\textup{BISH} $\vdash$ ({$\FAN\di T$~})\big] We have~~~~~ $T^{\st}\asa T^{*}\asa UT^{\st}$.  
\end{enumerate}  
\end{conj}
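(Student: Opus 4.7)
The plan is to treat each clause of the conjecture via a cycle of implications modelled on the templates already established in Theorems \ref{trikke}, \ref{doooook}, and \ref{2trikke}, with the intuitionistic/classical distinction in the hypotheses dictating which arrow of the cycle is the genuinely hard one.

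For case (1), assume $\BISH\vdash(T\di \WKL)$. The cycle I would establish is
\[
\paai \di (\exists^{2}) \di UT \di T^{*} \di \paai.
\]
The first implication is Theorem \ref{markje}, already in hand. For the second, $T$ is by hypothesis provable in $\ACA_{0}$, whose functional version is essentially $(\exists^{2})$ over $\RCAo$; the classical proof of $T$ should go through in the higher-type base theory once arithmetical comprehension is available as a functional, yielding a term in $(\exists^{2})$ that witnesses $UT$. The step $UT\di T^{*}$ is the ``uniform implies nonstandard'' move: after removing existential quantifiers via QF-AC$^{1,0}$ as in the proof of Theorem \ref{boon}, one applies PF-TP$_{\forall}$ to assume the extracted functional is standard and instantiates it at standard inputs. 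The closing step $T^{*}\di\paai$ is where the hypothesis $\BISH\vdash(T\di\WKL)$ is used: the intuitionistic proof of $T\di \WKL$ should transfer to $T^{*}\di \WKL^{*}$, and Theorem \ref{trikke} finishes the cycle.

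For case (2), assume $\BISH\vdash(\FAN\di T)$. Here the cycle is
\[
T^{\st} \di T^{*} \di UT^{\st} \di T^{\st},
\]
with $UT^{\st}\di T^{\st}$ immediate by instantiation. The harder direction $T^{*}\di UT^{\st}$ would exploit the Fujiwara--Kohlenbach metatheorems, which equate classical uniform existence with intuitionistic provability for rich enough formula classes; combined with an $\Omega$-CA-based construction of a canonical approximation $\Psi(\cdot,M)$ of low complexity, $\Omega$-invariant precisely because the intuitionistic proof avoids non-constructive case distinctions, this yields the standard functional needed for $UT^{\st}$. The remaining step $T^{\st}\di T^{*}$ is the content of the EMT at this ``constructive'' level: since $\FAN$ enjoys a standard (indeed $\Omega$-invariant) fan functional, standardness of the witness asserted by $T^{\st}$ should allow one to drop the `st' on the witnessing property, plausibly via a monotone functional interpretation relativised to `st'.

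The main obstacle is twofold. First, ``$\BISH\vdash$'' is informal; to make either clause precise one must fix a formal system (say Troelstra's $\textup{EL}$ or an intuitionistic fragment of $\RCAo$) and verify that the classical proof transformation in case (1) and the Fujiwara--Kohlenbach realizer extraction in case (2) both relativise to the standard world of $\RCAO$. Second, the $T^{*}\di\paai$ step of case (1) requires a genuinely \emph{uniform} reduction: one must show that the nonstandard witness supplied by $T^{*}$ is strong enough to drive a proof of $\WKL^{*}$ in a way that survives the shifted quantifier structure, which in general will demand a case-by-case analysis and will interact with the Reverse Mathematics zoo (whose very existence suggests that a sharp dichotomy of the two cases is optimistic for $T$ strictly between $\RCA_{0}$ and $\WKL_{0}$). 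The conjecture is consistent with every example treated in Sections~\ref{revmathrcao} onward, but a uniform proof for arbitrary $T$ likely requires a monotone Dialectica-style metatheorem tailored to $\RCAO$ that is not yet available in the literature, and this is the step I expect to be the principal difficulty.
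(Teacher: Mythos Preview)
The statement you are attempting to prove is labelled a \emph{conjecture} in the paper, and the paper does not supply a proof. It is introduced in Remark~\ref{difconc} with the words ``This leads us to the following conjecture'', and the material following it consists only of lists of candidate theorems for each of the two categories together with worked examples in Sections~\ref{revmathrcao} and~\ref{fannypack}. There is therefore no ``paper's own proof'' against which to compare your proposal.

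To your credit, you essentially recognise this yourself in the final paragraph: you note that the hypothesis ``$\BISH\vdash$'' is informal, that the closing implication $T^{*}\di\paai$ in case~(1) would in general require a case-by-case analysis, and that a uniform argument ``likely requires a monotone Dialectica-style metatheorem tailored to $\RCAO$ that is not yet available''. These are exactly the reasons the statement is left as a conjecture. Your cycle of implications for case~(1) is the template the paper uses for its individual examples (Theorems~\ref{trikke}, \ref{doooook}, \ref{2trikke}), and your cycle for case~(2) matches the structure of Theorem~\ref{dfh} and Corollary~\ref{cardargo}; but promoting these templates to a metatheorem covering arbitrary $T$ is precisely what remains open. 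In short: your plan is a reasonable heuristic outline, but it is not a proof, and the paper does not claim to have one either.
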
 
Examples of the second case of the conjecture are discussed in Section \ref{fannypack}.  In particular, we study the fan theorem itself, the Heine-Borel lemma, Riemann integration, and the supremum of continuous functions.        
\end{rem}
Following this conjecture, the following theorems should fall into the first category: \emph{Peano's theorem for $y'=f(x,y)$, binary expansion of reals, Jordan matrix decomposition, Ramsey's theorem \textup{RT}$(1)$, contraposition of Heine-Borel compactness, G\"odel's completeness theorems, Brouwer's fixed point theorem, the Hahn-Banach theorem, Weierstra\ss~approximation theorem, the Hilbert and Robson basis theorems \(\cite{baasje}\), \WWKL, etc}.

\medskip

Examples of theorems which should fall in the second category: \emph{contraposition of $\WWKL$ and $\Sigma_{1}^{0}$-separation, Riemann integration of continuous functions, Heine-Borel compactness, theorems from the previous category involving \textbf{unique} existence \(\cite{ishberg}\), existence of supremum for $f\in C[0,1]$, etc.} 

\medskip

Finally, we discuss our choice of framework.  
\begin{rem}\label{flack}\rm
As a consequence of the above results, we observe that the functional $\Phi$ from $\UWKL$ (which may be assumed to output the left-most path) equals the functional $\Theta(\cdot, M)$ from \eqref{tomega} for infinite $M$ and \emph{standard input}.   
Similarly, the functional $\varphi$ from $(\exists^{2})$ equals $\psi(\cdot, M)$ from \eqref{treffend} for infinite $M$ and \emph{standard input}.  The apparent restriction to \emph{standard input} is only a limitation of our choice of framework: Indeed, in \emph{stratified} Nonstandard Analysis, the unary predicate `$\st(x)$' is replaced by the binary predicate `$x\rel y$', to be read `$x$ is standard relative to $y$' (\cites{hrbacek3, hrbacek4, hrbacek5, aveirohrbacek, peraire}).  In this framework, we could prove the following:  
\[
(\forall f^{1})(\forall M\sler f)[\psi(f, M)=_{0} \varphi(T)]\wedge (\forall T^{1}\leq_{1}1)(\forall M\sler T)[\Theta(T, M)\approx_{1} \Phi(T)],
\]
where $x\sler y$ is $\neg (x\rel y)$, i.e.\ $x$ is \emph{nonstandard relative} to $y$.  In other words, in stratified Nonstandard Analysis, the approximation of $\Phi$ and $\varphi$ from $\UWKL$ and $(\exists^{2})$ works \emph{for any object}, not just the standard ones.    
Of course, we have chosen Nelson's framework for this paper, as this approach is more mainstream.   
\end{rem}

\section{The Explicit Mathematics theme for the fan functional}\label{fannypack2}
In this section, we establish the EMT for the \emph{fan functional}, defined as in \eqref{MUC} below, a classically false principle (See Theorem \ref{inco}).
Hence, Corollary \ref{dixie} below implies that the EMT is not limited to statements of classical mathematics.    
For reasons of space, we only establish the EMT for one intuitionistic principle;
In \cite{sambrouwt}, a large number of intuitionistic principles is studied from the point of view of the EMT, including \emph{Brouwer's continuity theorem}.   

\medskip
  
As to its history, the fan functional was introduced by Tait as the first example of a functional which is \emph{non-obtainable}, i.e.\ not computable from lower-type objects (See \cite{noortje}*{p.\ 102}). 
In intuitionistic mathematics, the fan functional emerges as follows:  By \cite{troelstra1}*{2.6.6, p.\ 141}, if a universe of functions $\mathfrak{U}$ satisfies $\bf{EL}+\FAN$, then the class ECF$(\mathfrak{U})$ of \emph{extensional continuous functionals relative to $\mathfrak{U}$}, contains a fan functional.  Here, $\bf{EL}$ is a basic system of intuitionistic mathematics and FAN is the fan theorem, the classical contraposition of WKL.  Similar results are in \cites{troelstra2,troelstra3,gandymahat}.    
\be\label{MUC}\tag{MUC}
(\exists \Omega^{3})(\forall \varphi^{2}) (\forall f^{1}, g^{1}\leq_{1}1 )[\overline{f}(\Omega(\varphi))=_{0}\overline{g}(\Omega(\varphi))\di \varphi(f)=_{0}\varphi(g)].
\ee
Clearly, the existence of the fan functional implies that all type 2-functionals are continuous, which contradicts $(\exists^{2})$ as the latter is equivalent to the 
existence of \emph{discontinuous} functions by \cite{kohlenbach2}*{Prop.\ 3.12}.   

\medskip

Now consider the following principle expressing that all standard type 2 objects are \emph{nonstandard} continuous:  
\be\label{druk}
(\forall^{\st}\varphi^{2})( \forall f^{1},g^{1}\leq_{1}1)\big[ {f}\approx_{1}{g} \di \varphi(f)=_{0}\varphi(g) \big] \tag{$\mathfrak{M}$}.
\ee
We have the following theorem.  
\begin{thm}\label{muckr}
In $\RCAO$, we have $\eqref{MUC}^{\st}\asa \eqref{druk}$.
\end{thm}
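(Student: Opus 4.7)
The plan is to prove both directions of $\textup{MUC}^{\st}\asa\eqref{druk}$ following the overspill-plus-$\textup{HAC}_{\INT}$ template employed elsewhere in the paper.

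For $\eqref{druk}\Rightarrow\textup{MUC}^{\st}$: fix a standard $\varphi^{2}$ and consider the internal, upward-closed (in $N$) formula
\[
\psi(N)\equiv(\forall f,g\leq_{1}1)\big[\overline{f}N=_{0}\overline{g}N\di\varphi(f)=_{0}\varphi(g)\big].
\]
Were $\psi(N)$ to fail at every standard $N$, overspill would deliver an infinite $N_{0}$ together with $f_{0},g_{0}\leq_{1}1$ satisfying $\overline{f_{0}}N_{0}=\overline{g_{0}}N_{0}$ and $\varphi(f_{0})\neq\varphi(g_{0})$; but agreement up to the infinite $N_{0}$ forces $f_{0}\approx_{1}g_{0}$, contradicting $\eqref{druk}$. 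Thus $(\forall^{\st}\varphi^{2})(\exists^{\st}N)\psi(N)$; applying $\textup{HAC}_{\INT}$ and then taking the standard max, exactly as in the proof of Theorem~\ref{drifh}, yields a standard $\Omega^{3}$ witnessing $\textup{MUC}^{\st}$.

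For $\textup{MUC}^{\st}\Rightarrow\eqref{druk}$: let $\Omega$ be the standard witness and fix standard $\varphi^{2}$. Setting $N:=\Omega(\varphi)$ (standard), the $\approx_{1}$-closeness of $f,g\leq_{1}1$ immediately forces the internal equality $\overline{f}N=_{0}\overline{g}N$, since every $n<N$ is standard. My plan for deducing $\varphi(f)=\varphi(g)$ is to introduce the standard auxiliary $\tilde{\varphi}(h):=\varphi((\overline{h}N)\ast 0^{\N})$; by construction this depends only on $\overline{h}N$, so $\tilde{\varphi}(f)=\tilde{\varphi}(g)$ is automatic, and $\textup{MUC}^{\st}$ applied to $\varphi$ with the standard pair $(h,(\overline{h}N)\ast 0^{\N})$ gives $\tilde{\varphi}(h)=\varphi(h)$ for every standard $h$. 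The task reduces to extending the agreement $\tilde{\varphi}=_{2}\varphi$ from the standard world to the actual (possibly nonstandard) $f,g$.

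The hard part will be precisely this extension, since $\RCAO$ deliberately withholds standard extensionality (cf.\ Remark~\ref{equ}). My plan is to argue contrapositively: any putative $f\leq_{1}1$ with $\varphi(f)\neq\tilde{\varphi}(f)$ has standard initial segment $\overline{f}N$ (standard because $N$ is standard and all entries are $0/1$), and an overspill argument on the internal witness of this disagreement, combined with the standardness of $\Omega$ and $\varphi$, should contradict $\textup{MUC}^{\st}$ itself by producing standard inputs $h_{1},h_{2}\leq_{1}1$ with $\overline{h_{1}}N=\overline{h_{2}}N$ but $\varphi(h_{1})\neq\varphi(h_{2})$. Threading this contrapositive cleanly through the parameter-free transfer $\textup{PF-TP}_{\forall}$ available in $\RCAO$---recalling that full transfer with parameters would collapse $\textup{MUC}^{\st}$ anyway via the inconsistency of $\textup{MUC}$ with $(\exists^{2})$ (see \cite{kohlenbach2}*{Prop.\ 3.12})---is the key technical step.
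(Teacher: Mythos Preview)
Your forward direction $\eqref{druk}\Rightarrow\textup{MUC}^{\st}$ is fine: overspill on the internal, downward-closed formula $\neg\psi(N)$ is a legitimate shortcut to what the paper does by building the explicit $\Omega$-invariant functional $\Xi(\varphi,M)$ from \eqref{dagnoor} and applying $\Omega$-CA. Your route is shorter; the paper's route has the advantage of exhibiting the canonical approximation, which is needed later for Corollary~\ref{dixie} and the discussion in Remark~\ref{corkul}.

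The backward direction has a genuine gap. You correctly isolate the difficulty---extending $\varphi=\tilde\varphi$ from standard to arbitrary binary inputs---but your proposed mechanism cannot work. Overspill takes you from ``all standard $N$'' to ``some infinite $N$''; it does \emph{not} take an existential witnessed by a nonstandard $f$ and hand you a standard witness. That reverse passage is exactly Transfer with parameters (here the parameter is $\varphi$), and as you yourself note, that would yield $\paai$ and hence $(\exists^{2})$, which is inconsistent with $\textup{MUC}^{\st}$ by Theorem~\ref{inco}. So the ``overspill argument on the internal witness of this disagreement'' is a non-starter, and the final sentence of your proposal is not a plan but a restatement of the obstacle.

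What the paper actually does is remove the parameter rather than work around it. The language of $\RCAO$ contains a constant symbol $\Omega_{0}^{3}$ with defining axiom \eqref{totally}, which pins down the (minimal) fan functional uniquely whenever it exists. The paper first proves the biconditional \eqref{maybe}, then uses \eqref{totally} to replace the standard-but-parametric $\Omega$ by the language constant $\Omega_{0}$ in the $\leftarrow$-direction, obtaining \eqref{maybe7}. After pulling the inner $(\forall^{\st}f)$ to the front, this is a sentence of the form $(\forall^{\st}\vec x)\theta(\vec x)$ with $\theta$ internal and \emph{genuinely parameter-free}, so $\textup{PF-TP}_{\forall}$ applies and yields \eqref{durfje}, from which \eqref{druk} is immediate. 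The point you are missing is that $\Omega_{0}$ is not a parameter but part of the signature; without invoking \eqref{totally} (or an equivalent device) there is no way to thread this through $\textup{PF-TP}_{\forall}$.
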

\begin{proof}
Let $\{0,1\}^{N}$ be the set of binary sequences of length $N$.  
For $\varphi^{2}$ and $f^{0}\in \{0,1\}^{N}$, we tacitly assume that $\varphi(f)$ stands for $\varphi(f* 00\dots)$.  

\medskip

First of all, assume \eqref{druk} and note that the latter immediately implies
\[
(\forall^{\st}\varphi^{2})(\forall f^{1},g^{1}\leq_{1}1)(\exists^{\st}x^{0})\big[(\overline{f}x=_{0}\overline{g}x) \di \varphi(f)=_{0}\varphi(g) \big].  
\]
Furthermore, we obtain for any fixed $M\in \Omega$, 
\be\label{krudf}
(\forall^{\st}\varphi^{2})(\forall f^{0},g^{0}\in \{0,1\}^{M})(\exists^{\st}x^{0})\big[(\overline{f}x=_{0}\overline{g}x) \di \varphi(f)=_{0}\varphi(g) \big].  
\ee
The formula $\Phi(x,f,g,\varphi)$ in square brackets in \eqref{krudf} is decidable and we define $g(f,g,\varphi,M)$ as the least $x^{0}\leq M$ such that $\Phi(x,f,g,\varphi)$.  As the range of $f,g$ in \eqref{krudf} is discrete, we may compute $\max_{f,g\in \{0,1\}^{M}}  g(f,g,\varphi,M)$.  
However, this \emph{finite} number does not depend on $f $ or $g$ anymore, and we obtain
\be\label{tocheq}
(\forall^{\st}\varphi^{2})(\exists^{\st} y^{0})(\forall f^{0},g^{0}\in \{0,1\}^{M})\big[(\overline{f}y=_{0}\overline{g}y) \di \varphi(f)=_{0}\varphi(g) \big].  
\ee
%
Combining \eqref{druk} and \eqref{tocheq}, we have that
\be\label{cruxsks}\tag{$\mathfrak{P}$}
(\forall^{\st}\varphi^{2})(\exists^{\st}x^{0})(\forall f^{1},g^{1}\leq_{1}1)\big[(\overline{f}x=_{0}\overline{g}x) \di \varphi(f)=_{0}\varphi(g) \big].
\ee
Note that if $y^{0}$ is as in \eqref{tocheq}, then in \eqref{cruxsks} we can take $x=y$.
Now define $\Xi(\varphi^{2},M^{0})$ as the least $y^{0}\leq M^{0}$ as in \eqref{tocheq}, i.e.\ the following `elementary in $\varphi$' functional:
\be\label{dagnoor} 
\Xi(\varphi^{2},M^{0}):=(\mu y\leq M)(\forall f^{0},g^{0}\in \{0,1\}^{M})\big[(\overline{f}y=_{0}\overline{g}y) \di \varphi(f)=_{0}\varphi(g) \big].\tag{$\mathfrak{I}$}
\ee
The functional $\Xi(\varphi^{2},M^{0})$ is clearly $\Omega$-invariant (because we assume \eqref{druk}), i.e.\
\be\label{doytrana}
(\forall^{\st}\varphi^{2})(\forall M^{0},N^{0})[ \Xi(\varphi,M)=_{0}\Xi(\varphi,N)],
\ee
and hence $\eqref{MUC}^{\st}$ follows from \eqref{dagnoor} and \eqref{doytrana} by applying $\Omega$-CA to the latter.   

\medskip

Secondly, assume $\eqref{MUC}^{\st}$ and note that we may assume that $\Omega(\varphi)$ is minimal in that for $m<\Omega(\varphi)$, there are binary sequences $\alpha,\beta$ of length at most $m$ such that $\varphi(\alpha)\ne_{0}\varphi(\beta)$.  Indeed, we need only check a finite number of finite binary sequences to see if $\Omega(\varphi)$ is minimal in this sense.  A simple bounded search can be used to redefine $\Omega(\varphi)$ if necessary.      
Now assume the following formula:
\be\label{maybe}
(\forall^{\st}n^{0},\varphi^{2})\big[ (\forall^{\st}f^{1}\leq_{1} 1)(\varphi(f)=_{0}\varphi(\overline{f}n))\asa n\geq_{0} \Omega(\varphi)  \big].
\ee
As stated in \cite{bennosam}*{\S3.3} and suggested in Remark \ref{tokkiep}, the language $\RCAO$ contains a symbol $\Omega_{0}^{3}$ with defining axiom 
\be\label{totally}
\st(\Omega_{0})\wedge (\forall^{\st}\Xi^{3})\big[M^{\st}(\Xi)\di (\forall^{\st} \varphi^{2})(\Omega_{0}(\varphi)=\Xi(\varphi))\big], 
\ee
where $M(\Omega)$ is the universal formula in \eqref{MUC} with the additional requirement that $\Omega(\varphi)$ is minimal.  
The axiom \eqref{totally} expresses that the fan functional, if it exists, is unique and standard.  
Thus, \eqref{maybe} yields      
\be\label{maybe7}
(\forall^{\st}n^{0},\varphi^{2})\big[ (\forall^{\st}f^{1}\leq_{1} 1)(\varphi(f)=_{0}\varphi(\overline{f}n))\leftarrow n\geq_{0} \Omega_{0}(\varphi)  \big],
\ee
which contains no parameters, i.e.\ \eqref{maybe} qualifies for PF-TP$_{\forall}$ (after bringing the universal quantifier outside the square brackets).  Hence, we obtain:
\be\label{durfje}
(\forall n^{0},\varphi^{2})\big[ (\forall f^{1}\leq_{1} 1)(\varphi(f)=_{0}\varphi(\overline{f}n))\leftarrow n\geq_{0} \Omega_{0}(\varphi)  \big],
\ee
Together with \eqref{maybe}, \eqref{druk} is now immediate.

\medskip

Finally, we prove \eqref{maybe}.  
The reverse direction of the latter is immediate by \eqref{MUC}$^{\st}$;  For the forward direction, assume 
$(\forall^{\st}f^{1}\leq_{1} 1)(\varphi(f)=_{0}\varphi(\overline{f}m_{0}))\wedge  m_{0}<_{0} \Omega(\varphi) $ for some fixed standard $m_{0}^{0}$ and $\varphi^{2}$.  
Fix standard $f^{1},g^{1}\leq_{1}1$ such that $\overline{f}m_{0}=\overline{g}m_{0}$.  We have $\varphi(f)=_{0}\varphi(\overline{f}m_{0})$ and $ \varphi(g)=_{0}\varphi(\overline{g}m_{0})$ by assumption, and
$\varphi(\overline{g}m_{0})=_{0}\varphi(\overline{f}m_{0})$ by extensionality.  However, we now have $\varphi(f)=_{0}\varphi(g)$ for any $f,g$ such that $\overline{f}m_{0}=\overline{g}m_{0}$ while $m_{0}<\Omega(\varphi)$, by assumption.  This contradicts the minimality of $\Omega(\varphi)$, and the forward direction of \eqref{maybe} follows.  
\end{proof}
Similar to \cite{kohlenbach2}*{Prop.\ 3.15}, $\RCAO+\eqref{MUC}$ is a conservative extension of $\WKL_{0}$ by \cite{bennosam}*{Theorem 5}.  
Now consider the formula \eqref{cruxsks} from the proof and consider the following corollary, establishing the EMT for the fan functional.  
\begin{cor}\label{dixie}
In $\RCAO$, we have $\eqref{MUC}\asa \eqref{MUC}^{\st}\asa \eqref{cruxsks}$.
\end{cor}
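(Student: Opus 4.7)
The plan is to establish the cycle
\[
\eqref{MUC}\di \eqref{MUC}^{\st}\di \eqref{cruxsks}\di \eqref{MUC},
\]
exploiting Theorem \ref{muckr} and the uniqueness-and-standardness machinery for the fan functional (namely the constant $\Omega_{0}^{3}$ with defining axiom \eqref{totally}, plus Remark \ref{tokkiep}).

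For $\eqref{MUC}\di \eqref{MUC}^{\st}$, I would argue as in Remark \ref{tokkiep}: a bounded search turns any $\Omega$ as in \eqref{MUC} into a \emph{minimal} such $\Omega$, and the minimal fan functional is unique, hence standard by the contraposition of PF-TP$_{\forall}$ (equivalently, by \eqref{totally}). Therefore \eqref{MUC}$^{\st}$ is immediate. The implication $\eqref{MUC}^{\st}\di \eqref{cruxsks}$ is trivial: given standard $\varphi^{2}$, set $x:=\Omega(\varphi)$, which is standard by standardness of $\Omega$, and then the internal matrix of \eqref{MUC}$^{\st}$ is \emph{already} the universal formula in $f,g\leq_{1}1$ required by \eqref{cruxsks}.

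The remaining direction $\eqref{cruxsks}\di \eqref{MUC}$ is the main step. First, note the matrix of \eqref{cruxsks} is internal, so one may apply HAC$_{\textup{int}}$ to the standard $\forall^{\st}\exists^{\st}$ prefix to obtain a standard $\Phi^{2\di 0^{*}}$ providing, for each standard $\varphi^{2}$, a finite sequence $\Phi(\varphi)$ containing some $x$ as in \eqref{cruxsks}. Define the standard functional $\Omega(\varphi):=\max_{i<|\Phi(\varphi)|}\Phi(\varphi)(i)$. Since if some $x$ validates the matrix of \eqref{cruxsks} then so does every $y\geq x$ (because $\overline{f}y=\overline{g}y$ implies $\overline{f}x=\overline{g}x$), the functional $\Omega$ satisfies, for every standard $\varphi^{2}$, the internal formula $(\forall f,g\leq_{1}1)[\overline{f}\Omega(\varphi)=\overline{g}\Omega(\varphi)\di \varphi(f)=\varphi(g)]$. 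After replacing $\Omega$ by its minimal variant (again via bounded search), the uniqueness axiom \eqref{totally} yields $\Omega(\varphi)=\Omega_{0}(\varphi)$ for all standard $\varphi$, so we obtain
\[
(\forall^{\st}\varphi^{2})(\forall f^{1},g^{1}\leq_{1}1)\big[\overline{f}\Omega_{0}(\varphi)=_{0}\overline{g}\Omega_{0}(\varphi)\di \varphi(f)=_{0}\varphi(g)\big].
\]
This statement involves only the \emph{constant} $\Omega_{0}$ of $\RCAO$ (no free parameters), so PF-TP$_{\forall}$ lifts the outermost $(\forall^{\st}\varphi^{2})$ to $(\forall \varphi^{2})$, yielding \eqref{MUC} witnessed by $\Omega_{0}$.

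The main obstacle will be exactly the last implication, namely exchanging the order of quantifiers in $(\forall^{\st}\varphi)(\exists^{\st}x)$ from \eqref{cruxsks} in order to manufacture a genuine functional, and ensuring this functional meets the hypotheses for PF-TP$_{\forall}$. Both are handled by the pair HAC$_{\textup{int}}$ plus the standardness-of-uniquely-defined-functionals axiom \eqref{totally}, which together play here exactly the role that $\Omega$-CA played in the proof of Theorem \ref{muckr}.
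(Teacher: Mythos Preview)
Your cycle is sound in outline, but Step~2 ($\eqref{MUC}^{\st}\di\eqref{cruxsks}$) has a real gap. By the paper's convention (Remark~\ref{notawin}), $\eqref{MUC}^{\st}$ is obtained by relativising \emph{all} quantifiers to `st', so its matrix is $(\forall^{\st} f,g\leq_{1}1)[\ldots]$, not the internal $(\forall f,g\leq_{1}1)[\ldots]$ that $\eqref{cruxsks}$ demands. Setting $x:=\Omega(\varphi)$ therefore only gives the bracketed implication for \emph{standard} $f,g$. Upgrading this to all $f,g$ is exactly the nontrivial content of the second half of the proof of Theorem~\ref{muckr}: one first proves \eqref{maybe}, replaces $\Omega$ by the language constant $\Omega_{0}$ via \eqref{totally}, and then applies PF-TP$_{\forall}$ to obtain \eqref{durfje}. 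From \eqref{durfje} the formula $\eqref{cruxsks}$ is immediate (take $x=\Omega_{0}(\varphi)$). So the implication you labelled ``trivial'' is in fact the step that needs the full $\Omega_{0}$/PF-TP$_{\forall}$ machinery; the paper's one-line proof simply points to \eqref{durfje} for precisely this reason.

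Your Step~3 ($\eqref{cruxsks}\di\eqref{MUC}$) is correct and is a mild repackaging of the paper's route. The paper goes $\eqref{cruxsks}\Rightarrow\eqref{MUC}^{\st}$ via $\Omega$-CA applied to $\Xi$ from \eqref{dagnoor}, then $\eqref{MUC}^{\st}\Rightarrow\eqref{MUC}$ via \eqref{durfje}; you instead apply HAC$_{\textup{int}}$ directly, take a maximum, minimise by bounded search (legitimate, since once a modulus $n$ is known, ``$m<n$ is also a modulus'' reduces to a finite check over binary strings of length $n$), identify with $\Omega_{0}$ via \eqref{totally}, and transfer. Both arguments use the same two ingredients (HAC$_{\textup{int}}$ and the uniqueness axiom for $\Omega_{0}$), so the difference is cosmetic. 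The only thing to fix is Step~2: replace your ``trivial'' justification by an appeal to \eqref{durfje}.
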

\begin{proof}
Immediate from the previous proof, in particular \eqref{durfje}, and Remark~\ref{tokkiep}.  
\end{proof}
Unsurprisingly, the fan functional is inconsistent with classical mathematics.
\begin{thm}\label{inco}
The principles $\eqref{MUC}^{\st}$ and $(\exists^{2})^{\st}$ are inconsistent with $\RCAO$.
\end{thm}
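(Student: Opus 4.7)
The plan is to derive a direct contradiction by extracting the standard fan functional $\Omega$ from $(\textup{MUC})^{\st}$ and the standard discontinuous type-$2$ functional $\varphi$ from $(\exists^{2})^{\st}$, and then exhibiting two standard binary sequences on which $\Omega$ forces $\varphi$ to agree while $\varphi$ is compelled by its own defining axiom to disagree. This mirrors Kohlenbach's classical observation (\textup{cf.}\ \cite{kohlenbach2}*{Prop.\ 3.12}) that the fan functional is incompatible with arithmetical comprehension, transposed to the standard fragment.

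First I fix standard $\Omega^{3}$ witnessing $(\textup{MUC})^{\st}$ and standard $\varphi^{2}$ witnessing $(\exists^{2})^{\st}$. Because standardness in $\bennot$ (and hence in $\RCAO$) is closed under application of a standard functional to standard arguments, the number $N := \Omega(\varphi)$ is standard, and therefore so is $N+1$.

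Next I define two standard binary sequences in $\{0,1\}^{\N}$: let $g_{0}$ be identically $1$, and let $g_{1}$ be given by $g_{1}(N+1) = 0$ and $g_{1}(n) = 1$ for $n \neq N+1$. Both are defined from the standard parameter $N$ via a quantifier-free condition, hence both are standard, and both satisfy $g_{i} \leq_{1} 1$. By construction, $\overline{g_{0}}(N) = 1^{N} = \overline{g_{1}}(N)$. Applying $(\textup{MUC})^{\st}$ to the standard triple $(\varphi, g_{0}, g_{1})$ then yields $\varphi(g_{0}) = \varphi(g_{1})$. On the other hand, applying $(\exists^{2})^{\st}$ to the standard input $g_{0}$ yields $\varphi(g_{0}) \neq 0$, since no $x$ at all satisfies $g_{0}(x) = 0$; applying it to the standard $g_{1}$ yields $\varphi(g_{1}) = 0$, since the standard witness $N+1$ satisfies $g_{1}(N+1) = 0$. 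These two conclusions are contradictory.

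The only delicate ingredient is the justification that $N$, $g_{0}$, and $g_{1}$ are standard; this is the one place where I must invoke the closure properties built into the base theory, namely that standardness propagates through application of standard functionals to standard inputs and through primitive recursive construction from standard parameters. Once that is in hand, the rest is a direct two-step instantiation of the two axioms being refuted, so I expect no serious obstacle.
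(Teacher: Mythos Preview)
Your proof is correct and takes essentially the same approach as the paper: both exhibit two standard binary sequences agreeing on the first $\Omega(\varphi)$ bits, one never zero and the other with a zero past that point, forcing $\varphi$ to simultaneously agree and disagree on them. The paper's $f$ has zeros at all positions $\geq \Omega(\varphi_0)$ while your $g_1$ has a single zero at $N+1$, but this is cosmetic; your explicit justification that $N$, $g_0$, $g_1$ are standard via closure of standardness under application and primitive-recursive definition is a point the paper leaves implicit.
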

\begin{proof}
Let $\varphi_{0}$ be the functional defined by $(\exists^{2})^{\st}$ and let $\Omega$ be as in \eqref{MUC}$^{\st}$.  Now define $f^{1}$ as follows: $f(n)$ is $1$ if $n\geq \Omega(\varphi_{0})$, and zero otherwise,     
and let $\mathbb{1}^{1}$ be the sequence which is $1$ everywhere.  
Then we have $\overline{f}(\Omega(\varphi_{0}))=\overline{\mathbb{1}}(\Omega(\varphi_{0}))$ and hence $\varphi_{0}(f)=\varphi_{0}(\mathbb{1})=1$, by $\eqref{MUC}^{\st}$.  
However, by the definition of $\varphi_{0}$, we have $\varphi_{0}(f)=0$, as clearly $f(\Omega(\varphi_{0})+1)=0$.  
\end{proof}   
Finally, we briefly consider the classically correct \eqref{MUC}$_{0}$ obtainted by limiting \eqref{MUC} to $\varphi^{2}\in \textup{C}(2^{N})$, i.e.\ pointwise continuous\footnote{For completeness, define $\varphi^{2}\in \textup{C}(2^{N})$ as $(\forall f^{1}\leq_{1}1)(\exists N^{0})(\forall g^{1}\leq_{1}1)(\overline{f}N=\overline{g}N\di \varphi(f)=\varphi(g))$.  As usual, denote $\varphi^{2}\in \textup{C}^{\st}(2^{N})$ as the previous formula relative to `st'.} on Cantor space.  
\begin{rem}\rm\label{krem}
As to positive results, \eqref{MUC}$_{0}$ {is} equivalent to the following:
\be\label{krembo2}
(\exists^{\st} \Xi^{3})\big[(\forall^{\st} \varphi^{2}\in \textup{C}(2^{N})) (\forall^{\st} f^{1}, g^{1}\leq_{1} 1 )(\overline{f}\Xi(\varphi)=\overline{g}\Xi(\varphi)\di \varphi(f)=\varphi(g))\big]
\ee
\be\label{krembo}
(\forall^{\st} \varphi^{2}\in \textup{C}(2^{N}))(\exists^{\st}k^{0}) (\forall f^{1}, g^{1}\leq_{1}1 )[\overline{f}k=_{0}\overline{g}k\di \varphi(f)=_{0}\varphi(g)], 
\ee
as `$\varphi^{2}\in \textup{C}(2^{N})$' is internal.  These equivalences are proved as for Theorem \ref{muckr}.  In particular, similar to \eqref{totally}, (the language of) $\RCAO$ contains a symbol $\Xi_{0}^{3}$ and 
\be\label{quark}
\st(\Xi_{0}) \wedge (\forall^{\st}\Gamma^{3})\big[N(\Gamma)\di (\forall^{\st} \varphi^{2}\in C(2^{N}))(\Xi_{0}(\varphi)=\Gamma(\varphi))\big], 
\ee
where $N(\Xi)$ is the square-bracketed formula in \eqref{krembo2} with the additional requirement that $\Xi(\varphi)$ is minimal.   
As to negative results, \eqref{krembo} involves $\textup{C}(2^{N})$ and not $\textup{C}^{\st}(2^{N})$ and it seems impossible to obtain $\eqref{MUC}_{0}\asa \eqref{MUC}_{0}^{\st}$; Indeed, `$\varphi \in \textup{C}^{\st}(2^{N})$' in the latter makes it impossible to apply PF-TP$_{\forall}$.
For the forward implication, we do not have a way of proving that $\varphi^{2}\in \textup{C}(2^{N})$ also yields $\varphi^{2}\in \textup{C}^{\st}(2^{N})$ for standard.  
\end{rem}

\section{The Explicit Mathematics theme around weak K\"onig's lemma}\label{fannypack}
In this section, we establish the EMT for theorems $T$ such that $UT$ is at the level of weak K\"onig's lemma, in line with Conjecture~\ref{braddd}.    
\subsection{The EMT for the fan theorem}\label{EMTFAN}
First of all, we study the \emph{fan theorem}, the classical contraposition of weak K\"onig's lemma, i.e.\ the statement that for all binary trees $T$:
\be\label{genefan}
(\forall  \alpha^{1}\leq_{1} 1)(\exists n^{0})(\overline{\alpha}n\not\in T)\di (\exists k^{0}_{0})(\forall \alpha^{1}\leq_{1} 1)(\exists n^{0}\leq_{0}k_{0})(\overline{\alpha}n\notin T). 
\ee
While weak K\"onig's lemma is universally rejected as `non-constructive' in constructive mathematics, the fan theorem is accepted in intuitionistic mathematics (\cite{brich}).  

\medskip

Denote the principle obtained by the universal closure of \eqref{genefan} by FAN.  For the nonstandard version, let FAN$^{*}$ be FAN$^{\st}$ but with $(\forall \alpha^{1}\leq_{1} 1)$ in the consequent.  
Now, there are at least two possible candidates for the uniform version of the fan theorem, as follows.
\begin{princ}[$\UFAN_{1}$]
There is a functional $\Phi^2$ such that for any binary tree $T$
\[
(\forall \alpha^{1}\leq_{1} 1)(\exists n)(\overline{\alpha}n\not\in T)\di (\forall \alpha^{1}\leq_{1} 1)(\exists n^{0}\leq_{0} \Phi(T))(\overline{\alpha}n\notin T). 
\]
\end{princ}
\begin{princ}[$\UFAN_{2}$]
There is $\Phi^{(1\times 2)\di 0}$ such that for any $T^{1}\leq_{1}1$ and $g^{2}$
\be\label{beestig}
(\forall \alpha^{1}\leq_{1} 1)(\overline{\alpha}g(\alpha)\not\in T)\di (\forall \alpha^{1}\leq_{1} 1)(\exists n^{0}\leq_{0} \Phi(T,g))(\overline{\alpha}n\notin T). 
\ee
\end{princ}
Note that $\UFAN_{2}$ is essentially the BHK-interpretation of intuitionistic logic (See e.g.\ \cite{bridges1}*{p.\ 8}): The functional $g^{2}$ witnesses `how' the tree $T$ has no path, and  the functional $\Phi(T,g)$ has access to this information to determine the finite height of $T$.  For this reason, we refer to $\UFAN_{2}$ as `the' uniform version of FAN.   

\medskip

We have the following preliminary results for the fan theorem.  
Recall that $\RCAO+\eqref{MUC}$ is conservative over $\WKL_{0}$ by \cite{bennosam}*{Theorem 5} and \cite{kohlenbach2}*{Prop.\ 3.15}.  
\begin{thm}\label{shat}
In $\RCAO$, $\FAN^{*}\asa \FAN^{\st}$, $\FAN^{\st}\di \FAN$, $\textup{UFAN}_{1}^{\st}\asa (\exists^{2})^{\st}$, $\eqref{MUC}^{\st}\di \UFAN_{2}^{\st}$. 
\end{thm}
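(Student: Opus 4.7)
The theorem comprises four implications, to be dispatched in turn; $\FAN^{\st}\di \FAN$ is the principal obstacle.

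For $\FAN^{*}\asa \FAN^{\st}$: the direction $\FAN^{*}\di \FAN^{\st}$ is trivial since the $\FAN^{*}$-consequent quantifies over all $\alpha\leq_{1}1$ and hence a fortiori over standard ones. For the converse, fix a standard tree $T$ satisfying the $\FAN^{\st}$-antecedent and let $k_{0}$ be the standard bound produced. Given any $\alpha\leq_{1}1$, the segment $\overline{\alpha}k_{0}$ is a binary string of standard length whose code is bounded by the standard number $2^{k_{0}+1}$ and hence standard; thus $\beta:=\overline{\alpha}k_{0}*00\dots$ is standard and agrees with $\alpha$ on its first $k_{0}$ entries. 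Applying $\FAN^{\st}$ to $\beta$ produces $n\leq k_{0}$ with $\overline{\beta}n=\overline{\alpha}n\notin T$, as required.

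For $\FAN^{\st}\di \FAN$ (the main obstacle): the plan is to invoke PF-TP$_{\forall}$ on the parameter-free internal formula $\Psi(T)\equiv \textup{FAN}(T)$, thereby reducing the task to $(\forall^{\st}T^{1}\leq_{1}1)\Psi(T)$, i.e.\ the internal FAN-instance for every standard binary tree. For such $T$, once the standard antecedent of FAN is in hand, $\FAN^{\st}$ produces a standard $k_{0}$ which the already established $\FAN^{*}$ upgrades to an internal bound. Bridging the internal antecedent $(\forall\alpha)(\exists n)\overline{\alpha}n\notin T$ to its standard counterpart $(\forall^{\st}\alpha)(\exists^{\st}n)\overline{\alpha}n\notin T$ is the delicate step; my plan is to combine idealisation~(I) with a careful use of $\Omega$-CA applied to the standard functional $(\alpha,N)\mapsto \min\{n\leq N:\overline{\alpha}n\notin T\}$ restricted (via Corollary~\ref{genalli}) to the internal condition that such an exit exists, exploiting the tree-theoretic fact that for a standard $T$ the least exit-index for a standard branch cannot be nonstandard on pain of producing an $\Omega$-invariant witness that $\Omega$-CA then standardises. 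This combinatorial-nonstandard bridging is where I expect the principal technical difficulty.

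For $\textup{UFAN}_{1}^{\st}\asa (\exists^{2})^{\st}$: for the forward direction, given standard $f^{1}$ associate the standard tree $T_{f}:=\{\sigma\in 2^{<\omega}:(\forall i<|\sigma|)f(i)\neq 0\}$, the full binary tree truncated at the least zero of $f$ (if any). If $f$ has any zero then $T_{f}$ has no infinite path and the standard bound $\Phi(T_{f})$ from $\UFAN_{1}^{\st}$ majorises the least zero; hence $(\exists x)f(x)=0\asa (\exists x\leq\Phi(T_{f}))f(x)=0$, yielding the standard functional required by $(\exists^{2})^{\st}$. Conversely, $(\exists^{2})^{\st}$ supplies a standard $\mu^{2}$-operator (cf.\ \cite{kooltje}), and one sets $\Phi(T):=\mu k.[(\forall\sigma\in 2^{k})(\exists m\leq k)\overline{\sigma}m\notin T]$, with default value $0$; the $\mu$-search terminates under the $\FAN^{\st}$-antecedent, which $(\exists^{2})^{\st}$ supplies via Theorem~\ref{trikke} together with $\FAN^{*}\asa \FAN^{\st}$ established above.

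For $\eqref{MUC}^{\st}\di \UFAN_{2}^{\st}$: let $\Omega^{3}$ be the standard fan functional provided by $\eqref{MUC}^{\st}$, so that $\Omega(g)$ is standard whenever $g^{2}$ is. Define the standard functional $\Phi(T,g):=\max\{g(\sigma*00\dots):\sigma\in 2^{\Omega(g)}\}$, a maximum over a standard-size set. For standard $T,g$ satisfying the $\UFAN_{2}^{\st}$-antecedent and standard $\alpha\leq_{1}1$, set $\sigma:=\overline{\alpha}\Omega(g)$; since $\alpha$ and $\sigma*00\dots$ agree on the first $\Omega(g)$ entries, $\eqref{MUC}$ yields $g(\alpha)=g(\sigma*00\dots)\leq \Phi(T,g)$, and the antecedent then gives $\overline{\alpha}g(\alpha)\notin T$ with witness bounded by $\Phi(T,g)$, which is the $\UFAN_{2}^{\st}$-conclusion.
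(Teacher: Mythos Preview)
Your treatment of the first, third, and fourth items is essentially sound (for the third you take a more direct route than the paper, which instead derives E-UWKL$^{\st}$ from $\UFAN_{1}^{\st}$ and builds the functional for $(\exists^{2})^{\st}$ via $\Omega$-CA; your tree-coding of $f$ is a legitimate shortcut, though you should say $(\exists^{\st}x)f(x)=0$ rather than $(\exists x)f(x)=0$ in the displayed equivalence, since the nonstandard-least-zero case is exactly where $\UFAN_{1}^{\st}$ fails to apply).

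The genuine gap is in your proof of $\FAN^{\st}\di\FAN$. Your plan is to verify the internal FAN-instance $\Psi(T)$ for every standard $T$ and then invoke PF-TP$_{\forall}$, and to do this you propose bridging the internal antecedent $(\forall\alpha)(\exists n)\overline{\alpha}n\notin T$ to the standard one $(\forall^{\st}\alpha)(\exists^{\st}n)\overline{\alpha}n\notin T$. This bridging is \emph{not} available in $\RCAO$: take any standard $f^{1}$ witnessing the failure of $\paai$ (i.e.\ $(\forall^{\st}n)f(n)=0$ but $(\exists n)f(n)\ne 0$) and set $T:=\{\sigma:(\forall i<|\sigma|)f(i)=0\}$. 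Then $T$ is standard, the internal antecedent holds (the tree is cut off at the nonstandard least zero), yet for the standard branch $00\dots$ there is no \emph{standard} exit index. So your bridging step is equivalent to (an instance of) $\paai$ and cannot be carried out by idealisation or $\Omega$-CA alone; the functional $(\alpha,N)\mapsto\min\{n\leq N:\overline{\alpha}n\notin T\}$ is simply not $\Omega$-invariant here.

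The paper's device is to avoid this bridging entirely by \emph{Skolemizing first}: for standard $T$ and standard $g^{2}$ one has
\[
(\forall\alpha\leq_{1}1)(\overline{\alpha}g(\alpha)\notin T)\di(\exists^{\st}k_{0})(\forall\alpha\leq_{1}1)(\exists n\leq k_{0})(\overline{\alpha}n\notin T),
\]
because a standard $g$ applied to a standard $\alpha$ gives a standard exit index, so the $\FAN^{\st}$-antecedent is secured (and in the pathological example above the displayed antecedent is simply \emph{false} for every standard $g$, making the implication vacuous). Weakening $(\exists^{\st}k_{0})$ to $(\exists k_{0})$ yields an internal formula in the parameters $T,g$ with no further parameters; PF-TP$_{\forall}$ then gives it for all $T,g$, and QF-AC$^{1,0}$ recovers FAN. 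The point is that the realizer $g$ must be brought out as an explicit (standard) parameter \emph{before} transfer, not manufactured after the fact.
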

\begin{proof}
By the very structure of FAN$^{\st}$, it is clear that FAN$^{*}$ follows trivially from the latter: If a standard binary tree has finite height, then nonstandard paths are also cut off at this height, as the paths have 
to go through the standard binary sequences of any height.  To prove that $\FAN^{\st}\di \FAN$, assume the former and note that for standard $T^{1}\leq_{1}1$ and standard $g^{2}$, we have
\[
(\forall  \alpha^{1}\leq_{1} 1)(\overline{\alpha}g(\alpha)\not\in T)\di (\exists^{\st} k^{0}_{0})(\forall^{\st} \alpha^{1}\leq_{1} 1)(\exists n^{0}\leq_{0}k_{0})(\overline{\alpha}n\notin T). 
\]
The previous formula trivially implies: 
\[
(\forall  \alpha^{1}\leq_{1} 1)(\overline{\alpha}g(\alpha)\not\in T)\di (\exists^{\st} k^{0}_{0})(\forall \alpha^{1}\leq_{1} 1)(\exists n^{0}\leq_{0}k_{0})(\overline{\alpha}n\notin T), 
\]
and by weakening the consequent we obtain:
\be\label{hanka}
(\forall  \alpha^{1}\leq_{1} 1)(\overline{\alpha}g(\alpha)\not\in T)\di (\exists k^{0}_{0})(\forall \alpha^{1}\leq_{1} 1)(\exists n^{0}\leq_{0}k_{0})(\overline{\alpha}n\notin T), 
\ee
which holds for standard $g^{2}, T^{1}\leq_{1}1$.  However, \eqref{hanka} is an internal formula, say $\varphi(T,g)$, with all parameters shown, and $(\forall^{\st}g^{2}, T^{1}\leq_{1}1)\varphi(T,g)$ implies 
$(\forall g^{2}, T^{1}\leq_{1}1)\varphi(T,g)$ via PT-TP$_{\forall}$.  Using QF-AC$^{1,0}$, FAN is now immediate.    

\medskip

Now assume $(\exists^{2})^{\st}$, fix $M\in \Omega$, and define the functional $\Psi(T,M)$ as $0$ if $(\forall^{\st}n)(\exists \alpha)(|\alpha|=n \wedge \alpha \in T)$, i.e.\ if $T$ is infinite, and as the least 
$k\leq M$ such that $(\forall \alpha^{0}\in \{0,1\}^{*})(|\alpha|= M\di (\exists n^{0}\leq_{0}k)(\overline{\alpha}n\not\in T)$ otherwise.  Clearly, $\Psi(T,M)$ is $\Omega$-invariant (distinguish between finite and infinite trees to see this), and $\textup{UFAN}_{1}^{\st}$ follows.  For the remaining implication, we derive E-$\UWKL^{\st}$ from $\textup{UFAN}_{1}^{\st}$.   
Let $T$ be an infinite standard binary tree and let $\Phi$ be the functional from $\UFAN_{1}^{\st}$.  Recall that $\beta\in T_{\alpha}$ is defined as $\alpha*\beta\in T$.    
Now define $\Psi(T)(1)$ as $0$ if $(\forall \alpha^{0}\in \{0,1\}^{*})(|\alpha|\leq \Phi(T_{1})\di (\exists n^{0}\leq_{0} \Phi(T_{1}))(\overline{\alpha}n\not\in T_{1})$ and $1$ otherwise.
For the general case, define $\Psi(T)(n+1)$ as $0$ if 
\[
(\forall \alpha^{0}\in \{0,1\}^{*})(|\alpha|\leq \Phi(T_{\Psi(T)(n)*1})\di (\exists n^{0}\leq_{0} \Phi(T_{\Psi(T)*1}))(\overline{\alpha}n\not\in T_{\Psi(T)(n)*1}),
\]
and 1 otherwise.  Then $\Psi$ is as required for E-$\UWKL^{\st}$.  

\medskip

For the final implication, define $\Phi(T,g)$ as $\max_{|\sigma|= \Omega(g)\wedge \sigma \leq_{0}1} g(\sigma*00\dots)$.
Alternatively, define $\Psi(T,g,M)$ as follows:
\be\label{funfunctional}
\Psi(T,g,M):=
\begin{cases}
0 & \text{otherwise}  \\
h(T,M) & (\forall \alpha^{0}\in \{0,1\}^{*})(|\alpha|=M \di \overline{\alpha}g(\alpha*00\dots)\not\in T) \\
\end{cases},
\ee
where $h(T,M):=(\mu k\leq M)(\forall \alpha^{0}\in \{0,1\}^{*})(|\alpha|= M\di (\exists n^{0}\leq_{0}k)(\overline{\alpha}n\not\in T)$.
Now use Theorem \ref{muckr}, in particular the nonstandard continuity of $g$, to prove the $\Omega$-invariance of this functional.
\end{proof}


\begin{thm}\label{dfh}
In $\RCAO$, we have $\FAN^{\st}\asa \UFAN_{2}^{\st}$.
\end{thm}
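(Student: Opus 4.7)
I would prove both implications via HAC$_{\INT}$, with the reverse direction requiring an additional ingredient to close the gap between $(\forall^{\st}\alpha)$ and $(\forall\alpha)$.

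For $\FAN^{\st}\di \UFAN_{2}^{\st}$: Given standard $T,g$ satisfying the hypothesis $(\forall\alpha\leq_{1}1)(\overline{\alpha}g(\alpha)\notin T)$, note that for each standard $\alpha$ the value $g(\alpha)$ is standard and witnesses $\overline{\alpha}g(\alpha)\notin T$; hence the antecedent of $\FAN^{\st}$ holds for $T$, giving standard $k$ with $(\forall^{\st}\alpha)(\exists n\leq k)(\overline{\alpha}n\notin T)$. Since $k$ is standard, the matrix depends only on the standard finite prefix $\overline{\alpha}(k{+}1)$; any $\alpha\leq_{1}1$ shares this prefix with some standard $\alpha'$, so the quantifier strengthens to $(\forall\alpha)$, exactly as in the $\FAN^{\st}\di \FAN$ step of Theorem~\ref{shat}. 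This establishes, over standard $T,g$, an implication $(\forall\alpha)(\overline{\alpha}g(\alpha)\notin T)\di (\exists^{\st}k)(\forall\alpha)(\exists n\leq k)(\overline{\alpha}n\notin T)$, which I rewrite as $(\forall^{\st}T,g)(\exists^{\st}k)[\ldots]$ with an internal conditional matrix (let $k=0$ when the hypothesis fails). Applying HAC$_{\INT}$ produces a standard functional $\Phi_{0}$ of finite sequences of candidates; setting $\Phi(T,g):=\max\Phi_{0}(T,g)$ and using that the consequent is monotone in $k$ delivers the required witness for $\UFAN_{2}^{\st}$.

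For $\UFAN_{2}^{\st}\di \FAN^{\st}$: Let $\Phi$ be the standard functional from $\UFAN_{2}^{\st}$ and let $T$ be standard with $(\forall^{\st}\alpha)(\exists^{\st}n)(\overline{\alpha}n\notin T)$. HAC$_{\INT}$ (followed by taking maxima) produces a standard $g$ with $(\forall^{\st}\alpha)(\overline{\alpha}g(\alpha)\notin T)$. The plan is to massage this $g$ into a standard $g^{*}$ for which the strengthened internal hypothesis $(\forall\alpha)(\overline{\alpha}g^{*}(\alpha)\notin T)$ is provable; then $k:=\Phi(T,g^{*})$ is standard and $\UFAN_{2}^{\st}$ delivers $(\forall\alpha)(\exists n\leq k)(\overline{\alpha}n\notin T)$, whose restriction to standard $\alpha$ is the conclusion of $\FAN^{\st}$.

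The main obstacle is precisely the construction of $g^{*}$: the HAC$_{\INT}$-output $g$ has its exit-property certified only for standard $\alpha$, whereas $\UFAN_{2}^{\st}$ requires the hypothesis to hold for \emph{every} $\alpha\leq_{1}1$. I would address this via the canonical-approximation template of Theorems~\ref{trikke} and~\ref{shat}: fix $M\in\Omega$ and let $F(T,M)(\alpha):=(\mu n\leq M)(\overline{\alpha}n\notin T)$ with default $M$, a standard functional. The hypothesis of $\FAN^{\st}$ forces $F(T,M)$ to agree on standard $\alpha$ with the (standard) least exit time, and hence to be $\Omega$-invariant over standard inputs; Corollary~\ref{genall} then yields a standard $g^{*}$ with $g^{*}(\alpha)=F(T,M)(\alpha)$ for standard $\alpha$ and $M\in\Omega$. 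The verification that this $g^{*}$ in fact satisfies $(\forall\alpha)(\overline{\alpha}g^{*}(\alpha)\notin T)$ — the delicate point — should follow by combining the prefix-dependence trick already used in the forward direction (to convert $(\forall^{\st}\alpha)$ to $(\forall\alpha)$ whenever the quantifier acts only on a standard finite prefix of $\alpha$) with the standard extensionality of $g^{*}$, which can be arranged along the lines of Theorem~\ref{boon}.
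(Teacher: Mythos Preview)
Your proposal rests on a misreading of $\UFAN_{2}^{\st}$. By the paper's convention (Remark~\ref{notawin}), $A^{\st}$ relativizes \emph{every} quantifier to `st', so the antecedent of $\UFAN_{2}^{\st}$ is $(\forall^{\st}\alpha\leq_{1}1)(\overline{\alpha}g(\alpha)\notin T)$, not the internal $(\forall\alpha\leq_{1}1)(\overline{\alpha}g(\alpha)\notin T)$ that you work with throughout. This misreading damages both directions.

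For the reverse direction, your elaborate construction of $g^{*}$ is unnecessary: once you have standard $g$ with $(\forall^{\st}\alpha)(\overline{\alpha}g(\alpha)\notin T)$ via HAC$_{\INT}$, that is already the antecedent of $\UFAN_{2}^{\st}$, so $k:=\Phi(T,g)$ finishes immediately. The paper dispatches this direction in one line.

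For the forward direction, your $\Phi$ only satisfies the implication with the \emph{internal} antecedent $(\forall\alpha)(\overline{\alpha}g(\alpha)\notin T)$, which is strictly stronger than the standard antecedent required by $\UFAN_{2}^{\st}$; hence what you prove is weaker than what is needed. The problem is exactly the one you tried to dodge with ``let $k=0$ when the hypothesis fails'': the standard antecedent $(\forall^{\st}\alpha)(\dots)$ is external and cannot sit inside the internal matrix required by HAC$_{\INT}$. The paper's key move is to pull this quantifier out front by prenexing, turning the implication into
\[
(\exists^{\st}\alpha\leq_{1}1,\,k^{0})\big[\overline{\alpha}\tilde{g}(\alpha)\notin T \di (\forall\beta\leq_{1}1)(\exists n\leq k)(\overline{\beta}n\notin T)\big],
\]
whose bracket is internal. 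HAC$_{\INT}$ then yields a standard functional producing finite lists of candidate pairs $(\alpha,k)$; take the max over the $k$-components to define $\Phi(T,g)$, and push the $\alpha$-quantifier back inside to recover the standard antecedent. This Herbrandization of the external antecedent is the idea your argument is missing.
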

\begin{proof}
The reverse direction is immediate as HAC$_{\INT}$ implies QF-AC$^{1,0}$ relative to `st'.  
To prove that $\FAN^{\st}\di \UFAN_{2}^{\st}$, assume the former and note that for standard $T^{1}\leq_{1}1$ and standard $g^{2}$, we have
\[
(\forall^{\st}  \alpha^{1}\leq_{1} 1)(\overline{\alpha}\tilde{g}(\alpha)\not\in T)\di (\exists^{\st} k^{0}_{0})(\forall^{\st} \beta^{1}\leq_{1} 1)(\exists n^{0}\leq_{0}k_{0})(\overline{\beta}n\notin T). 
\]
The previous formula trivially implies (for any standard $g^{2}, T^{1}\leq_{1}1$) that
\[
(\forall^{\st}  \alpha^{1}\leq_{1} 1)(\overline{\alpha}\tilde{g}(\alpha)\not\in T)\di (\exists^{\st} k^{0}_{0})(\forall \beta^{1}\leq_{1} 1)(\exists n^{0}\leq_{0}k_{0})(\overline{\beta}n\notin T),  
\]
where $\tilde{g}(\alpha)$ is the least $n\leq g(\alpha)$ such that $\overline{\alpha}n\not\in T$.  

We now bring both quantifiers relative to `st' to the front, yielding 
\be\label{ohnepc}
(\exists^{\st}  \alpha^{1}\leq_{1} 1, k^{0}) \big[(\overline{\alpha}\tilde{g}(\alpha)\not\in T)\di (\forall \beta^{1}\leq_{1} 1)(\exists n^{0}\leq_{0}k)(\overline{\beta}n\notin T)\big],
\ee
for any standard $g^{2}, T^{1}\leq_{1}1$.  Note that we could replace the quantifier `$(\exists^{\st}\alpha^{1}\leq_{1}1)$' by a \emph{type 0-quantifier} $(\exists^{\st} \sigma^{0}\leq_{0}1)$.    
In particular, for $\sigma=\overline{\alpha}g(\alpha)$, we have $\sigma=(\sigma*00\dots)\tilde{g}(\sigma*00\dots)\not \in T$.  This will only be relevant for the corollary. 

\medskip
   
Abbreviating the \emph{internal} formula in square brackets in \eqref{ohnepc} by $\psi(\alpha, T,g,k)$, the previous implies  
\be\label{subtle}
(\forall^{\st} g^{2}, T^{1}\leq_{1}1)(\exists^{\st}k^{0}, \alpha^{1}\leq_{1}1)\psi(\alpha, T,g,k),
\ee
and let standard $\Xi^{(1\times 2)\di (0\times 1)^{*}}$ be the functional resulting from applying HAC$_{\INT}$ to \eqref{subtle}.    
Defining $\Phi(T,g):=\max_{i<|\Xi(T,g)(1)|}\Xi(T,g)(1)(i)$, the previous yields
\[
(\forall^{\st} g^{2}, T^{1}\leq_{1}1)(\exists^{\st} \alpha^{1}\leq_{1}1)(\exists k \leq \Phi(T,g) )\psi(\alpha, T,g,k).
\]
Note that we ignored the second component of $\Xi(T,g)$.  Bringing the existential quantifier `$(\exists^{\st} \alpha^{1}\leq_{1}1)$' back inside $\psi$, we obtain for all standard $g^{2}, T^{1}\leq_{1}1$ that
\[
(\forall^{\st}  \alpha^{1}\leq_{1} 1)(\overline{\alpha}g(\alpha)\not\in T)\di (\forall \beta^{1}\leq_{1} 1)(\exists n^{0}\leq_{0}\Phi(T,g))(\overline{\beta}n\notin T),
\]
which yields UFAN$_{2}^{\st}$ and we are done.  
\end{proof}
The following corollary establishes the EMT for $\FAN$ as in the second part of Conjecture \ref{braddd}.   Note that we obtain $UT^{\st}\asa T^{\st}$ without extra assumptions, but require the axiom of choice for the internal version of this equivalence.    
Nonetheless, Hunter notes in \cite{hunterphd}*{\S2.1.2} that any $\QFAC^{\sigma, 0}$ still results in a conservative extension of $\RCA_{0}$.   
\begin{cor}\label{cardargo}
In $\RCAO$, we have $\FAN^{\st}\asa \UFAN_{2}^{\st}\asa \FAN^{*}$.  Adding $\QFAC^{2,0}$, we have $\FAN^{\st}\asa \FAN\asa \UFAN_{2}$.  
\end{cor}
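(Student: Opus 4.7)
\emph{Proof proposal.} The first chain $\FAN^{\st}\asa \UFAN_{2}^{\st}\asa \FAN^{*}$ is immediate by chaining the preceding results: Theorem~\ref{dfh} furnishes $\FAN^{\st}\asa \UFAN_{2}^{\st}$, and the first clause of Theorem~\ref{shat} furnishes $\FAN^{*}\asa \FAN^{\st}$, so transitivity completes the argument with no further work.

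For the second chain (with $\QFAC^{2,0}$), the plan is to close the cycle
\[
\FAN^{\st}\di \FAN \di \UFAN_{2} \di \FAN^{\st}.
\]
The first implication is the second clause of Theorem~\ref{shat}, which does not use $\QFAC^{2,0}$. For $\FAN\di \UFAN_{2}$, one first applies QF-AC$^{1,0}$ (already in $\RCAo$) to Skolemize the antecedent of FAN into $(\forall \alpha^{1}\leq_{1}1)(\overline{\alpha}g(\alpha)\not\in T)$, and uses downward closure of $T$ to rewrite the consequent's inner $(\forall \alpha^{1}\leq_{1}1)(\exists n\leq k)(\overline{\alpha}n\not\in T)$ as the equivalent bounded quantifier $(\forall \sigma\in\{0,1\}^{k})(\exists n\leq k)(\overline{\sigma}n\not\in T)$; FAN is thereby put into the shape $(\forall T^{1}\leq_{1}1, g^{2})(\exists k^{0})\theta(T,g,k)$. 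After coding the pair $(T,g)$ as a single type 2 input, $\QFAC^{2,0}$ extracts the functional $\Phi^{(1\times 2)\di 0}$ required for $\UFAN_{2}$. The remaining implication $\UFAN_{2}\di \FAN^{\st}$ exploits that $\UFAN_{2}$ is an internal, parameter-free existence statement $(\exists \Phi)\varphi(\Phi)$: the contraposition of PF-TP$_{\forall}$ (cf.\ Remark~\ref{tokkiep}) yields a standard witness $\Phi$, which a fortiori satisfies the restricted body $\varphi^{\st}(\Phi)$; hence $\UFAN_{2}^{\st}$ holds, and Theorem~\ref{dfh} then returns $\FAN^{\st}$, closing the loop.

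The main obstacle is the middle step $\FAN\di \UFAN_{2}$. After the manipulations above, the body $\theta$ still contains the antecedent $(\forall \alpha^{1}\leq_{1}1)(\overline{\alpha}g(\alpha)\not\in T)$, which is internal but not strictly quantifier-free, so the literal $\QFAC^{2,0}$ scheme does not apply unless one reads it in the slightly broader sense admitting internal bodies (consistent with Hunter's observation quoted just before the corollary). If one wishes to stay within the strict QF version of $\QFAC^{2,0}$, an alternative is to apply it to the genuinely QF statement
\[
(\forall T, g)(\exists k)\bigl[(\forall \sigma\in\{0,1\}^{k})(\exists n\leq k)(\overline{\sigma}n\not\in T)\vee k=0\bigr],
\]
and then verify, using the FAN hypothesis on $T,g$, that the extracted $\Phi$ still witnesses $\UFAN_{2}$ whenever the antecedent inside $\UFAN_{2}$ is satisfied.
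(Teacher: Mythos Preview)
Your first line is fine and matches the paper. The second line, however, has two real gaps that are more serious than you indicate.

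\textbf{The step $\FAN\Rightarrow\UFAN_2$.} You are right that after your manipulations the body $\theta$ still contains the type-$1$ universal $(\forall\alpha\leq_1 1)(\overline{\alpha}g(\alpha)\notin T)$, so the literal $\QFAC^{2,0}$ does not apply. Your first workaround (read $\QFAC^{2,0}$ as allowing internal matrices) is not available in $\RCAO$ as set up. Your second workaround is worse: the displayed formula with the disjunct $k=0$ is \emph{trivially} true for $k=0$, so $\QFAC^{2,0}$ may well return the constant functional $\Phi\equiv 0$, and no amount of ``verifying using the FAN hypothesis'' recovers a useful bound. The paper's fix is the prenexing trick already set up inside the proof of Theorem~\ref{dfh}: rewrite $(\forall\alpha)A(\alpha)\to(\exists k)B(k)$ as $(\exists\alpha,k)[A(\alpha)\to B(k)]$, then replace the type-$1$ witness $\alpha$ by a type-$0$ finite sequence $\sigma$ (using $\tilde g$ and downward closure), and bound $\beta$ by $|\beta|=k$ in the consequent. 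The resulting matrix is genuinely quantifier-free, so $\QFAC^{2,0}$ yields a functional $\Xi$ with two components $(\sigma,k)$; ignoring $\sigma$ gives the $\UFAN_2$-functional.

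\textbf{The step $\UFAN_2\Rightarrow\FAN^{\st}$.} Your ``a fortiori'' claim fails: from standard $\Phi$ satisfying the internal body $(\forall T,g)[(\forall\alpha)A\to B]$ you \emph{cannot} conclude $(\forall^{\st}T,g)[(\forall^{\st}\alpha)A\to B^{\st}]$, because the standard antecedent $(\forall^{\st}\alpha)A$ is strictly weaker than $(\forall\alpha)A$, so the internal implication does not fire. The paper closes the loop differently: once $\Xi$ (from the prenexed QF version) is standard via PF-TP$_\forall$, the recorded $\sigma=\Xi(T,g)(2)$ is a \emph{standard} finite sequence, and the standard-world antecedent $(\forall^{\st}\alpha)(\overline{\alpha}g(\alpha)\notin T)$ applied to the single standard $\alpha:=\sigma*00\dots$ already forces $\sigma\notin T$, whence the QF matrix delivers the bound $\Xi(T,g)(1)$ and hence $\FAN^{\st}$. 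In other words, pulling the universal out of the antecedent is not just bookkeeping for $\QFAC$; it is what makes the reversal to $\FAN^{\st}$ go through.
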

\begin{proof}
We only need to prove the second line in the corollary.  There, the first forward implication follows from the theorem and the final reverse implication is immediate using QF-AC$^{1,0}$.  
For the implication $\FAN\di \UFAN_{2}$, repeat the first part of the proof of the theorem without `st' to obtain \eqref{subtle} without `st'.  
We can make sure the formula $\psi$ is quantifier-free by requiring $|\beta|=k$ in the consequent of \eqref{ohnepc}.  
Furthermore, as noted in the proof of the theorem, the type 1-quantifier in \eqref{ohnepc} can be replaced by a type 0-quantifier.  
Now apply $\QFAC^{2,0}$ to the resulting formula to obtain:
\[
(\exists \Xi^{(2\times 1)\di (0\times 0)})(\forall g^{2}, T^{1}\leq_{1}1)[\psi(\Phi(T,g)(2), T,g,\Xi(T,g)(1))\wedge \Xi(T,g)(2)\leq_{1}1].
\]
and note that $\UFAN_{2}$ follows by ignoring the first component of $\Xi$.  Furthermore, by PF-TP$_{\forall}$, we may assume $\Xi$ is standard;  Hence if for standard $g^{2}, T^{1}\leq_{1}1$ we have $(\forall^{\st}\alpha^{1}\leq_{1}1)(\overline{\alpha}g(\alpha)\not \in T)$, then the tree $T$ is bounded by $\Xi(g,T)(1)$, which is a standard number, i.e.\ UFAN$_{2}^{\st}$ and FAN$^{\st}$ also follow, and we are done.     
\end{proof}
As an exercise, the reader can prove the equivalence between the fan theorem and its \emph{alternative nonstandard version}, defined as:  For all standard $T^{1}$
\be\tag{$\FAN^{**}$}\label{FANSS}
(\forall  \alpha^{1}\leq_{1} 1)(\exists n^{0})(\overline{\alpha}n\not\in T)\di (\exists^{\st} k^{0}_{0})(\forall^{\st} \alpha^{1}\leq_{1} 1)(\exists n^{0}\leq_{0}k_{0})(\overline{\alpha}n\notin T). 
\ee
As a further exercise, the reader can prove the equivalence between the standard part principle \eqref{STP} and \eqref{genefan}$^{\st}$ \emph{for any binary tree}.

\medskip

Similar to the addition of $\QFAC^{2,0}$ in the previous corollary,  certain results in Friedman-Simpson Reverse Mathematics require extra induction (often $I\Sigma_{2}$).  We will often not mention QF-AC$^{2,0}$ in the next section, but leave the associated results implicit.
As shown in \cites{sambrouwt, samzoo}, $\QFAC^{2,0}$ plays a similar important role in the RM of Brouwer's continuity theorem (and related principles) and in the study of uniform versions of principles from the RM zoo.  

\medskip

We finish this section with the following remark.
\begin{rem}\label{stow}\rm
Simpson has the following to say with regard to the mathematical naturalness of logical systems in \cite{simpson2}*{I.12}.
\begin{quote}
From the above it is clear that the five basic systems $\RCA_{0}$, $\WKL_{0}$, $\ACA_{0}$, $\ATR_{0}$, $\FIVE$ arise naturally from investigations of the Main Question. The proof that these systems are mathematically natural is provided by Reverse Mathematics.
\end{quote}
By Corollary \ref{cardargo}, weak K\"onig's lemma is equivalent to the uniform fan theorem UFAN$_{2}$ over a system conservative over $\RCA_{0}$.  
Hence, said uniform principle should also count as mathematically natural.  In the following sections, we shall prove a number of equivalences between weak K\"onig's lemma and uniform principles (involving continuity, Riemann integration, et cetera), 
bestowing mathematical naturalness onto all these higher-order statements.     
\end{rem}
        
\subsection{The EMT for theorems equivalent to weak K\"onig's lemma}
In this section, we establish the EMT for various principles equivalent to weak K\"onig's lemma, including the Heine-Borel lemma, Riemann integration, and the existence of the supremum of continuous functions.  
As noted in Remark \ref{difconc}, these principles can be derived constructively using the fan theorem, in line with Conjecture \ref{braddd}.  
\subsubsection{The Heine-Borel lemma}
We first establish the EMT for the Heine-Borel lemma HB from \cite{simpson2}*{IV.1}.
Careful inspection of the proof in the latter of the equivalence between $\WKL$ and HB, reveals that this proof is uniform.  
Thus, let UHB be the `fully' uniform version of HB, i.e.\ the statement that there is a functional $\Phi^{((0\di 1)\times 2)\di 0}$ such that for all open covers  $I_{n}^{0\di 1}=(c_{n}, d_{n})$ and $g^{2}$, we have:
\[
 (\forall x\in [0,1])(x\in (c_{g(x)}, d_{g(x)}))\di (\forall x\in [0,1])(\exists n \leq \Phi(I_{n}, g ))(x\in (c_{n}, d_{n})).
\]
The functional $g^{2}$ is essential as we otherwise would obtain a version of HB like UFAN$_{1}$, i.e.\ equivalent to $(\exists^{2})$.  
Furthermore, let HB$^{*}$ be HB$^{\st}$, but with the statement that the finite cover covers \emph{all} of $[0,1]$, including the nonstandard reals.  

\medskip

This corollary to Theorem \ref{dfh} establishes the EMT for the Heine-Borel lemma.
\begin{cor}\label{wusup}
In $\RCAO$, we have $\FAN^{\st}\asa \textup{HB}^{\st}\asa \textup{UHB}^{\st}\asa \textup{HB}^{*}$.  Adding $\QFAC^{2,0}$, we have $\FAN^{\st}\asa \textup{HB}\asa \textup{UHB}$.    
\end{cor}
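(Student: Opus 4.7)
The plan is to follow the template of Theorem~\ref{dfh} and Corollary~\ref{cardargo} from the fan theorem, adapting each step to the Heine-Borel setting. First, I would establish $\FAN^{\st} \asa \textup{HB}^{\st}$ by relativizing the classical proof of $\WKL \asa \textup{HB}$ in \cite{simpson2}*{IV.1} to the standard world; since that proof is uniform and uses only $\RCAo$-style manipulations, each step transfers to $\RCAO$ with all quantifiers restricted to `st'. Next, for $\textup{HB}^{\st} \asa \textup{UHB}^{\st}$, the reverse implication is immediate, while the forward direction mirrors the proof of $\FAN^{\st} \di \UFAN_{2}^{\st}$ in Theorem~\ref{dfh}: given standard $I^{0\di 1}$ and standard $g^{2}$ as in UHB, the hypothesis of HB$^{\st}$ yields standard $k_{0}$, producing $(\forall^{\st} I, g)(\exists^{\st} k_{0})\,\psi(I, g, k_{0})$ for the internal witnessing formula $\psi$. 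Applying HAC$_{\INT}$ yields a standard finite-sequence-valued $\Xi$, from which we extract $\Phi(I, g) := \max_{i < |\Xi(I,g)|}\Xi(I,g)(i)$.

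For the third equivalence $\textup{UHB}^{\st} \asa \textup{HB}^{*}$, the forward direction is immediate, since the standard functional $\Phi$ from UHB$^{\st}$ produces a standard bound $k_{0}$ on standard input and the internal content of UHB already covers \emph{all} reals in $[0,1]$. The reverse factors through HB$^{\st}$: HB$^{*}$ trivially implies HB$^{\st}$ by restricting the outer real quantifier to standard reals, and then the previous step applies. For the more conceptual direct argument HB$^{\st} \di$ HB$^{*}$, suppose the standard finite subcover $(c_{i},d_{i})_{i \leq k_{0}}$ covers every standard real in $[0,1]$. Its complement $A := [0,1] \cap \bigcap_{i\leq k_{0}}\bigl([0,c_{i}] \cup [d_{i},1]\bigr)$ is, after distributing the intersection over unions, a standard finite union of standard closed intervals with standard endpoints. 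If any such closed interval $[a,b]$ (with $a \leq b$, both standard) is nonempty, then $a \in A$ is a standard real, contradicting the hypothesis; hence each piece is empty and $A = \emptyset$.

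For the second line of the corollary, adding $\QFAC^{2,0}$: the implication $\textup{UHB} \di \textup{HB}$ is immediate using QF-AC$^{1,0}$. For $\textup{HB} \di \textup{UHB}$, I would repeat the extraction argument from the second step without `st' and apply QF-AC$^{2,0}$ (as in the proof of Corollary~\ref{cardargo}) to produce $\Phi$. Finally, $\textup{HB}^{\st} \di \textup{HB}$ follows as in the proof of $\FAN^{\st} \di \FAN$ in Theorem~\ref{shat}: after QF-AC$^{1,0}$-witnessing of the consequent and weakening the bound quantifier from $\exists^{\st}$ to $\exists$, the resulting statement is parameter-free internal, and PF-TP$_{\forall}$ applies. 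The main obstacle I anticipate is the argument HB$^{\st} \di$ HB$^{*}$ above: unlike the fan-theorem case, where nonstandard paths are cut off automatically because every path of standard length is a standard finite sequence in the tree, here one must argue that a standard closed subset of $[0,1]$ with no standard points is empty, which requires exploiting the explicit structure of finite unions of closed intervals with standard endpoints rather than invoking any density or completeness principle relative to `st'.
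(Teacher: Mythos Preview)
Your proposal is correct and covers all the required equivalences. The overall architecture matches the paper's: reduce to the $\FAN$ results of Theorem~\ref{dfh} and Corollary~\ref{cardargo}. For the first block of equivalences the paper is slightly more modular than you: rather than rerunning the $\textup{HAC}_{\INT}$ extraction for $\textup{HB}$ directly, it observes that the proofs of \cite{simpson2}*{IV.1.1--2} are uniform, so $\UFAN_{2}\asa\textup{UHB}$ (and the relativized versions) come for free, and then invokes Theorem~\ref{dfh}. Your direct extraction is equally valid and more self-contained.

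The genuine difference is your argument for $\textup{HB}^{\st}\di\textup{HB}^{*}$. The paper argues in nonstandard-analysis style: given $z\in[0,1]$ outside the finite subcover, it shows $z\not\approx c_{i},d_{i}$ for all $i\leq k_{0}$, then uses an infinite $M$ to select the nearest endpoints $d_{i_{0}}\ll z\ll c_{j_{0}}$ and concludes that the standard interval $[d_{i_{0}},c_{j_{0}}]$ contains uncovered standard reals. Your route---writing the complement as a finite union of closed intervals whose endpoints are $\max/\min$ of standardly finitely many standard reals, hence standard, and observing that any nonempty piece contains its own (standard) left endpoint---is more elementary and avoids the infinitesimal bookkeeping entirely. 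Both work; yours is arguably cleaner, while the paper's argument illustrates the ``canonical approximation'' idiom used elsewhere in the paper. The second line (adding $\QFAC^{2,0}$) is handled identically in both approaches, following the template of Corollary~\ref{cardargo}.
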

\begin{proof}
For the first two equivalences, the uniformity of the proofs of \cite{simpson2}*{IV.1.1-2} implies UFAN$_{2}\asa \textup{UHB}$, and the equivalence to FAN$^{\st}$ follows from the theorem.  
For the second equivalence, to prove $\HB^{\st}\di \HB^{*}$ is straightforward:  The upper bound $k_{0}$ of the finite cover from {HB}$^{\st}$ also satisfies $(\forall x^{1}\in [0,1])(x\in \cup_{i\leq k_{0}}I_{i})$.  

\medskip

Indeed, suppose $z\in [0,1]$ is such that 
$z\not\in \cup_{i\leq k_{0}}I_{i}$.  Then for all $i\leq k_{0}$, we have either $d_{i}\leq z$ or $z\leq c_{i}$, but we cannot have $z\approx c_{i}$ or $z\approx d_{i}$, as $c_{i}, d_{i}\in [0,1]$ satisfy 
$c_{i}, d_{i}\in \cup_{i\leq k_{0}}I_{i}$, which would also cover $z$.  Thus, fix infinite $M$ and let $i_{0}$ be that $i\leq k_{0}$ such that $[d_{i}](M)<_{0} [z](M)$ and $[d_{i}](M)-[z](M)$ is minimal ($i_{0}$ is the least one if there are several).       
Similarly, let $j_{0}$ be that $j\leq k_{0}$ such that $[c_{j}](M)>_{0} [z](M)$ and $[z](M)-[c_{j}](M)$ is minimal.
By the previous, we have $d_{i_{0}} \not\approx z \not\approx c_{j_{0}}$, implying $d_{i_{0}} +\frac{1}{N} < z < c_{j_{0}}-\frac{1}{N}$ for some finite $N^{0}$.
By the definitions of $d_{i_{0}}$ and $c_{j_{0}}$, there are standard reals in $[d_{i_{0}},c_{j_{0}}]$ which are not covered by $ \cup_{i\leq k_{0}}I_{i}$, a contradiction.  Hence, we must have 
$(\forall x^{1}\in [0,1])(x\in \cup_{i\leq k_{0}}I_{i})$.  
\end{proof}
\subsubsection{Theorems concerning continuity}
In this section, we study establish the EMT for theorems concerning continuity equivalent to weak K\"onig's lemma.

\medskip

The first theorem we consider is the statement `every continuous function on the unit interval is uniformly continuous', which is equivalent to weak K\"onig's lemma by \cite{simpson2}*{IV.2.3}.
As noted in Remark \ref{krem}, the `obvious' approach involving \eqref{MUC}$_{0}$, i.e.\ simply restricting the fan functional to continuous functionals, does not immediately yield an equivalence to the fan theorem.  
Therefore, we will study the following principle, called \textup{MUC}$(\mathfrak{C})$, for various notions of continuity:
\be 
\label{FIN}
(\exists \Theta^{3})(\forall \varphi^{2}\in \mathfrak{C}(2^{N}))(\forall \alpha, \beta \leq_{1}1)(\overline{\alpha}\Theta(\varphi)=\overline{\beta}\Theta(\varphi)\di \varphi(\alpha)=\varphi(\beta)).
\ee
First of all, let $\varphi \in \textup{CC}(2^{N})$ denote that $\varphi\in \textup{C}(2^{N})$ with a modulus of continuity $g_{\varphi}\in \textup{C}(2^{N})$ which in turn has a modulus of continuity $h_{\varphi}$. 
Both moduli are implicitly given together with $\varphi$, and `CC' stands for `constructive continuity'.   
\begin{thm}
In $\RCAo+\QFAC^{2,0}$, we have $\FAN\asa \textup{MUC}(\textup{CC})$.  This equivalence holds relative to `\st' in $\RCAO$.  
\end{thm}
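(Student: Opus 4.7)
The plan is to establish both directions by adapting the template of Theorem~\ref{dfh} and Corollary~\ref{cardargo}. For the forward direction, given $\varphi \in \textup{CC}(2^{N})$ with witnessing moduli $g_{\varphi}, h_{\varphi}$ (after replacing $g_{\varphi}$ by $\max(g_{\varphi}, h_{\varphi})$ pointwise so that $g_{\varphi} \geq h_{\varphi}$), I would introduce the auxiliary binary tree
\[
T_{\varphi} := \{\sigma \in \{0,1\}^{*} : (\forall \tau \preceq \sigma)(g_{\varphi}(\tau * 00\dots) > |\tau|)\},
\]
which is manifestly closed under prefixes. The key observation is that $T_{\varphi}$ has no infinite path: if $\alpha^{1} \leq_{1} 1$ were such a path, then $g_{\varphi}(\overline{\alpha}n * 00\dots) > n$ for every $n$, whereas for $n \geq h_{\varphi}(\alpha)$ the fact that $h_{\varphi}$ is a modulus of continuity for $g_{\varphi}$ forces $g_{\varphi}(\overline{\alpha}n * 00\dots) = g_{\varphi}(\alpha)$, yielding the contradiction $g_{\varphi}(\alpha) > n$ for all sufficiently large $n$.

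Applying FAN to $T_{\varphi}$ delivers a height bound $k_{\varphi}$ such that every $\sigma$ of length $k_{\varphi}$ has a prefix $\tau$ with $g_{\varphi}(\tau * 00\dots) \leq |\tau|$. For any $\alpha, \beta \leq_{1} 1$ with $\overline{\alpha}k_{\varphi} = \overline{\beta}k_{\varphi}$, the common prefix yields such a $\tau$; both $\alpha$ and $\beta$ extend $\tau$, and the modulus property of $g_{\varphi}$ at $\tau * 00\dots$ gives $\varphi(\alpha) = \varphi(\tau * 00\dots) = \varphi(\beta)$. Hence $k_{\varphi}$ is a uniform modulus for $\varphi$. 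To package this as the required functional $\Theta(\varphi)$ in \eqref{FIN}, I would invoke $\QFAC^{2,0}$ applied to the quantifier-free (bounded) formula asserting that $k$ bounds the height of $T_{\varphi}$; alternatively, one may directly invoke $\UFAN_{2}$ via Corollary~\ref{cardargo} with witness $g(\alpha) := g_{\varphi}(\alpha) + 1$ applied to the tree $T_{\varphi}$.

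For the reverse direction, given a binary tree $T$ satisfying $(\forall \alpha^{1} \leq_{1} 1)(\exists n^{0})(\overline{\alpha}n \notin T)$, I would define $\varphi_{T}(\alpha)$ as the least $n^{0}$ with $\overline{\alpha}n \notin T$. A direct verification shows that $g_{\varphi_{T}}(\alpha) := \varphi_{T}(\alpha) + 1$ is simultaneously a modulus of $\varphi_{T}$ and its own modulus, so $\varphi_{T} \in \textup{CC}(2^{N})$. Applying the functional $\Theta$ from \eqref{FIN} produces $N := \Theta(\varphi_{T})$ such that $\varphi_{T}$ depends only on the first $N$ bits of its input; consequently $k_{0} := \max\{\varphi_{T}(\sigma * 00\dots) : \sigma \in \{0,1\}^{N}\}$ is a finitely computable bound with $(\forall \alpha^{1} \leq_{1} 1)(\exists n^{0} \leq k_{0})(\overline{\alpha}n \notin T)$, which is precisely FAN for the given $T$.

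The equivalence relative to `st' in $\RCAO$ follows via the same construction with all quantifiers appropriately relativised, using the observation from Remark~\ref{tokkiep} that $\Theta$ may be taken standard and invoking $\FAN^{\st}$ (and Theorem~\ref{dfh}) in place of $\FAN$. The main point requiring care is the use of the second modulus $h_{\varphi}$ in ruling out infinite paths of $T_{\varphi}$: this is precisely what distinguishes $\textup{CC}$ from plain $\textup{C}$, and explains why the analogous statement for merely continuous functionals (as in \eqref{MUC}$_{0}$) did not produce such an equivalence in Remark~\ref{krem}.
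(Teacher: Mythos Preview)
Your argument is correct and follows essentially the same route as the paper. Two minor points deserve attention. First, the replacement $g_{\varphi}\mapsto\max(g_{\varphi},h_{\varphi})$ is unnecessary and actually harmful: $h_{\varphi}$ need not be a modulus of continuity for $\max(g_{\varphi},h_{\varphi})$ (since $h_{\varphi}$ is not assumed to be a modulus for itself), so the step ``$h_{\varphi}$ is a modulus for $g_{\varphi}$'' would fail after the replacement. Simply drop the replacement; nothing in your argument uses $g_{\varphi}\geq h_{\varphi}$ and the proof goes through verbatim with the original $g_{\varphi}$. Second, in the reverse direction the unbounded search defining $\varphi_{T}(\alpha):=(\mu n)(\overline{\alpha}n\notin T)$ is not available as a closed term in $\RCAo$; you should first invoke $\QFAC^{1,0}$ (which is part of $\RCAo$) to obtain $g^{2}$ with $\overline{\alpha}g(\alpha)\notin T$ and then take the least $n\leq g(\alpha)$ with $\overline{\alpha}n\notin T$, exactly as the paper does.

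The only cosmetic difference is in the forward direction: the paper routes through Kohlenbach's associate construction (defining an associate $\alpha^{1}$ uniformly from $\varphi$ and $g_{\varphi}$, then using $h_{\varphi}$ to produce the witnessing function needed for $\UFAN_{2}$), whereas you build the tree $T_{\varphi}$ directly from $g_{\varphi}$. Both encode the same idea: the second modulus $h_{\varphi}$ is exactly what guarantees the relevant tree is well-founded with a uniformly given realiser, so that $\UFAN_{2}$ (equivalently, FAN together with $\QFAC^{2,0}$ via Corollary~\ref{cardargo}) applies.
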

\begin{proof}
For the first reverse implication, let $T$ be a binary tree such that $(\forall \alpha^{1}\leq_{1}1)(\exists n)(\overline{\alpha}n\not\in T)$ and use QF-AC$^{1,0}$ to obtain 
$g^{2}$ such that $(\forall \alpha^{1}\leq_{1}1)(\overline{\alpha}g(\alpha)\not\in T)$.  Define $\tilde{g}(\alpha, T)$ as $(\mu n\leq g(\alpha))(\overline{\alpha}n\not \in T)$ if $\overline{\alpha}g(\alpha)\not \in T$, and zero otherwise.     
By assumption, $\tilde{g}(\cdot, T)$ is continuous as in CC$(2^{N})$;  In particular, this function is its own modulus of continuity.  Applying MUC(CC) yields an uniform upper bound for $\tilde{g}(\cdot, T)$, implying that $T$ is finite, 
and FAN follows.    

\medskip

For the first forward implication, following the proof of \cite{kohlenbach4}*{Prop.\ 4.4}, an associate $\alpha^{1}$ for $\Phi^{2}$ can be defined (uniformly) from $\Phi$ and a continuous modulus of pointwise continuity $g_{\Phi}$.  
By definition, the associate satisfies: 
\be\label{bellow}
(\forall \beta^{1}\leq_{1}1)(\exists k^{0})\alpha(\overline{\beta} k)>0\wedge (\forall \beta^{1}\leq_{1}1, k^{0})(\alpha(\overline{\beta} k)>0 \di \Phi(\beta)+1=\alpha(\overline{\beta} k)).
\ee   
Furthermore, if $g_{\Phi}$ has a modulus of continuity, say $h_{\Phi}$, one easily defines (uniformly in $h_{\Phi}$) a witnessing function $i_{\Phi}$ for $(\forall \beta^{1}\leq_{1}1)(\exists k^{0})\alpha(\overline{\beta} k)>0$, i.e.\ we have 
$(\forall \beta^{1}\leq_{1}1)\alpha(\overline{\beta} i_{\Phi}(\beta))>0$.  Finally, define a tree $T$ by $\sigma\in T\asa \alpha(\sigma)>0$ and apply $\UFAN_{2}$ to obtain the functional from MUC(CC).  
The previous clearly relativizes to the standard world, and HAC$_{\INT}$ implies $\QFAC^{2,0}$ relative to `st'.       
\end{proof}
This result is not satisfying as the CC-notion of continuity is very restrictive.  We therefore study the notion of continuity used in RM in more detail.  
Recall that continuity in the sense of \cite{simpson2}*{II.6.1} amounts to the existence of a modulus of pointwise continuity, i.e.\ the treatment of continuous functions as in RM entails a slight constructive enrichment, which is not problematic for the RM of $\WKL_{0}$ by \cite{kohlenbach4}*{Prop.\ 4.10}.  
We now observe a `nonstandard' enrichment due to the RM-definition of continuity.  This result was first obtained in \cite{samimplicit}.    
\begin{rem}[Continuity]\label{corkom}\rm
In two words, the `nonstandard' enrichment implicit in working with associates is as follows: A standard function which is given by an associate and is continuous relative to standard Cantor space, 
is automatically uniformly continuous \emph{everywhere} there, given weak K\"onig's lemma.  For type~2-functionals, we can only conclude this continuity relative to `st'.  

\medskip

To establish the previous claim, consider a standard function $\alpha^{1}$ such that $(\forall^{\st} \beta^{1}\leq_{1}1)(\exists^{\st} k^{0})\alpha(\overline{\beta} k)>0$, which represents some function $\phi$ on Cantor space.  
In other words, `$\alpha$ is a code for $\phi$' in the sense of \cite{simpson2}*{II.6.1} and one writes symbolically $\phi(\beta)=\alpha(\overline{\beta}(\mu m)(\alpha(\overline{\beta}m)>0))$.  
Now clearly $(\forall \beta^{1}\leq_{1}1)(\exists k^{0}\leq N)\alpha(\overline{\beta} k)>0$ for some standard $N^{0}_{0}$ by $\FAN^{*}$ (See also Corollary \ref{cardargo}) and this implies $(\exists^{\st} N_{0})(\forall \gamma, \beta\leq_{1}1)(\overline{\gamma}N=\overline{\beta}N\di \phi(\gamma)=\phi(\beta))$, i.e.\ $\phi$ is uniformly continuous on \emph{all of} Cantor space.   We also obtain nonstandard continuity as follows:
\be\label{lukas}
(\forall \beta^{1}, \gamma^{1} \leq_{1}1)(\beta\approx_{1}\gamma \di \phi(\beta)=_{0}\phi(\gamma))
\ee
By contrast, repeating the proof of \cite{kohlenbach4}*{Prop.\ 4.10} for standard $\Phi^{2}\in C^{\st}(2^{N})$ relative to `st', we only obtain 
$(\exists^{\st} N_{0})(\forall^{\st} \gamma, \beta\leq_{1}1)(\overline{\gamma}N=\overline{\beta}N\di \Phi(\gamma)=\Phi(\beta))$ 
since we can only obtain the second component of \eqref{bellow} relative to `st'.   
\end{rem}
Hence, we have established that the RM definition of continuity yields a nonstandard enrichment in the form of nonstandard continuity \eqref{lukas}.  
We now study MUC($\mathfrak{C}$) for the RM-definition of continuity, both directly and indirectly.  
\bdefi [Continuity]\label{conki}~  
\begin{enumerate}
\item Let MOD be the statement that for every standard $\Phi^{2}\in \textup{C}(2^{N})$, there is a standard modulus of continuity.  
\item Let ASC be the statement that for every standard $\Phi^{2}\in \textup{C}^{\st}(2^{N})$ and standard $\alpha^{1}$ such that \eqref{bellow}$^{\st}$, we have \eqref{bellow}, i.e.\ a standard associate relative to `st' is also a full associate.     
\item Let $\varphi^{2}\in M(2^{N})$ mean that $\varphi\in \textup{C}(2^{N})$ together with a (continuous) modulus of continuity $g_{\varphi}\in \textup{C}(2^{N})$, given together with $\varphi$.    
\item We write $\phi\in \textup{C}_{\textup{rm}}(2^{N})$ for $\phi$ given by $(\alpha^{1}, g^{2})$ such that $(\forall \beta^{1}\leq_{1}1)\alpha(\overline{\beta}g(\beta))>0$, i.e.\ $\alpha$ is a code for $\phi$ and $g$ is a continuous modulus of continuity of $\phi$.\label{kulllll}
\item Let MUC(C$_{\textup{rm}}$) be \eqref{FIN} modified for $\phi$ coded by $\alpha$ as above.  
\end{enumerate}
\edefi
Note that the modulus in item \eqref{kulllll} does not really constitute an enrichment of the RM-definition of continuity by \cite{kohlenbach4}*{Prop.\ 4.4}.
Furthermore, MOD seems to be a weak principle by \cite{kohlenbach4}*{Prop.\ 4.8}, as the latter shows that the axiom guaranteeing a modulus for every continuous $1\di 1$-functional, is quite weak.   
The study of the nonstandard versions (like \eqref{bang}) in the following theorem is left as an exercise.
\begin{thm}\label{foggart}
In $\RCAO$, we have $\FAN^{\st} \asa  \textup{MUC}(\textup{C}_{\textup{rm}})^{\st}$.  \\
In $\RCAO+\textup{ASC}$, we have $\FAN^{\st} \asa  \textup{MUC}(M)^{\st}\asa \textup{MUC}(M)\asa \textup{MUC}(\textup{C}_{\textup{rm}})$.  \\
In $\RCAO+\textup{ASC}$, we have $[\FAN^{\st} +\textup{MOD}]\asa [ \textup{MUC}(\textup{C})^{\st}+\textup{MOD}]\asa \textup{MUC}(\textup{C})$.  
\end{thm}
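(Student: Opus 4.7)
The plan is to obtain the core equivalence $\FAN^{\st}\asa \textup{MUC}(\textup{C}_{\textup{rm}})^{\st}$ first and then reduce the two $\textup{ASC}$-chains to it. For the forward implication of the core equivalence, I mimic the proof of Theorem \ref{dfh}: given a standard $\phi\in \textup{C}_{\textup{rm}}(2^{N})$ coded by standard $(\alpha,g)$ satisfying $(\forall\beta\leq_{1}1)(\alpha(\overline{\beta}g(\beta))>0)$, the standard tree $T_{\alpha}:=\{\sigma:(\forall i\leq|\sigma|)\alpha(\overline{\sigma}i)=0\}$ has no path, so $\FAN^{*}$ (via Corollary \ref{cardargo}) supplies a standard uniform bound $k_{0}(\phi)$. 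The assignment $\phi\mapsto k_{0}(\phi)$ is $\Omega$-invariant and $\Omega$-CA yields a standard $\Theta$ as required for $\textup{MUC}(\textup{C}_{\textup{rm}})^{\st}$. For the reverse direction, given a standard tree $T$ without standard infinite paths, HAC$_{\INT}$ supplies a standard $h^{2}$ bounding exit lengths on standard paths; pair $h$ with the standard associate $\alpha_{T}(\sigma)$ encoding the least exit from $\sigma$ to obtain a standard $\phi\in \textup{C}_{\textup{rm}}$. The bound $\Theta(\phi)$ provided by $\textup{MUC}(\textup{C}_{\textup{rm}})^{\st}$ then forces the height of $T$ on standard paths to be bounded by the standard number $\max_{\sigma\in\{0,1\}^{\Theta(\phi)}}\phi(\sigma*00\ldots)$, which yields $\FAN^{\st}$.

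For the second chain in $\RCAO+\textup{ASC}$, the equivalence $\textup{MUC}(M)^{\st}\asa \textup{MUC}(M)$ follows by bringing all quantifiers outside (applying QF-AC$^{1,0}$ as needed) and invoking PF-TP$_{\forall}$ on the resulting parameter-free internal formula, in the manner of Theorem \ref{boon}. The equivalence $\textup{MUC}(M)\asa \textup{MUC}(\textup{C}_{\textup{rm}})$ is the internal interchange between a type-$2$ functional with continuous modulus and its associate via \cite{kohlenbach4}*{Prop.\ 4.4}, which is parameter-free and straightforward in either direction. For $\FAN^{\st}\di \textup{MUC}(M)^{\st}$: given a standard $\varphi\in M$ with standard continuous modulus $g_{\varphi}$, Prop.\ 4.4 produces a standard associate $\alpha$ relative to `\st', and ASC upgrades it to a full associate, bringing us back to the setting of the core equivalence and supplying the required standard $\Theta$. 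The converse $\textup{MUC}(M)^{\st}\di\FAN^{\st}$ factors through the standard-level version of $\textup{MUC}(M)\asa \textup{MUC}(\textup{C}_{\textup{rm}})$ and then through the core equivalence.

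For the third chain, MOD is precisely the bridge from $\textup{C}$ to $M$: a standard $\varphi\in \textup{C}(2^{N})$ equipped with the standard modulus supplied by MOD lies in $M(2^{N})$, so $[\FAN^{\st}+\textup{MOD}]\di [\textup{MUC}(\textup{C})^{\st}+\textup{MOD}]$ reduces to the second chain. The step $\textup{MUC}(\textup{C})\di [\FAN^{\st}+\textup{MOD}]$ is immediate since the witnessing $\Theta$ is itself a modulus of uniform (hence pointwise) continuity, witnessing MOD, and via the core equivalence also yields $\FAN^{\st}$. The delicate step $[\textup{MUC}(\textup{C})^{\st}+\textup{MOD}]\di \textup{MUC}(\textup{C})$ requires combining the standard $\Theta$ with a standard modulus functional (extracted from MOD via HAC$_{\INT}$) into a single parameter-free internal statement to which PF-TP$_{\forall}$ applies, again in analogy with Theorem \ref{boon}. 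The main obstacle is $\FAN^{\st}\di \textup{MUC}(M)^{\st}$: the standard modulus $g_{\varphi}$ witnesses pointwise continuity only internally and need not lie in $\textup{C}^{\st}(2^{N})$, so a direct tree construction risks picking up nonstandard paths; ASC is precisely the hypothesis that lets us sidestep this by working through the associate, and carefully verifying that the lifted associate and resulting $\Omega$-invariant canonical approximation behave correctly on all of Cantor space is the most subtle point.
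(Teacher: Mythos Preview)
Your overall architecture matches the paper's: build the first line as the core equivalence, then reduce the $\textup{ASC}$-chains to it via canonical approximations, $\Omega$-CA, and PF-TP$_{\forall}$. The forward direction of the core equivalence, the use of HAC$_{\textup{int}}$ on MOD to manufacture a standard modulus functional, and the reverse implications via trees are all essentially what the paper does (sometimes with more detail than the paper provides).

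There is, however, a real gap in your treatment of the steps $\textup{MUC}(M)^{\st}\di \textup{MUC}(M)$, $\textup{MUC}(\textup{C}_{\textup{rm}})^{\st}\di \textup{MUC}(\textup{C}_{\textup{rm}})$, and $[\textup{MUC}(\textup{C})^{\st}+\textup{MOD}]\di \textup{MUC}(\textup{C})$. You invoke ``Theorem~\ref{boon} style PF-TP$_{\forall}$'', but Theorem~\ref{boon} runs in the \emph{opposite} direction: it starts from an internal principle, Skolemises, and applies PF-TP$_{\forall}$ to make the witnessing functionals standard. That technique gives you $\textup{MUC}(M)\di \textup{MUC}(M)^{\st}$ (with ASC), not the converse. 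To pass from the standard version to the internal one, the paper instead goes through the canonical approximation $\Psi(\varphi,K)$ \emph{with the internal hypothesis} $\varphi\in M(2^{N})$ (this is why Corollary~\ref{genalli} is invoked rather than plain $\Omega$-CA), obtains a standard $\Theta$ satisfying the analogue of \eqref{coecke}, and then --- crucially --- uses the uniqueness of the minimal modulus together with the language constant $\Xi_{0}$ from \eqref{quark} (cf.\ Remark~\ref{krem} and the proof of Theorem~\ref{muckr}). Replacing $\Theta$ by $\Xi_{0}$ makes the resulting universal formula parameter-free, and only then does PF-TP$_{\forall}$ yield the internal $\textup{MUC}(M)$. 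Your sketch never introduces this constant, so the ``parameter-free internal formula'' you want to apply PF-TP$_{\forall}$ to does not exist: the standard $\Theta$ you produced is a parameter, not a symbol of the language.

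A smaller point: in the forward direction of the core equivalence you apply $\Omega$-CA to ``$\phi\mapsto k_{0}(\phi)$'' for $\phi\in \textup{C}_{\textup{rm}}$. Since $\Omega$-CA and Corollary~\ref{genalli} only accept \emph{internal} side-conditions, you must first observe (as the paper does) that $\FAN^{*}$ upgrades $\textup{C}_{\textup{rm}}^{\st}$ to $\textup{C}_{\textup{rm}}$ for standard data, so that the relevant condition is indeed internal. You implicitly use this but do not state it.
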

\begin{proof}
First of all, we prove the second line in the theorem except for the final forward implication.  The first reverse implication follows as in the proof of the previous theorem.  
The second reverse implication follows from applying PF-TP$_{\forall}$ to MUC($M$) and observing that by ASC, a standard functional $\varphi^{2}\in M^{\st}(2^{N})$ satisfies $\varphi \in M(2^{N})$.  
For the third reverse implication, a continuous modulus uniformly yields an associate by the proof of \cite{kohlenbach4}*{Prop.\ 44}.  

\medskip

For the remaining forward implications, assume FAN$^{\st}$ and note that for standard $\varphi^{2}\in M(2^{N})$, the latter's standard modulus yields a standard associate $\alpha^{1}$ 
as in the proof of \cite{kohlenbach4}*{Prop.~4.4}, i.e.\ we have \eqref{bellow}.
We also have $(\forall^{\st} \beta^{1}\leq_{1}1)(\exists^{\st} k^{0})\alpha(\overline{\beta} k)>0$,    again since $\varphi^{2}$ has a standard modulus of continuity.  Applying FAN$^{*}$ to the latter yields $(\forall \beta^{1}\leq_{1}1)(\exists k^{0}\leq N)\alpha(\overline{\beta} k)>0$ for some standard $N^{0}$.     
Define $\Psi(\varphi, K)$ as 
\[
(\mu k\leq K)(\forall \alpha^{0}, \beta^{0}\leq_{0}1)(|\alpha|=|\beta|=K\wedge \overline{\alpha}k =\overline{\beta}k \di \varphi(\alpha*00\dots)=\varphi(\beta*00\dots)),  
\]
and note that $(\forall^{\st} \varphi^{2}\in M(2^{N}))(\forall L, K\in \Omega)\Psi(\varphi, K)=\Psi(\varphi, L)$.
Since the formula `$\varphi \in M(2^{N})$' is internal, there is (by Corollary \ref{genalli}) a standard $\Theta^{3}$ such that
\be\label{coecke}
(\forall^{\st}\varphi^{2}\in M(2^{N}))(\forall K\in \Omega)\Psi(\varphi, K)=\Theta(\varphi),
\ee
and we have proved \eqref{krembo2} from Remark \ref{krem} for $M$ instead of $\textup{C}$.    
To obtain MUC$(M)$ from this weaker version of \eqref{krembo2}, proceed as in Remark \ref{krem} and the proof of Theorem~\ref{muckr}.  
By ASC, $\varphi^{2}\in M^{\st}(2^{N})$ implies $\varphi \in M(2^{N})$, and MUC$(M)^{\st}$ also follows from the weaker version of \eqref{krembo2}.       

\medskip

Secondly, we prove the first line and the remaining implication in the second line.  For the reverse implication in the first line, define (for a standard binary tree $T$) the function $\alpha^{1}$ as $\alpha(\sigma)=0$ if $\sigma\in T$ and $2$ otherwise.  Applying MUC(C$_{\textup{rm}}$)$^{\st}$ implies that $T$ is bounded if it has no path (all relative to `st').  For the forward implication in the first line, obtain a version of \eqref{krembo2} for 
$\textup{C}_{\textup{rm}}(2^{N})$ instead of $M(2^{N})$ in the same way as the first part of the proof.  
Since FAN$^{\st}$ implies $\alpha^{1}\in \textup{C}^{\st}_{\textup{rm}}(2^{N})\di \alpha^{1}\in \textup{C}_{\textup{rm}}(2^{N})$ for standard $\alpha^{1}$, MUC(C$_{\textup{rm}}$)$^{\st}$ follows from this weak version of \eqref{krembo2}.  
As above, this weak version also implies MUC(C$_{\textup{rm}}$) by PF-TP$_{\forall}$.  

\medskip

Thirdly, we prove the third line.  Assume FAN$^{\st}$ and consider MOD, i.e.\
\be\label{KOD}\textstyle
(\forall^{\st}\Phi^{2}\in \textup{C}(2^{N}))(\exists^{\st}g^{2})(\forall \alpha^{1}, \beta^{1}\leq_{1}1)(\overline{\alpha}g(\alpha)=\overline{\beta}g(\alpha)\di \varphi(\alpha)=\varphi(\beta)).
\ee
As `$\Phi^{2}\in C(2^{N})$' is internal, we may apply HAC$_{\INT}$ to \eqref{KOD}, yielding standard $\Theta^{2\di 2^{*}}$ such that $(\exists g^{2}\in \Theta(\Phi))$ in \eqref{KOD}.  
Now define standard $\Xi^{2\di 2}$ as follows: $\Xi(\Phi)(\alpha^{1}):=\max_{i<|\Theta(\Phi)|}\Theta(\Phi)(i)(\alpha)$.  Clearly $\Xi$ outputs a standard modulus of continuity for standard $\Phi$ as input.    
Now proceed as above to obtain a version of \eqref{coecke} and use ASC to obtain MUC(C)$^{\st}$.  Furthermore, the latter implies FAN$^{\st}$ as in the first part of this proof.  
Next, apply PF-TP$_{\forall}$ to MUC(C) to obtain MOD.  The remaining equivalences follow from the previous parts of the proof.       
%
%
%
\end{proof}
The results in the theorem suggest that we can either directly work with type~1-associates \emph{without additional assumptions}, or work with `representation-free' type~2-functions and adopt additional axioms.  
Since the first route is the one taken in RM, we shall also adopt this approach.  

\medskip

The previous proof reveals a general technique for treating uniform theorems relating to continuity:  
One works with the internal notion of continuity, e.g.\ $\varphi^{2}\in M(2^{N})$ rather than $\varphi \in M^{\st}(2^{N})$, to obtain a version of \eqref{coecke} by Corollary \ref{genalli}.  
Since the standard notion of continuity is included in the internal one (by definition or by ASC), the theorem follows.  
In this light, we shall discuss two more examples of the EMT, namely Riemann integration and supremum for continuous functions.  
\bdefi~
\begin{enumerate}
\item We write 
$y=\sup_{x\in [0,1]}f(x)$ as an abbreviation for:
\be\label{loebak}\textstyle
(\forall x^{1}\in [0,1]) [f(x)\leq y] \wedge  (\forall k^{0})(\exists z^{0}\in [0,1])(y-\frac1k <f(z)).
\ee
\item We write `$\phi\in C_{\textup{rm}}[0,1]$' for $\phi$ given by $(\Phi^{1}, g^{2})$ such that $\Phi$ is a code for $\phi:[0,1]\di \R$ as in \cite{simpson2}*{II.6.1}, and $g$ is a modulus of continuity of $\phi$.
\end{enumerate}
\edefi
Note that the extra modulus in the second part of the definition does not really constitute an enrichment of the RM-definition of continuity by \cite{kohlenbach4}*{Prop.\ 4.4}.
We consider the following principles.
\be\tag{SUP}\label{SUP}  \textstyle
(\forall f\in C_{\textup{rm}}[0,1])(\exists y^{1})(y=\sup_{x\in [0,1]}f(x)).
\ee
\be\tag{USUP}\label{USUP}  \textstyle
(\exists \Phi^{1\di 1})(\forall f\in C_{\textup{rm}}[0,1])(\Phi(f)=\sup_{x\in [0,1]}f(x)).
\ee
\be\tag{SUP$^{*}$}\label{SUP*}  \textstyle
(\forall^{\st} f\in C_{\textup{rm}}[0,1])(\exists^{\st} y^{1})(y=\sup_{x\in [0,1]}f(x)).
\ee

\begin{cor}
In $\RCAO$, $\FAN^{\st}\asa\SUP^{\st}\asa \textup{USUP}^{\st}\asa \SUP^{*}\asa \USUP$.  
\end{cor}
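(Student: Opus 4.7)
The plan is to establish the cycle
\[
\FAN^{\st}\di\SUP^{\st}\di \USUP^{\st}\di \SUP^{*}\di \USUP\di \FAN^{\st},
\]
together with the trivial implication $\USUP^{\st}\di \SUP^{\st}$, following the template developed in Theorems~\ref{2trikke} and~\ref{dfh} and in Corollary~\ref{cardargo}. The implication $\FAN^{\st}\di \SUP^{\st}$ is a direct relativization to the standard world of the classical proof of $\FAN\di \SUP$ from \cite{simpson2}*{IV.2.3}: for standard $f=(\Phi,g)\in C_{\textup{rm}}[0,1]$, the standard modulus $g$ yields a standard bar on Cantor space, and $\FAN^{\st}$ supplies a standard uniform bound, out of which a standard finite partition of $[0,1]$ with sufficiently small oscillation defines the standard supremum.

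For $\SUP^{\st}\di \USUP^{\st}$, I mimic the canonical-approximation argument of Theorem~\ref{2trikke}. Fixing $M\in\Omega$, define for standard $f=(\Phi,g)\in C_{\textup{rm}}[0,1]$
\[
\Psi(f,M):=\max_{i\leq M}\Phi(i/M),
\]
and $\Psi(f,M):=0$ when $f$ is not a valid code. Since $\SUP^{\st}$ implies $\FAN^{\st}$ by relativizing the reversal of \cite{simpson2}*{IV.2.3}, the nonstandard continuity phenomenon of Remark~\ref{corkom} gives $f(x)\approx f(y)$ whenever $x\approx y$ in $[0,1]$; combined with the existence of a standard $y_{0}=\sup^{\st}f$ provided by $\SUP^{\st}$, one shows $\Psi(f,M)\approx y_{0}$ for every $M\in\Omega$. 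Corollary~\ref{genalli} therefore yields a standard $\Upsilon$ with $\Upsilon(f)\approx\Psi(f,M)$, and $\Upsilon(f)=y_{0}$ as standard reals, establishing $\USUP^{\st}$.

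For $\USUP^{\st}\di \SUP^{*}$, let $\Upsilon$ be the standard functional from $\USUP^{\st}$. For standard $f\in C_{\textup{rm}}[0,1]$, the real $\Upsilon(f)$ is standard and satisfies the $\st$-version of both conjuncts of $y=\sup_{x\in[0,1]}f(x)$. Nonstandard continuity then upgrades the upper-bound conjunct to \emph{all} $x\in[0,1]$: if $f(x)>\Upsilon(f)+\frac{1}{k}$ held for some standard $k$ and some (possibly nonstandard) $x$, a standard $x'\approx x$ would satisfy $f(x')>\Upsilon(f)+\frac{1}{2k}$, contradicting the standard inequality; the existence conjunct is handled symmetrically by pulling any witness $z$ back to a standard $z'\approx z$. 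For $\SUP^{*}\di \USUP$, I follow the pattern of Theorem~\ref{dfh}: bring the standard quantifiers to the front, apply $\textup{HAC}_{\INT}$ to extract a standard sequence of candidate suprema, pick out the correct component, and apply $\textup{PF-TP}_{\forall}$ to remove $\st$. The closing $\USUP\di \FAN^{\st}$ is immediate: by Remark~\ref{tokkiep} the functional in $\USUP$ may be assumed standard, whence on standard input it witnesses $\SUP^{\st}$, and hence $\FAN^{\st}$ via the relativized classical reversal.

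The main technical obstacle lies in the second step: verifying $\Omega$-invariance of $\Psi(f,M)$ requires simultaneously controlling both the \emph{location} of the approximate maximizer as the grid denominator ranges over $\Omega$ (which rests on nonstandard continuity of $f$ between nearby grid points) and its \emph{value} (which requires the genuine standard supremum supplied by $\SUP^{\st}$ to pin down a common limiting value across different $M\in\Omega$). A minor simplification is that $C_{\textup{rm}}[0,1]$ already bundles its continuous modulus with the function, so the modulus-extraction subtleties that necessitated \textup{MOD} and \textup{ASC} in Theorem~\ref{foggart} do not arise here.
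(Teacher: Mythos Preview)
Your canonical-approximation step $\SUP^{\st}\di\USUP^{\st}$ matches the paper's. The problems lie in crossing from the `$\st$'-world to the internal statements $\USUP$ and $\SUP^{*}$.

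For $\SUP^{*}\di\USUP$: applying $\textup{HAC}_{\INT}$ yields a standard $\Xi$ with $(\exists y\in\Xi(f))(y=\sup f)$ for each standard $f\in C_{\textup{rm}}$, but ``pick out the correct component'' is not available --- the predicate $y=\sup f$ is $\Pi^{0}_{2}$ and the spurious entries of $\Xi(f)$ are arbitrary, so unlike Theorem~\ref{dfh} (where the componentwise maximum suffices because only an upper \emph{bound} is needed) no simple selection works here. And $\textup{PF-TP}_{\forall}$ cannot then be invoked, since $\Xi$ is a standard \emph{parameter}, not a symbol of the language. For $\USUP^{\st}\di\SUP^{*}$: nonstandard uniform continuity only gives $f(x)\lessapprox\Upsilon(f)$ for nonstandard $x$ (via a nearby standard rational and the standard uniform modulus), not the exact inequality $f(x)\leq\Upsilon(f)$ required by the internal first conjunct of \eqref{loebak}; an infinitesimal overshoot is not excluded by your argument.

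The paper handles both passages at once via the \emph{uniqueness} of the supremum. Following Remark~\ref{tokkiep} and the pattern of \eqref{totally} and \eqref{quark}, the language of $\RCAO$ contains a constant $\Theta_{0}$ with defining axiom
\[
\st(\Theta_{0}) \wedge (\forall^{\st}\Xi^{1\di 1})\big[O(\Xi)\di (\forall^{\st} f\in C_{\textup{rm}}[0,1])(\Theta_{0}(f)\approx\Xi(f))\big],
\]
so the functional $\Theta$ produced by $\Omega$-CA may be replaced by the language symbol $\Theta_{0}$. Then $O(\Theta_{0})$ is parameter-free: weaken by dropping `$\st$' on the inner existential of \eqref{loebak}$^{\st}$ to obtain $(\forall^{\st}\vx)\varphi(\vx)$ with $\varphi$ internal and parameter-free, apply $\textup{PF-TP}_{\forall}$, and obtain $\USUP$ outright; $\SUP^{*}$ then follows since $\Theta_{0}(f)$ is standard for standard $f$. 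The paper closes the cycle by showing that $\USUP$ and $\SUP^{*}$ each imply $\SUP^{\st}$ via the intermediate value theorem --- this last step is also not as immediate as your bare appeal to Remark~\ref{tokkiep} suggests.
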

\begin{proof}
The first equivalence follows from \cite{simpson2}*{IV.2.3}.  Now assume FAN$^{\st}$, consider standard $f\in C_{\textup{rm}}[0,1]$ and define $\Psi(f,M)$ as $\max_{i\leq 2^{M}}[f(\frac{i}{2^{M}})](2^{M})$, where $[z](k)=w_{k}$ for a real $z$ represented by the sequence $w_{(\cdot)}^{1}$.  Since by definition also $f\in C^{\st}_{\textup{rm}}[0,1]$, $f$ has a standard supremum $y$ and it is easy to prove that $\Psi(f, M)\approx y \approx \Psi(f,N)$ for $M,N\in \Omega$.  
Applying Corollary \ref{genalli}, there is a standard $\Theta^{1\di 1}$ such that 
\be\label{soepke}
(\forall^{\st}f\in C_{\textup{rm}}[0,1])(\forall N\in \Omega)\big[\Psi(f,N)\approx \Theta(f)   \big],
\ee
and \eqref{soepke} together with the properties of $\Psi(f,M)$ now yields:
\be\label{loeba} \textstyle
(\forall^{\st} f\in C_{\textup{rm}}[0,1])\big[\Theta(f)=\sup_{x\in [0,1]}f(x)\big]^{\st}.
\ee
As in the previous proof, $f\in C^{\st}_{\textup{rm}}[0,1]$ implies $f\in C_{\textup{rm}}[0,1]$, and USUP$^{\st}$ is now immediate from \eqref{loeba}.  
The remaining forward implications are proved as for Theorem \ref{muckr} and Remark \ref{krem}, as the supremum of $f\in C_{\textup{rm}}[0,1]$ is unique.  
In particular, similar to \eqref{totally} and \eqref{quark}, (the language of) $\RCAO$ contains a symbol $\Theta_{0}^{3}$ and 
\[
\st(\Theta_{0}) \wedge (\forall^{\st}\Xi^{1\di 1})\big[O(\Xi)\di (\forall^{\st} f\in C_{\textup{rm}}[0,1])(\Theta_{0}(f)\approx\Xi(f))\big],
\]
where $O(\Theta)$ is \eqref{loeba}.  Now consider $O(\Theta_{0})$ and drop the `st' on the existential quantifier in the second conjunct of \eqref{loebak}$^{\st}$ to obtain a formula of the form $(\forall^{\st} \vx)\varphi(\vx)$ with $\varphi(\vx)$ internal and
without parameters as $\Theta_{0}$ is part of the language of $\RCAO$.  Applying PF-TP$_{\forall}$ now yields USUP and SUP$^{*}$.    
Finally, USUP and SUP$^{*}$ imply SUP$^{\st}$, as can be seen by using the intermediate value theorem.
\end{proof}
With minor adaptation, the proof of the previous corollary also applies to Riemann integration 
Indeed, let INT, UINT, and INT$^{*}$ be {SUP}, {USUP}, and {SUP}$^{*}$, but with \eqref{loebak} replaced by 
`$y=\int_{0}^{1}f(x)\,dx$', which has an obvious definition (\cite{simpson2}*{IV.2.6}).  The following corollary establishes the EMT for Riemann integration.  
\begin{cor}
In $\RCAO$, $\FAN^{\st}\asa \textup{INT}^{\st}\asa \textup{UINT}^{\st}\asa \textup{UINT}\asa \textup{INT}^{*}$.  
\end{cor}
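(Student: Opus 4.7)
The plan is to mirror the proof of the previous corollary almost verbatim, replacing the supremum functional by a Riemann sum functional. First, $\FAN^{\st}\asa \INT^{\st}$ is immediate from (the standard-world version of) \cite{simpson2}*{IV.2.6}. For the main forward direction $\INT^{\st}\di \UINT^{\st}$, I fix a standard $f\in C_{\textup{rm}}[0,1]$ given by the standard data $(\Phi^{1},g^{2})$, and define the canonical approximation
\be
\Psi(f,M):= \frac{1}{2^{M}}\sum_{i=0}^{2^{M}-1}\big[f\big(\tfrac{i}{2^{M}}\big)\big](2^{M}).
\ee
Since the modulus $g$ is standard, $f$ is uniformly continuous on all of $[0,1]$, not merely on the standard reals; this is the nonstandard enrichment described in Remark \ref{corkom} (available here because $\INT^{\st}$ already entails $\FAN^{\st}$). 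Consequently, for any $M,N\in \Omega$ both $\Psi(f,M)$ and $\Psi(f,N)$ are infinitely close to the unique standard integral $\int_{0}^{1}f(x)\,dx$ supplied by $\INT^{\st}$, so $\Psi(f,M)\approx \Psi(f,N)$. Because `$f\in C_{\textup{rm}}[0,1]$' is internal, Corollary \ref{genalli} delivers a standard functional $\Theta^{1\di 1}$ with $(\forall^{\st}f\in C_{\textup{rm}}[0,1])(\forall N\in \Omega)[\Psi(f,N)\approx \Theta(f)]$, and $\UINT^{\st}$ follows at once.

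For the forward implications $\UINT^{\st}\di \UINT\di \INT^{*}$, I exploit the uniqueness of the Riemann integral following the template of Theorem \ref{muckr} and Remark \ref{krem}. The language of $\RCAO$ contains a symbol $\Theta_{0}^{1\di 1}$ with a defining axiom analogous to \eqref{totally} and \eqref{quark}, namely
\be
\st(\Theta_{0})\wedge (\forall^{\st}\Xi^{1\di 1})\big[J(\Xi)\di (\forall^{\st}f\in C_{\textup{rm}}[0,1])(\Theta_{0}(f)\approx \Xi(f))\big],
\ee
where $J(\Xi)$ expresses that $\Xi$ is a witness for $\UINT$. Applying this axiom to the standard $\Theta$ produced above yields $J(\Theta_{0})^{\st}$. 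After freeing the existential quantifier hidden in the usual $\RCA_{0}$-definition of `$y=\int_{0}^{1}f(x)\,dx$' from its `st' (exactly as in the SUP proof), $J(\Theta_{0})^{\st}$ becomes a formula of the shape $(\forall^{\st}\vx)\varphi(\vx)$ with $\varphi$ internal and parameter-free, so PF-TP$_{\forall}$ yields both $\UINT$ and $\INT^{*}$.

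Finally, $\UINT\di \INT^{\st}$ is immediate by instantiation at standard input together with the standardness of $\Theta_{0}$, while $\INT^{*}\di \INT^{\st}$ is trivial by the very shape of the statements; either implication then closes the chain via the relativised reverse direction of \cite{simpson2}*{IV.2.6}, which gives $\INT^{\st}\di \FAN^{\st}$. The main obstacle is the verification of $\Omega$-invariance of $\Psi(f,M)$: this rests on the nonstandard enrichment of uniform continuity to all of $[0,1]$ given in Remark \ref{corkom}, which is precisely what forces two Riemann sums evaluated at distinct infinite resolutions to agree up to infinitesimal error. Once this is in hand, the packaging via Corollary \ref{genalli}, the $\Theta_{0}$ trick, and PF-TP$_{\forall}$ is entirely parallel to the SUP case and requires no new ideas.
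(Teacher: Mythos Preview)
Your proof is correct and follows exactly the approach the paper intends: the paper's own argument is the one-line remark that the SUP proof applies to Riemann integration ``with minor adaptation'', and you have carried out precisely this adaptation with the Riemann-sum functional $\Psi(f,M)$ in place of the max functional. Your observation that the closing implications $\UINT,\INT^{*}\Rightarrow\INT^{\st}$ are more direct here than in the SUP case (where the paper invokes IVT) is defensible, since `$y=\int_0^1 f$' is essentially real equality with a fixed Cauchy sequence of Riemann sums and hence $\Pi_1^0$-shaped, unlike the second conjunct of \eqref{loebak}.
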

In Remark \ref{corkom}, we showed that the definition of continuity used in RM constitutes a `nonstandard' enrichment in the form of nonstandard continuity \eqref{lukas}.  
Similarly, we now provide an example of a \emph{uniform principle} implicit in \cite{simpson2}*{IV.2.3}, i.e.\ the statement that every continuous function $[0,1]$ is \emph{uniformly} continuous.  
This observation was first made in \cite{samimplicit}.  
\begin{rem}\label{loofer2}\rm
%
%
%
%
First of all, by \cite{kohlenbach4}*{Prop.\ 4.4}, the RM definition of continuity implicitly involves a modulus, and we shall make the latter explicit.  
In other words, we represent a continuous function $\phi$ on Cantor space via a pair of codes $(\alpha^{1}, \beta^{1})$, where $\alpha$ codes $\phi$ and $\beta$ codes its continuous modulus of pointwise continuity $\omega_{\phi}$.      
Thus, $\alpha$ and $\beta$ satisfy $(\forall \gamma^{1}\leq_{1}1)(\exists N^{0})\alpha(\overline{\gamma}N)>0$ and $(\forall \gamma^{1}\leq_{1}1)(\exists N^{0})\beta(\overline{\gamma}N)>0$;  The values of $\omega_{\phi}$ and $\phi$ at $\gamma^{1}\leq_{1}1$, denoted $\omega_{\phi}(\gamma)$ and $\phi(\gamma)$, are $\beta(\overline{\gamma}k)-1$ and $\alpha(\overline{\gamma}k)-1$ for any $k^{0}$ such that the latter are non-negative.  
By the previous:
\be\label{krif2}
(\forall \zeta^{1}, \gamma^{1}\leq_{1}1)(\overline{\zeta}\omega_{\phi}(\zeta)=\overline{\gamma}\omega_{\phi}(\zeta)\di \phi(\zeta)=\phi(\gamma)).
\ee  
Secondly, to represent a \emph{standard} continuous function $\phi$ on Cantor space, we should require that $\phi$ and $\omega_{\phi}$ satisfy the basic axioms $\mathcal{T}_{\st}$ 
(See \cite{bennosam}*{\S2}) of $\RCAO$.  In particular, the numbers $\phi(\gamma)$ and $\omega_{\phi}(\gamma)$ should be standard for standard $\gamma^{1}\leq_{1}1$.  
To this end, we require that $\alpha$ and $\beta$ are standard and that they additionally satisfy:
\begin{align}\label{kruks}
(\forall^{\st} \gamma^{1}\leq_{1}1)(\exists &N^{0})(\exists^{\st}K)[K\geq \alpha(\overline{\gamma}N)>0] \\
&\wedge(\forall^{\st} \gamma^{1}\leq_{1}1)(\exists N^{0})(\exists^{\st}K^{0})[K\geq \beta(\overline{\gamma}N)>0].\notag
\end{align}
Obviously, there are other ways of guaranteeing that $\phi$ and $\omega_{\phi}$ map standard binary sequences to standard numbers, but whichever way we guarantee that $\omega_{\phi}$ and $\phi$ are standard for standard input, \eqref{krif2} yields that 
\be\label{krif3}
(\forall^{\st} \zeta^{1}\leq_{1}1)(\exists^{\st}N^{0})(\forall  \gamma^{1}\leq_{1}1)(\overline{\zeta}N=\overline{\gamma}N\di \phi(\zeta)=\phi(\gamma)),
\ee
since $\omega_{\phi}(\zeta)$ is assumed to be standard for standard binary $\zeta^{1}$.  Combining \eqref{krif3} and \eqref{kruks}, we obtain $(\forall^{\st}\gamma^{1}\leq_{1}1)(\exists^{\st}N)\alpha(\overline{\gamma}N)>0$.    
Applying FAN$^{*}$, which follows from weak K\"onig's lemma by Corollary \ref{cardargo}, we obtain $(\forall \gamma^{1}\leq_{1}1)(\exists N\leq k)\alpha(\overline{\gamma}N)>0$, for some standard $k^{0}$.  
Hence, for every standard and continuous (in the sense of RM) function $\phi$ on Cantor space, we have
\be\label{mattios}
(\exists^{\st}N^{0})(\forall  \zeta^{1}, \gamma^{1}\leq_{1}1)(\overline{\zeta}N=\overline{\gamma}N\di \phi(\zeta)=\phi(\gamma)).  
\ee
given weak K\"onig's lemma (or equivalently \cite{simpson2}*{IV.2.3}) by Corollary \ref{cardargo}.  
In other words, implicit in weak K\"onig's lemma (or again \cite{simpson2}*{IV.2.3}) is the fact that all standard continuous functions are uniformly continuous on \emph{all of Cantor space}.  
The associated statement in the higher type framework is as follows:
\be\label{bang}
(\forall^{\st}\varphi^{2}\in M(2^{N}))(\exists^{\st}N^{0})(\forall  \zeta^{1}, \gamma^{1}\leq_{1}1)(\overline{\zeta}N=\overline{\gamma}N\di \varphi(\zeta)=\varphi(\gamma)).  
\ee
Applying HAC$_{\textup{int}}$ to the previous formula, we obtain $\MUC(M)^{\st}$.  
In conclusion, we have established that the latter \emph{uniform} statement is implicit in the non-uniform statement \cite{simpson2}*{IV.2.3}.  
Similar results hold for other theorems related to continuity, like those concerned with Riemann integration.
%
%
%
\end{rem}

\section{The Explicit Mathematics theme around arithmetical transfinite recursion}\label{EMTATR}
In this section, we establish the EMT for the fourth Big Five system, called $\ATR_{0}$, which formalises \emph{arithmetical transfinite recursion} (\cite{simpson2}*{V}). 
Our theorems and proofs associated with $\ATR_{0}$ show a striking resemblance to those obtained for the fan theorem in Section \ref{EMTFAN}.  
Simpson has previously pointed out a connection between $\WKL_{0}$ and $\ATR_{0}$ in \cite{simpson2}*{I.11.7}, and this connection apparently manifests itself quite strongly at the uniform level.  

\medskip

For reasons of space, we only consider some examples of the EMT around $\ATR_{0}$.  
We will work with the functional version of the latter, which is a mere cosmetic difference.  Indeed, let $\WO(X)$ and $H_{f}(X, Y)$ be the formula $H_{\theta}(X, Y)$ from \cite{simpson2}*{V.1.1 and V.2.2} for $\theta(n^{0}, Y^{1})\equiv (\forall k^{0})[f(k, n, \overline{Y}k)=0]$.   
Then $\ATR_{0}$ in our framework is:
\be\label{ATR}\tag{$\ATR_{\mathbb{o}}$}
(\forall f^{1}, X^{1})[\WO(X)\di (\exists Y^{1})H_{f}(X, Y)].
\ee
Recall that $\WO(X)$ means that the countable linear order $\leq_{X}$ is well-founded.  
Then define the following uniform version of $\ATR_{\mathbb{o}}$ as:
\be\label{UATR}\tag{$\UATR_{\mathbb{o}}$}
(\exists \Phi^{1\di 1})(\forall f^{1}, X^{1})[\WO(X)\di H_{f}(X, \Phi(f, X))],
\ee
and the (non-trivial) nonstandard version of $\ATRO$ as:
\be\label{ATRS}\tag{$\ATR_{\mathbb{o}}^{*}$}
(\forall^{\st} f^{1}, X^{1})[\WO(X)\di (\exists^{\st} Y^{1})H_{f}^{\st}(X, Y)].
\ee
The proof of the following theorem should be compared to that of Theorem \ref{dfh} and Corollary \ref{cardargo}.  
By \cite{yamayamaharehare}*{Theorem 2.2}, the base theory is not stronger than $\ACA_{0}$.   
\begin{thm}\label{scoen}
In $\RCAO+(\exists^{2})$, we have $\ATRO^{\st}\asa \UATRO^{\st}$.  \\
In $\RCAO+\QFAC^{1,1}+(\exists^{2})$, $\ATRO \asa \UATRO\asa \ATRO^{\st}\asa \UATRO^{\st}\asa \ATRO^{*}$.  
\end{thm}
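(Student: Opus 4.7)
The plan is to follow the template of Theorem~\ref{dfh} and Corollary~\ref{cardargo}, using $(\exists^{2})$ (and hence $\paai$ via Theorem~\ref{markje}) to handle the $\Pi_{1}^{0}$-matrix $H_{f}$ that distinguishes $\ATRO$ from the fan theorem.

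For the first line, the nontrivial direction is $\ATRO^{\st}\di \UATRO^{\st}$. Since $H_{f}(X,Y)$ is essentially $\Pi_{1}^{0}$, $\paai$ yields $H_{f}^{\st}(X,Y)\asa H_{f}(X,Y)$ for standard $f,X,Y$, so $\ATRO^{\st}$ can be rewritten as
\[
(\forall^{\st} f,X)(\exists^{\st} Y)\big[\WO(X)\di H_{f}(X,Y)\big],
\]
whose matrix is internal. Applying HAC$_{\INT}$ then produces a standard $\Xi^{(1\times 1)\di 1^{*}}$ with $(\forall^{\st} f,X)(\exists Y\in \Xi(f,X))[\WO(X)\di H_{f}(X,Y)]$. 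Using $(\exists^{2})$ to decide the $\Pi_{1}^{0}$-formula $H_{f}(X,Y)$, define a standard functional $\Phi(f,X)$ as the first $Y\in \Xi(f,X)$ satisfying $H_{f}(X,Y)$ (with default $0$ when none exists). Then $\Phi$ witnesses $\UATRO^{\st}$: for standard $f,X$ with $\WO(X)$, the output $\Phi(f,X)$ is standard and $H_{f}(X,\Phi(f,X))$ holds, hence $H_{f}^{\st}(X,\Phi(f,X))$ by $\paai$. The converse $\UATRO^{\st}\di \ATRO^{\st}$ is immediate: take $Y:=\Phi(f,X)$.

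For the second line the equivalence chain is closed as follows. The implication $\UATRO\di \UATRO^{\st}$ follows from PF-TP$_{\forall}$ applied to a canonically-defined $\Phi$ — for instance the functional obtained by iterating the ATR-recursion via $(\exists^{2})$, whose uniqueness on $\WO$-inputs makes the definition parameter-free. The implications $\UATRO^{\st}\di \ATRO^{\st}$ and $\UATRO^{\st}\di \ATRO^{*}$ are both immediate by evaluating $\Phi$. The step $\ATRO^{*}\di \UATRO^{\st}$ proceeds exactly as in the first paragraph via HAC$_{\INT}$ plus $(\exists^{2})$. Finally, $\ATRO\di \UATRO$ uses $\QFAC^{1,1}$: with $(\exists^{2})$ the formula $H_{f}(X,Y)$ is decidable in the extended language, and after writing $\ATRO$ in the form $(\forall f,X)(\exists Y)\psi(f,X,Y)$ (taking $Y=0$ as the default when $\WO(X)$ fails, which can be enforced after extracting a characteristic functional for $H_{f}$), $\QFAC^{1,1}$ supplies the required $\Phi^{1\di 1}$. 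The reverse $\UATRO\di \ATRO$ is trivial.

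The principal obstacle is the $\Pi_{1}^{1}$-nature of $\WO(X)$: unlike the $\Pi_{1}^{0}$-antecedent of the fan theorem, $\WO$ is not simplified by $(\exists^{2})$ alone, and in general $\WO(X)$ and $\WO^{\st}(X)$ need not coincide for standard $X$. The proposed strategy sidesteps this by leaving $\WO$ untouched in the antecedent throughout, and by exploiting the uniqueness of the ATR-solution (when $\WO$ holds) to pick canonical witnesses from the finite sequences supplied by HAC$_{\INT}$. A secondary subtlety is that PF-TP$_{\forall}$ forbids parameters, forcing the construction of $\Phi$ to be canonical — via $(\exists^{2})$-driven recursion — before transfer is invoked. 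Handling $\WO(X)$ in the $\QFAC^{1,1}$-based derivation $\ATRO\di \UATRO$ is the most delicate point and the place where the additional hypotheses $\QFAC^{1,1}+(\exists^{2})$ do the essential work beyond what was needed for the fan theorem.
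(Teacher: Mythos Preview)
Your argument has a genuine gap, and it is precisely the point you flag at the end without resolving. In the step $\ATRO^{\st}\di\UATRO^{\st}$ you weaken the antecedent from $\WO^{\st}(X)$ to $\WO(X)$ so that the matrix becomes internal and HAC$_{\INT}$ applies. But then your $\Phi$ only satisfies $(\forall^{\st}f,X)[\WO(X)\di H_{f}(X,\Phi(f,X))]$, whereas $\UATRO^{\st}$ demands the conclusion under the \emph{weaker} hypothesis $\WO^{\st}(X)$. Since $\WO(X)\di\WO^{\st}(X)$ and not conversely, you have proved strictly less than required; a standard $X$ may satisfy $\WO^{\st}(X)$ while $\WO(X)$ fails, and then your HAC$_{\INT}$-witness carries no information. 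The same problem recurs in $\ATRO\di\UATRO$: you cannot write $\ATRO$ as $(\forall f,X)(\exists Y)\psi$ with $\psi$ quantifier-free using only $(\exists^{2})$, because the antecedent $\WO(X)$ is genuinely $\Pi_{1}^{1}$ and $(\exists^{2})$ does not decide it --- so $\QFAC^{1,1}$ does not apply as you describe.

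The paper's fix is to \emph{prenex} the type-1 quantifier hidden in $\WO$: write $\WO^{\st}(X)\equiv(\forall^{\st}h^{1})\WO(X,h)^{\st}$ with $\WO(X,h)$ arithmetical, and pull $h$ out front as an existential in the implication, obtaining $(\forall^{\st}f,X)(\exists^{\st}Y,h)[\WO(X,h)\di H_{f}(X,Y)]$ after $\paai$. Now the bracketed matrix is arithmetical and internal, HAC$_{\INT}$ applies, and --- crucially --- the finitely many $h$'s in $\Psi(f,X)$ are \emph{standard}, so one can push `st' back in via $\paai$ and recover the hypothesis $\WO^{\st}(X)$. The same prenexing makes the matrix quantifier-free (via $(\exists^{2})$) for the $\QFAC^{1,1}$ step in $\ATRO\di\UATRO$, with the $h$-component of the resulting functional simply discarded. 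This prenex-then-$\paai$ manoeuvre is the missing idea; once you have it, the rest of your outline (selection via $(\exists^{2})$, uniqueness of the ATR-solution, PF-TP$_{\forall}$ on the parameter-free skolemised form) goes through essentially as in the paper.
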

\begin{proof}
The respective uniform principles clearly imply their non-uniform counterparts.  
Furthemore, $\ATRO^{\st}$ implies
\be\label{talfo}
(\forall^{\st} f^{1}, X^{1})(\exists^{\st}Y^{1}, h^{1})[\WO(X, h)^{\st}\di H_{f}(X, Y)^{\st}], 
\ee
where $(\forall h^{1})\WO(X, h)\equiv \WO(X)$, i.e.\ the former is the latter with the only type 1-quantifier brought to the front.  
By \cite{simpson2}*{V.2.3}, the formula in square brackets in \eqref{talfo} is arithmetical (relative to `\st') 
and we may drop all `st' inside the square brackets due to $\paai$, obtained via $(\exists^{2})$. 
Since we now have:
\be\label{crax}
(\forall^{\st} f^{1}, X^{1})(\exists^{\st}Y^{1}, h^{1})[\WO(X, h)\di H_{f}(X, Y)], 
\ee
we apply HAC$_{\textup{int}}$ and obtain a standard $\Psi$ such that 
\[
(\forall^{\st} f^{1}, X^{1})(\exists Y^{1}, h^{1}\in \Psi(f, X))[\WO(X, h)\di H_{f}(X, Y)], 
\]
Since $\Psi$ is standard, we also obtain, ignoring the second component of $\Psi$, that
\[
(\forall^{\st} f^{1}, X^{1})(\exists Y^{1}\in\Psi(f, X)(1) ) (\exists^{\st} h^{1})[\WO(X, h)\di H_{f}(X, Y)].
\]
Since the formula in square brackets is arithmetical, we may again introduce `st' everywhere using $\paai$.  We obtain:
\[
(\forall^{\st} f^{1}, X^{1})(\exists Y^{1}\in\Psi(f, X)(1) ) (\exists^{\st} h^{1})[\WO(X, h)^{\st}\di H_{f}(X, Y)^{\st}], 
\]
which yields by definition that 
\[
(\forall^{\st} f^{1}, X^{1})(\exists Y^{1}\in\Psi(f, X)(1) ) [\WO(X)^{\st}\di H_{f}(X, Y)^{\st}].   
\]
Since $H_{f}(X, Y)^{\st}$ is arithmetical (relative to `\st'), we can use $(\exists^{2})$ to test which entries of $\Psi(f, X)(1)$ satisfy the former.  
Thus, define $\Phi(f, X)$ as $\Psi(f, X)(1)(i_{0})$ where $i_{0}<|\Psi(f, X)(1)|$ is the least number $i^{0}$ such that $\Psi(f, X)(1)(i)$ satisfies $H_{f}(X, \cdot)^{\st}$, if such there is, and the empty set otherwise.  
By definition, we have
\[
(\forall^{\st} f^1, X^{1})[\WO(X)^{\st}\di H_{f}(X, \Phi(f, X))^{\st}].  
\]
Indeed, if $\WO(X)^{\st}$ then by \cite{simpson2}*{Lemma V.2.3} relative to `st', if standard $Z^{1}, W^{1}$ both satisfy $H_{f}(X, \cdot)^{\st}$, then $Z\approx_{1}W$, and $Z=_{1}W$ by $\paai$.  
In other words, there is a \emph{unique standard} $Y^{1}$ satisfying $H_{f}(X, \cdot)^{\st}$ and this $Y^{1}$ is exactly the one computed by $\Phi(f,  X)$ in case $\WO(X)^{\st}$.  

\medskip

Clearly, $\ATRO$ implies \eqref{crax} without `\st'.  In the resulting formula, use $(\exists^{2})$ to make the formula in square brackets quantifier-free, and $\QFAC^{1,1}$ yields:
\be\label{firf}
(\exists \Phi^{1\di 1})(\forall f^{1}, X^{1})[\WO(X, \Phi(f, X)(2))\di H_{f}(X, \Phi(f, X)(1)))].
\ee 
Now $\UATRO$ follows by ignoring the second component of $\Phi$ in \eqref{firf}.  Since the latter does not involve parameters, $\Phi$ is standard by PF-TP$_{\forall}$.   
Thus, \eqref{firf} implies
\be\label{firf2}
(\exists^{\st} \Phi^{1\di 1})(\forall^{\st} f^{1}, X^{1})[(\forall^{\st}h^{1})\WO(X,h)\di H_{f}(X, \Phi(f, X)(1)))].
\ee 
Using $\paai$, $\ATRO^{\st}$ is now immediate.  

\medskip

Next, note that in \eqref{talfo}, we can drop all `st' using $\paai$, except for in $(\forall^{\st}f^{1}, X^{1})$.  
Since the resulting formula has no parameters, we may apply PF-TP$_{\forall}$ to obtain $\ATRO$ (from $\ATRO^{\st}$).  

\medskip

Finally, $\ATRO^{\st}$ clearly implies $\ATRO^{*}$ given $\paai$.  
To obtain $\UATRO$ from $\ATR_{0}^{*}$, drop the `\st' in $(\exists^{\st}Y)$ and apply PF-TP$_{\forall}$.
\end{proof}
\begin{cor}
In $\RCAo+\QFAC^{1,1}$, we have $[\ATRO+(\exists^{2})] \asa \UATRO$.  
\end{cor}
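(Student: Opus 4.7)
The plan is to derive this corollary directly from Theorem \ref{scoen} combined with the conservation of $\RCAO$ over $\RCAo$ recorded at the start of Section \ref{base}. No new combinatorial or recursion-theoretic work is required; the corollary is essentially a conservation-plus-bookkeeping step that removes both the nonstandard machinery of $\RCAO$ from the statement.

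For the forward implication $\ATRO + (\exists^{2}) \di \UATRO$, Theorem \ref{scoen} already gives
\[
\RCAO + \QFAC^{1,1} + (\exists^{2}) + \ATRO \vdash \UATRO.
\]
The sentences $\UATRO$, $\ATRO$, $\QFAC^{1,1}$, and $(\exists^{2})$ are all internal (no occurrence of the predicate $\st$). Since $\RCAO$ together with internal axioms remains a conservative extension of $\RCAo$ together with the same axioms (by the conservation result stated after the definition of $\RCAO$), the implication transfers:
\[
\RCAo + \QFAC^{1,1} + (\exists^{2}) + \ATRO \vdash \UATRO.
\]

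For the reverse implication $\UATRO \di \ATRO + (\exists^{2})$, the half $\UATRO \di \ATRO$ is immediate by setting $Y := \Phi(f, X)$ in the matrix of $\UATRO$. For $\UATRO \di (\exists^{2})$, I would chain this with the classical RM fact that $\ATRO$ implies arithmetical comprehension (by the proof of \cite{simpson2}*{V.5.1}, which is internal and hence goes through verbatim in $\RCAo$), and then invoke the equivalence of arithmetical comprehension with $(\exists^{2})$ over $\RCAo$ from \cite{kohlenbach2}*{Prop.\ 3.9}.

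The one thing to check carefully — insofar as there is anything to check — is that the cited conservation result really applies after both $\QFAC^{1,1}$ and $(\exists^{2})$ are adjoined to the base theory on both sides. This is routine because both axioms are internal and their addition to $\RCAo$ and to $\RCAO$ simply extends the base theories symmetrically, so the model-theoretic construction underlying the conservation proof of \cite{bennosam} is unaffected. Beyond this verification, all substantive work has already been carried out in Theorem \ref{scoen}.
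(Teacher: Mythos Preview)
Your forward direction is fine, though the conservation detour is unnecessary: the part of the proof of Theorem~\ref{scoen} establishing $\ATRO\di\UATRO$ (bringing the set quantifiers in front, using $(\exists^{2})$ to render the matrix quantifier-free, and applying $\QFAC^{1,1}$ to obtain \eqref{firf}) is already entirely internal, so it goes through verbatim in $\RCAo+\QFAC^{1,1}+(\exists^{2})$ without ever passing through $\RCAO$.

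The reverse direction has a genuine gap. Your route $\UATRO\di\ATRO\di\ACA_{0}\di(\exists^{2})$ breaks at the last step: the second-order comprehension schema does not yield the type-2 functional $(\exists^{2})$ over $\RCAo$, and \cite{kohlenbach2}*{Prop.~3.9} does not assert this (it concerns $\UWKL$, as in the corollary to Theorem~\ref{trikke}). Even with $\QFAC^{1,1}$ available, you cannot apply it to $(\forall f^{1})(\exists Y^{1})H_{f}(\{0\},Y)$ because the matrix $H_{f}(\{0\},Y)$ is arithmetical, not quantifier-free, and you do not yet have $(\exists^{2})$ to collapse it. The paper's one-line argument avoids this entirely: apply $\UATRO$ itself to the well-order $\{0\}$. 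The functional $\Phi$ from $\UATRO$ then gives $f\mapsto\Phi(f,\{0\})$, a \emph{uniform} type $1\to 1$ map producing the $\Pi^{0}_{1}$-comprehension set for $f$, from which $(\exists^{2})$ is defined directly. The point is that uniformity must come from $\UATRO$, not from $\ATRO$ plus choice.
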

\begin{proof}
Apply $\UATRO$ for the well-order $\{0\}$ to obtain $(\exists^{2})$.  
\end{proof}
The principle CWO from \cite{simpson2}*{V.6.8} has the same syntactical structure as $\ATR_{0}$ by \cite{simpson2}*{V.2.7 and V.2.8}.  Hence, it is straightforward to obtain an equivalence between $\ATRO$ and the (obvious) uniform version of CWO.

\medskip

By Theorems \ref{shat} and \ref{dfh}, it is clear that there is a big difference between the two versions of the uniform fan theorem from Section \ref{EMTFAN}.  
In particular, the inclusion of a realiser for the antecedent of the fan theorem makes a big difference (in logical strength).  We now obtain a similar result
for the statement PST that:
\begin{quote}
\emph{A tree with uncountably many paths has a nonempty perfect subtree}.
\end{quote}
The principle PST is equivalent to $\ATR_{0}$ by \cite{simpson2}*{V.5.5}.  
\begin{princ}[UPST$_{1}$]
There is a functional $\Phi^{1\di 1}$ such that $\Phi(T)$ is a nonempty perfect subtree of any tree $T$ with uncountably many paths.  
\end{princ}\noindent
A tree $T$ is said to have \emph{uncountably many paths} if 
\be\label{kante}
(\forall f_{n}^{0\di 1})(\exists f^{1}\in T)(\forall n^{0})(\exists m^{0})(f(m)\ne f_{n}(m)).  
\ee
\begin{princ}[UPST$_{2}$]
There is a functional $\Phi^{1\di 1}$ such that $\Phi(T, g)$ is a nonempty perfect subtree for any tree $T$ with uncountably many paths, and any $g^{1\di 1}$ witnessing this, i.e.\
$(\forall f_{n}^{0\di 1})(\forall n^{0})(\exists m^{0})\big[g(f_{(\cdot)}(\cdot))(m)\ne f_{n}(m)\wedge g(f_{(\cdot)}(\cdot))\in T\big]$.  
\end{princ}
The following theorem shows that the differences between UFAN$_{1}$ and $\UFAN_{2}$ from Section \ref{EMTFAN} align perfectly with those between $\UPST_{1}$ and $\UPST_{2}$.   
The Suslin functional $(S^{2})$ is the functional version of $\FIVE$, and discussed in Section \ref{zuslin}.  
\begin{thm}\label{mrhin}
In $\RCAo$, we have $\UPST_{1}\asa (S^{2})$.\\ 
In $\RCAo+(\exists^{2})+\QFAC^{1, 1}$, we have $\UPST_{2}\asa \ATRO$.   
\end{thm}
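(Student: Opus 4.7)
The plan is to adapt the templates of Theorem~\ref{shat} (for $\UFAN_{1}$) and Theorem~\ref{dfh}/Corollary~\ref{cardargo} (for $\UFAN_{2}$), as the distinction between $\UPST_{1}$ and $\UPST_{2}$ mirrors precisely that between $\UFAN_{1}$ and $\UFAN_{2}$: whether a realiser for the $\Sigma^{1}_{1}$-antecedent is supplied along with the main input, or only its truth is assumed.

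For $\UPST_{1}\asa (S^{2})$ over $\RCAo$, the reverse implication is the easier one. The Suslin functional $(S^{2})$ yields $\Pi^{1}_{1}$-CA and a fortiori $\ATRO$, so one can decide \eqref{kante} via $(S^{2})$ and carry out the construction from the proof of \cite{simpson2}*{V.5.5} by transfinite recursion to produce a perfect subtree. This construction is uniform in $T$, yielding the functional $\Phi$ as required for $\UPST_{1}$. For the forward implication, first note that $\UPST_{1}\di \PST\di \ATRO\di (\exists^{2})$ by \cite{simpson2}*{V.5.5}. The key observation is that, over $\RCAo$, the existence of a nonempty perfect subtree always implies uncountably many paths (the easy half of the equivalence ``perfect subtree iff uncountably many paths''). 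Combined with $\UPST_{1}$, this means that
\[
\textup{NPPS}(T)\equiv \text{``$\Phi(T)$ is a nonempty perfect subtree of $T$''}
\]
is an arithmetical predicate in $T$ (testable via $(\exists^{2})$) which is \emph{equivalent} to $T$ having uncountably many paths. Encoding an arbitrary $\Sigma^{1}_{1}$-question $\psi(f)$ as the statement that a tree $T_{f}$ (recursive in $f$) has uncountably many paths then yields a decision procedure for $\Sigma^{1}_{1}$-formulas, i.e.\ $(S^{2})$.

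For $\UPST_{2}\asa \ATRO$ over $\RCAo+(\exists^{2})+\QFAC^{1,1}$, follow the template of Theorem~\ref{scoen}. The implication $\UPST_{2}\di \ATRO$ is immediate from $\UPST_{2}\di \PST\di \ATRO$. For the converse, given $\ATRO$ together with $(\exists^{2})$, a tree $T$, and a realiser $g$ as in the antecedent of $\UPST_{2}$, the construction from \cite{simpson2}*{V.5.5} goes through uniformly with $g$ treated as a parameter throughout. As in the proof of Theorem~\ref{scoen}, $(\exists^{2})$ reduces the relevant arithmetical predicate to a quantifier-free one, and $\QFAC^{1,1}$ extracts the functional $\Phi$, yielding $\UPST_{2}$.

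The main obstacle will be carrying out the encoding of $\Sigma^{1}_{1}$-questions as ``$T_{f}$ has uncountably many paths'' already in $\RCAo$, and verifying that the relevant $\Sigma^{1}_{1}$-completeness of this encoding is provable at this weak level. A secondary challenge is ensuring that Simpson's proof of $\ATRO\di \PST$, which uses transfinite recursion along the Kleene--Brouwer ordering, carries through \emph{uniformly} in the realiser $g^{1\di 1}$ for the $\UPST_{2}$ direction---this is the direct analogue of how the realiser $g^{2}$ appears explicitly in the functional witnessing $\UFAN_{2}$ in the proof of Theorem~\ref{dfh}.
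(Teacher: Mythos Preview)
Your approach to $\UPST_{2}\asa \ATRO$ is correct and matches the paper's: obtain PST from $\ATRO$, introduce the realiser $g$, prenex the quantifiers, use $(\exists^{2})$ to render the arithmetical matrix quantifier-free, and apply $\QFAC^{1,1}$ to extract $\Phi$---exactly the template of Theorem~\ref{scoen}.

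For $\UPST_{1}\asa (S^{2})$, the paper simply cites \cite{yamayamaharehare}*{Theorem 4.4}, so your sketch goes beyond what appears here. The strategy---observe that $\textup{NPPS}(T)$ is arithmetical and equivalent (via the easy direction of the perfect-set dichotomy) to ``$T$ has uncountably many paths'', then reduce $\Sigma^{1}_{1}$ questions to the latter---is sound in outline. However, the chain $\UPST_{1}\di \PST\di \ATRO\di (\exists^{2})$ has a genuine gap at the last arrow: over $\RCAo$, the non-uniform $\ATRO$ only yields the second-order schema $\ACA_{0}$, not the type-2 functional $(\exists^{2})$. Compare the corollary after Theorem~\ref{scoen}, where $(\exists^{2})$ is recovered from the \emph{uniform} $\UATRO$ applied to the trivial well-order, not from $\ATRO$ itself. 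Since you need $(\exists^{2})$ to decide $\textup{NPPS}(T)$ uniformly in $T$, this is not a harmless slip. You must obtain $(\exists^{2})$ from $\UPST_{1}$ directly---for instance via Grilliot's trick, exploiting the discontinuity of the functional $\Phi$, or through the specific tree encodings in Sakamoto--Yamazaki---before the rest of your argument can proceed.
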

\begin{proof}
The first equivalence follows from \cite{yamayamaharehare}*{Theorem 4.4}.  For the equivalence on the second line, the forward implication is immediate by \cite{simpson2}*{V.5.5}.  
For the remaining implication, assume $\ATRO$ and use \cite{simpson2}*{V.5.5} to obtain PST:
\[
(\forall T^{1})\big[ (\forall f_{n}^{0\di 1})(\exists f^{1}\in T)(\forall n^{0})(\exists m^{0})(f(m)\ne f_{n}(m)) \di (\exists S^{1})P(S, T)\big].
\]
where $ T, S$ are variables ranging over trees, and $P(S,T)$ is the arithmetical formula denoting that $S$ is a non-empty perfect subtree of $T$ (See \cite{simpson2}*{V.4.1}).  
Since $(\exists^{2})$ is available, we may treat arithmetical formulas as quantifier-free.  As is common in RM, we also treat type $0\di 1$-objects as type $1$-objects.  By $\QFAC^{1,1}$, we obtain
\[
(\forall T^{1}, g^{1\di 1})\big[ (\forall f_{n}^{0\di 1})(\forall n^{0})(\exists m^{0})\big[g(f_{(\cdot)}(\cdot))(m)\ne f_{n}(m)\wedge g(f_{(\cdot)}(\cdot))\in T\big] \di (\exists S^{1})P(S, T)\big].
\]  
Bringing the set quantifiers to the front:
\[
(\forall T^{1}, g^{1\di 1})(\exists f_{n}^{0\di 1}, S^{1})\big[(\forall n^{0})(\exists m^{0})\big[g(f_{(\cdot)}(\cdot))(m)\ne f_{n}(m)\wedge g(f_{(\cdot)}(\cdot))\in T\big] \di P(S, T)\big].
\]  
The formula in square brackets is arithmetical, and applying $\QFAC^{1,1}$ yields $\Phi^{1\di 1}$ witnessing the existential quantifiers.
Ignoring the first component of $\Phi$ (involving the witness to $(\exists f_{n}^{0\di 1})$), we obtain for all $T^{1}, g^{1\di 1}$ that
\[
(\forall f_{n}^{0\di 1})(\forall n^{0})(\exists m^{0})\big[g(f_{(\cdot)}(\cdot))(m)\ne f_{n}(m)\wedge g(f_{(\cdot)}(\cdot))\in T\big] \di P(\Phi(T, g), T),
\]  
which is exactly as required, and we are done.  
\end{proof}
In the same way as for Theorem \ref{scoen}, we can establish the following, where PST$_{1}^{*}$ is $\PST^{\st}$ with the `\st' dropped from the antecedent.   
\begin{cor}\label{mrahin}
In $\RCAo$, we have $\UPST_{1}^{\st}\asa \PST_{1}^{*}\asa\UPST_{1}\asa (S^{2})$.\\ 
In $\RCAO+(\exists^{2})+\QFAC^{1, 1}$, $\PST^{\st}\asa \UPST_{2}^{\st}\asa \PST\asa \UPST_{2}\asa \ATRO$.   
\end{cor}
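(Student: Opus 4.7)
The plan is to adapt the template of Theorem \ref{scoen} almost verbatim, with $\PST$ in place of $\ATRO$; Theorem \ref{mrhin} (together with \cite{simpson2}*{V.5.5}) provides the classical-level anchors in place of the direct equivalence $\ATRO \asa \UATRO$, and the arithmeticity of $P(S,T)$ (``$S$ is a non-empty perfect subtree of $T$'') plays the role of the arithmeticity of $H_f(X,Y)$, allowing the $\paai$ obtained from $(\exists^{2})$ to move `\st' freely in and out of the defining matrices.

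For the first line, the pivot $\UPST_{1} \asa (S^{2})$ is Theorem \ref{mrhin}. Since $\UPST_{1}$ is internal and parameter-free, PF-TP$_{\forall}$ yields $\UPST_{1} \di \UPST_{1}^{\st}$ and ensures that the witnessing $\Phi$ may be taken standard. The step $\UPST_{1}^{\st} \di \PST_{1}^{*}$ is immediate: if a standard $T$ satisfies the non-relativised antecedent \eqref{kante}, the weaker standard-relativised antecedent holds a fortiori, so $\Phi(T)$ is a standard non-empty perfect subtree. To close the cycle, I would observe that $\PST_{1}^{*}$ provides, for every standard $T$ satisfying \eqref{kante}, a standard perfect subtree without any realiser hypothesis; imitating the coding used for \cite{yamayamaharehare}*{Theorem 4.4} then derives $(S^{2})^{\st}$, from which PF-TP$_{\forall}$ applied to the internal, parameter-free defining formula of $(S^{2})$ yields $(S^{2})$ itself, and Theorem \ref{mrhin} closes the cycle with $\UPST_{1}$.

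For the second line, the anchor $\UPST_{2} \asa \ATRO \asa \PST$ combines Theorem \ref{mrhin} with \cite{simpson2}*{V.5.5}. The implications $\UPST_{2} \di \UPST_{2}^{\st}$ and $\PST \di \PST^{\st}$ follow from PF-TP$_{\forall}$ applied to the parameter-free internal formulas defining $\UPST_{2}$ and $\PST$. For the remaining step $\PST^{\st} \di \UPST_{2}^{\st}$, I would mimic Theorem~\ref{scoen}: starting from $\PST^{\st}$, use $\paai$ (available from $(\exists^{2})$ via Theorem~\ref{markje}) to move `\st' in and out of the arithmetical formulas $P$ and ``$T$ has uncountably many paths''; bring the realiser $g$ of the antecedent (as in the proof of Theorem~\ref{mrhin}) and the standard existential quantifier over $S$ to the front; apply HAC$_{\textup{int}}$ to obtain a standard sequence-valued functional $\Psi(T, g)$; and finally use $(\exists^{2})$ to select the least component of $\Psi(T,g)$ satisfying the arithmetical matrix $P(\cdot, T)$.

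The main obstacle is this last step: unlike the $\ATRO$ case of Theorem~\ref{scoen}, where \cite{simpson2}*{Lemma V.2.3} supplies uniqueness of the witness $Y$ modulo $\approx_{1}$ (and hence equality via $\paai$), the perfect subtree is not unique. Uniqueness, however, is not required — any component of $\Psi(T,g)$ satisfying $P$ suffices, and such a component exists and can be located by $(\exists^{2})^{\st}$ since $P$ is arithmetical. Once this selection step is in place, the resulting functional together with PF-TP$_{\forall}$ lifts the nonstandard equivalences $\PST^{\st} \asa \UPST_{2}^{\st}$ back to the internal equivalences $\PST \asa \UPST_{2} \asa \ATRO$, completing the chain in the second line.
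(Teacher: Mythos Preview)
Your proposal is correct and follows essentially the same approach the paper indicates: the paper's entire proof is the remark that one proceeds ``in the same way as for Theorem~\ref{scoen},'' and you have spelled out precisely that template, correctly noting that the non-uniqueness of perfect subtrees is harmless because the $(\exists^{2})$-based selection from the finite sequence $\Psi(T,g)$ only needs \emph{some} component satisfying the arithmetical $P(\cdot,T)$, not a canonical one. (The base theory in the first line is presumably a typo for $\RCAO$, as you implicitly assume by invoking PF-TP$_{\forall}$.)
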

Recall Kohlenbach's heuristic from \cite{kohlenbach2}*{p.\ 293} on the connection between increased logical strength at the uniform level and the essential use of the law of excluded middle in proofs. 
In this light, the behaviour of UPST$_{1}$ is not that surprising as the proof of the perfect set theorem in $\ATR_{0}$ makes use of the law of excluded middle for $\Pi_{1}^{1}$-formulas (See \cite{simpson2}*{p.\ 187}).  

\medskip

On the other hand, the \emph{contraposition} of $\Sigma_{1}^{1}$-separation (\cite{simpson2}*{V.5.1}) has the same syntactical form as PST, and it is possible to obtain an equivalence between $\ATRO$ and the uniform version of this contraposition 
(as in $\UPST_{2}$).  
Further principles with the same syntactical structure as PST are:  The contrapositions of \cite{simpson2}*{V.5.2.2-3} and \cite{simpson2}*{V.6.9.2}, \emph{Ulm's theorem} (\cite{simpson2}*{V.7.3}), \emph{Fra\"iss{\'e}'s conjecture}\footnote{To the best of our knowledge, the exact RM-classification of this theorem is not known.} and certain equivalent principles from \cite{montja}, and the principle TC from \cite{frist}.  
 None of these seem to have nice nonstandard versions as in $T^{*}$ of the EMT.  As to principles with a syntactical structure different from PST, we list \emph{Jullien's theorem} as in \cite{montja} (equivalent to Fra\"iss{\'e}'s conjecture) and the extendibility of $\zeta$, the linear order of the integers, from \cite{denisetal} (equivalent to $\ATR_{0}$).    

\begin{rem}[Mathematical naturalness]\label{stow2}\rm
Recall Remark \ref{stow} concerning the naturalness of logical systems.  Given the above results, $\UATRO$ and related principles also seem to deserve the label `mathematically natural'.  
As to exceptional principles, as well as a potential RM `zoo' (\cite{damirzoo}) between $\ACA_{0}$ and $\ATR_{0}$, one can limit $\ATR_{0}$ to specific well-orders, like the natural numbers.  
\end{rem}
Finally, we suggest further similarities between $\WKL_{0}$ and $\ATR_{0}$.
\begin{rem}[$\WKL_{0}$ versus $\ATR_{0}$]\rm
As noted in Section \ref{STPSEC}, \eqref{STP} is the nonstandard version of weak K\"onig's lemma.  
After Corollary \ref{cardargo}, it is noted that \eqref{STP} is equivalent to $\WKL^{\st}$ generalised to \emph{all} finite trees.  
In light of the similarities between $\WKL_{0}$ and $\ATR_{0}$ pointed out in \cite{simpson2}*{I.11.7}, it is natural question is whether there is a version of the Standard Part principle which corresponds to $\ATR_{0}$.    
Intuitively speaking, $\ATRO^{\st}$ \emph{generalised to any $f$} is equivalent to the statement expressing that one can take the standard part at each step in a (quantifier-free) transfinite recursion, and hand over this standard set to the next step, i.e.\ one can take standard parts along any countable well-order.   This will be explored in future research, as it is beyond the scope of this paper.  
\end{rem}

\section{The Explicit Mathematics theme around $\Pi_{1}^{1}$-comprehension}\label{zuslin}
In this section, we establish the EMT for theorems $T$ such that $UT$ is equivalent to $\Pi_{1}^{1}$-comprehension.   
For reasons of space, we only consider some examples.  Similar to the similarities between the fan theorem and arithmetical transfinite comprehension from the previous section, we establish in Section \ref{sectie} the existence of strong similarities between arithmetical comprehension and $\Pi_{1}^{1}$-comprehension.  

\medskip

We will work with the functional version of $\FIVE$, the so-called Suslin functional (\cite{avi1, yamayamaharehare,kohlenbach2}), defined as follows:
\be\label{suske}
(\exists S^{2})(\forall f^{1})\big[   S(f)=_{0} 0 \asa (\exists g^{1})(\forall x^{0}) (f(\overline{g}x)\ne 0)\big] \tag{$S^{2}$}
\ee
As shown in \cite{bennosam}*{Cor.\ 14}, the Suslin functional $(S^{2})$ is equivalent to:  
\be\label{paaikestoemp2} \tag{$\Paai$}
(\forall^{\st}f^{1})\big[(\forall^{\st} g^{1})(\exists^{\st}x^{0})f(\overline{g}x)=0 \asa (\forall g^{1})(\exists x^{0})f(\overline{g}x)=0\big].
\ee
We now sketch our approach to the EMT around $\Pi_{1}^{1}$-comprehension.  
In particular, we discuss an interesting analogy between $(\exists^{2})$ and $(S^{2})$.
\begin{rem}[Bounded formulas]\label{tola}\rm
As discussed in the proof of Theorem \ref{trikke}, central to the development of the EMT in Section \ref{EMTWKL} 
is that $(\Pi_{1}^{0})^{\st}$-formulas can be replaced by equivalent bounded formulas by simply replacing $(\forall^{\st}k^{0})$ by $(\forall k\leq M)$ for \emph{any} $M\in \Omega$ (assuming of course $T^{*}$).
As will become clear in Section~\ref{sectie}, Nonstandard Analysis also allows us to treat $\Pi_{1}^{1}$-formulas \emph{as bounded formulas}, as will become clear in the following two sections.  
In this way, the EMT for $(S^{2})$ can be established (in Section~\ref{scheidt}) in much the same way as for $(\exists^{2})$,.  
\end{rem}
\subsection{Bounding $\Pi_{1}^{1}$-formulas}\label{sectie}
In this section, we show that $\Pi_{1}^{1}$-formulas are equivalent to natural \emph{bounded} formulas as in \eqref{nonstrat}, assuming $\Paai$.  

\medskip

To this end, consider the following principle.  For finite sequences $\tau^{0},\sigma^{0}$, the notation `$\tau \leq_{0^{*}}\sigma$' is defined as $|\tau|=|\sigma|\wedge (\forall i<|\sigma|)(\tau(i)\leq_{0} \sigma(i))$.    
\begin{princ}[RB] There is a standard functional $\Phi^{1\di 1}$ such that for all standard $f^{1}$ and $M\in \Omega$, we have:
\be\label{nonstrat}
(\forall g^{0}\leq_{0^{*}}\overline{\Phi(f)}M)(\exists x^{0}\leq M)(f(\overline{g}x)=0)\asa (\forall g^{1})(\exists x^{0})(f(\overline{g}x)=0).
\ee
\end{princ}
Intuitively speaking (RB) expresses that it suffices to look for witnesses to $\Sigma_{1}^{1}$-formulas $(\exists g^{1})(\forall x^{0})f(\overline{g}x)\ne0$ below $\Phi(f)$ for standard $f$.  

\medskip

We have the following theorem, where the base theory 
is a conservative extension of $\WKL_{0}$ (See \cite{keisler1, briebenno}).   
\begin{thm}\label{rep}
In $\RCAO+\eqref{STP}$, we have $(S^{2})^{\st}\asa \textup{(RB)}$.
\end{thm}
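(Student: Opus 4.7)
My plan is to combine the equivalence $(S^{2})^{\st}\asa\Paai$ from \cite{bennosam}*{Cor.\ 14} with a left-most path construction through the Kleene-style tree $T_{f}:=\{\sigma:(\forall x\leq|\sigma|)\,f(\overline{\sigma}x)\ne 0\}$ associated with each $f^{1}$. The forward direction of the theorem builds the functional $\Phi$ from the Suslin functional; the reverse direction exploits (RB) together with \eqref{STP}.

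\emph{Forward direction, $(S^{2})^{\st}\di\textup{(RB)}$.} Let $S$ be the standard Suslin functional, and for $\sigma\in\N^{<\N}$ set $\tilde f_{\sigma}(\tau):=f(\sigma*\tau)$, so that $S(\tilde f_{\sigma})=0$ iff some infinite path of $T_{f}$ extends $\sigma$. Since $(S^{2})$ yields $(\mu^{2})$, I use $\mu$ with the Kleene recursor $R_{0}$ (standard by the axiom $\mathcal{T}_{st}$) to define $\Phi$ by primitive recursion: if $S(f)=0$ set $\Phi(f)(n):=(\mu k)\big(S(\tilde f_{\langle\Phi(f)(0),\dots,\Phi(f)(n-1),k\rangle})=0\big)$, and otherwise set $\Phi(f):=\mathbb{0}$. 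Then $\Phi$ is standard, and whenever $T_{f}$ has an infinite path, $\Phi(f)$ outputs its left-most branch. For the biconditional in (RB) with standard $f$ and $M\in\Omega$: if the full $\Pi_{1}^{1}$ statement fails then $S(f)=0$, $\Phi(f)$ is an actual path of $T_{f}$, and $g^{0}:=\overline{\Phi(f)}M$ witnesses the failure of the bounded statement; conversely, if the full $\Pi_{1}^{1}$ statement holds then the $\Phi(f)$-bounded subtree of $T_{f}$ is finitely-branching and well-founded, hence finite by K\"onig's lemma (available since the base theory extends $\WKL_{0}$). Its height $N$ is a standard natural dominated by any $M\in\Omega$, and the bounded statement follows.

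\emph{Reverse direction, $\textup{(RB)}\di(S^{2})^{\st}$.} It suffices to derive $\Paai$. Fix standard $f$ with $(\forall^{\st}g^{1})(\exists^{\st}x^{0})f(\overline{g}x)=0$ and assume toward contradiction, using (RB), that some length-$M$ sequence $g^{0}$ with $g^{0}\leq_{0^{*}}\overline{\Phi(f)}M$ satisfies $(\forall x\leq M)\,f(\overline{g^{0}}x)\ne 0$. Since $\Phi(f)$ is standard, each entry $\Phi(f)(i)$ at a standard index $i$ is a standard natural, and $g^{0}(i)\leq\Phi(f)(i)$ is then itself standard as the standard naturals are downward-closed. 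Applying \eqref{STP} to the graph of $g^{0}$ (padded by zeros) produces a standard set $\tilde G$ agreeing with this graph on all standard pairs. Defining $\tilde g(i):=(\mu n)(\langle i,n\rangle\in\tilde G)$ yields a standard $\tilde g^{1}$ with $\tilde g(i)=g^{0}(i)$ for every standard $i$: the standard witness $g^{0}(i)$ lies in $\tilde G$, while any other standard $n\ne g^{0}(i)$ is excluded by STP-agreement with the functional graph. Hence $(\forall^{\st}x)f(\overline{\tilde g}x)=f(\overline{g^{0}}x)\ne 0$, contradicting the hypothesis $(\exists^{\st}x)f(\overline{\tilde g}x)=0$ applied to the standard $\tilde g$. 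This establishes the bounded statement, hence the full $\Pi_{1}^{1}$ statement via (RB), completing $\Paai$.

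The main obstacle is producing the standard approximant $\tilde g$ of the genuinely nonstandard length-$M$ sequence $g^{0}$ in the reverse direction; the argument crucially uses that $\Phi(f)$ is standard, which forces pointwise standardness of the entries of $g^{0}$ at standard indices, and then a single application of \eqref{STP} transfers this pointwise data to a bona fide standard type-$1$ function on which the standard-world hypothesis on $f$ can be invoked.
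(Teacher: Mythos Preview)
Your proof is essentially correct, but both directions take a genuinely different route from the paper.

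For the forward direction, the paper does not construct $\Phi$ explicitly. Instead, assuming $\Paai$ it observes that
\[
(\forall^{\st}f)(\exists^{\st}h)\big[(\forall g\leq_{1}h)(\exists x)f(\overline{g}x)=0 \di (\forall g)(\exists x)f(\overline{g}x)=0\big]
\]
is trivially true (take $h$ to be a standard witness to the $\Sigma_{1}^{1}$-formula if one exists), applies $\textup{HAC}_{\textup{int}}$ to obtain a finite list of candidate bounds, and sets $\Phi(f)$ to be their pointwise maximum; \eqref{STP} and $\paai$ then convert the type-$1$ bounded statement into the type-$0$ bounded one. This abstract extraction is a template reused verbatim in the later applications (Theorems~\ref{doooook8} and~\ref{doooook2}). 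Your left-most-path construction is more explicit and closer in spirit to the canonical approximations of Section~\ref{revmathrcao}, but it is tailored to this single statement. One small gap: in the case $S(f)\ne 0$ (so $\Phi(f)=\mathbb{0}$), K\"onig's lemma only tells you the one-branch subtree has \emph{some} height $N$, not that $N$ is standard; you need the contrapositive of $\paai$ (available from $(S^{2})^{\st}$) applied to $(\exists x)f(0^{x})=0$ to conclude $N\leq M$.

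For the reverse direction, the paper first derives $\paai$ from (RB) by coding, then uses \eqref{STP} together with $\paai$ to replace the right-hand side of \eqref{nonstrat} by its standard relativisation, and finally applies $\Omega$-CA to the $\Omega$-invariant left-hand side to produce the Suslin functional directly. Your route---proving $\Paai$ outright by an \eqref{STP}-based standard-part argument---bypasses $\Omega$-CA and is arguably more direct. One technical point: the definition $\tilde g(i):=(\mu n)(\langle i,n\rangle\in\tilde G)$ need not yield a total function, since for nonstandard $i$ the set $\tilde G$ may contain no pair with first coordinate $i$. Bounding the search by $\Phi(f)(i)$ (with a default value) fixes this while keeping $\tilde g$ standard, as both $\tilde G$ and $\Phi(f)$ are standard.
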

\begin{proof}
For the reverse implication, it is easy to derive $\paai$ from (RB) using some coding.  Then, the right-hand side of \eqref{nonstrat} may be replaced, using \eqref{STP} and $\paai$, by $(\forall^{\st} g^{1})(\exists^{\st} x^{0})(f(\overline{g}x)=0)$.  
By $\Omega$-CA, there is a standard functional $\Xi(f)$ deciding the truth of the left-hand side of \eqref{nonstrat}, yielding $(S^{2})^{\st}$.     

\medskip

For the forward implication, we recall that $(S^{2})^{\st}\asa \Paai$ by \cite{bennosam}*{Theorem~13}.  
Assuming $\Paai$, the following formula is trivially true:
\be\label{hullb}
(\forall^{\st}f^{1})(\exists^{\st} h^{1})\big[(\forall g^{1}\leq_{1}h)(\exists x)f(\overline{g}x)=0 \di (\forall g^{1})(\exists x)f(\overline{g}x)=0].
\ee
Since the formula in square brackets in \eqref{hullb} is internal, we may apply HAC$_{\textup{int}}$ to obtain standard $\Psi^{1\di 1}$ such that
\[
(\forall^{\st}f^{1})(\exists  h^{1}\in \Psi(f))\big[(\forall g^{1}\leq_{1}h)(\exists x)f(\overline{g}x)=0 \di (\forall g^{1})(\exists x)f(\overline{g}x)=0].
\]
Note that $\Psi(f)$ does not provide a witness to $h$ in \eqref{hullb}, but only a finite sequence of possible witnesses.  
However, we can simply define the standard functional $\Phi^{1\di 1}$ by $\Phi(f)(n):= \max_{i<|\Psi(f)|}\Psi(f)(i)(n)$.  Hence, we obtain   
\[
(\forall^{\st}f^{1})\big[(\forall g^{1}\leq_{1}\Phi(f))(\exists x)f(\overline{g}x)=0 \di (\forall g^{1})(\exists x)f(\overline{g}x)=0], 
\]
and trivially also the reverse implication:
\[
(\forall^{\st}f^{1})\big[(\forall g^{1}\leq_{1}\Phi(f))(\exists x)f(\overline{g}x)=0 \asa (\forall g^{1})(\exists x)f(\overline{g}x)=0].  
\]
Using \eqref{STP} and $\paai$ and the fact that $\Phi$ is standard, we easily obtain:
\be\label{forge}
(\forall g^{0}\leq_{0^{*}}\overline{\Phi(f)}M)(\exists x\leq M)f(\overline{g}x)=0 \asa (\forall g^{1}\leq_{1}\Phi(f))(\exists x)f(\overline{g}x)=0, 
\ee
for any standard $f^{1}$ and $M\in \Omega$.  We now immediately obtain (RB).  
\end{proof}
By the proof of the theorem, the functional $\Phi$ from (RB) is already present in $\RCAO$, but we only obtain (RB) if $\Paai$ is present.  Note that we can repeat the above proof for any special case of $\Paai$.    
\begin{cor}
In $\RCAO+\eqref{STP}+\QFAC^{1,1}$, $(S^{2}) \asa \Paai \asa \textup{(RB)}$.
\end{cor}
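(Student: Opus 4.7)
The plan is to upgrade Theorem~\ref{rep} from its standard-world versions to the fully internal equivalences. Granted $(S^{2})\asa \Paai$ (the fully internal strengthening of $(S^{2})^{\st}\asa \Paai$ from \cite{bennosam}*{Cor.\ 14}), the equivalence $\Paai\asa \textup{(RB)}$ is immediate from Theorem~\ref{rep}. The missing link $(S^{2})\asa \Paai$ is obtained by bootstrapping via $(S^{2})^{\st}$, using \eqref{STP}, $\QFAC^{1,1}$, and PF-TP$_{\forall}$ in opposite directions.

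For $(S^{2})\di\Paai$, I would apply the contraposition of PF-TP$_{\forall}$ to the parameter-free axiom $(S^{2})$ to extract a \emph{standard} Suslin functional $S$ satisfying the internal $(S^{2})$-clause. For standard $f$ with $S(f)=0$, the clause gives some $g^{1}$ with $(\forall x^{0})\, f(\overline{g}x)\ne 0$; by \eqref{STP} a standard $g_{0}\approx_{1} g$ exists, and $\overline{g_{0}}x=\overline{g}x$ for standard $x$ yields $(\forall^{\st}x)\, f(\overline{g_{0}}x)\ne 0$. Conversely, given $(\exists^{\st}g)(\forall^{\st}x^{0})\, f(\overline{g}x)\ne 0$ for standard $f$, $\paai$ (available since $(S^{2})\di(\exists^{2})\asa\paai$ by Theorem~\ref{markje}) transfers the inner quantifier to $(\forall x)$, whence $S(f)=0$ by the internal clause. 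Hence $(S^{2})^{\st}$ holds, and $\Paai$ follows by \cite{bennosam}*{Cor.\ 14}.

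For $\Paai\di(S^{2})$, from $(S^{2})^{\st}$ (via \cite{bennosam}*{Cor.\ 14}) a standard $S$ satisfies the nonstandard equivalence; using $\paai$ this becomes $(\forall^{\st}f^{1})[S(f)=0 \asa (\exists g^{1})(\forall x^{0})\, f(\overline{g}x)\ne 0]$. Treating the $\Pi_{1}^{0}$-matrix as quantifier-free via $(\exists^{2})$ and applying $\QFAC^{1,1}$ yields a standard functional $G^{1\di 1}$ witnessing the inner $(\exists g)$. Adding $S$ and $G$ as standard constants to the language with parameter-free defining axioms in the spirit of \eqref{totally} and \eqref{quark} (cf.\ Remark~\ref{tokkiep}), PF-TP$_{\forall}$ then promotes $(\forall^{\st}f)$ to $(\forall f)$, delivering $(S^{2})$.

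The main obstacle is this parameter-freeness requirement for PF-TP$_{\forall}$: $\QFAC^{1,1}$ is needed precisely to replace the inner existential by a definable functional, while the standardness axioms developed in Section~\ref{fannypack2} permit treating $S$ and $G$ as constants rather than existentially produced witnesses. Once these language-level considerations are settled, the transfer and $\Omega$-CA machinery from Theorem~\ref{rep} transports the argument across the `\st'-boundary to deliver the full equivalences.
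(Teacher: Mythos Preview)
You have over-engineered this. The paper's proof is a one-liner: \cite{bennosam}*{Cor.\ 14} already establishes the \emph{full internal} equivalence $(S^{2})\asa\Paai$ (this is stated explicitly at the opening of Section~\ref{zuslin}), so there is no ``missing link'' to supply. Combined with Theorem~\ref{rep}, which gives $(S^{2})^{\st}\asa\textup{(RB)}$, and the known $(S^{2})^{\st}\asa\Paai$, the chain $(S^{2})\asa\Paai\asa(S^{2})^{\st}\asa\textup{(RB)}$ is immediate. You appear to have misread the citation as giving only the relativized version $(S^{2})^{\st}\asa\Paai$.

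Your direct argument for $(S^{2})\di\Paai$ via PF-TP$_{\forall}$ and \eqref{STP} is essentially correct (and in fact yields $\Paai$ outright, without the final appeal to \cite{bennosam}). However, your proposed route for $\Paai\di(S^{2})$ has a genuine gap. You write ``applying $\QFAC^{1,1}$ yields a standard functional $G$'', but $\QFAC^{1,1}$ is an \emph{internal} choice principle: it applies to premises of the form $(\forall f)(\exists g)\varphi(f,g)$ with $\varphi$ quantifier-free, not to $(\forall^{\st}f)(\exists g)\varphi(f,g)$. At that stage you only have the relativized premise, so the axiom does not fire. The relativized choice available in $\RCAO$ is $\textup{HAC}_{\INT}$, which only produces a finite \emph{sequence} of candidate witnesses. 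Moreover, the ``adding $G$ as a standard constant'' manoeuvre is not licensed by the axioms in the style of \eqref{totally} and \eqref{quark}: those are reserved for functionals with a \emph{unique} parameter-free defining property, whereas a $\Sigma_{1}^{1}$-witnessing functional is not canonically determined. So this bootstrapping attempt does not close, and you should simply invoke \cite{bennosam}*{Cor.\ 14} as the paper does.
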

\begin{proof}
Immediate from \cite{bennosam}*{Cor.\ 14} and the theorem.
\end{proof}
\begin{rem}[Searching through the reals]\rm
If a $\Sigma_{1}^{0}$-sentence $(\exists n)\varphi(n)$ with $\varphi$ quantifier-free, is known to be true, one need only test $\varphi(0)$, $\varphi(1)$, \dots~to eventually 
find a witness to $(\exists n)\varphi(n)$.  Hence, once can `search through the natural numbers' for a witness to a true $\Sigma_{1}^{0}$-sentence, i.e.\ this infinite search terminates.  
The previous is well-known and it is usually added that `one cannot search through the real numbers (in a similarly basic way)'.  
Nonetheless, (RB) allows us to `search through the reals' for a witness to a $\Sigma_{1}^{1}$-formula as in \eqref{nonstrat} by testing all sequences $\sigma$ such that $|\sigma|=M\wedge (\forall i<M)(\sigma(i)<\Phi(f)(i))$ for $(\forall x\leq M)f(\overline{\sigma}x)\ne0$.  
Now $\Phi$ is already present in $\RCAO$ and if $\Paai$ is given, this search will find a witness.  
\end{rem}
In light of the previous remark, (RB) provides us with a suitable bounding result for $\Pi_{1}^{1}$-formulas, as suggested in Remark \ref{tola}.  We could prove a similar result for $\Delta_{1}^{1}$-comprehension (See \cite{simpson2}*{I.11.8}) 
and the associated Transfer principle, but this is beyond the scope of this paper.    
\subsection{Two examples of the EMT around $\FIVE$}\label{scheidt}
In this section, we provide two examples of the EMT around $\FIVE$.  We make essential use of the fact that $\Pi_{1}^{1}$-formulas can be replaced by bounded ones, as shown in the previous section.

\subsubsection{The \textup{EMT} for $\Sigma_{1}^{0}$-determinacy}
We establish the EMT for the  $\Sigma_{1}^{0}$-determinacy principle, which is equivalent to $\ATR_{0}$ by \cite{simpson2}*{V.8.7}.
We refer to \cite{simpson2}*{V.8} for definitions and notations;  The principle $\Sigma_{1}^{0}$-DET is as follows.
\begin{princ}[$\Sigma_{1}^{0}$-DET] For $\varphi(h^{1},f^{1})\equiv(\exists k^{0})f(\overline{h}k)=0$, we have
\be\label{starbuck8}
(\forall f^{1})\big[(\exists S_{0})(\forall S_{1})\varphi(S_{0}\otimes S_{1},f) \vee (\exists S_{1})(\forall S_{0})\neg\varphi(S_{0}\otimes S_{1},f)  \big].
\ee
\end{princ}
Clearly, if $f$ in $\varphi(S_{0}\otimes S_{1}, f)$ ignores $S_{0}$ in $S_{0}\otimes S_{1}$, then \eqref{starbuck8} just expresses the $\Pi_{1}^{1}$-law of excluded middle.  
The uniform version of $\Sigma_{1}^{0}$-DET is as follows.
\begin{princ}[U$\Sigma_{1}^{0}$-DET] For $\varphi(h^{1},f^{1})\equiv(\exists k^{0})f(\overline{h}k)=0$, we have
\be\label{starbuck9}
(\exists \Phi^{1\di (1\times 1)})(\forall f^{1})\big[(\forall S_{1})\varphi(\Phi(f)(1)\otimes S_{1},f) \vee (\forall S_{0})\neg\varphi(S_{0}\otimes \Phi(f)(2),f)  \big].
\ee
\end{princ}
Finally, let $\Sigma_{1}^{0}$-DET$^{*}$ be $\Sigma_{1}^{0}$-DET$^{\st}$ without `st' in the innermost $\Pi_{1}^{1}$-formulas.  

\begin{thm}\label{doooook8}
In $\RCAO+\eqref{STP}$, we have $\Sigma_{1}^{0}\textup{-DET}^{*}\asa \textup{U}\Sigma_{1}^{0}\textup{-DET}^{\st}\asa (S^{2})^{\st}$.  
\end{thm}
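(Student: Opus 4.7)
My plan is to mimic the template of Theorem \ref{doooook} and establish the chain
\[
(S^{2})^{\st}\di \Sigma_{1}^{0}\textup{-DET}^{*}\di \textup{U}\Sigma_{1}^{0}\textup{-DET}^{\st}\di (S^{2})^{\st}.
\]

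For the first implication, I would use the equivalence $(S^{2})^{\st}\asa \Paai$ from \cite{bennosam}*{Cor.\ 14}. Since $(S^{2})^{\st}$ implies $\FIVE^{\st}$, the result \cite{simpson2}*{V.8.7} relativised to `st' yields $\Sigma_{1}^{0}\textup{-DET}^{\st}$; applying $\Paai$ to the innermost $(\Pi_{1}^{1})^{\st}$-subformulas $(\forall^{\st} S_{1})\varphi^{\st}(S_{0}\otimes S_{1},f)$ and $(\forall^{\st} S_{0})\neg\varphi^{\st}(S_{0}\otimes S_{1},f)$ then drops `st' from them, yielding $\Sigma_{1}^{0}\textup{-DET}^{*}$.

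For the main step $\Sigma_{1}^{0}\textup{-DET}^{*}\di \textup{U}\Sigma_{1}^{0}\textup{-DET}^{\st}$, I would construct a canonical approximation $\Psi(f, M)$ by bounded search, invoking the bounding principle \textup{(RB)} from Theorem~\ref{rep} to render the relevant $\Pi_{1}^{1}$ game-theoretic formulas bounded. Concretely, for each finite candidate strategy $S_{0}$ of length at most $M$, I would replace the $\Pi_{1}^{1}$-formula $(\forall S_{1})\varphi(S_{0}\otimes S_{1}, f)$ by a bounded quantifier using the standard functional $\Phi_{f}$ supplied by \textup{(RB)} applied to a suitable auxiliary function coding the game, then set $\Psi(f,M)(1)$ to the leftmost such $S_{0}*00\ldots$ (with a default for the second component) when one exists, and fall back to the symmetric search for $S_{1}$ otherwise. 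I would verify $\Omega$-invariance of $\Psi(\cdot,M)$ on standard $f$ using $\Sigma_{1}^{0}\textup{-DET}^{*}$ in the same manner as the derivation of \eqref{tomega} in Theorem~\ref{doooook}, introducing a modification $\Theta(f, M)$ which agrees with $\Psi$ when the bounded determinacy disjunction holds internally up to $M$ and is $0$ otherwise, so as to secure $\Omega$-invariance unconditionally. An application of Corollary~\ref{genalli} then produces the standard $\Phi$ witnessing $\textup{U}\Sigma_{1}^{0}\textup{-DET}^{\st}$.

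For the final implication $\textup{U}\Sigma_{1}^{0}\textup{-DET}^{\st}\di (S^{2})^{\st}$, I would specialise the game to one whose payoff ignores one player. Given standard $f^{1}$, define $\tilde{f}$ so that the condition $\tilde{f}((S_{0}\otimes S_{1})(k))=0$ depends only on the $S_{1}$-coordinate, e.g.\ via $\tilde{f}((S_{0}\otimes S_{1})(k)) = f(\overline{S_{1}}k)$. Then the first disjunct of $\textup{U}\Sigma_{1}^{0}\textup{-DET}^{\st}$ applied to $\tilde{f}$ asserts $(\forall S^{1})(\exists k^{0})f(\overline{S}k)=0$, whereas the second provides a witness $\Phi(\tilde{f})(2)$ to $(\exists S^{1})(\forall k^{0})f(\overline{S}k)\neq 0$; this delivers a standard functional satisfying $(S^{2})^{\st}$.

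The main obstacle will lie in the second implication: since the innermost formulas are $\Pi_{1}^{1}$ rather than $\Sigma_{1}^{0}$, the canonical approximation must route through \textup{(RB)}, and I must select canonical (e.g.\ leftmost) witnesses so that $\Psi(f,M)$ genuinely agrees on standard input for all $M\in \Omega$. Managing the case distinction between the two disjuncts of determinacy within a single $\Omega$-invariant functional will likewise require some care, paralleling the four-case definition of $\Psi$ in \eqref{dolpi}.
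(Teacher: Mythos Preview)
Your overall chain and the first and last implications match the paper's approach. The gap is in the middle implication $\Sigma_{1}^{0}\textup{-DET}^{*}\di \textup{U}\Sigma_{1}^{0}\textup{-DET}^{\st}$, where you invoke the bounding principle \textup{(RB)} from Theorem~\ref{rep} as a black box. By that theorem, \textup{(RB)} is \emph{equivalent} to $(S^{2})^{\st}$ over $\RCAO+\eqref{STP}$, so you cannot appeal to it while still working from $\Sigma_{1}^{0}\textup{-DET}^{*}$ alone: this is circular. The remark following Theorem~\ref{rep} already flags this issue, noting that the functional $\Phi$ exists in $\RCAO$ but \textup{(RB)} itself only follows given $\Paai$.

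The paper resolves this by \emph{not} using full \textup{(RB)}, but by rerunning the proof of Theorem~\ref{rep} for the single $\Pi_{1}^{1}$-formula $A(f,S_{0},S_{1})$ appearing in determinacy. The point is that $\Sigma_{1}^{0}\textup{-DET}^{*}$ itself supplies the formula $(\forall^{\st}f)(\exists^{\st}S_{0},S_{1})A(f,S_{0},S_{1})$ with $A$ internal, which is exactly the shape \eqref{hullb} needed to start the \textup{(RB)} argument. Applying $\textup{HAC}_{\textup{int}}$ to this and taking maxima yields a standard $\Phi$ such that, for standard $f$ and $M\in\Omega$, the formula $A(f,S_{0},S_{1})$ becomes equivalent to a bounded formula in $\overline{\Phi(f)}M$. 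With $A$ now effectively quantifier-free, the paper does not build a canonical approximation via $\Omega$-CA at all: it observes that $\textup{HAC}_{\textup{int}}$ yields $\QFAC^{1,1}$ relative to `st' and applies this directly to $\Sigma_{1}^{0}\textup{-DET}^{*}$ to extract the functional in $\textup{U}\Sigma_{1}^{0}\textup{-DET}^{\st}$. Your $\Omega$-CA route could likely be made to work once the tailored bounding is in hand, but as written it rests on a principle you have not yet earned.
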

\begin{proof}
We prove the following implications: 
\be\label{gen8}
\Paai \di \Sigma_{1}^{0}\textup{-DET}^{*}\di \textup{U}\Sigma_{1}^{0}\textup{-DET}^{\st} \di (S^{2})^{\st}.
\ee
For the first implication in \eqref{gen8}, $\Paai$ implies $(S^{2})^{\st}$ and the latter implies $\ATRO^{\st}$ and hence $\Sigma_{1}^{0}$-SEP$^{\st}$.  The first implication in \eqref{gen8} is now trivial as \eqref{starbuck8} is a $\Pi_{3}^{1}$-formula.     
The final implication in \eqref{gen8} is also immediate:  For any $f^{1}$, let $\tilde{f}$ be such that in $\varphi(S_{0}\otimes S_{1}, \tilde{f})$, $\tilde{f}$ ignores $S_{0}$ in $S_{0}\otimes S_{1}$, i.e. $\tilde{f}(\overline{S_{0}\otimes S_{1}}k)=f(\overline{S_{1}}\lfloor\frac{k}{2}\rfloor)$.  Then $\Phi(\tilde{f})(2)$ supplies a witness to $(\exists^{\st} g^{1})(\forall^{\st} k^{0})f(\overline{g}k)\ne 0$, if such there is.  Such a functional is known as $(\mu_{1})$ (See \cite{avi2}*{\S8.4.1} and \cite{bennosam}) and implies $(S^{2})$.    

\medskip

For the remaining implication in \eqref{gen8}, we repeat the proof of Theorem \ref{rep} for a particular instance of $\Paai$ provided by $\Sigma_{1}^{0}\textup{-DET}^{*}$.  
The latter can easily be seen to imply $\paai$ and we will treat arithmetical formulas as decidable.  
Furthermore, let $A(f, S_{0}, S_{1})$ be the innermost $\Pi_{1}^{1}$-formula in \eqref{starbuck8}. Then:
\[
(\forall^{\st} f^{1})(\exists^{\st} S_{0}^{1}, S_{1}^{1})\big[A(f,S_{0}, S_{1})\wedge A(f,S_{0}, S_{1})^{\st}\di A(f,S_{0}, S_{1})    \big].
\]
Now let $(\forall S_{2}^{1}, S_{3}^{1})B(S_{2}, S_{3}, f, S_{0}, S_{1})$ be $A(f, S_{0}, S_{1})$, with $B$ arithmetical.  Similar to \eqref{hullb} in the proof of Theorem~\ref{rep}, we obtain 
\[
(\forall^{\st} f^{1})(\exists^{\st} S_{0}^{1}, S_{1}^{1}, h^{1})\big[A(f,S_{0}, S_{1})\wedge (\forall S_{2}^{1}, S_{3}^{1}\leq_{1}h)B(S_{2}, S_{3},f,S_{0}, S_{1})\di A(f,S_{0}, S_{1})    \big].
\]
As in the aforementioned proof, apply HAC$_{\textup{int}}$ to obtain $\Psi$ such that $(\exists S_{0}, S_{1}, h^{1}\in \Psi(f))$ for standard $f^{1}$.  
Define $\Phi^{1\di 1}$ by $\Phi(f)(n):= \max_{1<i<|\Psi(f)|/3}\Psi(f)(3i)(n)$, i.e.\ $\Phi$ ignores $S_{0}, S_{1}$ and computes the maximum of all possible witnesses to $h$ provided by $\Psi$.  
The previous considerations, together with the standardness of $\Phi$, yield that
\[
(\forall^{\st} f^{1})(\exists^{\st} S_{0}^{1}, S_{1}^{1})\big[A(f,S_{0}, S_{1})\wedge (\forall S_{2}^{1}, S_{3}^{1}\leq_{1}\Phi(f))B(S_{2}, S_{3},f,S_{0}, S_{1})\di A(f,S_{0}, S_{1})    \big].
\]
By definition, the inverse implication is again trivial:
\[
(\forall^{\st} f^{1})(\exists^{\st} S_{0}^{1}, S_{1}^{1})\big[A(f,S_{0}, S_{1})\wedge (\forall S_{2}^{1}, S_{3}^{1}\leq_{1}\Phi(f))B(S_{2}, S_{3},f,S_{0}, S_{1})\asa A(f,S_{0}, S_{1})    \big].
\]
Similar to \eqref{forge}, $(\forall S_{2}^{1}, S_{3}^{1}\leq_{1}\Phi(f))B(S_{2}, S_{3},f,S_{0}, S_{1})$ is equivalent to 
\[
(\forall S_{2}^{0}, S_{3}^{0}\leq_{0^{*}}\overline{\Phi(f)}M)\big[(\exists k\leq M)f(\overline{S_{0}\otimes S_{2}}k)=0 \vee  (\forall k\leq M)f(\overline{S_{0}\otimes S_{3}}k)\ne0  \big], 
\]
for standard $f, S_{0}, S_{1}$ and $M\in \Omega$.  Hence, we may treat the innermost $\Pi_{1}^{1}$-formula in $\Sigma_{1}^{0}\textup{-DET}^{*}$ as quantifier-free.  One easily 
verifies that HAC$_{\textup{int}}$ implies $\QFAC^{1,1}$ relative to `st', and applying the latter to $\Sigma_{1}^{0}\textup{-DET}^{*}$ yields $\textup{U}\Sigma_{1}^{0}\textup{-DET}^{\st}$.  
\end{proof}
By \cite{kohlenbach4}*{Cor.\ 4.9} and \cite{yamayamaharehare}*{Theorem 2.2}, $\QFAC^{1,1}$, and hence the base theory of the following theorem, is quite weak.  
\begin{cor}\label{doooook9}
In $\RCAO+\eqref{STP}+\QFAC^{1,1}$, we have 
\be\label{kilopl2}
\Paai\asa \Sigma_{1}^{0}\textup{-DET}^{*}\asa \textup{U}\Sigma_{1}^{0}\textup{-DET}\asa (S^{2}).  
\ee
\end{cor}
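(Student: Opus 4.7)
The plan is to close the cycle
\begin{equation*}
\Paai \di (S^{2}) \di \textup{U}\Sigma_{1}^{0}\textup{-DET} \di \Sigma_{1}^{0}\textup{-DET}^{*} \di \Paai,
\end{equation*}
thereby extending Theorem~\ref{doooook8} to the unrelativised uniform principle. The two `outer' equivalences $\Paai\asa (S^{2})$ and $\Paai\asa (S^{2})^{\st}$ are \cite{bennosam}*{Cor.\ 14}, and $\Sigma_{1}^{0}\textup{-DET}^{*}\di (S^{2})^{\st}$ is the non-trivial direction of Theorem~\ref{doooook8}. Hence only the two fresh implications $(S^{2})\di \textup{U}\Sigma_{1}^{0}\textup{-DET}$ and $\textup{U}\Sigma_{1}^{0}\textup{-DET}\di \Sigma_{1}^{0}\textup{-DET}^{*}$ require genuine work.

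For $(S^{2})\di \textup{U}\Sigma_{1}^{0}\textup{-DET}$, I would first derive ordinary $\Sigma_{1}^{0}$-DET from $(S^{2})$ via $\FIVE$ and \cite{simpson2}*{V.8.7}. The Suslin functional renders both $\Pi_{1}^{1}$-clauses in the disjunction \eqref{starbuck8} decidable, so the whole bracketed formula may be treated as quantifier-free. Massaging the conclusion into the single existential statement $(\forall f^{1})(\exists S_{0}^{1}, S_{1}^{1})\bigl[(\forall S')\varphi(S_{0}\otimes S', f)\vee (\forall S')\neg\varphi(S'\otimes S_{1}, f)\bigr]$ (where once $(S^{2})$ reports which disjunct holds, the `inactive' witness may be declared arbitrary), the axiom $\QFAC^{1,1}$ then extracts the pair-valued functional $\Phi^{1\di (1\times 1)}$ required by $\textup{U}\Sigma_{1}^{0}\textup{-DET}$.

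For $\textup{U}\Sigma_{1}^{0}\textup{-DET}\di \Sigma_{1}^{0}\textup{-DET}^{*}$, the principle is parameter-free, so the contraposition of PF-TP$_{\forall}$ (cf.\ Remark~\ref{tokkiep}) lets us assume the witnessing $\Phi$ is standard. For standard $f$ we then obtain standard components $\Phi(f)(1), \Phi(f)(2)$ satisfying the two unrelativised $\Pi_{1}^{1}$-clauses, which is precisely the content of $\Sigma_{1}^{0}\textup{-DET}^{*}$---recall that the latter intentionally does not `st'-relativise those innermost $\Pi_{1}^{1}$-formulas, matching the output of a standard $\Phi$.

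The main obstacle will lie in the step $(S^{2})\di \textup{U}\Sigma_{1}^{0}\textup{-DET}$: one has to carefully package the binary disjunction of \eqref{starbuck8} into a single existential claim over a pair of sequences to which $\QFAC^{1,1}$ applies. Using $(S^{2})$ to decide which disjunct holds and collapsing the two independent existential choices into one pair $(S_{0},S_{1})$ is the crucial move; once this is in place, the remaining implications amount to routine manipulation of `$\st$' together with PF-TP$_{\forall}$ and the quoted results from \cite{bennosam}.
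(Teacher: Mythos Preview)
Your proposal is correct and follows essentially the same strategy as the paper, which also cites \cite{bennosam} for $\Paai\asa (S^{2})$, invokes Theorem~\ref{doooook8} for the link with $\Sigma_{1}^{0}\textup{-DET}^{*}$, and regards $(S^{2})\di \textup{U}\Sigma_{1}^{0}\textup{-DET}$ as immediate (your explicit use of $\QFAC^{1,1}$ to extract the pair-valued functional is exactly what underlies that remark). The one organizational difference is that the paper closes the loop via $\textup{U}\Sigma_{1}^{0}\textup{-DET}\di (S^{2})$ using the $\tilde{f}/(\mu_{1})$ argument from the proof of Theorem~\ref{doooook8}, whereas you instead route through $\textup{U}\Sigma_{1}^{0}\textup{-DET}\di \Sigma_{1}^{0}\textup{-DET}^{*}$ via PF-TP$_{\forall}$; both paths are equally short and valid.
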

\begin{proof}
In \cite{bennosam}*{Cor.\ 15}, the equivalence $\Paai\asa (S^{2})$ is proved.  
The final equivalence in \eqref{kilopl2} follows from the implication $\textup{U}\Sigma_{1}^{1}\textup{-DET}\di (S^{2})$ from the proof of the theorem, as the reverse implication of the equivalence is immediate.    
\end{proof}

\subsubsection{The \textup{EMT} for $\Sigma_{1}^{1}$-separation}
We establish the EMT for $\Sigma_{1}^{1}$-separation, which is equivalent to $\ATR_{0}$ by \cite{simpson2}*{V.5.1}.  The nonstandard version is:  
\begin{princ}[$\Sigma_{1}^{1}$-SEP$^{*}$]
For standard $f_{i}^{1}$ and $\varphi_{i}(n)\equiv (\exists g^{1}_{i})(\forall n_{i})(f_{i}(\overline{g_{i}}n_{i},n)\ne0)$ such that $ (\forall^{\st}n)\neg[ \varphi_{1}^{\st}(n)\wedge \varphi_{2}^{\st}(n)] $, there is standard $Z^{1}$ such that
\be\label{starbuck2}
(\forall n^{0})\big[  \varphi_{1}(n)  \di n\not\in Z \wedge \varphi_{2}(n)\di n\in Z \big].
\ee
\end{princ}
The principle $\Sigma_{1}^{1}$-SEP$^{*}$ states the existence of a separating set for $(\Sigma_{1}^{1})^{\st}$-formulas, but for \emph{all} numbers, not just the standard ones.  
Similarly, $\textup{U}\Sigma_{1}^{1}\textup{-SEP}$ is: 
\begin{princ}[U$\Sigma_{1}^{0}$-SEP] \label{SEP12}
For $\varphi_{i} (n, f)\equiv(\exists g_{i}^{1})(\forall n_{i})(f(\overline{g_{i}}n_{i}, n)\ne0)$, we have
\begin{align}
\big(\exists & F^{(1\times 1\times 0)\di 0}\big)(\forall f^{1},g^{1})\Big[ (\forall n)\neg[ \varphi_{1} (n,f)\wedge \varphi_{2} (n,g)] \di \notag \\ 
& (\forall n^{0})[\varphi_{1} (n,f) \di F(f,g,n)=1  ]\wedge (\forall n) [\varphi_{2} (n,g)\di F(f,g,n)=0 ] \Big].\label{jesestiasep2}
\end{align}
\end{princ}

\begin{thm}\label{doooook2}
In $\RCAO+\eqref{STP}$, we have $\Sigma_{1}^{1}\textup{-SEP}^{*}\asa \textup{E-U}\Sigma_{1}^{1}\textup{-SEP}^{\st}\asa (S^{2})^{\st}$.  
\end{thm}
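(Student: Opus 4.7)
The plan is to establish the chain of implications
\[
\Paai \di \Sigma_{1}^{1}\textup{-SEP}^{*} \di \textup{E-U}\Sigma_{1}^{1}\textup{-SEP}^{\st} \di (S^{2})^{\st},
\]
which suffices by the equivalence $\Paai \asa (S^{2})^{\st}$ from \cite{bennosam}*{Cor.\ 15}. The first implication is essentially trivial: $\Paai$ yields $(S^{2})^{\st}$ and hence $\ATRO^{\st}$, so the proof of \cite{simpson2}*{V.5.1} relativised to `st' produces a standard separating set $Z^{1}$, and $\Paai$ together with the bounding result (RB) of Theorem \ref{rep} allows one to drop the standardness predicate on the inner $\Pi_{1}^{1}/\Sigma_{1}^{1}$ quantifiers of \eqref{starbuck2}. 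The final implication is equally direct by adapting the final argument of \cite{simpson2}*{V.5.1}: given the uniform separating functional, for a $\Pi_{1}^{1}$-formula $\psi(n) \equiv (\forall g^{1})(\exists n_{0})(f(\overline{g}n_{0},n)=0)$ one forms two $\Sigma_{1}^{1}$-formulas which together with $\psi$ decide whether or not $\psi$ holds, delivering $(S^{2})^{\st}$.

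For the main implication $\Sigma_{1}^{1}\textup{-SEP}^{*} \di \textup{E-U}\Sigma_{1}^{1}\textup{-SEP}^{\st}$, the strategy is to mimic the proof of Theorem \ref{doooook}, using (RB) to reduce the hidden type~$1$ quantifier in each $\varphi_{i}$ to a bounded numerical one. Note $\Sigma_{1}^{1}\textup{-SEP}^{*}$ easily implies $\paai$ and hence $\Paai$, making (RB) available. Writing $(\tilde{f}_{i})_{n}(\sigma) := f_{i}(\sigma,n)$, Theorem \ref{rep} yields a standard $\Phi_{0}^{1\di 1}$ such that, for standard $f_{i},n$ and any $M \in \Omega$, the $\Pi_{1}^{1}$-formula $\neg\varphi_{i}(n,f_{i})$ is equivalent to the internal bounded formula
\[
h_{i}(n,f_{i},M) \equiv (\forall g^{0} \leq_{0^{*}} \overline{\Phi_{0}((\tilde{f}_{i})_{n})}M)(\exists n_{i} \leq M)\big(f_{i}(\overline{g}n_{i},n) = 0\big).
\]
Using these $h_{i}$ in place of the $h_{i}$ of \eqref{dolpi}, define the canonical approximation $\Psi(f_{1},f_{2},M)(n)$ by the same four-case split as in Theorem \ref{doooook}, and then modify $\Psi$ to a functional $\Theta$ which equals $\Psi$ when the separation hypothesis is met bounded by $M$ and is $0$ otherwise, so that the $\Omega$-invariance clause of Corollary \ref{genall} applies unconditionally in $f_{1},f_{2}$.

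The main obstacle will be verifying $\Omega$-invariance of $\Psi$ case by case for standard $f_{1},f_{2},n$, particularly the fourth case where $\neg\varphi_{1}^{\st}(n_{0}) \wedge \neg\varphi_{2}^{\st}(n_{0})$ holds for some standard $n_{0}$: as in Theorem \ref{doooook} this requires constructing a modified pair $f_{3},f_{4}$ agreeing with $f_{1},f_{2}$ off $n_{0}$ but sharing a common value at $n_{0}$, then applying $\Sigma_{1}^{1}\textup{-SEP}^{*}$ to $(f_{3},f_{4})$ and transferring back through (RB) to conclude independence of the choice of infinite $M$. A subtlety absent in the $\Sigma_{1}^{0}$-case is that the bounding functional $\Phi_{0}$ must be invoked uniformly in the parameter $n$, which is legitimate since $(\tilde{f}_{i})_{n}$ is standard for standard $f_{i},n$ and $\Phi_{0}$ is standard. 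Once $\Omega$-invariance is established, $\Omega$-CA yields the standard separating functional; its standard extensionality follows because its output depends only on the standard truth values of the $\varphi_{i}$, which are extensional in $f_{i}$ via \eqref{test}$^{\st}$ lifted to the $\Pi_{1}^{1}$-setting through (RB).
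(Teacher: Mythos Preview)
Your proposal is correct and follows the same overall template as the paper: both prove the chain $\Paai \di \Sigma_{1}^{1}\textup{-SEP}^{*} \di \textup{E-U}\Sigma_{1}^{1}\textup{-SEP}^{\st} \di (S^{2})^{\st}$, both handle the outer implications briefly, and both attack the middle implication by replacing the $\Pi_{1}^{1}$-formulas $\neg\varphi_{i}$ by bounded surrogates, building a four-case canonical approximation $\Psi$ as in Theorem~\ref{doooook}, and applying $\Omega$-CA.

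The one substantive difference is in \emph{how} the bounded surrogates are obtained. The paper does not claim full $\Paai$ from $\Sigma_{1}^{1}\textup{-SEP}^{*}$; it only extracts the particular transfer instances \eqref{test4} and \eqref{test3} (for the disjunction and conjunction of the $\neg\varphi_{i}$) and then \emph{re-runs the proof} of Theorem~\ref{rep} on those instances to produce the bounding functional and \eqref{evidenze}. You instead observe that full $\Paai$ already follows from $\Sigma_{1}^{1}\textup{-SEP}^{*}$ and invoke (RB) as a black box on each $(\tilde{f}_{i})_{n}$. Your route is legitimate --- taking $f_{1}=f_{2}$ in $\Sigma_{1}^{1}\textup{-SEP}^{*}$ immediately yields the nontrivial direction of $\Paai$, and $\paai$ gives the other --- and it actually buys you something: since (RB) makes each $h_{i}(n,f_{i},M)$ \emph{individually} equivalent to the $M$-free formula $\neg\varphi_{i}(n,f_{i})$, the $\Omega$-invariance of $\Psi$ is automatic for \emph{all} standard $f_{1},f_{2}$, and neither the $f_{3},f_{4}$ trick nor the auxiliary $\Theta$ is needed. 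The paper, having only \eqref{evidenze} for the disjunction/conjunction, must do the case analysis you describe.

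One phrasing issue: ``implies $\paai$ and hence $\Paai$'' reads as $\paai\di\Paai$, which is false. You should state explicitly that $\Paai$ follows \emph{directly} from $\Sigma_{1}^{1}\textup{-SEP}^{*}$ (via the $f_{1}=f_{2}$ trick), with $\paai$ supplying only the reverse direction.
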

\begin{proof}
We prove the following implications: 
\be\label{gen2}
\Paai \di \Sigma_{1}^{1}\textup{-SEP}^{*}\di \textup{E-U}\Sigma_{1}^{1}\textup{-SEP}^{\st} \di (S^{2})^{\st}.
\ee
The first implication in \eqref{gen2} is trivial as \eqref{starbuck2} is a $\Pi_{1}^{1}$-formula.  
For the second implication in \eqref{gen2}, clearly $ \Sigma_{1}^{1}\textup{-SEP}^{*}\di  \Sigma_{1}^{0}\textup{-SEP}^{*}$, i.e.\ we may use $\paai$ by Theorem \ref{doooook}.  
Next, note that $ \Sigma_{1}^{1}\textup{-SEP}^{*}$ implies for all standard $f_{1}^{1}, f_{2}^{1}$ that
\be\label{test2}
(\forall^{\st}n)[ \neg\varphi_{1}^{\st}(n, f_{1})\vee \neg\varphi_{2}^{\st}(n, f_{2})]\di  (\forall n)[\neg \varphi_{1}^{}(n, f_{1})\vee \neg\varphi_{2}^{}(n, f_{2})].
\ee
Similar to the proof of Theorem \ref{doooook} (involving the functions $f_{3},f_{4}$), this yields:  
\be\label{test4}
(\forall^{\st}f_{1}^{1},f_{2}^{1}, n^{0} )\Big[ [ \neg\varphi_{1}^{\st}(n, f_{1})\vee \neg\varphi_{2}^{\st}(n, f_{2})]\di [\neg \varphi_{1}^{}(n, f_{1})\vee \neg\varphi_{2}^{}(n, f_{2})]\Big],
\ee
\be\label{test3}
(\forall^{\st}f_{1}^{1},f_{2}^{1}, n^{0} )\Big[ [ \neg\varphi_{1}^{\st}(n, f_{1})\wedge \neg\varphi_{2}^{\st}(n, f_{2})]\di  [\neg \varphi_{1}^{}(n, f_{1})\wedge \neg\varphi_{2}^{}(n, f_{2})]\Big].
\ee
Now, the consequent (resp.\ antecendent) of both \eqref{test4} and \eqref{test3} is a $\Pi_{1}^{1}$-formula (resp.\ relative to `st').  
In other words, the previous centered formulas are instances of $\Paai$ and we can repeat the proof of Theorem \ref{rep} for the conjunction of \eqref{test4} and \eqref{test3}.  
This yields the existence of a standard functional $\Phi$ such that for all standard $f_{1}, f_{2}, n$ and $M\in \Omega$, we have
\begin{align}
[\neg\varphi_{1}^{M,\Phi}(n, f_{1})\vee \neg\varphi_{2}^{M,\Phi}(n, f_{2})]&\asa [\neg \varphi_{1}^{}(n, f_{1})\vee \neg\varphi_{2}^{}(n, f_{2})] \notag\\
&\wedge\label{evidenze} \\
[\neg\varphi_{1}^{M,\Phi}(n, f_{1})\wedge \neg\varphi_{2}^{M,\Phi}(n, f_{2})]&\asa [\neg \varphi_{1}^{}(n, f_{1})\wedge \neg\varphi_{2}^{}(n, f_{2})] \notag
\end{align}
where we abbreviate $\neg\varphi_{i}^{M,\Phi}(f,n)\equiv [(\forall g_{i}^{0}\leq_{0*}\Phi(f))(\exists x_{i}\leq M)f(\overline{g_{i}}x, n)=0]$.  
Now define the functional $\Psi$ as follows:
\be\label{full}
\Psi(f_{1},f_{2},M)(n):=
\begin{cases}
~0 &~~\varphi_{1}^{M,\Phi}(n,f_{1}) \wedge \neg\varphi_{2}^{M,\Phi}(n, f_{2})\\
~1 & \neg\varphi_{1}^{M,\Phi}(n, f_{1}) \wedge~ ~\varphi_{2}^{M,\Phi}(n, f_{2})\\
~2 &  \neg\varphi_{1}^{M,\Phi}(n, f_{1}) \wedge \neg\varphi_{2}^{M,\Phi}(n, f_{2})\\
~3 & ~~ \varphi_{1}^{M,\Phi}(n, f_{1}) \wedge ~~\varphi_{2}^{M,\Phi}(n, f_{2})\\
\end{cases}.
\ee
%
%
Using \eqref{evidenze}, it is now straightforward (by following the proof of Theorem \ref{doooook}) that $\Psi$ is both as required for $\textup{U}\Sigma_{1}^{1}\textup{-SEP}^{\st}$ and $\Omega$-invariant, in case the standard $f_{i}$ satsify $(\forall^{\st}n)[ \neg\varphi_{1}^{\st}(n, f_{1})\vee \neg\varphi_{2}^{\st}(n, f_{2})]$.  We also show this explicitly now.  

\medskip

Indeed, for standard $n$, if $\varphi_{1}^{\st}(n, f_{1})$ then $\varphi_{1}(n,f_{1})$ by $\paai$;  By assumption and \eqref{test2}, we have $\neg\varphi_{2}(n, f_{2})$, and the first case in \eqref{full} holds by \eqref{evidenze} (for any infinite $M$). 
If $\varphi_{2}^{\st}(n, f_{2})$ holds for standard $n$, then similarly $\neg\varphi_{1}(n, f_{1})$ by the previous, and the second case in \eqref{full} holds (for any infinite $M$).  
Since $\varphi_{2}^{\st}(n, f_{2})\wedge \varphi_{1}^{\st}(n, f_{1})$ is impossible by assumption, the final case in \eqref{full} does not occur.  If for some standard $n_{0}$ the third case holds, we have it for all $M\in \Omega$ by the second 
conjunct of \eqref{evidenze}. 
Hence, if the third case in \eqref{full} occurs, it does so for all $M\in \Omega$.  

\medskip  

As $\Omega$-CA requires quantification over \emph{all} standard sequences $f_{i}^{1}$ as in \eqref{tomega23}, we need to specify the behaviour when the separation assumption $(\forall^{\st}n)[ \neg\varphi_{1}^{\st}(n, f_{1})\vee \neg\varphi_{2}^{\st}(n, f_{2})]$ is not met.      
Thus, let $\Xi(f_{1},f_{2},n, M)$ be the $\Omega$-invariant characteristic function of the left-hand side of the first conjunct in \eqref{evidenze}, and let $\Lambda(f_{1}, f_{2}, n)$ be its standard part obtained via $\Omega$-CA.  
Now define $\Theta(f, g, M)$ as $\Psi(f,g,M)$ if $(\forall n\leq M)\Lambda(f, g,n)=1$, and $0$ otherwise.    
Using $\paai$ and $ \Sigma_{1}^{0}\textup{-SEP}^{*}$ as in the previous paragraph, it is clear that $\Theta(T,M)$ is $\Omega$-invariant, i.e.\ we have
\be\label{tomega23}
(\forall^{\st}n , f^{1},g^{1})(\forall N,M\in \Omega)\big[ \Theta(f,g,M)\approx_{1}\Theta(f,g,N) \big]. 
\ee  
The axiom $\Omega$-CA provides a standard functional $\Phi(\cdot)\approx_{1}\Theta(\cdot, M)$ which satisfies $\textup{U}\Sigma_{1}^{1}\textup{-SEP}^{\st}$.  
As to the standard extensionality of $\Phi$, note that if $\varphi_{i}^{\st}(n, f_{i})$, i.e.\ in one the first two cases of \eqref{full}, this extensionality property is immediate due to \eqref{test}.  By the latter, the third case of \eqref{dolpi} also does not occur for standard $h_{1}, h_{2}$ such that $h_{i}\approx_{1}f_{i}$.  For the final case in \eqref{dolpi}, a similar argument involving the functions $f_{3}, f_{4}$ from Theorem \ref{doooook} guarantees standard extensionality.    

\medskip

For reasons of space, the proof of U$\Sigma_{1}^{0}$-SEP $\di (S^{2})$ is left to the reader.   
\end{proof}
By \cite{kohlenbach4}*{Cor.\ 4.9} and \cite{yamayamaharehare}*{Theorem 2.2}, $\QFAC^{1,1}$, and hence the base theory of the following theorem, is quite weak.  
\begin{cor}\label{doooook3}
In $\RCAO+\eqref{STP}+\QFAC^{1,1}$, we have 
\be\label{kilopl}
\Paai\asa \Sigma_{1}^{1}\textup{-SEP}^{*}\asa \textup{U}\Sigma_{1}^{1}\textup{-SEP}\asa (S^{2}).  
\ee
\end{cor}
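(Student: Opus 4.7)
The plan is to combine Theorem~\ref{doooook2}, the equivalence $\Paai \asa (S^{2})$ from \cite{bennosam}*{Cor.\ 15}, and PF-TP$_{\forall}$ to dispose of two of the three equivalences immediately, and then to prove the remaining equivalence $\textup{U}\Sigma_{1}^{1}\textup{-SEP} \asa (S^{2})$ by a direct argument. Concretely, Theorem~\ref{doooook2} gives $\Sigma_{1}^{1}\textup{-SEP}^{*} \asa (S^{2})^{\st}$, while $(S^{2})$ is a parameter-free existence statement so that Remark~\ref{tokkiep} (via PF-TP$_{\forall}$) yields $(S^{2}) \asa (S^{2})^{\st}$; chaining these with $\Paai \asa (S^{2})$ delivers $\Paai \asa \Sigma_{1}^{1}\textup{-SEP}^{*} \asa (S^{2})$.

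For the forward implication $(S^{2}) \di \textup{U}\Sigma_{1}^{1}\textup{-SEP}$, I define the required separating functional by setting $F(f_{1},f_{2},n)$ to be $1$ when $S(\hat{f}_{1,n}) = 0$ and $0$ otherwise, where $\hat{f}_{1,n}(\sigma) := f_{1}(\langle \sigma, n\rangle)$ is primitive recursive in $f_{1}$ and $n$. By the defining clause of $(S^{2})$, we have $F(f_{1},f_{2},n) = 1$ exactly when $\varphi_{1}(n, f_{1})$ holds; under the disjointness hypothesis in \eqref{jesestiasep2}, this forces $F = 0$ whenever $\varphi_{2}(n, f_{2})$ holds, and $F$ is as required. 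For the reverse implication $\textup{U}\Sigma_{1}^{1}\textup{-SEP} \di (S^{2})$, given arbitrary $h^{1}$, I set $\tilde{f}_{1}(\langle\sigma, n\rangle) := h(\sigma)$ (ignoring $n$) and let $\tilde{f}_{2}$ be the constant-$1$ function. Then $\varphi_{2}(n, \tilde{f}_{2})$ is vacuously false for every $n$, so the disjointness hypothesis is trivially met, while $\varphi_{1}(0, \tilde{f}_{1}) \asa (\exists g)(\forall x)(h(\overline{g}x) \ne 0)$ by construction. Defining $S(h) := 1 - F(\tilde{f}_{1}, \tilde{f}_{2}, 0)$ (where the dependence on $h$ enters through $\tilde{f}_{1}$) then yields the Suslin functional.

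The main obstacle will be essentially bookkeeping: verifying that the primitive-recursive transformations $(f_{1}, n) \mapsto \hat{f}_{1,n}$ and $h \mapsto \tilde{f}_{1}$ are available in $\RCAO$ and interact correctly with the pairing convention implicit in the definition of $\varphi_{i}$. All the substantive conceptual work has already been carried out in Theorem~\ref{doooook2}; $\QFAC^{1,1}$ is not used in the direct arguments above but enters the corollary through \cite{bennosam}*{Cor.\ 15}, where it underwrites the internal equivalence $\Paai \asa (S^{2})$ as opposed to its purely nonstandard shadow.
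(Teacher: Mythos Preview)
Your reductions for $\Paai\asa\Sigma_{1}^{1}\textup{-SEP}^{*}\asa(S^{2})$ and for $(S^{2})\di\textup{U}\Sigma_{1}^{1}\textup{-SEP}$ are correct and match the paper's intent (the paper simply calls the latter ``immediate'').

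The gap is in your argument for $\textup{U}\Sigma_{1}^{1}\textup{-SEP}\di(S^{2})$. First, a slip: with $\tilde{f}_{2}$ constant~$1$, the formula $\varphi_{2}(n,\tilde{f}_{2})\equiv(\exists g)(\forall m)(1\ne0)$ is \emph{true} for every $n$, not false; the disjointness hypothesis then fails whenever $\varphi_{1}$ holds. You presumably meant constant~$0$. But even with that fix the argument does not go through. The conclusion of $\textup{U}\Sigma_{1}^{1}\textup{-SEP}$ is only the pair of one-sided implications $\varphi_{1}\di F=1$ and $\varphi_{2}\di F=0$; when $\varphi_{2}$ is identically false the second clause is vacuous, and $F$ is free to output $1$ on every input regardless of whether $\varphi_{1}$ holds. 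So $1-F(\tilde{f}_{1},\tilde{f}_{2},0)$ need not decide $(\exists g)(\forall x)(h(\overline{g}x)\ne0)$, and you do not obtain $(S^{2})$.

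What is missing is a second $\Sigma_{1}^{1}$-formula that is genuinely complementary to $\varphi_{1}$, so that the two clauses together pin down $F$. One standard route: $\textup{U}\Sigma_{1}^{1}\textup{-SEP}$ trivially yields $\Sigma_{1}^{1}\textup{-SEP}$ and hence $\ATRO$; over $\ATRO$ the tree $T_{h}=\{\sigma:(\forall i<|\sigma|)h(\overline{\sigma}i)\ne0\}$ is well-founded iff it admits a rank function into a countable well-order, and the latter is a $\Sigma_{1}^{1}$-statement uniformly in $h$. Taking $\varphi_{2}$ to assert the existence of such a rank function gives two disjoint $\Sigma_{1}^{1}$-formulas whose union is everything, so the separator $F$ from $\textup{U}\Sigma_{1}^{1}\textup{-SEP}$ now \emph{decides} $\varphi_{1}$, and $(S^{2})$ follows. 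The paper itself leaves this implication to the reader, so there is no official proof to compare against, but some device of this kind is unavoidable.
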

\begin{proof}
In \cite{bennosam}*{Cor.\ 15}, the equivalence $\Paai\asa (S^{2})$ is proved.  
The final equivalence in \eqref{kilopl} follows from the implication $\textup{U}\Sigma_{1}^{1}\textup{-SEP}\di (S^{2})$ from the proof of the theorem, as the reverse implication of the latter is immediate.    
\end{proof}
In light of the uniformity of the proof in \cite{denisetal}*{\S4}, the uniform version of the extendibility of $\zeta$, the linear order of the integers, seems equivalent to uniform $\Sigma_{1}^{1}$-separation, and the same for the nonstandard versions.   

\medskip

Finally, it should be possible to formulate a version of Conjecture \ref{braddd} for $\ATR_{0}$ and $\FIVE$ after studying more examples of the EMT around $\FIVE$.    

\begin{ack}\rm
This research was supported by the following funding bodies: FWO Flanders, the John Templeton Foundation, the Alexander von Humboldt Foundation, and the Japan Society for the Promotion of Science.  
The author expresses his gratitude towards these institutions. 
The author would like to thank Ulrich Kohlenbach, Karel Hrbacek, Benno van den Berg, Steffen Lempp, Paul Shafer, Mariya Soskova, and Denis Hirschfeldt for their valuable advice.  
\end{ack}

\begin{bibdiv}
\begin{biblist}
\bib{avi3}{article}{
  author={Avigad, Jeremy},
  title={Weak theories of nonstandard arithmetic and analysis},
     note={See \cite{simpson1}},
}

\bib{avi2}{article}{
  author={Avigad, Jeremy},
  author={Feferman, Solomon},
  title={G\"odel's functional \(``Dialectica''\) interpretation},
  conference={ title={Handbook of proof theory}, },
  book={ series={Stud. Logic Found. Math.}, volume={137}, },
  date={1998},
  pages={337--405},
}

\bib{avi1}{article}{
  author={Avigad, Jeremy},
  author={Helzner, Jeremy},
  title={Transfer principles in nonstandard intuitionistic arithmetic},
  year={2002},
  journal={Archive for Mathmatical Logic},
  volume={41},
  pages={581--602},
}

\bib{brie}{article}{
  author={van den Berg, Benno},
  author={Briseid, Eyvind},
  author={Safarik, Pavol},
  title={A functional interpretation for nonstandard arithmetic},
  journal={Ann. Pure Appl. Logic},
  volume={163},
  date={2012},
  number={12},
  pages={1962--1994},
}

\bib{bennosam}{article}{
  author={van den Berg, Benno},
  author={Sanders, Sam},
  title={Transfer equals Comprehension},
  journal={Submitted},
  volume={},
  date={2014},
  number={},
  note={Available on arXiv: \url {http://arxiv.org/abs/1409.6881}},
  pages={},
}

\bib{briebenno}{article}{
  author={van den Berg, Benno},
  author={Briseid, Eyvind},
  title={Weak systems for nonstandard arithmetic},
  journal={In preparation},
}

\bib{ishberg}{article}{
  author={Berger, Josef},
  author={Ishihara, Hajime},
  title={Brouwer's fan theorem and unique existence in constructive analysis},
  journal={MLQ Math. Log. Q.},
  volume={51},
  date={2005},
  number={4},
  pages={360--364},
}

\bib{bish1}{book}{
  author={Bishop, Errett},
  title={Foundations of constructive analysis},
  publisher={McGraw-Hill Book Co.},
  place={New York},
  date={1967},
  pages={xiii+370},
}

\bib{brich}{book}{
   author={Bridges, Douglas},
   author={Richman, Fred},
   title={Varieties of constructive mathematics},
   series={London Mathematical Society Lecture Note Series},
   volume={97},
   publisher={Cambridge University Press},
   place={Cambridge},
   date={1987},
   pages={x+149},
}

\bib{dalencont}{article}{
  author={van Dalen, Dirk},
  title={How connected is the intuitionistic continuum?},
  journal={J. Symbolic Logic},
  volume={62},
  date={1997},
  number={4},
  pages={1147--1150},
}

\bib{bridges1}{book}{
  author={Bridges, Douglas S.},
  author={V{\^{\i }}{\c {t}}{\u {a}}, Lumini{\c {t}}a Simona},
  title={Techniques of constructive analysis},
  series={Universitext},
  publisher={Springer},
  place={New York},
  date={2006},
  pages={xvi+213},
}

\bib{burgess}{article}{
  author={Burgess, John P.},
  title={On the outside looking in: a caution about conservativeness},
  conference={ title={Kurt G\"odel: essays for his centennial}, },
  book={ series={Lect. Notes Log.}, volume={33}, publisher={Assoc. Symbol. Logic}, },
  date={2010},
  pages={128--141},
}

\bib{denisetal}{article}{
   author={Downey, Rodney G.},
   author={Hirschfeldt, Denis R.},
   author={Lempp, Steffen},
   author={Solomon, Reed},
   title={Computability-theoretic and proof-theoretic aspects of partial and
   linear orderings},
   journal={Israel J. Math.},
   volume={138},
   date={2003},
   pages={271--289},
}

\bib{damirzoo}{misc}{
  author={Dzhafarov, Damir D.},
  title={Reverse Mathematics Zoo},
  note={\url {http://rmzoo.uconn.edu/}},
}

\bib{feferman2}{article}{
  author={Feferman, Solomon},
  title={A language and axioms for explicit mathematics},
  conference={ title={Algebra and logic}, },
  book={ publisher={Springer}, },
  date={1975},
  pages={87--139. LNM 450},
}

\bib{fefmar}{article}{
  author={Feferman, Solomon},
  title={Recursion theory and set theory: a marriage of convenience},
  conference={ title={Generalized recursion theory, II}, },
  book={ series={Stud. Logic Foundations Math.}, volume={94}, publisher={North-Holland}, },
  date={1978},
  pages={55--98},
}

\bib{fefmons}{article}{
  author={Feferman, Solomon},
  title={Constructive theories of functions and classes},
  conference={ title={Logic Colloquium '78}, address={Mons}, date={1978}, },
  book={ series={Stud. Logic Foundations Math.}, volume={97}, publisher={North-Holland}, },
  date={1979},
  pages={159--224},
}

\bib{fefermaninf}{book}{
  author={Feferman, Solomon},
  author={J\"ager, Gerhard},
  author={Strahm, Thomas},
  title={Foundations of Explicit Mathematics},
  note={In progress},
}

\bib{fried}{article}{
  author={Friedman, Harvey},
  title={Some systems of second order arithmetic and their use},
  conference={ title={Proceedings of the International Congress of Mathematicians (Vancouver, B.\ C., 1974), Vol.\ 1}, },
  book={ },
  date={1975},
  pages={235--242},
}

\bib{fried2}{article}{
  author={Friedman, Harvey},
  title={ Systems of second order arithmetic with restricted induction, I \& II (Abstracts) },
  journal={Journal of Symbolic Logic},
  volume={41},
  date={1976},
  pages={557--559},
}

\bib{frist}{article}{
   author={Friedman, Harvey M.},
   author={Hirst, Jeffry L.},
   title={Reverse mathematics and homeomorphic embeddings},
   journal={Ann. Pure Appl. Logic},
   volume={54},
   date={1991},
   number={3},
   pages={229--253},
}

\bib{fuji1}{article}{
  author={Fujiwara, Makoto},
  author={Kohlenbach, Ulrich},
  title={Classical provability of uniform versions and intuitionistic provability},
  journal={Mathematical Logic Quarterly},
  date={2015},
  pages={To appear},
}

\bib{fuji2}{article}{
  author={Fujiwara, Makoto},
  title={Intuitionistic and uniform provability in reverse mathematics},
  journal={PhD thesis, Mathematical Institute, Tohoku University, Sendai},
  date={2015},
  pages={To appear},
}

\bib{gandymahat}{article}{
  author={Gandy, Robin},
  author={Hyland, Martin},
  title={Computable and recursively countable functions of higher type},
  conference={ },
  book={ publisher={North-Holland}, },
  date={1977},
  pages={407--438. Studies in Logic and Found. Math 87},
}

\bib{grilling}{article}{
  author={Grilliot, Thomas J.},
  title={On effectively discontinuous type-$2$ objects},
  journal={J. Symbolic Logic},
  volume={36},
  date={1971},
  pages={245--248},
}

\bib{hunterphd}{book}{
  author={Hunter, James},
  title={Higher-order reverse topology},
  note={Thesis (Ph.D.)--The University of Wisconsin - Madison},
  publisher={ProQuest LLC, Ann Arbor, MI},
  date={2008},
  pages={97},
}

\bib{polarhirst}{article}{
  author={Hirst, Jeffry L.},
  title={Representations of reals in reverse mathematics},
  journal={Bull. Pol. Acad. Sci. Math.},
  volume={55},
  date={2007},
  number={4},
  pages={303--316},
}

\bib{aveirohrbacek}{article}{
  author={Hrbacek, Karel},
  title={Stratified analysis?},
  conference={ title={The strength of nonstandard analysis}, },
  book={ publisher={Springer}, },
  date={2007},
  pages={47--63},
}

\bib{hrbacek3}{article}{
  author={Hrbacek, Karel},
  title={Relative Set Theory: Internal View},
  journal={J. Log. Anal.},
  volume={1},
  date={2009},
  pages={Paper 8, pp.\ 108},
  issn={1759-9008},
}

\bib{hrbacek4}{article}{
  author={Hrbacek, Karel},
  author={Lessmann, Olivier},
  author={O'Donovan, Richard},
  title={Analysis with ultrasmall numbers},
  journal={Amer. Math. Monthly},
  volume={117},
  date={2010},
  number={9},
  pages={801--816},
}

\bib{hrbacek5}{article}{
  author={Hrbacek, Karel},
  title={Relative Set Theory: Some external issues},
  journal={J. Log. Anal.},
  volume={2},
  date={2010},
  pages={pp.\ 37},
}

\bib{ishi1}{article}{
  author={Ishihara, Hajime},
  title={Reverse mathematics in Bishop's constructive mathematics},
  year={2006},
  journal={Philosophia Scientiae (Cahier Sp\'ecial)},
  volume={6},
  pages={43-59},
}

\bib{kaye}{book}{
  author={Kaye, Richard},
  title={Models of Peano arithmetic},
  series={Oxford Logic Guides},
  volume={15},
  publisher={The Clarendon Press},
  date={1991},
  pages={x+292},
}

\bib{keisler1}{article}{
  author={Keisler, H. Jerome},
  title={Nonstandard arithmetic and reverse mathematics},
  journal={Bull. Symb.\ Logic},
  volume={12},
  date={2006},
  pages={100--125},
}

\bib{kohlenbach3}{book}{
  author={Kohlenbach, Ulrich},
  title={Applied proof theory: proof interpretations and their use in mathematics},
  series={Springer Monographs in Mathematics},
  publisher={Springer-Verlag},
  place={Berlin},
  date={2008},
  pages={xx+532},
}

\bib{kohlenbach2}{article}{
  author={Kohlenbach, Ulrich},
  title={Higher order reverse mathematics},
note={See \cite{simpson1}},
}

\bib{kohlenbach4}{article}{
  author={Kohlenbach, Ulrich},
  title={Foundational and mathematical uses of higher types},
  conference={ title={Reflections on the foundations of mathematics (Stanford, CA, 1998)}, },
  book={ series={Lect. Notes Log.}, volume={15}, publisher={ASL}, },
  date={2002},
  pages={92--116},
}

\bib{kooltje}{article}{
  author={Kohlenbach, Ulrich},
  title={On uniform weak K\"onig's lemma},
  journal={Ann. Pure Appl. Logic},
  volume={114},
  date={2002},
  pages={103--116},
}

\bib{mandje2}{article}{
  author={Mandelkern, Mark},
  title={Brouwerian counterexamples},
  journal={Math. Mag.},
  volume={62},
  date={1989},
  number={1},
  pages={3--27},
}

\bib{montja}{article}{
   author={Montalb{\'a}n, Antonio},
   title={Equivalence between Fra\"\i ss\'e's conjecture and Jullien's
   theorem},
   journal={Ann. Pure Appl. Logic},
   volume={139},
   date={2006},
   number={1-3},
   pages={1--42},
}

\bib{montahue}{article}{
  author={Montalb{\'a}n, Antonio},
  title={Open questions in reverse mathematics},
  journal={BSL},
  volume={17},
  date={2011},
  pages={431--454},
}

\bib{noortje}{book}{
  author={Normann, Dag},
  title={Recursion on the countable functionals},
  series={LNM 811},
  volume={811},
  publisher={Springer},
  date={1980},
  pages={viii+191},
}

\bib{wownelly}{article}{
  author={Nelson, Edward},
  title={Internal set theory: a new approach to nonstandard analysis},
  journal={Bull. Amer. Math. Soc.},
  volume={83},
  date={1977},
  number={6},
  pages={1165--1198},
}

\bib{peraire}{article}{
  author={P{\'e}raire, Yves},
  title={Th\'eorie relative des ensembles internes},
  language={French},
  journal={Osaka J. Math.},
  volume={29},
  date={1992},
  number={2},
  pages={267--297},
}

\bib{robinson1}{book}{
  author={Robinson, Abraham},
  title={Non-standard analysis},
  publisher={North-Holland},
  place={Amsterdam},
  date={1966},
  pages={xi+293},
}

\bib{yamayamaharehare}{article}{
  author={Sakamoto, Nobuyuki},
  author={Yamazaki, Takeshi},
  title={Uniform versions of some axioms of second order arithmetic},
  journal={MLQ Math. Log. Q.},
  volume={50},
  date={2004},
  number={6},
  pages={587--593},
}

\bib{aloneatlast3}{article}{
  author={Sanders, Sam},
  title={\textup {ERNA} and {F}riedman's {R}everse {M}athematics},
  year={2011},
  journal={J.\ of Symb.\ Logic},
  volume={76},
  pages={637-664},
}

\bib{sayo}{article}{
  author={Sanders, Sam},
  author={Yokoyama, Keita},
  title={The {D}irac delta function in two settings of {R}everse {M}athematics},
  year={2012},
  journal={Archive for Mathematical Logic},
  volume={51},
  number={1},
  pages={99-121},
}

\bib{tale}{article}{
  author={Sanders, Sam},
  title={A tale of three Reverse Mathematics},
  year={2012},
  number={},
  journal={Submitted},
  volume={},
  pages={},
}

\bib{sambrouwt}{article}{
  author={Sanders, Sam},
  title={Reverse Mathematics of Brouwer's continuity theorem and related principles},
  year={2014},
  journal={Submitted, Available from arXiv:\url {http://arxiv.org/abs/1502.03621}},
}

\bib{samzoo}{article}{
  author={Sanders, Sam},
  title={Taming the Reverse Mathematics zoo},
  year={2014},
  journal={Submitted, Available from arXiv: \url {http://arxiv.org/abs/1412.2022}},
}

\bib{samimplicit}{article}{
  author={Sanders, Sam},
  title={More than bargained for in Reverse Mathematics},
  year={2014},
  journal={Submitted, Available from arXiv: \url {http://arxiv.org/abs/1502.03613}},
}

\bib{simpson1}{collection}{
  title={Reverse mathematics 2001},
  series={Lecture Notes in Logic},
  volume={21},
  editor={Simpson, Stephen G.},
  publisher={ASL},
  date={2005},
  pages={x+401},
}

\bib{simpson2}{book}{
  author={Simpson, Stephen G.},
  title={Subsystems of second order arithmetic},
  series={Perspectives in Logic},
  edition={2},
  publisher={CUP},
  date={2009},
  pages={xvi+444},
}

\bib{baasje}{article}{
  author={Simpson, Stephen G.},
  title={Ordinal numbers and the Hilbert basis theorem},
  journal={J. Symbolic Logic},
  volume={53},
  date={1988},
  number={3},
  pages={961--974},
}

\bib{tait1}{article}{
  author={Tait, William W.},
  title={Finitism},
  year={1981},
  journal={The Journal of Philosophy},
  volume={78},
  pages={524-564},
}

\bib{tanaka1}{article}{
  author={Tanaka, Kazuyuki},
  title={The self-embedding theorem of $\WKL _{0}$ and a non-standard method},
  year={1997},
  journal={Annals of Pure and Applied Logic},
  volume={84},
  pages={41-49},
}

\bib{tahaar}{article}{
  author={Tanaka, Kazuyuki},
  author={Yamazaki, Takeshi},
  title={A non-standard construction of Haar measure and weak K\"onig's lemma},
  journal={J. Symbolic Logic},
  volume={65},
  date={2000},
  number={1},
  pages={173--186},
}

\bib{troelstra1}{book}{
  author={Troelstra, Anne Sjerp},
  title={Metamathematical investigation of intuitionistic arithmetic and analysis},
  note={Lecture Notes in Mathematics, Vol.\ 344},
  publisher={Springer Berlin},
  date={1973},
  pages={xv+485},
}

\bib{troelstra2}{article}{
  author={Troelstra, A. S.},
  title={Note on the fan theorem},
  journal={J. Symbolic Logic},
  volume={39},
  date={1974},
  pages={584--596},
}

\bib{troelstra3}{article}{
  author={Troelstra, A. S.},
  title={Some models for intuitionistic finite type arithmetic with fan functional},
  journal={J. Symbolic Logic},
  volume={42},
  date={1977},
  number={2},
  pages={194--202},
}

\bib{yo1}{article}{
  author={Yokoyama, Keita},
  title={Formalizing non-standard arguments in second-order arithmetic},
  journal={J. Symbolic Logic},
  volume={75},
  date={2010},
  number={4},
  pages={1199--1210},
}

\bib{yokoyama2}{article}{
  author={Yokoyama, Keita},
  title={Non-standard analysis in ${\rm ACA}_0$ and Riemann mapping theorem},
  journal={Math. Log. Q.},
  volume={53},
  date={2007},
  number={2},
  pages={132--146},
}

\bib{yokoyama3}{book}{
  author={Yokoyama, Keita},
  title={Standard and non-standard analysis in second order arithmetic},
  series={Tohoku Mathematical Publications},
  volume={34},
  note={PhD Thesis, Tohoku University, 2007},
  place={Sendai},
  date={2009},
  pages={iv+130},
  url={http://www.math.tohoku.ac.jp/tmj/PDFofTMP/tmp34.pdf},
}

\end{biblist}
\end{bibdiv}
\bye